%
\documentclass[12pt]{amsart} 
\RequirePackage[l2tabu,orthodox]{nag}
\usepackage[usenames]{color}
\usepackage{amssymb, amscd, amsmath, float, graphicx, longtable,
   enumerate, tabularx, 
  stmaryrd, fullpage,
  mathtools,todonotes, empheq }
\usepackage{fouriernc}
\usepackage[all]{xy}



\usepackage[pagebackref]{hyperref}
    \renewcommand*{\backref}[1]{}
    \renewcommand*{\backrefalt}[4]{%
    \ifcase #1 %
        (Not cited).
    \or
        (Cited on page~#2).%
    \else
        (Cited on pages~#2).
    \fi}


\hyphenation{mod-ule com-plex com-plex-es mor-phism ho-mo-mor-phism
  iso-mor-phism ho-mo-morphic pro-jec-tive in-jec-tive re-so-lu-tion
  ho-mo-lo-gy ho-mo-lo-gi-cal ho-mo-lo-gi-cally du-a-liz-ing re-si-due
  pa-ral-lel semi-dualizing mono-morphism de-sing-u-lar-i-za-tion
  rep-re-sen-ta-tion mod-ules comm-u-ta-tive non-comm-u-ta-tive
  di-men-sion-al ma-cau-lay Mart-sin-kov-sky}


\DeclareMathOperator{\add}{{\mathsf{add}}}
\DeclareMathOperator{\ann}{Ann}

\DeclareMathOperator{\Aut}{Aut}
\DeclareMathOperator{\Bl}{\mathcal{B}}

\DeclareMathOperator{\coh}{{\mathsf{coh}}}

\DeclareMathOperator{\cok}{cok}

\DeclareMathOperator{\D}{{\mathsf{D}}}
\DeclareMathOperator{\depth}{depth}

\DeclareMathOperator{\End}{End}
\DeclareMathOperator{\Ext}{Ext}

\DeclareMathOperator{\GL}{GL}
\DeclareMathOperator{\GKdim}{GKdim}
\DeclareMathOperator{\gldim}{gldim}

\DeclareMathOperator{\GrMod}{\mathsf{GrMod}}
\DeclareMathOperator{\grmod}{\mathsf{grmod}}
\DeclareMathOperator{\gr}{gr}

\DeclareMathOperator{\Hilb}{Hilb}
\DeclareMathOperator{\Hom}{Hom}

\DeclareMathOperator{\injdim}{injdim}
\DeclareMathOperator{\image}{image}

\DeclareMathOperator{\lcm}{lcm}
\DeclareMathOperator{\Mat}{Mat}

\DeclareMathOperator{\ncProj}{\mathsf{Proj}}
\DeclareMathOperator{\nc}{nc}
\newcommand{\op}{{\operatorname{op}}}

\DeclareMathOperator{\pd}{pd}

\newcommand{\perf}{{\mathsf{perf}{}}}
\DeclareMathOperator{\proj}{{\mathsf{proj}}}
\DeclareMathOperator{\Proj}{{Proj}}
\DeclareMathOperator{\Qcoh}{{\mathsf{Qch}}}

\DeclareMathOperator{\rad}{rad}
\DeclareMathOperator{\rank}{rank}
\DeclareMathOperator{\rep}{{\mathsf{rep}}}

\DeclareMathOperator{\repdim}{{{repdim}}}
\DeclareMathOperator{\SL}{SL}
\DeclareMathOperator{\Spec}{\mathrm{Spec}}
\DeclareMathOperator{\Supp}{Supp}
\DeclareMathOperator{\Sym}{Sym}

\DeclareMathOperator{\tails}{\mathsf{tails}}
\DeclareMathOperator{\Tails}{\mathsf{Tails}}
\DeclareMathOperator{\Tor}{Tor}
\DeclareMathOperator{\Tors}{\mathsf{Tors}}
\DeclareMathOperator{\tors}{\mathsf{tors}}
\DeclareMathOperator{\tr}{tr}



\newcommand{\Lmod}[1]{#1\text{-}{\mathsf{Mod}}}

\newcommand{\lmod}[1]{#1\text{-}{\mathsf{mod}}}
\newcommand{\rmod}[1]{\mathsf{mod}\text{-}#1}

\newcommand{\fl}[1]{#1\text{-}{\mathsf{fl}}}

\renewcommand{\phi}{\varphi}

\renewcommand{\bar}{\overline}
\renewcommand{\tilde}{\widetilde}

\renewcommand{\leq}{\leqslant}

\renewcommand{\geq}{\geqslant}

\renewcommand{\to}{\longrightarrow}
\newcommand\into{{\hookrightarrow}}

\newcommand{\xto}{\xrightarrow}


\newcommand{\Wedge}{{\textstyle\bigwedge}}
\newcommand{\w}{\Wedge}
\renewcommand{\L}{{\mathbf{L}}}

\newcommand{\svee}{{\scriptscriptstyle \vee}}

\newcommand{\Gam}{\Gamma}
\newcommand{\Lam}{\Lambda}
\newcommand{\SG}{S\#G}


\newcommand{\p}{{\mathfrak{p}}}

\renewcommand{\r}{{\mathfrak{r}}}

\newcommand{\m}{{\mathfrak{m}}}


\newcommand{\K}{\mathsf{K}}


\newcommand{\CC}{{\mathbb C}}

\newcommand{\PP}{{\mathbb P}}
\newcommand{\QQ}{{\mathbb Q}}

\newcommand{\ZZ}{{\mathbb Z}}

\newcommand{\cata}{{\mathsf A}}
\newcommand{\catb}{{\mathsf B}}
\newcommand{\catc}{{\mathsf C}}

\newcommand{\catf}{{\mathsf F}}
\newcommand{\catg}{{\mathsf G}}

\newcommand{\catk}{{\mathsf K}}


\newcommand{\cala}{{\mathcal A}}

\newcommand{\cale}{{\mathcal E}}
\newcommand{\calf}{{\mathcal F}}
\newcommand{\calg}{{\mathcal G}}

\newcommand{\cali}{{\mathcal I}}

\newcommand{\call}{{\mathcal L}}
\newcommand{\calm}{{\mathcal M}}
\newcommand{\caln}{{\mathcal N}}
\newcommand{\calo}{{\mathcal O}}
\newcommand{\calp}{{\mathcal P}}


\newcommand{\calox}{{\calo_X}}
\newcommand{\caloy}{{\calo_Y}}
\newcommand{\caloz}{{\calo_Z}}
\newcommand{\calop}{{\calo_\PP}}
\newcommand{\F}{\mathcal{F{}}}


\def\cEnd{{\mathcal E}nd\,}

\def\cHom{{\mathcal H}\!om}
\newcommand{\rHom}[1][{}]{{\mathbf R}^{#1}\!\Hom}
\newcommand{\R}{{\mathbf R}}
\newcommand{\lotimes}{\overset{\mathbf{L}}{\otimes}}


\theoremstyle{plain}
\newtheorem{theorem}{Theorem}[section]

\newtheorem{prop}[theorem]{Proposition}
\newtheorem{proposition}[theorem]{Proposition}

\newtheorem{lemma}[theorem]{Lemma}

\newtheorem{conj}[theorem]{Conjecture}
\newtheorem{corollary}[theorem]{Corollary}
\newtheorem{question}[theorem]{Question}
\newtheorem{provdef}[theorem]{Provisional Definition}
\newtheorem*{theorem*}{Theorem}
\newtheorem*{prop*}{Proposition}

\theoremstyle{definition}

\newtheorem{definition}[theorem]{Definition}

\newtheorem*{conventions}{Conventions}

\newtheorem{example}[theorem]{Example}

\numberwithin{equation}{section}


\begin{document}

\title[]{%
Non-commutative crepant resolutions:\\ scenes from categorical geometry
}

\author[G.J. Leuschke]{Graham J. Leuschke}
\address{Mathematics Department, Syracuse University,
Syracuse NY 13244, USA}
\email{\href{mailto:gjleusch@math.syr.edu}{gjleusch@math.syr.edu}}
\urladdr{\href{http://www.leuschke.org}{www.leuschke.org}}


\thanks{The author was partly supported by NSF grant DMS-0902119.}

\date{\today}

\keywords{non-commutative algebraic geometry, categorical geometry,
  NCCR, non-commutative crepant resolution, McKay correspondence,
  minimal model program, tilting}

\subjclass[2010]{Primary: 
  13-02, 
  14A22, 
  14E15, 
  16S38; 
  Secondary:
  13C14, 
  13D09. 
}

\begin{abstract}
  Non-commutative crepant resolutions are algebraic objects defined by
  Van den Bergh to realize an equivalence of derived categories in
  birational geometry.  They are motivated by tilting theory, the
  McKay correspondence, and the minimal model program, and have
  applications to string theory and representation theory.  In this
  expository article I situate Van den Bergh's definition within these
  contexts and describe some of the current research in the area.
\end{abstract}
\maketitle

\begin{center}
\begin{minipage}{0.5\textwidth}
\tiny
\tableofcontents
\normalsize 
\end{minipage}
\end{center}

\section*{Introduction}\label{sect:intro}

A resolution of singularities replaces a singular algebraic variety by
a non-singular one that is isomorphic on a dense open set.  As such,
it is a great boon to the algebraic geometer, allowing the reduction
of many calculations and constructions to the case of a smooth
variety.  To the pure commutative algebraist, however, this process
can seem like the end of a story rather than the beginning: it
replaces a well-understood thing, the spectrum of a ring, with a much
more mysterious thing glued together out of other spaces.  Put simply,
a resolution of singularities of an affine scheme $\Spec R$ is almost
never another affine scheme (but see \S\ref{sect:normalization}).  One
cannot in general resolve singularities and stay within the categories
familiar to commutative algebraists.

The usual solution, of course, is to expand one's landscape on the
geometric side to include more complicated schemes.  There are plenty
of good reasons to do this other than resolving singularities, and it
has worked well for a century.  Locally, the more convoluted objects
are built out of affine schemes/commutative algebra, so one has not
strayed too far.  

Here is another alternative: expand the landscape on the algebraic
side instead, to include non-commutative rings as well as commutative
ones.  This suggestion goes by the name ``non-commutative algebraic
geometry'' or, my preference, ``categorical geometry''.  For some
thoughts on the terminology, see \S\ref{sect:non-existence}.  Whatever
the name, the idea is to treat algebraic objects, usually derived
categories, as coming from geometric objects even when no such
geometric things exist.  Since one trend in algebraic geometry in the
last forty years has been to study algebraic varieties indirectly, by
studying their (derived) categories of quasicoherent sheaves, one can
try to get along without the variety at all.  Given a category of
interest $\catc$, one can postulate a ``non-commutative'' space $X$
such that the (derived) category of quasicoherent sheaves on $X$ is
$\catc$, and write $\catc = \D^b(\Qcoh X)$.  In this game, the derived
category \emph{is} the geometry and the symbol $X$ simply stands in as
a grammatical placeholder; the mathematical object in play is $\catc$.

Of course, such linguistic acrobatics can only take you so far.  The
bonds between algebra and geometry cannot be completely severed: the
``non-commutative spaces'' must be close enough to the familiar
commutative ones to allow information to pass back and forth.  This
article is about a particular attempt to make this program work.

The idea of non-commutative resolutions of singularities appeared
around the same time in physics \cite{Berenstein-Leigh:2001,
  Berenstein-Douglas:2002, Berenstein:2002,
  Douglas-Greene-Morrison:1997} and in pure mathematics,
notably~\cite{Bridgeland-King-Reid:2001,Bondal-Orlov:2002}.  In 2002,
inspired by Bridgeland's proof~\cite{Bridgeland:2002} of a conjecture
due to Bondal and Orlov, Van den Bergh~\cite{VandenBergh:flops}
proposed a definition for a \emph{non-commutative crepant resolution}
of a ring $R$.  This is an $R$-algebra $\Lam$ which is (a) finitely
generated as an $R$-module, (b) generically Morita equivalent to $R$,
and (c) has finite global dimension.  These three attributes are
supposed to stand in for the components of the definition of a
resolution of singularities: it is (a) proper, (b) birational, and (c)
non-singular.  The additional ``crepancy'' condition is a certain
symmetry hypothesis on $\Lam$ which is intended to stand in for the
condition that the resolution of singularities not affect the
canonical sheaf.  See~\S\ref{sect:NCCRs} for details.

My main goal for this article is to motivate the definition of a
non-commutative crepant resolution (Definition~\ref{def:NCCR}).  In
order to do that effectively, I will attempt to describe the contexts
out of which it arose.  These are several, including Morita theory and
tilting, the McKay correspondence, the minimal model program of Mori
and Reid, and especially work of Bondal and Orlov on derived
categories of coherent sheaves.  Of course, the best motivation for a
new definition is the proof of a new result, and I will indicate where
the new concepts have been applied to problems in ``commutative''
geometry.  Finally, the article contains a healthy number of examples,
both of existence and of non-existence of non-commutative crepant
resolutions.  Since it is not at all clear yet that the definitions
given below are the last word, we can hope that reasoning by example 
will point the way forward.

As Miles Reid writes in \cite{Reid:oldperson}, \begin{quote}It is
  widely appreciated that mathematicians usually treat history in a
  curiously dishonest way, rewriting the history of the subject as it
  should have been discovered [\dots.]  The essential difficulty seems
  to be that the story in strictly chronological order will not make
  sense to anyone; the writer wants to give an explanation based on
  the logical layout of the subject, whatever violence it does to
  historical truth.\end{quote} This article will be guilty of the
dishonesty Reid suggests, intentionally in some places and, I fear,
unintentionally in others.  I do intend to build a certain logical
layout around the ideas below, and I sincerely apologize to any who
feel that violence has been done to their ideas.

Here is a thumbnail sketch of the contents.  The first few sections
consider, on both the algebraic side and the geometric, the
reconstruction of the underlying ring or space from certain associated
categories. The obstructions to this reconstruction---and even to
reconstruction of the commutative property---are explained by Morita
equivalence (\S\ref{sect:Morita}) and tilting theory
(\S\S\ref{sect:der-mods}--\ref{sect:der-sheaves}).
\S\ref{sect:beilinson} contains a central example: Be\u\i linson's
``tilting description'' of the derived category of coherent sheaves on
projective space.

This is not intended to be a comprehensive introduction to
non-commutative algebraic geometry; for one thing, I am nowise
competent to write such a thing.  What I cannot avoid saying is
in~\S\ref{sect:non-existence}. 

The next two sections give synopses of what I need from the geometric
theory of resolutions of singularities and the minimal model program,
followed in~\S\ref{sect:cat-desing} by some remarks on purely
category-theoretic replacements for resolutions of singularities.
Another key example, the McKay correspondence, appears
in~\S\ref{sect:McKay}.

At last in~\S\ref{sect:NCCRs} I define non-commutative crepant
resolutions. The definition I give is slightly different from Van den
Bergh's original, but agrees with his in the main case of interest.
The next few sections
\S\S\ref{sect:normalization}--\ref{sect:ratsings} focus on particular
aspects of the definition, recapping some related research and
focusing on obstructions to existence.  In particular I give
several more examples of existence and non-existence of
non-commutative crepant resolutions.  Two more families of examples
take up~\S\ref{sect:FCMT} and~\S\ref{sect:detX}: rings of finite
representation type and the generic determinantal hypersurface.  Here
tilting returns, now as a source of non-commutative crepant
resolutions.  I investigate a potential theory of ``non-commutative
blowups'' in~\S\ref{sect:blowups}, and give very quick indications of
some other examples in~\S\ref{sect:omissions}.  That section also
lists a few open questions and gestures at some topics that were
omitted for lack of space, energy, or expertise.

Some results are simplified from their published versions for
expository reasons. In particular I focus mostly on local rings,
allowing some cleaner statements at the cost of generality, even
though such generality is in some cases necessary for the proofs.  In
any case I give very few proofs, and sketchy ones at that.  The only
novel contribution is a relatively simple proof, in \S\ref{sect:detX},
of the $m=n$ case of the main theorem
of~\cite{Buchweitz-Leuschke-VandenBergh:2010}.  

The reader I have in mind has a good background in commutative
algebra, but perhaps less in non-commutative algebra, algebraic
geometry, and category theory.  Thus I spend more time on trivialities
in these latter areas than in the first.  I have tried to make the
references section comprehensive, though it surely is out of date
already.

I am grateful to Jesse Burke, Hailong Dao, Kos Diveris, and Michael
Wemyss for insightful comments on earlier drafts, and to Ragnar-Olaf
Buchweitz and Michel Van den Bergh for encouragement in this project,
as well as for years of enjoyable mathematical interaction.

\begin{conventions}
  All modules will be left modules, so for a ring $\Lam$ I will denote by
  $\lmod \Lam$ the category of finitely generated left $\Lam$-modules, and
  by $\rmod \Lam = \lmod {\Lam^{\op}}$ the category of finitely generated
  left $\Lam^\op$-modules. Other categories of modules will be defined on
  the fly.  Capitalized versions of names, namely $\Lmod \Lam$, etc.,
  will denote the same categories without any hypothesis of finite
  generation.

  Throughout $R$ and $S$ will be commutative noetherian rings, usually
  local, while $\Lam$ and its Greek-alphabet kin will not necessarily
  be commutative.
\end{conventions}

\section{Morita equivalence}\label{sect:Morita}

To take a representation-theoretic view is to replace the study of a
ring by the study of its (abelian category of) modules.  Among other
advantages, this allows to exploit the tools of homological algebra.
A basic question is: How much information do we lose by becoming
representation theorists?  In other words, when are two rings
indistinguishable by their module categories, so that $\lmod \Lam$ and
$\lmod \Gam$ are the same abelian category for different rings $\Lam$
and $\Gam$?

To fix terminology, recall that a functor $\catf \colon \cata \to
\catb$ between abelian categories is \emph{fully faithful} if it
induces an isomorphism on $\Hom$-sets, and \emph{dense} if it is
surjective on objects up to isomorphism.  If $\catf$ is both fully
faithful and dense, then it is an
\emph{equivalence}~\cite[IV.4]{MacLane:categories}, that is, there is
a functor $\catg \colon \catb \to \cata$ such that both compositions
are isomorphic to the respective identities.  In this case  write
$\cata \simeq \catb$.  Equivalences preserve and reflect essentially
all ``categorical'' properties and attributes: mono- and epimorphisms,
projectives, injectives, etc.  Thus the question above asks when two
module categories are equivalent.

Morita's theorem on equivalences of module categories~\cite[Section
3]{Morita} completely characterizes the contexts in which $\lmod \Lam
\simeq \lmod \Gam$ for rings $\Lam$ and $\Gam$.  First I define some
of the necessary terms.

\begin{definition}
  \label{def:morita}
  Let $\Lam$ be a ring and $M \in \Lmod \Lam$.
  \begin{enumerate}
  \item \label{item:add} Denote by $\add M$ the full subcategory of
    $\Lmod \Lam$ containing all direct summands of finite direct sums
    of copies of $M$.
  \item \label{item:generator} Say $M$ is a \emph{generator} (for
    $\lmod \Lam$) if every finitely generated left $\Lam$-module is a
    homomorphic image of a finite direct sum of copies of $M$.
    Equivalently, $\Lam \in \add M$.
  \item \label{item:progenerator} Say $M$ is a \emph{progenerator} if
    $M$ is a finitely generated projective module and a generator.
    Equivalently, $\add \Lam = \add M$.
  \end{enumerate}
\end{definition}

\begin{theorem}[Morita equivalence, see e.g.\ {\cite[Chap. V]{Gabriel:1962}}]
  \label{thm:morita}
  The following  are equivalent for rings $\Lam$ and $\Gam$.
  \begin{enumerate}
  \item There is an equivalence of abelian categories $\lmod \Lam
    \simeq \lmod \Gam$.
  \item There exists a progenerator $P \in \lmod \Lam$ such that
    $\Gam \cong \End_\Lam(P)^\op$.
  \item There exists a $(\Lam\text{-}\Gam)$-bimodule ${}_\Lam P_\Gam$ such
    that the functor $\Hom_\Lam(P,-)\colon \lmod \Lam \to \lmod \Gam$
    is an equivalence.
  \end{enumerate}
  In this case,  say that $\Lam$ and $\Gam$ are \emph{Morita
    equivalent.}   \qed
\end{theorem}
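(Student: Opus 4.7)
The plan is to prove the cyclic chain (ii) $\Rightarrow$ (iii) $\Rightarrow$ (i) $\Rightarrow$ (ii). The middle implication is immediate from the definition of equivalence, so only the other two need real work, and the bimodule $P$ of (iii) will turn out to be nothing but the progenerator of (ii) equipped with its tautological right action of $\End_\Lam(P)^{\op} \cong \Gam$.

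For (ii) $\Rightarrow$ (iii), first observe that any left $\Lam$-module $P$ carries a tautological right action of $\End_\Lam(P)$ by evaluation, making it a $(\Lam, \End_\Lam(P)^{\op})$-bimodule; transporting along the isomorphism $\Gam \cong \End_\Lam(P)^{\op}$ turns $P$ into a $(\Lam, \Gam)$-bimodule. One has the adjoint pair
\[
P \otimes_\Gam - : \Lmod \Gam \rightleftarrows \Lmod \Lam : \Hom_\Lam(P, -),
\]
and the task is to verify that under the progenerator hypothesis both the unit and the counit are natural isomorphisms. Since $P$ is finitely generated projective, $\Hom_\Lam(P, -)$ is exact and commutes with direct sums, so the composite $P \otimes_\Gam \Hom_\Lam(P, -)$ is right exact and preserves direct sums. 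Every finitely generated $\Lam$-module $M$ sits in a presentation $\Lam^m \to \Lam^n \to M \to 0$, so it suffices to check that the counit is an isomorphism on $M = \Lam$. Here the progenerator property does the work: from $\Lam \in \add P$ one obtains a splitting $P^n \cong \Lam \oplus P'$, and applying $\Hom_\Lam(P, -)$ and then tensoring back with $P$ over $\Gam$ must recover this splitting summand by summand, identifying $P \otimes_\Gam \Hom_\Lam(P, \Lam)$ with $\Lam$ via the evaluation map. The unit is handled symmetrically on the other side, using that $\Gam \in \add \Gam$ trivially.

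For (i) $\Rightarrow$ (ii), let $F : \lmod \Lam \to \lmod \Gam$ be an equivalence with quasi-inverse $G$, and set $P = G(\Gam)$. By construction $P \in \lmod \Lam$, so it is finitely generated. Natural isomorphisms $\Hom_\Lam(P, -) \cong \Hom_\Gam(F(P), F(-)) \cong \Hom_\Gam(\Gam, F(-)) \cong F(-)$ show that $\Hom_\Lam(P, -)$ is exact, whence $P$ is projective. It is a generator: any $N \in \lmod \Gam$ admits a surjection from some $\Gam^n$, and applying the exact, additive functor $G$ yields a surjection $P^n \cong G(\Gam^n) \twoheadrightarrow G(N)$, which covers every object of $\lmod \Lam$ up to isomorphism since $G$ is dense. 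Finally, full faithfulness of $G$ gives $\End_\Lam(P)^{\op} = \End_\Lam(G(\Gam))^{\op} \cong \End_\Gam(\Gam)^{\op} \cong \Gam$, using the standard identification of a ring with the opposite of the endomorphism ring of its left regular module.

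The main obstacle is not conceptual but rather the bookkeeping around left versus right actions and around $\End$ versus $\End^{\op}$: one must be scrupulous about which ring acts on which side of $P$, so that the adjunction in (ii) $\Rightarrow$ (iii) assembles correctly. The substantive mathematical content is concentrated in the single assertion that the progenerator hypothesis on $P$ is exactly what forces $P \otimes_\Gam \Hom_\Lam(P, \Lam) \cong \Lam$, and conversely that any equivalence of module categories must send $\Gam$ to an object with this property; everything else is categorical formality.
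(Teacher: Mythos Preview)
The paper does not actually prove this theorem: it is stated as a classical result with a citation to Gabriel and closed with a \qed, so there is no argument in the paper to compare against. Your proof is the standard one and is correct in outline; it is exactly the kind of argument the cited reference contains.

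Two small points worth tightening. First, the theorem is phrased for $\lmod$, the category of \emph{finitely generated} modules, while your adjunction $P \otimes_\Gam - \dashv \Hom_\Lam(P,-)$ is set up on $\Lmod$. You should remark that both functors restrict to finitely generated modules: since $P$ is a progenerator, $\Lam \in \add P$, so $\Hom_\Lam(P,\Lam)$ is a summand of $\Hom_\Lam(P,P^n)=\Gam^n$ and hence finitely generated projective over $\Gam$; right-exactness then gives $\Hom_\Lam(P,M)$ finitely generated for any $M \in \lmod\Lam$, and similarly for $P\otimes_\Gam-$. Second, your treatment of the unit in (ii) $\Rightarrow$ (iii) is a bit terse: the point is that on $N=\Gam$ the unit is the map $\Gam \to \Hom_\Lam(P,P)$ sending $\gamma$ to the right action of $\gamma$, which is an isomorphism precisely by the hypothesis $\Gam \cong \End_\Lam(P)^\op$; then right-exactness and preservation of finite sums push this to all of $\lmod\Gam$. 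With these clarifications your argument is complete.
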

Interesting bits of the history of Morita's theorem, as well as his
other work, can be found in~\cite{Morita:obit}.

An immediate corollary of Morita's theorem will be useful later.
\begin{corollary}
  \label{cor:intertwine} Let $\Lam$ be a ring and $M$, $N$ two
  $\Lam$-modules such that $\add M = \add N$, equivalently $M$ is a
  direct summand of $N^s$ for some $s$ and $N$ is a direct summand of
  $M^t$ for some $t$. Then $\End_\Lam(M)$ and $\End_\Lam(N)$ are
  Morita equivalent via the functors
  $\Hom_\Lam(M,N)\otimes_{\End_\Lam(M)}-$ and
  $\Hom_\Lam(N,M)\otimes_{\End_\Lam(N)}-$.
\end{corollary}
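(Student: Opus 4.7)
The plan is to pass through the intermediate ring $\Pi = \End_\Lam(L)$ with $L = M \oplus N$, and apply Morita's theorem (Theorem~\ref{thm:morita}) twice. Let $e_M, e_N \in \Pi$ denote the orthogonal idempotents projecting $L$ onto its summands, so that $e_M + e_N = 1_\Pi$. Standard identifications yield ring isomorphisms $e_M \Pi e_M \cong \End_\Lam(M)$ and $e_N \Pi e_N \cong \End_\Lam(N)$, together with an isomorphism of $(\End_\Lam(N), \End_\Lam(M))$-bimodules $e_N \Pi e_M \cong \Hom_\Lam(M, N)$.

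The first step is to verify that $\Pi e_M$ is a progenerator in $\lmod \Pi$. Projectivity is automatic, since $\Pi e_M$ is a direct summand of $\Pi$. For the generator property, I would note that $\Pi = \Pi e_M \oplus \Pi e_N$, so it suffices to prove $\Pi e_N \in \add(\Pi e_M)$. Under the identifications $\Pi e_M \cong \Hom_\Lam(M, L)$ and $\Pi e_N \cong \Hom_\Lam(N, L)$, this follows by applying $\Hom_\Lam(-, L)$ to the given inclusion of $N$ as a summand of $M^s$. Theorem~\ref{thm:morita} then yields that $\Pi$ is Morita equivalent to $\End_\Pi(\Pi e_M)^{\op} \cong e_M \Pi e_M \cong \End_\Lam(M)$. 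A symmetric argument, this time invoking $M \in \add N$, shows $\Pi$ is Morita equivalent to $\End_\Lam(N)$ as well, so $\End_\Lam(M)$ and $\End_\Lam(N)$ are Morita equivalent.

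To identify the bimodule, recall the explicit form of a Morita equivalence: $\lmod e_M\Pi e_M \to \lmod \Pi$ is given by $\Pi e_M \otimes_{e_M\Pi e_M} -$, while $\lmod \Pi \to \lmod e_N \Pi e_N$ is given by $\Hom_\Pi(\Pi e_N, -) \cong e_N(-)$. Composing, one sends $X \mapsto e_N \Pi e_M \otimes_{\End_\Lam(M)} X \cong \Hom_\Lam(M, N) \otimes_{\End_\Lam(M)} X$, as claimed; the quasi-inverse $\Hom_\Lam(N, M) \otimes_{\End_\Lam(N)} -$ arises identically by switching the roles of $M$ and $N$.

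The main obstacle is careful bookkeeping: one must confirm that the canonical identifications of the ``corners'' $e_M \Pi e_M$ and $e_N \Pi e_M$ intertwine correctly with the bimodule actions supplied by the abstract Morita construction, with particular attention to the opposite-ring convention built into $\End_\Pi(\Pi e_M)^{\op}$. Once these identifications are pinned down, no ingredient beyond the hypothesis $\add M = \add N$ and Theorem~\ref{thm:morita} is required.
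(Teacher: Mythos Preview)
Your proof is correct. The paper does not actually supply a proof of this corollary; it simply introduces it as ``an immediate corollary of Morita's theorem'' and moves on. Your route through the intermediate ring $\Pi = \End_\Lam(M \oplus N)$, showing that $\Pi e_M$ and $\Pi e_N$ are both progenerators and then reading off the bimodule from the corner $e_N \Pi e_M$, is a standard and clean way to make that immediacy precise. The bookkeeping you flag (identifying corners with endomorphism rings and tracking the opposite-ring convention in $\End_\Pi(\Pi e_M)^\op \cong e_M \Pi e_M$) is indeed the only thing requiring care, and you have it right. One trivial slip: you write ``$N$ as a summand of $M^s$'' where the statement reserves $s$ for $M \mid N^s$ and $t$ for $N \mid M^t$; this is purely notational and does not affect the argument.
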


Among the consequences of Theorem~\ref{thm:morita}, the most
immediately relevant to our purposes are those related to
commutativity. One can show~\cite[18.42]{Lam:secondbook} that if
$\Lam$ and $\Gam$ are Morita equivalent, then the centers $Z(\Lam)$ and
$Z(\Gam)$ are isomorphic.  It follows that two \emph{commutative}
rings $R$ and $S$ are Morita equivalent if and only if they are
isomorphic.  On the other hand, in general Morita equivalence is blind
to the commutative property.  Indeed, the free module $\Lam^n$ is a
progenerator for any $n \geq 1$, so that $\Lam$ and the matrix ring
$\End_\Lam(\Lam^n) \cong \Mat_n(\Lam)$ are Morita equivalent.  Even if
$\Lam$ is commutative, $\Mat_n(\Lam)$ will not be for $n\geq 2$.

The fact that commutativity is invisible to the module category is a
key motivation for categorical geometry.  It is interesting to observe
that this idea, and even the connection with endomorphism rings, is
already present in the Freyd--Mitchell Theorem~\cite[Theorem
7.34]{Freyd:AbelianCategories} classifying abelian categories as
categories of modules.  In detail, the Freyd--Mitchell Theorem says
that if $\catc$ is a category whose objects form a set (as opposed to
a proper class) which is closed under all set-indexed direct sums, and
$\catc$ has a progenerator $P$ such that $\Hom_\catc(P,-)$ commutes
with all set-indexed direct sums, then $\catc \simeq \Lmod \Lam$ for
$\Lam = \End_\catc(P)$.  Different choices of $P$ obviously give
potentially non-commutative rings $\Lam$, even if $\catc = \lmod R$
for some commutative ring $R$.

The property of $P$ referred to above will recur later:  say that
$P$ is \emph{compact} if $\Hom_\catc(P,-)$ commutes with all
(set-indexed) direct sums.

The following cousin of Morita equivalence will be essential later
on.  

\begin{prop}[Projectivization~{\cite[II.2.1]{AuslanderReitenSmalo}}]
  \label{prop:projectivization}
  Let $\Lam$ be a ring and $M$ a finitely generated $\Lam$-module
  which is a generator. Set $\Gam = \End_\Lam(M)^\op$.  Then the
  functor
  \[
  \Hom_\Lam(M,-) \colon \lmod \Lam \to \lmod \Gam
  \]
  is fully faithful, and restricts to an equivalence
  \[
  \Hom_\Lam(M,-) \colon \add M \to \add \Gam\,.
  \]
  In particular, the indecomposable projective $\Gam$-modules are
  precisely the modules of the form $\Hom_\Lam(M,N)$ for $N$ an
  indecomposable module in $\add M$. \qed
\end{prop}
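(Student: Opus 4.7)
The plan is to leverage the generator property of $M$ to reduce fully faithfulness on all of $\lmod \Lam$ to the tautological case of modules in $\add M$. The key starting observation is that $F(M) = \End_\Lam(M)$, which under the left $\Gam$-action $\gamma \cdot \phi := \phi \circ \gamma$ is canonically $\Gam$ acting on itself by left multiplication. Thus $F(M^n) \cong \Gam^n$, so $F$ sends $\add M$ into $\add \Gam$, and a direct computation (comparing $\Hom_\Lam(M^a, M^b)$ and $\Hom_\Gam(\Gam^a, \Gam^b)$ as matrix groups over $\End_\Lam(M)$) shows that $F$ is fully faithful on $\add M$.

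For fully faithfulness on general $X, Y \in \lmod \Lam$, I would construct a presentation $M^a \to M^b \to X \to 0$ whose image under $F$ remains exact. Choose generators $\phi_1, \ldots, \phi_b$ of $F(X)$ as a $\Gam$-module and let $p \colon M^b \to X$ be the map $(m_i) \mapsto \sum_i \phi_i(m_i)$. This $p$ is surjective, because any $\phi \in F(X)$ can be written as $\phi = \sum_i \phi_i \circ \gamma_i$ for some $\gamma_i \in \Gam$, so $\phi(M) \subseteq p(M^b)$ and the images together cover $X$ (as $M$ is a generator). The same expression gives an explicit lift of $\phi$ to $M \to M^b$, namely $m \mapsto (\gamma_i(m))_i$, so $F(p)$ is also surjective. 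Repeating the construction with $K := \Ker p$ in place of $X$ produces $M^a \to K$ with the analogous lifting property; combining left exactness of $F$ with these two properties forces the sequence $F(M^a) \to F(M^b) \to F(X) \to 0$ to be exact.

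Now apply $\Hom_\Lam(-, Y)$ to $M^a \to M^b \to X \to 0$ and $\Hom_\Gam(-, F(Y))$ to $F(M^a) \to F(M^b) \to F(X) \to 0$, obtaining a commutative diagram of left-exact sequences whose middle and right verticals are the bijections from the $\add M$ case. The five lemma then forces the left vertical $\Hom_\Lam(X, Y) \to \Hom_\Gam(F(X), F(Y))$ to be an isomorphism, completing the fully faithful claim. For the equivalence $F \colon \add M \to \add \Gam$, only density remains: any summand of $\Gam^n \cong F(M^n)$ is cut out by an idempotent $e \in \End_\Gam(\Gam^n)$ which, by fully faithfulness on $\add M$, is the image of an idempotent in $\End_\Lam(M^n)$; the corresponding summand of $M^n$ lies in $\add M$ and is sent by $F$ to the given object. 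The last statement on indecomposable projectives is then immediate, since indecomposable projective $\Gam$-modules are precisely the indecomposable summands of $\Gam = F(M)$. The main technical step is the construction of the $F$-exact presentation; everything else is routine functoriality.
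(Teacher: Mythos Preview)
The paper does not prove this proposition: it cites \cite[II.2.1]{AuslanderReitenSmalo} and closes with a \qed. So there is no argument in the paper to compare against; your write-up is being measured only against the cited reference and internal consistency.

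Your proof is the standard one and is correct. The reduction to $\add M$ via an $F$-exact presentation $M^a \to M^b \to X \to 0$, followed by the five-lemma comparison of $\Hom_\Lambda(-,Y)$ and $\Hom_\Gamma(F(-),FY)$, is exactly the approach in Auslander--Reiten--Smal{\o}. One small point you pass over: when you ``choose generators $\phi_1,\dots,\phi_b$ of $F(X)$ as a $\Gamma$-module'' you are assuming $\Hom_\Lambda(M,X)$ is finitely generated over $\Gamma$, and likewise for $F(K)$. This is already implicit in the statement (the functor is asserted to land in $\lmod\Gamma$), and it is automatic in the settings that matter here---Artin algebras, or algebras module-finite over a commutative Noetherian ring---since then $\Hom_\Lambda(M,X)$ is finitely generated over the center and hence over $\Gamma$. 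In full generality one can sidestep the issue entirely by showing directly that the counit $M\otimes_\Gamma \Hom_\Lambda(M,X)\to X$ is an isomorphism, using a ``dual basis'' $\pi_j\in\Hom_\Lambda(M,\Lambda)$, $u_j\in M$ with $\sum_j\pi_j(u_j)=1$ coming from $\Lambda\in\add M$; but for the purposes of this paper your argument is complete as written.
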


\section{(Quasi)coherent sheaves}\label{sect:sheaves}

On the geometric side, it has also long been standard operating
procedure to study a variety or scheme $X$ in a
representation-theoretic mode by investigating the sheaves on $X$,
particularly those of algebraic origin, the quasicoherent sheaves.

Let $X$ be a noetherian scheme.  Recall that an $\calox$-module is
\emph{quasicoherent} if it locally can be represented as the cokernel
of a homomorphism between direct sums of copies of $\calox$.  A
quasicoherent sheaf is \emph{coherent} if those direct sums can be
chosen to be finite.  Write $\Qcoh X$ for the category of
quasicoherent sheaves and $\coh X$ for that of coherent sheaves.
Since $X$ is assumed noetherian, these are both abelian categories
(for the quasicoherent sheaves, quasi-compact and quasi-separated is
enough~\cite{Bondal-VdB:2003}).

The category $\Qcoh X$ is a natural environment for homological
algebra over schemes; for example, computations of cohomology
naturally take place in $\Qcoh X$.  Now one may ask the same question
as in the previous section: What information, if any, is lost in
passage from the geometric object $X$ to the category $\coh X$ or
$\Qcoh X$?  In this case, the kernel is even smaller: we lose
essentially nothing.  Indeed, it is not hard to show that for
arbitrary complex varieties $X$ and $Y$, the categories $\coh X$ and
$\coh Y$ are equivalent if and only if $X$ and $Y$ are isomorphic.
The key idea is to associate to a coherent sheaf the closed subset of
$X$ on which it is supported; for example, the points of $X$
correspond to the simple objects in $\coh X$.  See for
example~\cite[Section 8]{Buan-Krause-Solberg}.  More generally,
Gabriel~\cite{Gabriel:1962} taught us how to associate to any abelian
category $\cata$ a \emph{geometric realization}: a topological space
$\Spec \cata$, together with a sheaf of rings $\calo_\cata$.  (In
fact, the sheaf $\calo_\cata$ is the endomorphism sheaf of the
identity functor on $\cata$, reminiscent of the Freyd--Mitchell
theorem mentioned in the previous section.  The space $\Spec \cata$ is
nothing but the set of isomorphism classes of indecomposable injective
objects of $\cata$, with a base for the topology given by $\Supp M =
\left\{\ [I] \ \middle|\ \text{there is a nonzero arrow } M \to I\
\right\}$ for noetherian objects $M$.)  In the case $\cata = \Qcoh X$
for a noetherian scheme $X$, the pair $(\Spec \cata,\ \calo_\cata)$ is
naturally isomorphic to $(X,\ \calox)$.  This construction has been
generalized to arbitrary schemes by Rosenberg~\cite{Rosenberg:1998,
  Rosenberg:recon}, giving the following theorem.

\begin{theorem}
  [Gabriel--Rosenberg Reconstruction]
  \label{thm:gab-ros}
  A scheme $X$ can be reconstructed up to isomorphism from the abelian
  category $\Qcoh X$. \qed
\end{theorem}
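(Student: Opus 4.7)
The plan is to verify that the construction $\cata \mapsto (\Spec\cata,\calo_\cata)$ sketched in the paragraph preceding the theorem, when applied to $\cata = \Qcoh X$, produces a ringed space canonically isomorphic to $(X,\calox)$, functorially in $X$. The construction has two pieces---an underlying space of isomorphism classes of indecomposable injectives with a support topology, and a structure sheaf built from endomorphisms of identity functors on suitable localizations---so the verification splits naturally into matching the points and topology first, and then matching the rings.

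For $\cata = \Qcoh X$ with $X$ noetherian, I would first invoke the Matlis-style classification of injectives: every indecomposable injective quasicoherent sheaf is isomorphic to the pushforward of the injective hull $E(k(x))$ of the residue field at a unique point $x \in X$, yielding a canonical bijection $|X| \leftrightarrow \Spec\cata$. Next, I would check that for a coherent sheaf $\F$ the categorical support $\Supp\F = \{[I] \mid \Hom_\cata(\F,I)\neq 0\}$ coincides, under this bijection, with the usual Zariski support of $\F$, so that the basic opens $X\setminus\Supp\F$ generate the Zariski topology and match Gabriel's base. Finally, sections of $\calox$ over a quasicompact open $U\subseteq X$ are identified with $\End(\id_{\Qcoh U})$ using that the full subcategory of sheaves supported away from $U$ is localizing with Serre quotient $\Qcoh U$; assembling these identifications yields an isomorphism of ringed spaces that is manifestly natural in $X$.

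The main obstacle is extending this to arbitrary---possibly non-noetherian or non-separated---schemes, where $\Qcoh X$ may lack enough indecomposable injectives to separate points. This is Rosenberg's contribution: he replaces indecomposable injectives by a purely categorical notion of spectrum built from the lattice of topologizing (or thick, or localizing) subcategories of $\cata$ together with a suitable primeness condition, and equips it with a topology and structure sheaf coming from Gabriel quotients. The substance of the proof is then to identify, for $\cata = \Qcoh X$, Rosenberg's primes with the points of $X$ via a correspondence between quasicompact opens of $X$ and certain localizing subcategories of $\Qcoh X$, and to show that this correspondence is an isomorphism of lattices carrying the structure sheaf to $\calox$. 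This is where the most delicate technical work resides, since the usual dictionary ``open subscheme $\leftrightarrow$ Serre quotient'' must be upgraded to a statement that distinguishes enough subcategories of $\Qcoh X$ to recover the scheme structure; it is essentially the content of Rosenberg's reconstruction papers, and I would follow that approach for the general case.
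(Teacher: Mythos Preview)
The paper does not prove this theorem. The statement ends with \qed, and the surrounding text makes clear that this is a survey citing a known result (with references to Gabriel and Rosenberg) rather than proving it; the only content the paper offers is the one-paragraph sketch of Gabriel's construction immediately before the statement. So there is no ``paper's own proof'' to compare against.

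Your outline is a faithful expansion of that sketch and of the standard Gabriel--Rosenberg argument: the noetherian case via the Matlis-type bijection between points and indecomposable injectives, the identification of categorical support with Zariski support, the recovery of the structure sheaf from centers of Serre quotients, and then Rosenberg's replacement of the injective spectrum by a spectrum of topologizing/localizing subcategories to handle the general case. This is the correct shape of the proof and matches what the paper gestures at. If anything, your account is more detailed than what the paper provides, which is appropriate since the paper is expository and simply quotes the result.
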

%

Theorem~\ref{thm:gab-ros} implies that there is no interesting Morita-type
theory for  (quasi)coherent sheaves.  This is not all
that surprising, given that Morita-equivalent commutative rings are
necessarily isomorphic.  The well-known equivalence between modules
over a ring $R$ and quasicoherent sheaves over the affine scheme
$\Spec R$ strongly suggests the same sort of uniqueness on the
geometric side as on the algebraic.

For \emph{projective} schemes, Serre's fundamental
construction~\cite{Serre:FAC} describes the quasicoherent sheaves on
$X$ in terms of the graded modules over the homogeneous coordinate
ring.  Explicitly, let $A$ be a finitely generated graded algebra over
a field, and set $X = \Proj A$, the associated projective scheme.
Let $\GrMod A$, resp.\ $\grmod A$, denote the category of graded,
resp.\ finitely generated graded, $A$-modules.  The graded modules
annihilated by $A_{\geq n}$ for $n \gg 0$ form a subcategory $\Tors A$, resp.\
$\tors A$, and 
\[
\Tails A = \GrMod A/\Tors A
\qquad\text{ and }\qquad
\tails A = \grmod A/\tors A
\]
are defined to be the quotient categories.  This means that two graded
modules $M$ and $N$ are isomorphic in $\Tails A$ if and only if
$M_{\geq n} \cong N_{\geq n}$ as graded modules for large enough $n$.

\begin{theorem}
  [Serre]\label{thm:serre-grmod}
  Let $A$ be a commutative graded algebra generated in degree one over
  $A_0 = k$, a field, and set $X = \Proj A$.   Then the
  functor $\Gam_* \colon \coh X \to \tails A$, defined by
  sending a coherent sheaf $\calf$ to the image in $\tails A$ of
  $\bigoplus_{n=-\infty}^\infty H^0(X,\calf(n))$, defines an
  equivalence of categories $\coh X \simeq \tails A$.  \qed
\end{theorem}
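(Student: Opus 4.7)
The plan is to construct an explicit quasi-inverse to $\Gam_*$ via sheafification, namely the functor $M \mapsto \tilde M$, where $\tilde M$ is the quasicoherent sheaf on $X = \Proj A$ whose sections on the basic open set $D_+(f)$ are the degree-zero part $M_{(f)}$ of the homogeneous localization. Since $A$ is generated by $A_1$, the opens $D_+(f)$ for $f \in A_1$ already cover $X$, and for $M \in \grmod A$ the sheaf $\tilde M$ is coherent. The first step is to check that $\widetilde{(-)}$ annihilates $\tors A$: if $A_{\geq n} M = 0$, then every homogeneous element of $M$ is killed by a power of each $f$ of positive degree, so $M_{(f)} = 0$ and $\tilde M = 0$. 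Hence the functor descends to $\tails A \to \coh X$, and it is this induced functor I will show is inverse to $\Gam_*$.

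Next I verify the composition $\coh X \xrightarrow{\Gam_*} \tails A \xrightarrow{\widetilde{(-)}} \coh X$ is naturally isomorphic to the identity. For $\calf \in \coh X$, there is a canonical comparison map $\widetilde{\Gam_*(\calf)} \to \calf$, and it suffices to check it is an isomorphism on each $D_+(f)$ with $f \in A_1$. On such an open, the left side is the degree-zero piece of the localization of $\bigoplus_n H^0(X, \calf(n))$ at $f$, and standard sheaf theory on $\Proj A$ (multiplication by $f$ identifies sections of $\calf(n)$ on $D_+(f)$ with sections of $\calf(n+1)$) identifies this with $\Gam(D_+(f), \calf)$. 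This is the easy direction.

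The main obstacle is the other composition $\tails A \to \coh X \to \tails A$. For $M \in \grmod A$ there is a natural graded map $\eta_M \colon M \to \Gam_*(\tilde M)$, and I need to show both its kernel and cokernel lie in $\tors A$, i.e.\ are annihilated by $A_{\geq n}$ for some $n$. For $\ker \eta_M$: an element $m \in M_d$ mapping to zero vanishes in every $M_{(f)}$, so is annihilated by a power of each $f \in A_1$; since $M$ is finitely generated and $A_1$ is a finite-dimensional $k$-vector space, a single $A_{\geq n}$ annihilates $\ker\eta_M$. The cokernel is the delicate point: it requires that for $n \gg 0$ the map $M_n \to H^0(X, \tilde M(n))$ be an isomorphism. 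This is the content of Serre's finiteness and vanishing theorem for coherent sheaves on a projective scheme, which provides that $\bigoplus_n H^0(X,\tilde M(n))$ is a finitely generated graded $A$-module and agrees with $M$ above some degree.

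Granting that theorem, both $\ker \eta_M$ and $\cok \eta_M$ are torsion, so $\eta_M$ becomes an isomorphism in $\tails A$, and naturality is routine. Combined with the previous step, this exhibits $\widetilde{(-)}$ as a two-sided quasi-inverse to $\Gam_*$, establishing the equivalence $\coh X \simeq \tails A$. The proof thus reduces to two ingredients: the sheaf-theoretic computation of sections on affine opens of $\Proj A$ (standard), and Serre's vanishing theorem (the real input), with everything else being formal bookkeeping in the quotient category $\tails A$.
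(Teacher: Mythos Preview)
The paper does not prove this theorem: it is stated as a classical result with a reference to Serre's original paper~\cite{Serre:FAC}, and the trailing \qed{} simply marks it as quoted without proof. Your sketch is the standard argument---construct the sheafification functor $M \mapsto \tilde M$ as quasi-inverse, check it kills $\tors A$, and verify the two triangle identities using Serre's finiteness/vanishing theorem as the essential analytic input---and it is correct. There is nothing to compare against in the paper itself.
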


Serre's theorem is the starting point for ``non-commutative projective
geometry,'' as we shall see in \S\ref{sect:non-existence} below.  From
the point of view of categorical geometry, it is the first instance of
a purely algebraic description of the (quasi)coherent sheaves on a
space, and thus opens the possibility of ``doing geometry'' with only
a category in hand. 

\section{Derived categories of modules}\label{sect:der-mods}

Originally introduced as technical tools for organizing homological
(or ``hyperhomological''~\cite{Verdier:thesis}) information, derived
categories have in the last 30 years been increasingly viewed as a
basic invariant of a ring or variety.  Passing from an abelian
category to an associated derived category not only tidies the
workspace by incorporating the non-exactness of various natural
functors directly into the notation, but in some cases it allows a
``truer description''~\cite{Bridgeland:2006} of the underlying algebra
or geometry than the abelian category does.  For example, there are
varieties with non-trivial derived auto-equivalences $\D^b(X) \simeq
\D^b(X)$ that do not arise from automorphisms; one might think of
these as additional symmetries that were invisible from the geometric
point of view.  Another example is Kontsevich's Homological Mirror
Symmetry conjecture~\cite{Kontsevich:1994}, which proposes an
equivalence of certain derived categories related to ``mirror pairs''
of Calabi-Yau manifolds.

Let us fix some notation.  Let $\cata$ be an abelian category.  The
\emph{homotopy category} $\K(\cata)$ has for objects the complexes
over $\cata$, and for morphisms homotopy-equivalence classes of chain
maps.  The \emph{derived category} $\D(\cata)$ is obtained by formally
inverting those morphisms in $\K(\cata)$ which induce isomorphisms on
cohomology, i.e.\ the quasi-isomorphisms.

We decorate $\K(\cata)$ and $\D(\cata)$ in various ways to denote full
subcategories.  For the moment I need only $\K^b(\cata)$, the full
subcategory composed of complexes $C$ having only finitely many
non-zero components, and $\D^b(\cata)$, the corresponding
\emph{bounded derived category} of $\cata$.

The homotopy category $\K(\cata)$, the derived category $\D(\cata)$
and their kin are no longer abelian categories, but they have a
\emph{triangulated structure}, consisting of a \emph{shift functor}
$(-)[1]$ shifting a complex one step against its differential and
changing the sign of that differential, and a collection of
\emph{distinguished triangles} taking the place occupied by the short
exact sequences in abelian categories. A functor between triangulated
categories is said to be a \emph{triangulated functor} if it preserves
distinguished triangles and intertwines the shift operators.

Homomorphisms $\phi\colon M \to N$ in $\D(A)$ are diagrams $M
\xleftarrow{\ f\ } P \xrightarrow{\ g\ } N$ of homotopy classes of
chain maps, where $f \colon P \to M$ is a quasi-isomorphism and we
think of $\phi$ as $f^{-1}g$.  Much more usefully,
\[
\Hom_{\D(A)}(M,N[i]) = \Ext_A^i(M,N)
\]
for all $i \in \ZZ$ and all $M$, $N$ in $\D(A)$.

Let us say that two rings $\Lam$ and $\Gam$ are \emph{derived
  equivalent} if there is an equivalence of triangulated categories
$\D^b(\Lmod \Lam) \to \D^b(\Lmod \Gam)$.

It is nearly obvious that a Morita equivalence between rings $\Lam$
and $\Gam$ gives rise to a derived equivalence $\D^b(\Lmod \Lam)
\simeq \D^b(\Lmod \Gam)$. (Any equivalence between abelian categories
preserves short exact sequences.)  In general, derived equivalence is
a much weaker notion.  It does, however, preserve some essential
structural information.  For example, if $\Lam$ and $\Gam$ are derived
equivalent, then their Grothendieck groups $K_0(\Lam)$ and $K_0(\Gam)$
are isomorphic~\cite[Prop. 9.3]{Rickard:1989}, as are the Hochschild
homology and cohomology groups~\cite{Rickard:1991} and the
cyclic cohomologies~\cite{Happel:1989}.  If $\Lam$ and $\Gam$ are
derived-equivalent finite-dimensional algebras over a field $k$, then
they have the same number of simple modules, and simultaneously have
finite global dimension~\cite{Happel:1987, Happel:1988}.  

Most importantly for this article, derived-equivalent rings have
isomorphic centers~\cite[Prop. 9.2]{Rickard:1989}.  In
particular, if $R$ and $S$ are commutative rings, then $\D^b(\Lmod R)
\simeq \D^b(\Lmod S)$ if and only if $R \cong S$.  Thus there is at
most one commutative ring in any derived-equivalence class, another
sign that one should look at non-commutative rings for non-trivial
derived equivalences.  

All these facts follow from Rickard's Morita theory of derived
equivalences, in which the progenerator of Theorem~\ref{thm:morita} is
replaced by a \emph{tilting object.}  Here is the main result of
Rickard's theory.

\begin{theorem}
  [Rickard~{\cite{Rickard:1989}}]
  \label{thm:rickard}
  Let $\Lam$ and $\Gam$ be rings. The following conditions are
  equivalent.
  \begin{enumerate}
  \item\label{item:rickard1} $\D^b(\Lmod \Lam)$ and $\D^b(\Lmod \Gam)$
    are equivalent as triangulated categories.
  \item\label{item:rickard2} $\K^b(\add \Lam)$ and $\K^b(\add \Gam)$
    are equivalent as triangulated categories.
  \item \label{item:rickard3}There is an object $T \in \K^b(\add
    \Lam)$ satisfying
    \begin{enumerate}[\quad(a)]
    \item $\Ext^i_{\Lam}(T,T) = 0$ for all $i > 0$, and
    \item $\add T$ generates $\K^b(\add \Lam)$ as a triangulated category,
    \end{enumerate}
    such that $\Gam \cong \End_\Lam(T)$. 
  \end{enumerate}
  If $\Lam$ and $\Gam$ are finite dimensional algebras over a field,
  then these are all equivalent to $\D^b(\lmod \Lam) \simeq \D^b(\lmod
  \Gam)$ \qed
\end{theorem}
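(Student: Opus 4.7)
My plan is to run the cycle (ii)\,$\Rightarrow$\,(iii)\,$\Rightarrow$\,(ii)\,$\Rightarrow$\,(i)\,$\Rightarrow$\,(ii), with the bridging between (i) and (ii) handled by an intrinsic characterization of $\K^b(\add \Lam)$ inside $\D(\Lmod \Lam)$. The direction (ii)\,$\Rightarrow$\,(iii) is routine transport of structure: given an equivalence $F \colon \K^b(\add \Gam) \xrightarrow{\,\sim\,} \K^b(\add \Lam)$, I set $T := F(\Gam)$. Then conditions (a) and (b) for $T$ follow from the analogous evident properties of $\Gam \in \K^b(\add \Gam)$ (no higher self-extensions, generates the homotopy category), and $\End_\Lam(T) = \End_{\K^b(\add \Gam)}(\Gam) = \Gam$.

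For (iii)\,$\Rightarrow$\,(ii), the natural candidate functor is $\R\Hom_\Lam(T,-) \colon \K^b(\add \Lam) \to \K^b(\add \Gam)$. On the subcategory $\add T$, the projectivization principle (Proposition~\ref{prop:projectivization}) identifies it with an equivalence $\add T \simeq \add \Gam$ matching $T$ with $\Gam$. Condition (a) provides the vanishing needed to extend this compatibly across cones of morphisms among shifts of objects of $\add T$ (positive self-extensions would obstruct lifting the componentwise action to homotopies), and condition (b) guarantees these cones eventually exhaust $\K^b(\add \Lam)$. Together these yield fullness, faithfulness, and essential surjectivity, hence an equivalence of triangulated categories.

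The main obstacle is the bridging step (i)\,$\Leftrightarrow$\,(ii), specifically the direction (ii)\,$\Rightarrow$\,(i). For (i)\,$\Rightarrow$\,(ii), I would characterize $\K^b(\add \Lam)$ as the subcategory of \emph{compact} (equivalently \emph{perfect}) objects of $\D(\Lmod \Lam)$ --- those $C$ for which $\Hom_{\D(\Lmod \Lam)}(C,-)$ preserves set-indexed coproducts --- which is intrinsic to the triangulated structure and hence preserved by any equivalence in (i). The converse is genuinely harder: lifting a homotopy-category equivalence up to an equivalence of bounded derived categories requires upgrading the one-sided tilting object $T$ of (iii) to a \emph{two-sided tilting complex}, that is, a complex of $\Lam$-$\Gam$-bimodules whose underlying left $\Lam$-module complex is $T$. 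The right $\Gam$-action exists a priori only up to homotopy via $\End_\Lam(T) = \Gam$; strictifying it to an honest bimodule action at the chain level requires a careful resolution by projective bimodules, and is the genuinely technical ingredient in Rickard's proof. Once $T$ is realized as an honest bimodule complex, the derived tensor $-\lotimes_\Gam T$, with quasi-inverse $\R\Hom_\Lam(T,-)$, furnishes the equivalence $\D^b(\Lmod \Gam) \simeq \D^b(\Lmod \Lam)$. The final sentence concerning finite-dimensional algebras follows because in that setting $\D^b(\lmod \Lam) \hookrightarrow \D^b(\Lmod \Lam)$ is fully faithful, and the tilting bimodule remains a bounded complex of finitely generated modules on both sides, so the equivalence restricts as claimed.
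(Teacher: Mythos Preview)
The paper does not give its own proof of this theorem: the statement ends with a \verb|\qed| and a citation to Rickard's original paper, which is the author's convention throughout this survey for results quoted without argument. So there is nothing in the paper to compare your proposal against.

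That said, your sketch is a reasonable outline of the standard argument, with one small slip worth noting. In your handling of (i)\,$\Rightarrow$\,(ii) you characterize $\K^b(\add \Lam)$ as the compact objects of $\D(\Lmod \Lam)$, but condition~(i) concerns $\D^b(\Lmod \Lam)$, not the full unbounded derived category; you need either to argue that $\D^b$ determines $\D$, or to give an intrinsic description of perfect complexes inside $\D^b(\Lmod\Lam)$ itself (e.g.\ as the objects $C$ with $\Hom(C,-)$ preserving those coproducts that exist in $\D^b$, or as the homologically finite objects). Also, historically Rickard's 1989 paper did \emph{not} proceed via a two-sided tilting complex for (iii)\,$\Rightarrow$\,(i); that strictification came in his 1991 paper. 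The original argument is more indirect, building the equivalence at the level of homotopy categories of complexes and then descending. Your outline conflates the two papers, though the end result is of course the same.
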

A complex $T$ as in condition~(\ref{item:rickard3}) is called a
\emph{tilting complex} for $\Lam$, and $\Gam$ is \emph{tilted from}
$\Lam$.  Tilting complexes appeared first in the form of \emph{tilting
  modules,} as part of Brenner and Butler's~\cite{Brenner-Butler:1980}
study of the reflection functors of Bern{\v{s}}te{\u\i}n,
Gel{$'$}fand, and Ponomarev~\cite{Bernstein-Gelfand-Ponomarev}.  (The
word was chosen to illustrate their effect on the vectors in a root
system, namely a change of basis that tilts the axes relative to the
positive roots.)  Their properties were generalized, formalized, and
investigated subsequently by Happel and
Ringel~\cite{Happel-Ringel:1982}, Bongartz~\cite{Bongartz:1981},
Cline--Parshall--Scott~\cite{Cline-Parshall-Scott:1986},
Miyashita~\cite{Miyashita:1986}, and others. Happel seems to have been
the first to realize~\cite{Happel:1988} that if $T$ is a $\Lam$-module
of finite projective dimension, having no higher self-extensions
$\Ext_\Lam^{>0} (T,T)=0$, and $\Lam$ has a finite co-resolution $0 \to
\Lam \to T_1 \to \cdots \to T_r \to 0$ with each $T_i \in \add T$,
then the functor $\Hom_\Lam(T,-) \colon \D^b(\Lmod \Lam) \to
\D^b(\Lmod {\End_\Lam(T)})$ is an equivalence.  In their earliest
incarnation, tilting modules were defined to have projective dimension
one, but this more general version has become standard.

Morita equivalence is a special case of tilting.  Indeed, any
progenerator is a tilting module.  However, see \S\ref{sect:beilinson}
below for a pair of derived-equivalent algebras which are not Morita
equivalent.

\section{Derived categories of sheaves}\label{sect:der-sheaves}

The first real triumphs of the derived category came in the geometric
arena: Grothendieck and coauthors' construction of a global
intersection theory and the theorem of Riemann--Roch~\cite{SGA6} are
the standard examples~\cite{Caldararu:2005}.  The idea of the
(bounded) derived category of a scheme as a geometric invariant first
emerged around 1980 in the work of Be\u\i linson, Mukai, and others.
I will describe some of Be\u\i linson's observations in the next \S{}.
Mukai found the first example of non-isomorphic varieties which are
derived equivalent~\cite{Mukai:1981}; he showed that an abelian
variety $X$ and its dual $X^\svee$ always have equivalent derived
categories of quasicoherent sheaves.  His construction is modeled on a
Fourier transform and is now called a Fourier-Mukai
transform~\cite{Huybrechts, Hille-VdB:2007}.  It would draw us too far
afield from our subject to discuss Fourier-Mukai transforms in any
depth here.  Several examples will appear later in the text: see the
end of \S\ref{sect:beilinson} and
Theorems~\ref{thm:bridgeland},~\ref{thm:kap-vass}, and~\ref{thm:BKR}.
It is an important result of Orlov~\cite{Orlov:1997} that \emph{any}
equivalence $\D^b(\coh X) \to \D^b(\coh Y)$, for $X$ and $Y$ connected
smooth projective varieties, is given by a Fourier-Mukai transform.

The existence of non-trivial derived equivalences for categories of
sheaves means that one cannot hope for a general reconstruction
theorem, even for smooth varieties.  However, under an assumption on
the \emph{canonical sheaf} $\omega_X$, the variety $X$ can be
reconstructed from its derived category.  Recall that for $X$ a smooth
complex variety over $\CC$, $\omega_X = \bigwedge^{\dim X}
\Omega_{X/\CC}$ is the sheaf of top differential forms on $X$ where
$\Omega_{X/\CC}$ is the cotangent bundle, a.k.a.\ the sheaf of
$1$-forms on $X$.  It is an invertible sheaf.  Recall
further~\cite[II.7]{Hartshorne} that an invertible sheaf $\call$ is
\emph{ample} if for every coherent sheaf $\calf$, $\calf \otimes
\call^n$ is generated by global sections for $n \gg 0$.

\begin{theorem}
  [Bondal--Orlov~{\cite{Bondal-Orlov:2001}}]
  \label{thm:BOrecon}
  Let $X$ and $Y$ be smooth connected projective varieties over $\CC$.
  Assume that either the canonical sheaf $\omega_X$ or the
  anticanonical sheaf $\omega_X^{-1}$ is ample.  If $\D^b(\coh X)
  \simeq \D^b(\coh Y)$, then $X$ is isomorphic to $Y$. \qed
\end{theorem}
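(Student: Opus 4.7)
The plan is to reconstruct $X$ from the derived category $\D^b(\coh X)$ by producing an intrinsic description, in purely categorical terms, of the structure sheaves of closed points and of the line bundles, and then assembling them into the (anti)canonical graded ring. The ampleness hypothesis on $\omega_X^{\pm 1}$ is what guarantees that the abstract combinatorics recover $X$ on the nose via Proj.

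First I would recall that any equivalence $F\colon \D^b(\coh X) \to \D^b(\coh Y)$ of triangulated categories must intertwine the Serre functors, since the Serre functor is uniquely determined by the categorical structure alone. For a smooth projective variety of dimension $n$, the Serre functor is $S_X(-) = -\otimes \omega_X[n]$. Thus already one sees that $\dim X = \dim Y$ and, more importantly, that the auto-equivalences $-\otimes \omega_X^k$ on $\D^b(\coh X)$ correspond under $F$ to $-\otimes \omega_Y^k$ on $\D^b(\coh Y)$.

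Next I would characterize intrinsically the \emph{point objects}: an object $P$ is point-like if $S_X(P) \cong P[\dim X]$, if $\Hom(P,P[i])=0$ for $i<0$, and if $\Hom(P,P)$ is a field. Under the hypothesis that $\omega_X$ or $\omega_X^{-1}$ is ample, I would prove that the point-like objects are precisely the shifts of skyscraper sheaves $k(x)$ for closed points $x\in X$. This is the crucial step and the main obstacle: the ``easy'' direction, that $k(x)[i]$ is point-like, is straightforward from $\omega_X \otimes k(x)\cong k(x)$; the ``hard'' direction requires showing that a point-like object is concentrated in a single degree and is supported at a single closed point, and here ampleness is used to force, via repeated twists by $\omega_X^{\pm 1}$ and boundedness of cohomology, that the support is a point. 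Similarly, I would define \emph{invertible objects} as those $L$ for which, for every point object $P_x$, $\Hom(L, P_x[i])$ is one-dimensional for exactly one value of $i$ and vanishes otherwise, and show these are exactly the shifts of line bundles on $X$.

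Having these characterizations, I would fix an invertible object $L_0$ and form the graded ring
\[
A_X \;=\; \bigoplus_{k\geq 0} \Hom_{\D^b(\coh X)}\bigl(L_0,\; S_X^{-k}(L_0)[-k\dim X]\bigr) \;\cong\; \bigoplus_{k\geq 0} H^0(X, L_0^{-1}\otimes L_0 \otimes \omega_X^{-k}) \;=\; \bigoplus_{k\geq 0} H^0(X, \omega_X^{-k}),
\]
or its analogue using $\omega_X$ in the ample case. This ring is visibly a categorical invariant, so $A_X \cong A_Y$ as graded rings. Because $\omega_X^{\pm 1}$ is ample, the standard Serre construction gives $X \cong \Proj A_X$, and similarly for $Y$, whence $X\cong Y$. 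The delicate bookkeeping lies in step three (pinning down the point objects under ampleness); once that is in hand, the remaining reconstruction of the graded (anti)canonical ring and the application of Theorem~\ref{thm:serre-grmod}-style Proj recovery are essentially formal.
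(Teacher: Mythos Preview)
The paper does not prove this theorem; it is quoted from Bondal--Orlov with no argument given. Your outline follows the standard route of their paper, and the identification of point objects, invertible objects, and the Serre functor is correct. There is, however, a genuine gap at the final step. You write ``$X \cong \Proj A_X$, and similarly for $Y$'', but the ``similarly'' is unjustified: you have only assumed $\omega_X^{\pm 1}$ ample, not $\omega_Y^{\pm 1}$, and your characterizations of point and invertible objects were carried out only on $X$, where ampleness was available. Without knowing that the invertible objects in $\D^b(\coh Y)$ are shifts of line bundles, the categorically defined graded ring computed on the $Y$ side need not equal $\bigoplus_k H^0(Y,\omega_Y^{-k})$; and even granting that, one cannot conclude $Y \cong \Proj A_Y$ without $\omega_Y^{\pm 1}$ ample.

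The missing ingredient is a transference step. The skyscrapers $k(x)$ form a spanning class in $\D^b(\coh X)$, hence their images $F(k(x))$ span $\D^b(\coh Y)$. One shows that any two point objects are either shifts of one another or have no morphisms between them in any degree; applied on $Y$ to the pair $F(k(x))$ and a genuine skyscraper $k(y)$, this forces every $k(y)$ to be some $F(k(x))[m]$, and after a single global shift all point objects on $Y$ are skyscrapers. An analogous argument then handles invertible objects on $Y$, after which $\omega_Y^{\pm 1}$ is seen to be ample and your $\Proj$ argument goes through. (A small aside: in your formula for $A_X$ the shift should be $[+k\dim X]$, since $S_X^{-k}(L_0) = L_0 \otimes \omega_X^{-k}[-k\dim X]$.)
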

Note that the result is definitely false for abelian varieties by the
result of Mukai mentioned above; in this case $\omega_X\cong \calox$
is trivial, so not ample.  Calabi-Yau varieties are another
example where $\omega_X\cong \calox$ is not ample, and the conclusion
does not hold.

One consequence of this theorem is that, under the same hypotheses,
the group of \emph{auto-equivalences} $\D^b(\coh X)
\xto{\ \simeq\ } \D^b(\coh X)$ of $X$ is generated by the obvious
suspects: $\Aut(X)$, the shift $(-)[1]$, and the tensor products
$-\otimes_\calox \call$ with fixed line bundles $\call$.

Triangulated categories arising in nature like $\D^b(\coh X)$
generally have a lot of additional structure: there is a tensor
(symmetric monoidal) structure induced from the derived tensor
product, among other things.  Taking this into account gives stronger
results.  To give an example, recall that a \emph{perfect complex} on
a scheme $X$ is one which locally is isomorphic in the derived
category to a bounded complex of locally free sheaves of finite rank.
Perfect complexes form a subcategory $\D^\perf(\Qcoh X)$.  As long as
$X$ is quasi-compact and separated (noetherian is enough),
$\D^\perf(\Qcoh X)$ contains precisely the \emph{compact} objects of
$\D(\Qcoh X)$, that is, the complexes $C$ such that $\Hom_\calox(C,-)$
commutes with set-indexed direct
sums. See~\cite[3.11]{Bondal-VdB:2003}.  Balmer~\cite{Balmer:2002,
  Balmer:2005} shows that a noetherian scheme $X$ can be reconstructed
up to isomorphism from $\D^\perf(\Qcoh X)$, as long as the natural
tensor structure is taken into account, and that two reduced
noetherian schemes $X$ and $X'$ are isomorphic if and only if
$\D^\perf(\Qcoh X)$ and $\D^\perf(\Qcoh X')$ are equivalent as tensor
triangulated categories.


The theory of tilting sketched in the previous \S{} has a geometric
incarnation as well, which signals the first appearance of
non-commutative rings on the geometric side of our story.

\begin{definition}
  \label{def:tilt-obj}
  Let $X$ be a noetherian scheme and $T$ an object of $\D(\Qcoh X)$.
  Say that $T$ is a \emph{tilting object} if it is compact, is a
  classical generator for $\D^\perf(\Qcoh X)$, and has no non-trivial
  self-extensions.  Explicitly, this is to say:
  \begin{enumerate}
  \item \label{item:compact} $T$ is a perfect complex;
  \item \label{item:gen} The smallest triangulated subcategory of
    $\D(\Qcoh X)$ containing $T$ and closed under direct summands is
    $\D^\perf(\Qcoh X)$; and
  \item \label{item:ext} $\Ext^i_\calox(T,T)=0$ for $i> 0$.
  \end{enumerate}
  If $T$ is quasi-isomorphic to a complex consisting of a locally free
  sheaf in a single degree, it is sometimes called a \emph{tilting
    bundle}.
\end{definition}

The generating condition~(\ref{item:gen}) is sometimes replaced by the
requirement that $T$ \emph{generates} $\D(\Qcoh X)$, i.e.\ that if an
object $N$ in $\D(\Qcoh X)$ satisfies $\Ext_\calox^i(T,N)=0$ for all
$i \in \ZZ$, then $N=0$.  If an object $T$ classically generates
$\D^\perf(\Qcoh X)$ as in the definition, then it 
generates $\D(\Qcoh X)$; the converse holds in the presence of the
assumption~(\ref{item:compact}) that $T$ is compact.  This is a
theorem due to Ravenel and Neeman~\cite[Theorem
2.1.2]{Bondal-VdB:2003}.

A class of schemes particularly well-suited for geometric tilting
theory consists of those which are projective over a scheme $Z$, which
in turn is affine of finite type over an algebraically closed field
$k$.  This generality allows a wide range of interesting examples, but
also ensures, by~\cite[Th\'eor\`eme 2.4.1(i)]{EGAIII.1}, that if $T$
is a tilting object on $X$ then the endomorphism ring $\Lam
= \End_\calox(T)$ is a \emph{finitely generated} algebra over the
field $k$.  In particular, $\Lam$ is finitely generated as a module
over its center.  

The next result is fundamental for everything that follows.  It has
origins in the work of Be\u\i linson presented in the next \S{}, with
further refinements in~\cite{Bondal:1989, Baer:1988, Bondal-VdB:2003}. 

\begin{theorem}
  [Geometric Tilting Theory~{\cite[7.6]{Hille-VdB:2007}}]
  \label{thm:geomtilt}
  Let $X$ be a scheme, projective over a finite-type affine scheme
  over an algebraically closed field $k$.  Let $T$ be a tilting object
  in $\D(\Qcoh X)$, and set $\Lam = \End_\calox(T)$.  Then
  \begin{enumerate}
  \item $\rHom_\calox(T,-)$ induces an equivalence of triangulated
    categories between $\D(\Qcoh X)$ and $\D(\Lmod \Lam)$, with
    inverse $-\lotimes_\Lam T$.
  \item If $T$ is in $\D^b(\coh X)$, then this equivalence restricts
    to give an equivalence between $\D^b(\coh X)$ and $\D^b(\lmod
    \Lam)$.
  \item If $X$ is smooth, then $\Lam$ has finite global dimension.\qed
  \end{enumerate} 
\end{theorem}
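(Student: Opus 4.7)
The plan is to prove (1) by a Rickard-style devissage using that $T$ is a compact generator and the Ext-vanishing hypothesis gives $\rHom_\calox(T,T) \simeq \Lam$, and then to deduce (2) and (3) by tracking which finiteness conditions are preserved across the resulting equivalence.

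For (1), I would set up the adjoint pair $G = -\lotimes_\Lam T \dashv F = \rHom_\calox(T,-)$, with counit $\epsilon\colon GF \to \id$ on $\D(\Qcoh X)$ and unit $\eta\colon \id \to FG$ on $\D(\Lmod \Lam)$. Hypothesis (iii) in the definition of a tilting object says that $F(T)$ has cohomology $\Ext^*_\calox(T,T)$ concentrated in degree zero and equal to $\Lam$ there, so $F(T) \simeq \Lam$, whence $\epsilon_T$ and $\eta_\Lam$ are both isomorphisms. The core step is devissage: the full subcategory of $\D(\Qcoh X)$ on which $\epsilon$ is a quasi-isomorphism is triangulated, and is closed under arbitrary set-indexed coproducts because compactness of $T$ forces $F$ to commute with coproducts. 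Since this subcategory contains $T$, and $T$ classically generates $\D^\perf(\Qcoh X)$ and therefore---by the Ravenel--Neeman theorem invoked above---generates $\D(\Qcoh X)$, it must equal all of $\D(\Qcoh X)$. The symmetric argument on the algebraic side, using that $\Lam$ is a compact generator of $\D(\Lmod \Lam)$, shows $\eta$ is an isomorphism everywhere, yielding the quasi-inverse equivalences.

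For (2), I would check that $F$ carries $\D^b(\coh X)$ into $\D^b(\lmod \Lam)$ (and similarly for $G$). Bounded cohomological amplitude of $F$ is immediate from $T$ being perfect. Finite generation of each $H^i F(\calf)$ as a $\Lam$-module is where the hypothesis that $X$ is projective over a finite-type affine base $Z = \Spec R$ enters: by Serre finiteness, $\R\Gamma(X,-)$ sends coherent sheaves to bounded complexes of finitely generated $R$-modules, and coherence of $T$ together with projectivity makes $\Lam$ itself a finitely generated $R$-algebra, so the cohomology modules---a priori $R$-finite---are in particular $\Lam$-finite. The reverse inclusion for $G$ follows from perfectness of $T$.

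For (3), smoothness of $X$ together with finite-typeness over $k$ forces each local ring $\calox_{,x}$ to be regular of Krull dimension at most $d = \dim X$, so by Auslander--Buchsbaum--Serre $\gldim \calox_{,x} \leq d$; combined with a local-to-global spectral sequence and the bounded cohomological dimension of the projective morphism $X \to Z$, this yields a uniform integer $N$ with $\Ext^i_\calox(\calf,\calg) = 0$ for all $i > N$ and all coherent $\calf, \calg$. Transporting this through the bounded equivalence of (2) gives $\Ext^i_\Lam(M,N) = 0$ for all $i > N$ and all $M, N \in \lmod \Lam$, and a standard syzygy argument upgrades this uniform pointwise vanishing to $\gldim \Lam \leq N$. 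I expect the main obstacle to be in step (2), specifically the need to ensure $F$ genuinely lands in \emph{finitely generated} $\Lam$-modules, since this is the point at which the geometric projectivity hypothesis couples most delicately with the noncommutative algebra; the devissage in (1) and the homological input in (3) are both relatively formal once that finiteness is in hand.
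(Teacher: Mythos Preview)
The paper does not supply its own proof of this theorem: it is stated with a terminal \texttt{\textbackslash qed} and attributed to Hille--Van den Bergh~[HVdB07, 7.6], so there is no in-paper argument to compare against. Your sketch is the standard one for results of this type (compact generator plus Ext-vanishing yields a derived equivalence via the adjoint pair $(-\lotimes_\Lam T,\ \rHom_\calox(T,-))$ and a Bousfield-localization/devissage argument), and is essentially how the cited reference and its antecedents (Bondal, Baer, Bondal--Van den Bergh, Keller) proceed.

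A couple of places where your outline could be tightened if you were to write it out in full. In part (1), the statement that ``the subcategory on which $\epsilon$ is an isomorphism is closed under coproducts'' needs both that $F$ preserves coproducts (compactness of $T$) and that $G$ preserves coproducts (automatic for a left adjoint); you only mention the first. More substantively, to conclude that this localizing subcategory is all of $\D(\Qcoh X)$ you need not just that $T$ generates, but that $\D(\Qcoh X)$ is compactly generated so that a localizing subcategory containing a set of compact generators is everything; this holds under the hypotheses but deserves a sentence. In part (2), your argument that $G$ lands in $\D^b(\coh X)$ is a bit thin: you say ``perfectness of $T$'' suffices, but you also need that $T$ has coherent cohomology (which is part of the hypothesis ``$T \in \D^b(\coh X)$'') and that tensoring a bounded complex of finitely generated $\Lam$-modules with such a $T$ stays bounded-coherent. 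None of this is wrong, but the asymmetry between the care you give $F$ and $G$ is noticeable. Part (3) is fine as a sketch; the only thing to watch is that the ``standard syzygy argument'' you invoke works for noetherian $\Lam$, which is guaranteed here since $\Lam$ is module-finite over the noetherian ring $R$.
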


It is not at all clear from this result when tilting objects exist,
though it does impose some necessary conditions on $X$.  For example,
assume that in addition $X$ is projective over $k$ and $T$ is a
tilting object in $\D(\Qcoh X)$.  Then $\Lam = \End_\calox(T)$ is
a finite-dimensional algebra over $k$.  The Grothendieck group
$K_0(\Lam)$ is thus a free abelian group of finite rank, equal to the
number of simple $\Lam$-modules.  This implies that $K_0(X)$ is free
abelian as well.  Thus any torsion in $K_0$ rules out the existence of
a tilting object.


\section{Example: Tilting on projective space}\label{sect:beilinson}

In this \S{} I illustrate Theorem~\ref{thm:geomtilt} via Be\u\i
linson's tilting description of the derived category of projective
space.  The techniques have been refined and are now standard; they
have been used, most notably by Kapranov, to construct explicit
descriptions of the derived category of coherent sheaves on several
classes of varieties.  For example, there are tilting bundles on
smooth projective quadrics~\cite{Kapranov:1986}, on
Grassmannians~\cite{Kapranov:1984,
  Buchweitz-Leuschke-VandenBergh:tiltGrass}, on flag
manifolds~\cite{Kapranov:1988}, on various toric
varieties~\cite{King:tilting, Hille-Perling:2006, Hille-Perling:2008},
and on weighted projective spaces~\cite{Geigle-Lenzing:1987,Baer:1988}.
Here we stick to projective space.

Let $k$ be a field, $V$ a $k$-vector space of dimension $n \geq 2$,
and $\PP = \PP^{n-1} = \PP(V)$ the projective space on $V$.  We
consider two families of $n$ locally free sheaves on $\PP$.  First let 
\[
\cale_1 = \left\{\calo, \calo(-1), \dots, \calo(-n+1)\right\}
\]
where $\calo = \calop$ is the structure sheaf.  Also let $\Omega =
\Omega_\PP$ be the cotangent sheaf, so that $\Omega^i = \Wedge^i
\Omega$ is the $\calo$-module of differential $i$-forms on $\PP$, and
set
\[
\cale_2 = \left\{\Omega^0(1) = \calo(1), \Omega^1(2), \dots,
  \Omega^{n-1}(n)\right\}\,.
\]
Let $T_1$ and $T_2$ be the corresponding direct sums, 
\[
T_1 = \bigoplus_{a=0}^{n-1} \calo(-a)
\qquad\text{and}\qquad
T_2 = \bigoplus_{a=1}^{n} \Omega^{a-1}(a)\,.
\]
The constituent sheaves of $\cale_1$ and $\cale_2$ are related by the
tautological Koszul complex on $\PP$.  Indeed, the Euler derivation $e
\colon V \otimes_k \calo(-1) \to \calo$, which corresponds to the
identity on $V$ under $\Hom_\calo(V\otimes_k \calo(-1),\calo) \cong
\Hom_\calo(V\otimes_k \calo,\calo(1)) \cong \Hom_k(V,V)$, gives rise
to a complex
%
\begin{equation}\label{eq:Omegares}
0 \to \Wedge^n V \otimes_k \calo(-n) \to \cdots \to 
\Wedge^1 V \otimes_k \calo(-1) \to \calo \to 0
\end{equation}
on $\PP$.  In fact it is acyclic~\cite[Ex. 17.20]{Eisenbud:book}, and
the cokernels are exactly the sheaves $\Omega^i$, which decompose the
Koszul complex into short exact sequences
\[
0 \to \Omega^a \to \Wedge^a V \otimes_k \calo(-a) \to \Omega^{a-1} \to
0\,.
\]
Together with the well-known calculation of the cohomologies of the
sheaves $\calo(-a)$~\cite[III.5.1]{Hartshorne}, the identification
$\cHom_\calo(\calo(-a),\calo(-b))=\calo(a-b)$, and the fact that
$\Ext_\calo^i(-,-) = H^i(\cHom_\calo(-,-))$ on vector bundles, this
produces the following data.
(See~\cite{Buchweitz-Leuschke-VandenBergh:2010} for a jazzed-up
version which holds over any base ring $k$.)

\begin{lemma}
  \label{lem:beilinson}
  Keep the notation established so far in this section.
  \begin{enumerate}
  \item We have $\Ext_\calo^i(\calo(-a),\calo(-b)) = 0$ for all $i>0$,
    and 
    \[
    \Hom_\calo(\calo(-a),\calo(-b)) \cong \Sym_{a-b}(V)
    \]
    for $0 \leq a, b \leq n-1$. 

  \item We have $\Ext_\calo^i(\Omega^{a-1}(a),\Omega^{b-1}(b)) = 0$
    for all $i>0$, and
    \[
    \Hom_\calo(\Omega^{a-1}(a),\Omega^{b-1}(b)) \cong
    \Wedge^{a-b}(V^*)
    \]
    for $1 \leq a, b \leq n$, where $V^*$ is the dual of
    $V$.  
    \qed
  \end{enumerate}
\end{lemma}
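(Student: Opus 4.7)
The plan is to compute both parts by combining Serre's calculation of the cohomology of line bundles on $\PP^{n-1}$ with the Koszul resolution~(\ref{eq:Omegares}).

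For Part~(i), since $\calo(-a)$ and $\calo(-b)$ are line bundles, $\cExt^i_\calo(\calo(-a),\calo(-b)) = 0$ for $i>0$, and the local-to-global Ext spectral sequence collapses to
\[
\Ext^i_\calo(\calo(-a),\calo(-b)) = H^i(\PP, \cHom_\calo(\calo(-a),\calo(-b))) = H^i(\PP, \calo(a-b)).
\]
For $0 \le a,b \le n-1$ the degree $d = a-b$ lies in $[-(n-1),n-1] \subset (-n,n)$. On this range Serre's classical computation gives $H^0(\calo(d)) \cong \Sym_d(V)$ (zero if $d<0$), $H^{n-1}(\calo(d))=0$ since $d>-n$, and all intermediate cohomology vanishes. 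This yields both the $\Hom$-identification and the vanishing of higher Ext.

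For Part~(ii), I would reduce to Part~(i) using (\ref{eq:Omegares}). Twisting that Koszul complex by $\calo(a)$ produces an exact sequence
\[
0 \to \Wedge^n V \otimes \calo(a-n) \to \cdots \to \Wedge^{a+1} V \otimes \calo(-1) \to \Wedge^a V \otimes \calo \to \Omega^{a-1}(a) \to 0,
\]
a locally free resolution $F^{\bullet} \to \Omega^{a-1}(a)$ of length $n-a$ whose terms are direct sums of line bundles $\calo(-j)$ with $0 \le j \le n-a \le n-1$. Resolving $\Omega^{b-1}(b)$ by the analogous complex $G^{\bullet}$ and forming the double complex $\Hom_\calo(F^{\bullet},G^{\bullet})$ produces entries that, by Part~(i), are direct sums of $\Sym$-terms with no higher Ext contributions. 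Thus $\Ext^i_\calo(\Omega^{a-1}(a),\Omega^{b-1}(b))$ is computed by the total cohomology of this double complex.

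The crux---and the anticipated main obstacle---is the combinatorial collapse of this double complex to $\Wedge^{a-b}(V^*)$ concentrated in total degree zero, with all off-diagonal cohomology vanishing. This amounts to a Koszul-style identity in which the alternating contractions along the Euler differentials cancel everything except a single surviving class, namely $\Wedge^{a-b}(V^*)$. I would either carry this out directly by identifying the double complex with an explicit Koszul complex on $V$ in the appropriate bidegree, or appeal to the more general computation (over arbitrary base rings) in~\cite{Buchweitz-Leuschke-VandenBergh:2010}.
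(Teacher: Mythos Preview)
Your proposal is correct and follows the same approach the paper indicates: the paper gives no detailed proof, but immediately before the lemma lists precisely your ingredients---the Koszul complex~\eqref{eq:Omegares} and its splitting into short exact sequences, Serre's computation of $H^i(\PP,\calo(d))$, and $\Ext^i=H^i(\cHom)$ for vector bundles---and then marks the lemma as done, citing~\cite{Buchweitz-Leuschke-VandenBergh:2010} for the combinatorics in arbitrary characteristic. Your Part~(i) is exactly this, and your Part~(ii) makes the same reduction explicit via the double complex, correctly identifying the Koszul collapse as the only nontrivial step and deferring it to the same reference.
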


The lemma in particular implies that the endomorphism rings of $T_1$
and $T_2$, 
\[
\Lam_1 = \End_\calo(\bigoplus_{a=0}^{n-1} \calo(-a)) \cong
\bigoplus_{a,b=0}^{n-1}\Sym_{a-b}(V)
\]
and
\[
\Lam_2 = \End_\calo(\bigoplus_{a=1}^{n} \Omega^{a-1}(a)) \cong
\bigoplus_{a,b=0}^{n-1} \Wedge^{a-b}(V^*)
\]
are ``spread out'' versions of the truncated symmetric and exterior
algebras, respectively.  This can be made more precise by viewing
$\Lam_1$ and $\Lam_2$ as \emph{quiver algebras.}  Consider a quiver on
$n$ vertices labeled, say, $0, 1, \dots, n-1$, and having $n$ arrows
from each vertex to its successor, corresponding to a basis of $V$,
resp.\ of $V^*$.  Introduce quadratic relations $v_iv_j =v_j v_i$
corresponding to the kernel of the natural map $V \otimes_k V \to
\Sym_2(V)$, respectively $v_iv_j =- v_j v_i$ corresponding to the
kernel of $V^* \otimes_k V^* \to \Wedge^2(V^*)$.  The resulting path
algebras with relations are isomorphic to $\Lam_1$ and $\Lam_2$,
respectively. In~\cite{Buchweitz-Leuschke-VandenBergh:2010} we call
these ``quiverized'' symmetric and exterior algebras.


I have not yet proven that $\Lam_1$ and $\Lam_2$ are derived
equivalent to $\PP$.  For this, it remains to show that the
collections $\cale_1$ and $\cale_2$ generate the derived category
$\D^b(\coh \PP)$.  This is accomplished via Be\u\i linson's
``resolution of the diagonal'' argument.  Let $\Delta \subset \PP
\times \PP$ denote the diagonal, and $p_1, p_2 \colon \PP \times \PP
\to \PP$ the projections onto the factors.  For sheaves $\calf$ and
$\calg$ on $\PP$,  set
\[
\calf \boxtimes \calg = p_1^*\calf \otimes_{\PP \times \PP} p_2^*
\calg\,,
\]
a sheaf on $\PP \times \PP$. 
One can show that the structure sheaf of the diagonal $\calo_{\Delta}$
is resolved over $\calo_{\PP \times \PP}$ by a Koszul-type resolution
\[
0 \to \calo(-n) \boxtimes \Omega^n(n) \to \cdots \to \calo(-1)
\boxtimes \Omega^1(1) \to \calo_{\PP \times \PP} \to \calo_\Delta \to0\,.
\]
In particular, $\calo_\Delta$ is in the triangulated subcategory of
$\D^b(\coh(\PP \times \PP))$ generated by sheaves of the form
$\calo(-i) \boxtimes Y$ for $Y$ in $\D^b(\coh \PP)$.  The same goes
for any object of the form $\calo_\Delta \boxtimes \mathbf{L} p_1^* X$
with $X$ in $\D^b(\coh \PP)$ as well. Push down now by $p_2$ and use
the projection formula to see that $X = \R {p_2}_* (\calo_\Delta
\lotimes \mathbf{L} p_1^* X)$ belongs to the triangulated subcategory
of $\D^b(\coh \PP)$ generated by $\calo(-i) \otimes \R {p_2}_* p_1^*
Y$.  The factor $\R {p_2}_* p_1^* Y$ is represented by the complex of
$k$-vector spaces with zero differential $\R\Gam(Y)$, and hence
$\cale_1 = \left\lbrace \calo, \calo(-1), \dots,
  \calo(-n+1)\right\rbrace$ generates $\D^b(\coh \PP)$.  On the other
hand, reversing the roles of $p_1$ and $p_2$ gives the result for
$\cale_2 = \left\lbrace \calo(1), \Omega^1(2), \dots,
  \Omega^{n-1}(n)\right\}$ as well.

This discussion proves the following theorem.

\begin{theorem}
  [Be\u\i linson]
  \label{thm:beilinson}
  Let $k$ be a field, $V$ a vector space of dimension $n \geq 2$ over
  $k$, and $\PP = \PP(V)$.  The vector bundles 
  \[
  T_1 = \bigoplus_{a=0}^{n-1} \calop(-a) 
  \qquad\text{and}\qquad
  T_2 = \bigoplus_{a=1}^{n} \Omega_{\PP}^{a-1}(a)
  \]
  are tilting bundles on $\PP$.  In particular, there are  equivalences
  of triangulated categories 
  \[
  \D^b(\lmod {\Lam_1}) \simeq \D^b(\coh \PP) \simeq \D^b(\lmod {\Lam_2})
  \]
  defined by $\rHom_\calop(T_i,-)$ for $i=1,2$, where
  $\Lam_i = \End_\calop(T_i)$.  \qed
\end{theorem}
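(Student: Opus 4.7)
The plan is to verify the three conditions of Definition~\ref{def:tilt-obj} for each of $T_1$ and $T_2$, and then apply Theorem~\ref{thm:geomtilt} (geometric tilting theory) to extract the two derived equivalences. Since $\PP$ is smooth and projective over the field $k$, Theorem~\ref{thm:geomtilt} will automatically also yield that $\Lam_1$ and $\Lam_2$ have finite global dimension as a bonus.

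Conditions~(\ref{item:compact}) and~(\ref{item:ext}) are the easy ones. Each $T_i$ is a finite direct sum of locally free sheaves of finite rank concentrated in degree zero, hence a perfect complex on $\PP$ and in particular compact in $\D(\Qcoh \PP)$. The Ext-vanishing required by~(\ref{item:ext}) reduces by additivity to vanishing of $\Ext_\calop^{>0}$ between pairs of summands of $T_i$, which is supplied directly by Lemma~\ref{lem:beilinson}.

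The heart of the argument is condition~(\ref{item:gen}), that $T_i$ classically generates $\D^\perf(\Qcoh \PP) = \D^b(\coh \PP)$; I would establish this via Be\u\i linson's resolution-of-the-diagonal technique. Let $\Delta \subset \PP \times \PP$ denote the diagonal with projections $p_1, p_2$, and write $\calf \boxtimes \calg = p_1^*\calf \otimes_{\calo_{\PP \times \PP}} p_2^*\calg$. The Euler derivation on $\PP \times \PP$ produces a Koszul-type exact resolution of $\calo_\Delta$ whose terms are box products of twisted line bundles with twisted sheaves of differentials. For any $X \in \D^b(\coh \PP)$, the identity $X \simeq \R(p_2)_*(\calo_\Delta \lotimes \mathbf{L}p_1^* X)$ combined with the projection formula expresses $X$ as a finite iterated extension of the summands $\calop(-a)$ of $T_1$ tensored against complexes of $k$-vector spaces of the form $\R\Gam(\PP, -)$; this proves that $T_1$ classically generates. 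Exchanging the roles of $p_1$ and $p_2$---equivalently, employing the mirror version of the diagonal resolution---gives the corresponding presentation with the summands of $T_2$ in place of those of $T_1$, proving that $T_2$ classically generates as well.

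With the three tilting conditions in place for both $T_1$ and $T_2$, Theorem~\ref{thm:geomtilt} applies: since each $T_i \in \D^b(\coh \PP)$, the functor $\rHom_\calop(T_i, -)$ descends to a triangulated equivalence $\D^b(\coh \PP) \xrightarrow{\ \simeq\ } \D^b(\lmod{\Lam_i})$ for $i = 1, 2$, where $\Lam_i = \End_\calop(T_i)$. The main technical obstacle is the construction and proof of exactness of the Koszul-type resolution of $\calo_\Delta$; once that is in hand, the rest assembles formally from Lemma~\ref{lem:beilinson}, the projection formula, and Theorem~\ref{thm:geomtilt}.
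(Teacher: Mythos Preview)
Your proposal is correct and follows essentially the same approach as the paper: verify compactness and Ext-vanishing via Lemma~\ref{lem:beilinson}, establish classical generation via Be\u\i linson's resolution of the diagonal together with the projection formula (swapping the projections $p_1$ and $p_2$ to pass from $T_1$ to $T_2$), and then invoke Theorem~\ref{thm:geomtilt}. The paper's discussion preceding the theorem is exactly this argument, with the same identification of the diagonal resolution as the one nontrivial input.
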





By the way, the construction $\R {p_2}_* (\calo_\Delta 
\lotimes_\calop \mathbf{L}p_1^* (-))$, which accepts sheaves on $\PP$
and returns sheaves on $\PP$, is an example of a Fourier--Mukai
transform, the definition of which was gracefully avoided in
\S\ref{sect:der-sheaves}.  Replacing $\calo_\Delta$ by any other fixed
complex in $\D^b(\coh (\PP \times \PP))$ would give another.

\section{The non-existence of non-commutative
  spaces}\label{sect:non-existence} 

As mentioned in the Introduction, I personally am reluctant to use the
phrase ``non-commutative algebraic geometry'' to describe results like
Be\u\i linson's in \S\ref{sect:beilinson}.  While the phrase is
certainly apposite on a word-by-word basis, given that the ideas are a
natural blend of algebraic geometry and non-commutative algebra, I
find that using it in public leads immediately to being asked awkward
questions like, ``What on earth is non-commutative geometry?''  While
many people have offered thoughtful, informed answers to this
question---\cite{SmithPaul:book, Keeler:2003, Kaledin:Seoul,
  Kaledin:Tokyo, Mori:2008, Verschoren-Willaert, Laudal:2003,
  JorgensenP:notes, Stafford-VdB:2001, Ginzburg:NCG} are some of my
personal favorites---I find the whole conversation distracting from
the more concrete problems at
hand.  
I propose instead that results like Be\u\i linson's and those to
follow in later sections should be considered as part of ``categorical
geometry''.  The name seems unclaimed, apart from an online book from
1998.

In this \S{} I say a few words about a couple of approaches to
building a field called non-commutative algebraic geometry.  I have
chosen a deliberately provocative title for the section, so that there
can be no question that these are opinionated comments.  The reader
who is intrigued by the ideas mentioned here would do well to seek out
a less idiosyncratic, more comprehensive introduction such as those
cited in the previous paragraph.

One potential pitfall for the prospective student of non-commutative
geometry is that there are several disparate approaches.  For one
thing, the approach of Connes and his
collaborators~\cite{Jones-Moscovici:1997}, which some hope will
explain aspects of the Standard Model of particle physics or even
prove the Riemann Hypothesis, is based on differential geometry and
$C^*$-algebras, and is, as far as I can tell, completely separate from
most of the considerations in this article.  More subtly, even within
non-commutative \emph{algebraic} geometry, there are a few different
points of view.  I do not consider myself competent even to give
references, for fear of giving offense by omission.

So what is the problem here?  Why can't one simply do algebraic
geometry, say at the level of~\cite{Hartshorne}, over non-commutative
rings~\cite{MathOverflow:NCAG}?  There have been several sustained
attempts to do exactly this, starting in the 1970s.  There are a
couple of immediate obstacles to a na\"\i ve approach.

The first problem is to mimic the fact that a ring $R$ can be
recovered from the Zariski topology on the prime spectrum $\Spec R$
and the structure sheaf $\calo_{\Spec R}$.  (One finds, of course,
$H^0(\Spec R, \calo_{\Spec R}) = R$.)  Both of these sets of
information depend essentially on \emph{localization.}  For
non-commutative rings,  the prime spectrum is rather
impoverished; for example, the Weyl algebra $\CC\langle
x,y\rangle/(y x-xy-1)$ has trivial two-sided prime spectrum.  Even
ignoring this difficulty, localization for non-commutative
rings~\cite{NCLocalization,Jategaonkar:1986} only functions well for
\O re sets, and the complement of a prime ideal need not be an \O re
set.

One possible resolution of the problem  would be to focus on the
quotient modules $\Lam/\p$ instead of the prime ideals $\p$.  The
points of a commutative affine variety $X$ (over $\CC$, say) are in
one-one correspondence with the simple modules over the coordinate
ring $\CC[X]$.  Furthermore, a point $x \in X$ is a non-singular point
if and only if the corresponding simple module $\CC[X]/\m_x$ has
finite projective dimension.

Unfortunately, here there is a second problem: finite projective
dimension, even finite \emph{global} dimension, is a very weak
property for non-commutative rings.  For example, there is no
Auslander--Buchsbaum Theorem giving a uniform upper bound on finite
projective dimensions over a given ring; the existence of such a bound
over an Artin algebra is called the \emph{finitistic dimension
  conjecture,} and has been open since at least 1960~\cite{Bass:1960,
  Huisgen-Zimmermann:1995}. There are a host of additional technical
problems to be overcome.  It's unknown, for instance, whether finite
global dimension implies primeness (as regularity implies domain for a
commutative local ring); the Jacobson radical might fail to satisfy
the Artin--Rees property~\cite{Brown-Hajarnavis-MacEacharn:1982},
derailing the standard proof.  Pathologies abound: for example, there
is a local noetherian domain $\Lam$ of global dimension $3$ such that
every quotient ring other than $\Lam$ itself, $0$, and
$\Lam/\rad(\Lam)$ has infinite global dimension~\cite[Example
7.3]{Brown-Hajarnavis-MacEacharn:1982}.

Restricting to a smaller class of rings solves some of these problems.
For example, the class of rings $\Lam$ which are finitely generated
modules over their center $Z(\Lam)$ are much better-behaved than the
norm.  For example, $\Lam$ is left and right noetherian if $Z(\Lam)$
is, so that $\gldim \Lam = \gldim \Lam^\op$.  The ``lying over'',
``incomparability'', and ``going up'' properties hold for prime ideals
along the extension $Z(\Lam) \into \Lam$~\cite[Theorem
16.9]{Passman:1989}.  Furthermore, the following reassuring results
hold~\cite[Section 2]{Iyama-Reiten:2008}.
\begin{proposition}
  \label{prop:center-finite}
  Let $(R, \m)$ be a local ring and $\Lam$ a module-finite
  $R$-algebra.  Let $M$ be a finitely generated $\Lam$-module.
  \begin{enumerate}
  \item The \emph{dimension} of $M$, defined by $\dim M =
    \dim(R/\ann_R(M))$, is independent of the choice of central
    subring $R$ over which $\Lam$ is a finitely generated module.
  \item The \emph{depth} of $M$, defined by $\depth M = \inf\left\{\ i
      \ \middle|\ \Ext_R^i(R/\m, M) \neq0\right \}$, is also
    independent of the choice of $R$.
  \item{}(Ramras~\cite{Ramras:1969})\label{Ramras} We have 
    \[
    \depth M \leq \dim M \leq \injdim_\Lam M\,.
    \]
    In particular, if $\Lam$ is a torsion-free $R$-module and
    $\gldim\Lam < \infty$, then $\injdim_\Lam \Lam =
    \gldim\Lam$~\cite[Lemma 1.3]{Auslander:isolsing}, so that
    \[
    \depth_R \Lam \leq \dim R \leq \gldim \Lam\,.
    \]
  \item{}(\cite{Rainwater:1987} or~\cite{Goodearl:1989}) The global
    dimension of $\Lam$ is the supremum of $\pd_\Lam L$ over all
    $\Lam$-modules $L$ of finite length.\qed
  \end{enumerate}
\end{proposition}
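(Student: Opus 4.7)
The four assertions are largely independent; I will address them in turn, leaning on standard commutative algebra where possible.

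For (i) and (ii), the unifying idea is to pass through the full center $Z(\Lambda)$. Since $\Lambda$ is a finitely generated $R$-module and $R$ is Noetherian, every $R$-submodule of $\Lambda$ is finitely generated, so $Z(\Lambda)$ is itself module-finite over $R$, and likewise over $R'$. For (i), the identities $\ann_R M = R \cap \ann_{Z(\Lambda)} M$ and $\ann_{R'} M = R' \cap \ann_{Z(\Lambda)} M$ yield module-finite inclusions of commutative Noetherian rings
\[
R/\ann_R M \hookrightarrow Z(\Lambda)/\ann_{Z(\Lambda)}M \hookleftarrow R'/\ann_{R'}M,
\]
and module-finite extensions of commutative rings preserve Krull dimension, giving the common value $\dim(Z(\Lambda)/\ann_{Z(\Lambda)}M)$. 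For (ii), I would use the local cohomology description $\depth_R M = \inf\{i : H^i_\m(M) \neq 0\}$. Because $\Lambda/\m\Lambda$ is a finite-dimensional algebra over the field $R/\m$, hence Artinian, $\rad(\m\Lambda)$ equals the Jacobson radical $\rad(\Lambda)$; consequently the $\m$-torsion and $\rad(\Lambda)$-torsion subfunctors of $M$ coincide, and $H^i_\m(M) = H^i_{\rad(\Lambda)}(M)$, a quantity manifestly independent of the choice of $R$.

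For (iii), the first inequality $\depth M \leq \dim M$ is the familiar commutative fact applied to $M$ as a finitely generated $R$-module. For $\dim M \leq \injdim_\Lambda M$, I would induct on $d = \dim M$, with $d=0$ trivial. In the inductive step, choose $\p \in \Supp_R M$ with $\dim R/\p = d$, so $\p \neq \m$, and use prime avoidance on the finite set of associated primes of $\Lambda$ and $M$ to produce an $x \in \m$ which is a non-zerodivisor on both. A standard change-of-rings argument then gives $\injdim_{\Lambda/x\Lambda}(M/xM) = \injdim_\Lambda M - 1$ and $\dim(M/xM) = d-1$, reducing the problem to smaller dimension. For the ``in particular'' clause, $M = \Lambda$ torsion-free over $R$ forces $\ann_R \Lambda = 0$, so $\dim \Lambda = \dim R$, while the cited Auslander lemma identifies $\injdim_\Lambda \Lambda$ with $\gldim \Lambda$ when the latter is finite.

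For (iv), the inequality $\sup \pd_\Lambda L \leq \gldim \Lambda$ is immediate. For the reverse, since $\Lambda$ is left Noetherian one has $\gldim \Lambda = \sup\{\pd_\Lambda M : M \text{ finitely generated}\}$, so it is enough to bound each such $\pd M$ by the projective dimension of some finite-length module. Because $R$ is local, minimal projective resolutions of finitely generated $\Lambda$-modules exist by Nakayama's lemma, and reading off their terms gives $\pd_\Lambda M = \sup\{i : \Ext^i_\Lambda(M, \Lambda/\rad\Lambda) \neq 0\}$. The module $\Lambda/\rad\Lambda$ is itself of finite length, being finite-dimensional over $R/\m$, and this observation, combined with a Bass-style comparison of projective and injective dimensions, allows one to bound $\pd M$ by $\pd_\Lambda(\Lambda/\rad\Lambda)$. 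I expect this last step to be the main technical obstacle: carefully descending from an arbitrary finitely generated module to a finite-length one in the non-commutative setting requires the full force of semi-locality and the Noetherian hypothesis, which is precisely the content of the cited results of Rainwater and Goodearl.
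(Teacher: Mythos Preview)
The paper does not give a proof of this proposition at all: it is stated with citations to Iyama--Reiten, Ramras, Auslander, Rainwater, and Goodearl, and closed with a \qed. So there is no ``paper's own proof'' to compare against; your proposal is being measured against the literature arguments those citations point to.

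Your treatments of (i), (ii), and (iv) are reasonable sketches. For (i) and (ii) the passage through $Z(\Lambda)$ is the right idea; note only that $Z(\Lambda)$ need not be local, so a cleaner phrasing is to compare two choices $R$, $R'$ directly via the module-finite composite $R \hookrightarrow R'$ (or vice versa) and use that $\sqrt{\m R'} = \m'$. For (iv) your ``Bass-style comparison'' is indeed the heart of the matter: the key input is that over a semilocal Noetherian ring one can test injective dimension against simple modules, which reduces $\gldim\Lambda$ to $\sup_S \pd_\Lambda S$ over simples $S$; this is exactly what Rainwater and Goodearl supply.

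There is, however, a genuine gap in your argument for (iii). Your induction on $d = \dim M$ requires choosing $x \in \m$ which is a non-zerodivisor on $M$, but if $\depth_R M = 0$ (which is entirely possible with $\dim M > 0$, e.g.\ $M = R/\p \oplus k$ over a regular local ring) then $\m \in \Ass_R M$ and no such $x$ exists. The same obstruction applies to $\Lambda$: nothing in the hypotheses forces $\depth_R \Lambda \geq 1$. The standard route around this, going back to Bass in the commutative case and adapted by Ramras, avoids $M$-regular elements altogether: one tracks Bass numbers $\mu_i(\p,M) = \dim_{\kappa(\p)} \Ext^i_{R_\p}(\kappa(\p), M_\p)$ and shows that if $\mu_i(\p,M) \neq 0$ and $\p \neq \m$ then $\mu_{i+1}(\q,M) \neq 0$ for some $\q \supsetneq \p$. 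Starting from a minimal prime in $\Supp_R M$ one climbs a chain of length $\dim M$, forcing $\injdim_\Lambda M \geq \dim M$. Your reduction-mod-$x$ approach would work under the additional hypothesis that $M$ (and $\Lambda$) have positive depth, but not in the generality stated.
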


Restricting still further, one arrives at a very satisfactory
class of rings.  Recall that for $(R, \m)$ a local ring, a finitely
generated $R$-module $M$ is \emph{maximal Cohen-Macaulay (MCM)} if
$\depth M = \dim R$.  Equivalently, there is a system of parameters
$x_1, \dots, x_d$, with $d=\dim R$, which is an $M$-regular sequence.
In the special case where $R$ is Gorenstein, this condition is
equivalent to $\Ext_R^i(M,R)=0$ for all $i >0$.

\begin{definition}
  \label{def:order}
  Let $(R, \m)$ be a local ring and $\Lam$ a module-finite
  $R$-algebra.  Say that $\Lam$ is an \emph{$R$-order} if $\Lam$ is
  maximal Cohen--Macaulay as an $R$-module.
\end{definition}
 
The terminology is imperfect: there are several other definitions of
the word ``order'' in the literature, going back decades.  Here we
follow~\cite{Auslander:1978}.  See \S\ref{sect:NCCRs} for a
connection to the classical theory of hereditary and classical orders
over Dedekind domains.

Localization is still problematic, even for orders.  In order to get a
workable theory, a condition stronger than finite global dimension is
sometimes needed.

\begin{definition}
  \label{def:non-sing} Let $R$ be a commutative ring and
  let $\Lam$ be a module-finite $R$-algebra.  Say that $\Lam$ is
  \emph{non-singular} if $\gldim \Lam_\p = \dim R_\p$ for every prime
  ideal $\p$ of $R$.
\end{definition}

Non-singular orders have a very satisfactory homological theory,
especially over Gorenstein local rings.  A non-singular order over a
local ring satisfies a version of the Auslander--Buchsbaum
Theorem~\cite[Proposition 2.3]{Iyama-Reiten:2008}: If $\Lam$ is an
$R$-order with $\gldim \Lam = d < \infty$, then for any $\Lam$-module
$M$ the equality $\pd_\Lam M + \depth M = d$ holds.
Furthermore, the following characterization of non-singularity holds
for orders~\cite[Proposition 2.13]{Iyama-Wemyss:ARduality}.

\begin{proposition}
  \label{prop:nonsingorder}
  Let $R$ be a CM ring with a canonical module $\omega$, and let
  $\Lam$ be an $R$-order.  Then the following are equivalent.
  \begin{enumerate}
  \item $\Lam$ is non-singular.
  \item $\gldim \Lam_\m = \dim R_\m$ for all maximal ideals $\m$ of $R$.
  \item The finitely generated $\Lam$-modules which are MCM as
    $R$-modules are precisely the finitely generated projective
    $\Lam$-modules.
  \item $\Hom_R(\Lam,\omega)$ is a projective $\Lam$-module and
    $\gldim \Lam < \infty$.\qed
  \end{enumerate}
\end{proposition}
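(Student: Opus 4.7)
\textbf{The plan} is to close the cycle (i) $\Rightarrow$ (ii) $\Rightarrow$ (iii) $\Rightarrow$ (iv) $\Rightarrow$ (i). Each of (ii), (iii), (iv) is a condition at the maximal ideals of $R$ (projectivity, finite global dimension, and the property ``$R$-MCM iff $\Lam$-projective'' all localize), so after the trivial (i) $\Rightarrow$ (ii) I reduce to $R$ local of dimension $d$ and establish the three-way equivalence there; the global (i) $\Leftrightarrow$ (ii) is then recovered by further localizing (iv) at an arbitrary prime.

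The technical backbone is the natural isomorphism
\[
\Ext^i_\Lam(M, \omega_\Lam) \;\cong\; \Ext^i_R(M, \omega)
\]
for any finitely generated $\Lam$-module $M$, where $\omega_\Lam = \Hom_R(\Lam, \omega)$. To establish this, take a $\Lam$-projective resolution $P_\sbullet \to M$. Each $P_i$ is a summand of some $\Lam^{n_i}$, hence $R$-MCM since $\Lam$ is an order, so $\Ext^{>0}_R(P_i, \omega) = 0$ and $\Hom_R(P_\sbullet, \omega)$ computes $\Ext^{\sbullet}_R(M, \omega)$. Termwise, Hom-tensor adjunction gives $\Hom_\Lam(P_i, \Hom_R(\Lam, \omega)) \cong \Hom_R(P_i \otimes_\Lam \Lam, \omega) = \Hom_R(P_i, \omega)$, so the very same complex also computes $\Ext^{\sbullet}_\Lam(M, \omega_\Lam)$.

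Given this identification, (ii) $\Leftrightarrow$ (iii) follows directly from the Auslander--Buchsbaum equality for orders (Proposition~\ref{prop:center-finite}(iii)): finite global dimension $d$ forces $\pd_\Lam M + \depth_R M = d$, so $R$-MCM is equivalent to $\Lam$-projective; conversely, iterating the depth lemma along a $\Lam$-projective resolution (whose terms are $R$-MCM) shows every $\Lam$-module has $R$-MCM $d$-th syzygy, which (iii) declares $\Lam$-projective, yielding $\gldim \Lam \leq d$ matched against Ramras's lower bound. For (iii) $\Rightarrow$ (iv), the module $\omega_\Lam = \Hom_R(\Lam, \omega)$ is $R$-MCM by standard properties of the canonical module applied to an MCM argument; (iii) then makes it $\Lam$-projective, and the syzygy argument again produces finite global dimension.

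\textbf{The main obstacle} is (iv) $\Rightarrow$ (ii). By Proposition~\ref{prop:center-finite}(iv), it suffices to bound $\pd_\Lam S \leq d$ for simple (finite-length) $\Lam$-modules $S$. The Ext identification, together with the local-duality fact $\Ext^{>d}_R(S, \omega) = 0$ (valid since $\omega$ has injective dimension $d$ and $S$ has depth zero), yields $\Ext^{>d}_\Lam(S, \omega_\Lam) = 0$; in other words $\injdim_\Lam \omega_\Lam \leq d$. To transfer this bound to $\Lam$ itself, I would dualize the splitting $\omega_\Lam \oplus Y \cong \Lam^n$ via $\Hom_R(-, \omega)$ and invoke the reflexivity $\Hom_R(\omega_\Lam, \omega) \cong \Lam$ for MCM modules to exhibit $\Lam$ as a summand of $\omega_\Lam^n$, which gives $\injdim_\Lam \Lam \leq d$ and hence $\gldim \Lam \leq d$. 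The delicate bookkeeping here tracks the interplay between left- and right-$\Lam$ structures through the $R$-duality: $\omega_\Lam$ is intrinsically a bimodule, and the argument genuinely uses that $\add \Lam = \add \omega_\Lam$ on the relevant side. Once the local three-way equivalence is in hand, the global implication (ii) $\Rightarrow$ (i) is routine, since both halves of (iv) are preserved by localization at any prime $\p$ and the local equivalence then delivers $\gldim \Lam_\p = \dim R_\p$.
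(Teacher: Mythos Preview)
The paper does not actually prove this proposition; it is quoted from Iyama--Wemyss with a bare \qed, so there is no in-paper argument to compare against. Your outline is essentially correct and the identification $\Ext^i_\Lam(M,\omega_\Lam)\cong\Ext^i_R(M,\omega)$ is the right engine. Two points deserve tightening. In (iv)$\Rightarrow$(ii), restricting to simple $S$ and then claiming $\injdim_\Lam\omega_\Lam\le d$ is a needless detour with a nonobvious inference over noncommutative $\Lam$---your $\Ext$ identification already holds for \emph{all} finitely generated $M$, and $\injdim_R\omega=d$ kills $\Ext_R^{>d}(M,\omega)$ outright, giving $\injdim_\Lam\omega_\Lam\le d$ directly. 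The left/right bookkeeping you flag is genuine and should be made explicit: left-projectivity of $\omega_\Lam$ yields, after applying $\Hom_R(-,\omega)$, that $\Lam\in\add\omega_\Lam$ on the \emph{right}, so the injective-dimension bound must be run on the right side and then transferred via $\gldim\Lam=\gldim\Lam^\op$ (valid since $\Lam$ is module-finite over noetherian $R$). Finally, Proposition~\ref{prop:center-finite}(iii) is Ramras's inequality, not the Auslander--Buchsbaum formula; the formula you need is the Iyama--Reiten reference the paper cites immediately after this proposition, and its hypothesis is only that $\Lam$ is an order with $\gldim\Lam=d$, so there is no circularity.
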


\bigskip

The definitions above represent an attempt to force classical
algebraic geometry, or equivalently commutative algebra, to work over
a class of non-commutative rings.  Here is a different approach, more
consonant with the idea of ``categorical geometry.''  Rather than
focusing attention on the rings, concentrate on an abelian or
triangulated category $\catc$, which we choose to think of as $\Qcoh
X$ or $\D^b(\Qcoh X)$ for some space $X$ \emph{about which we say
  nothing further.}  In this approach, the space $X$ is nothing but a
notational placeholder, and the geometric object is the category
$\catc$.

This idea has had particular success in taking Serre's
Theorem~\ref{thm:serre-grmod} as a template and writing $\D^b(\Qcoh
X)$ for a quotient category of the form $\tails \Lam = \grmod \Lam
/\tors \Lam$.  One thus obtains what is called \emph{non-commutative
  projective geometry.}  To describe these successes, let us make the
following definition, based on the work of
Geigle--Lenzing~\cite{Geigle-Lenzing:1987},
Ver{\"e}vkin~\cite{Verevkin-a,Verevkin-b},
Artin--Zhang~\cite{Artin-Zhang:1994} and Van den
Bergh~\cite{VdB:memoir}.

\begin{definition}
  \label{def:quasi-scheme}
  A \emph{quasi-scheme} (over a field $k$) is a pair $X = (\lmod X,
  \calox)$ where $\lmod X$ is a ($k$-linear) abelian category and
  $\calox \in \lmod X$ is an object.  Two quasi-schemes $X$ and $Y$ are
  isomorphic (over $k$) if there exists a ($k$-linear) equivalence
  $\catf \colon \lmod X \to \lmod Y$ such that $\catf(\calox) \cong
  \caloy$.
\end{definition}

The obvious first example is that a (usual, commutative) scheme $X$ is
a quasi-scheme $(\coh X, \calox)$.  For any ring $\Lam$, commutative
or not, one can define the affine quasi-scheme associated to $\Lam$ to
be $\Spec \Lam := (\lmod \Lam, \Lam)$.  One checks that if $R$ is
commutative and $X = \Spec R$ is the usual prime spectrum, then the
global section functor $\Gamma (X,-) \colon \coh X \to \lmod R$
induces an isomorphism of quasi-schemes $(\coh X,\calox) \to (\lmod R,
R)$.

The basic example of a quasi-scheme in non-commutative projective
geometry is the following, which mimics the definition of $\tails$
from \S\ref{sect:sheaves} precisely. Let $\Lam$ be a noetherian graded
algebra over a field $k$.  For simplicity, assume that $A_0=k$.  Let
$\GrMod \Lam$ and $\grmod \Lam$ be the categories of graded
$\Lam$-modules, resp.\ finitely generated graded $\Lam$-modules.
Let $\Tors \Lam$, resp.\ $\tors \Lam$, be the subcategory of graded
modules annihilated by $\Lam_{\geq n}$ for $n \gg 0$.  Then define the
quotient categories
\[
\Tails \Lam = \GrMod \Lam/\Tors \Lam
\qquad\text{ and }\qquad
\tails \Lam = \grmod \Lam/\tors \Lam\,,
\]
 and set
\[
\ncProj\Lam = (\Tails \Lam, \calo)
\qquad\text{ and }\qquad
\proj \Lam = (\tails \Lam, \calo)
\]
where $\calo$ is the image of $\Lam$ in $\tails \Lam$.  Call $\ncProj
\Lam$ and $\proj \Lam$ the \emph{(noetherian) projective quasi-scheme}
determined by $\Lam$.  The \emph{dimension} of the projective
quasi-scheme is $\GKdim \Lam-1$, where $\GKdim \Lam$ is the
Gelfand-Kirillov dimension; this means $\dim \proj \Lam$ is the
polynomial rate of growth of $\{\dim _k \Lam_n\}_{n \geq 0}$.

One can define sheaf cohomology $H^j(\Tails \Lam,-)$ in $\Tails \Lam$
to directly generalize the commutative definition.  In particular the
global sections functor is $\Gam(-) = \Hom_{\Tails \Lam}(\calo,-)$.
For $M$ in $\Tails \Lam$, then, one would like versions of two basic
results in algebraic geometry: Serre-finiteness ($H^j(\Tails
\Lam,M)=0$ for $j \gg0$) and Serre-vanishing ($H^j(\Tails \Lam,
M(i))=0$ for all $j \geq 1$ and $i \gg0$).  These results turn out only
to be true under a technical condition called $\chi$
(see~\cite{Artin-Zhang:1994}), which is automatic in the commutative
case. There is also an analogue of Serre's
Theorem~\ref{thm:serre-grmod} due to Artin and Van den
Bergh~\cite{Artin-VdB:1990}, which gives the same sort of purely
algebraic description of $\Qcoh X$ as $\Tails \Lam$, where $\Lam$ is
defined to be a \emph{twisted homogeneous coordinate ring.}  For
details, see~\cite{Stafford-VdB:2001}. 

The classification of projective quasi-schemes of small dimension,
i.e.\ categories of the form $\tails \Lam$ where $\Lam$ is a graded
algebra with small rate of growth, is an ongoing program.  The case of
non-commutative curves (where $\dim_k A_n$ grows linearly) was
completed by Artin and Stafford~\cite{Artin-Stafford:1995}.  There is
a conjectural classification of non-commutative surfaces due to Artin,
but it is still open. The important special case of
\emph{non-commutative projective planes,} that is, $\tails \Lam$ where
$\Lam$ is a so-called Artin-Schelter regular algebra of
Gelfand-Kirillov dimension $3$ with Hilbert series $(1-t)^{-3}$, has
been completely understood~\cite{Artin-Schelter:1987,Artin-Tate-VdB:1990,
  Artin-Tate-VdB:1991, VandenBergh:1987, Bondal-Polishchuk:1993}.

\section{Resolutions of singularities}\label{sect:resns}

So far I have considered only ``absolute'' situations, that is,
constructions applied to individual rings or categories in isolation.
In the sections to come, I will want to understand certain relative
situations, particularly analogues of resolutions of singularities.
In this \S{} I collect a few definitions and facts about resolutions
of singularities, for easy reference later.  Begin with the
definition.

\begin{definition}
  \label{def:resnofsing}
  Let $X$ be an algebraic variety over a field $k$.  A
  \emph{resolution of singularities} of $X$ is a proper, birational
  morphism $\pi \colon \tilde X \to X$ with $\tilde X$ a non-singular
  algebraic variety.
\end{definition}

Resolutions of singularities are also sometimes called ``smooth
models,'' indicating that the non-singular variety $\tilde X$ is not
too different from $X$: the map is an isomorphism on a dense open set
and is proper, hence surjective.  For curves, construction of
resolution of singularities is easy, as every irreducible curve is
birational to a unique smooth projective curve, namely the
normalization (see \S\ref{sect:normalization}).  For surfaces,
resolutions of singularities still exist in any characteristic, but
now an irreducible surface is birational to infinitely many smooth
surfaces.  This observation is the beginning of the minimal model
program, cf.~\S\ref{sect:MMP}.

Of course existence of resolutions of singularities in any dimension
is a theorem due to Hironaka for $k$ an algebraically closed field of
characteristic zero; in this case the morphism $\pi \colon \tilde X
\to X$ can be taken to be an isomorphism over the smooth locus of $X$,
and even to be obtained as a sequence of blowups of non-singular
subvarieties of the singular locus followed by normalizations.  We
will not need this.

As an aside, I mention here that a proper map between affine schemes
is necessarily finite~\cite[Ex. II.4.6]{Hartshorne}.  It follows
immediately that a resolution of singularities of a singular normal affine
scheme is \emph{never} an affine scheme.

Our other definitions require the canonical sheaf of a singular
variety.  The canonical sheaf $\omega_Y$ of a smooth variety $Y$ has
already appeared, as the sheaf of top differential forms $\w^{\dim
  Y}\Omega_{Y}$ (see the discussion before Theorem~\ref{thm:BOrecon}).
If $Y$ is merely normal, then define $\omega_Y$ to be $j_*
\omega_{Y_\text{reg}}$, where $j$ is the open immersion $Y_\text{reg}
\into Y$ of the smooth locus.  When $Y$ is Cohen-Macaulay, $\omega_Y$
is also a dualizing sheaf~\cite[III.7]{Hartshorne}; in other words, if
the local rings of $Y$ are CM, then the stalks of $\omega_Y$ are
canonical modules in the sense of~\cite{Herzog-Kunz}.  Similarly,
$\omega_Y$ is an invertible sheaf (line bundle) if and only if $Y$ is
Gorenstein.  The Weil divisor $K_Y$ such that $\omega_Y = \caloy(K_Y)$
is called the \emph{canonical divisor}.

The behavior of the canonical sheaf/divisor under certain morphisms is
of central interest.  For example, the Grauert--Riemenschneider
Vanishing theorem describes the higher direct images of $\omega$.

\begin{theorem}
  [GR Vanishing~{\cite{Grauert-Riemenschneider}}]
  \label{thm:GRvan}
  Let $\pi \colon \tilde X \to X$ be a resolution of singularities of
  a variety $X$ over $\CC$.  Then 
  \(
  \R^i \pi_* \omega_{\tilde X} = 0
  \)
  for all $i>0$. \qed
\end{theorem}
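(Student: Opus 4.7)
The strategy is to deduce the relative vanishing from an absolute Kodaira-type vanishing on a smooth projective compactification. First, since $\R^i \pi_*$ is computed Zariski-locally on $X$, one may assume $X = \Spec A$ is affine; and by dominating the given $\tilde X$ by a projective resolution $\tilde X'' \to X$ (Chow's lemma followed by Hironaka) and using that $(\tilde X'' \to \tilde X)_* \omega_{\tilde X''} = \omega_{\tilde X}$ with higher direct images vanishing in the birational-smooth setting, one reduces to the case where $\pi$ itself is projective.

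Next, compactify: embed $X$ in a projective variety $\bar X_0$, extend $\pi$ to a projective morphism $\bar\pi \colon \bar X \to \bar X_0$, and apply Hironaka to arrange that $\bar X$ is smooth projective and that $\bar\pi$ restricts to $\pi$ over $X$. Choose any ample line bundle $L$ on $\bar X_0$. Since $\bar\pi$ is birational, $\bar\pi^* L$ is nef and big on $\bar X$, and the Kawamata--Viehweg vanishing theorem (a strengthening of Kodaira that applies to nef and big divisors) yields $H^i(\bar X, \omega_{\bar X} \otimes \bar\pi^* L^n) = 0$ for all $i>0$ and $n \geq 1$.

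To convert this absolute vanishing into the desired relative statement, combine the projection formula $\R^i \bar\pi_*(\omega_{\bar X} \otimes \bar\pi^* L^n) \cong \R^i \bar\pi_* \omega_{\bar X} \otimes L^n$ with the Leray spectral sequence abutting to $H^\bullet(\bar X, \omega_{\bar X} \otimes \bar\pi^* L^n)$. Taking $n \gg 0$, Serre vanishing on the projective base $\bar X_0$ kills $H^p(\bar X_0, \R^i \bar\pi_* \omega_{\bar X} \otimes L^n)$ for every $p>0$, so the spectral sequence collapses and $H^0(\bar X_0, \R^i \bar\pi_* \omega_{\bar X} \otimes L^n)$ equals $H^i(\bar X, \omega_{\bar X} \otimes \bar\pi^* L^n)$, which is zero for $i>0$. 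Since $L$ is ample, a nonzero coherent sheaf $\calf$ on $\bar X_0$ satisfies $H^0(\bar X_0, \calf \otimes L^n) \neq 0$ for $n$ large (as $\calf \otimes L^n$ becomes globally generated); hence $\R^i \bar\pi_* \omega_{\bar X} = 0$ for $i>0$, and restriction to $X$ gives the claim.

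The principal obstacle is the compactification step, producing a smooth projective $\bar X$ over a projective $\bar X_0$ together with an extension of $\pi$; this rests on Nagata--Hironaka and is substantive though standard in characteristic zero. The other essential ingredient is Kawamata--Viehweg vanishing itself, for which there is no real substitute short of some form of Hodge-theoretic positivity input, because Kodaira vanishing alone would require $\bar\pi^* L$ to be ample on $\bar X$, which fails along the exceptional locus. An entirely different route is Grauert and Riemenschneider's original argument via $L^2$-estimates on strongly $1$-convex complex manifolds, which avoids compactification at the cost of serious analytic machinery.
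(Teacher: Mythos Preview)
The paper does not supply a proof: the theorem is quoted from \cite{Grauert-Riemenschneider} and closed with a \qed, as befits a survey. Your outline is the standard modern algebraic proof, and it is correct.

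One point worth tightening: your reduction to projective $\pi$ invokes the vanishing of higher direct images for $g\colon \tilde X'' \to \tilde X$ ``in the birational-smooth setting'', but that is itself an instance of the statement you are proving. The non-circular way to organize the argument is to establish the projective case first via your compactification and spectral-sequence argument, and only then apply it (locally on the smooth target $\tilde X$, so over affine opens) to the projective birational map $g$; alternatively, the smooth-to-smooth case can be handled directly by factoring into blowups along smooth centers, where the computation is explicit. As written, your first paragraph reads as if the smooth-to-smooth vanishing is a separate known fact, which it is not without one of these justifications.

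Your appeal to Kawamata--Viehweg is logically sound but anachronistic: GR vanishing (1970) predates KV (1982), and in many presentations KV is deduced from GR or from equivalent Hodge-theoretic input, so one should be sure that the proof of KV being invoked does not itself rest on GR. The original Grauert--Riemenschneider argument is the analytic one via $L^2$-estimates that you allude to at the end; the algebraic route you give is essentially the one found in \cite{Kollar-Mori:1998}.
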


Now I come to a pair of words which will be central for the rest of
the article.

\begin{definition}
  \label{def:ratl-crep}
  Let $\pi \colon \tilde X \to X$ be a resolution of singularities of
  a normal variety $X$.
  \begin{enumerate}
  \item\label{item:ratl} Say that $\pi$ is a \emph{rational}
    resolution if $\R^i \pi_* \calo_{\tilde X} = 0$ for $i>0$.
    Equivalently, since $X$ is normal, $\R\pi_* \calo_{\tilde X} =
    \calox$.  In this case $X$ is said to have \emph{rational
      singularities.}
  \item\label{item:crep} Say that $\pi$ is a
    \emph{crepant}\footnote{Obligatory comment on the terminology: the
      word ``crepant'' is due to Miles Reid.  He describes
      it~\cite[p. 330]{Reid:oldperson} as a pun meaning
      ``non-discrepant'', in that the \emph{discrepancy divisor}
      $K_{\tilde X} - \pi^* K_X$ vanishes.} resolution if $\pi^*
    \omega_X = \omega_{\tilde X}$.
  \end{enumerate}
\end{definition}

Crepancy is a condition relating the two ways of getting a sheaf on
$\tilde X$ from one on $X$, namely via $\Hom$ and via $\otimes$.  To
get an idea what this condition is, consider a homomorphism of CM
local rings $R \to S$ such that $S$ is a finitely generated
$R$-module.  Let $\omega_R$ be a canonical module for $R$.  Then one
knows that the ``co-induced'' module $\Ext_R^t(S,\omega_R)$, where $t
= \dim R-\dim S$, is a canonical module for $S$~\cite[3.3.7]{BH}.  The
``induced'' module $S \otimes_R \omega_R$ is not necessarily a
canonical module.  Back in the geometric world, $\pi^* \omega_{\Spec
  R}$ corresponds to $S \otimes_R \omega_R$, so the assumption that
this is equal to $\omega_S$ is locally a condition of the form
$\Ext_R^t(S,\omega_R) \cong S \otimes_R \omega_R$.  When $X$ is
Gorenstein, i.e.\ $\omega_X \cong \calox$, a crepant resolution
$\tilde X$ is also Gorenstein.

One of the main motivations for considering crepant resolutions of
singularities comes from the study of Calabi-Yau varieties, which in
particular have trivial canonical sheaves.  In this case, if one wants a
resolution $\pi \colon \tilde X \to X$ in which $\tilde X$ is also
Calabi-Yau, then  $\pi$ needs to be crepant.

A \emph{small} resolution, that is, one for which the exceptional
locus has codimension at least two, is automatically crepant.  This is
a very useful sufficient condition.

The next proposition follows from GR
vanishing~\cite[p. 50]{Kempf-Knudsen-Mumford}.

\begin{proposition}
  \label{prop:crep-ratl}
  Let $X$ be a complex algebraic variety and let $\pi \colon \tilde X
  \to X$ be a resolution of singularities.
  \begin{enumerate}
  \item $X$ has rational singularities if and only if $X$ is CM and
    $\pi_*\omega_{\tilde X} = \omega_{X}$.
  \item \label{item:crep-ratl} If $X$ is Gorenstein and has a crepant
    resolution of singularities, then $X$ has rational
    singularities. \qed
  \end{enumerate}
\end{proposition}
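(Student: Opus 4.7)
The plan is to combine Grothendieck duality for the proper morphism $\pi$ with GR vanishing (Theorem~\ref{thm:GRvan}); this reduces both parts to short formal manipulations.

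For part~(i), I would apply Grothendieck duality to $\calo_{\tilde X}$. Since $\tilde X$ is smooth of dimension $d=\dim X$, its dualizing complex is $\omega_{\tilde X}[d]$, so duality takes the shape
\[
\R\pi_*\,\omega_{\tilde X}[d] \;\cong\; \R\cHom_{\calox}\bigl(\R\pi_*\calo_{\tilde X},\, \omega_X^\bullet\bigr),
\]
where $\omega_X^\bullet$ is a dualizing complex on the normal variety $X$. By GR vanishing the left-hand side is concentrated in a single cohomological degree and equals $\pi_*\omega_{\tilde X}[d]$. If $X$ has rational singularities then $\R\pi_*\calo_{\tilde X}=\calox$, and the right-hand side collapses to $\omega_X^\bullet$; comparing, $\omega_X^\bullet \cong \pi_*\omega_{\tilde X}[d]$ lives in a single degree, which is precisely the statement that $X$ is Cohen--Macaulay with canonical module $\omega_X=\pi_*\omega_{\tilde X}$. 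Conversely, given those conditions, $\omega_X^\bullet = \omega_X[d]=\pi_*\omega_{\tilde X}[d]$, so the displayed duality reads $\R\cHom_\calox(\R\pi_*\calo_{\tilde X},\omega_X^\bullet)\cong\omega_X^\bullet$. Applying $\R\cHom_\calox(-,\omega_X^\bullet)$ once more and invoking biduality on the CM scheme $X$ (which applies because $\R\pi_*\calo_{\tilde X}$ has bounded coherent cohomology) returns $\R\pi_*\calo_{\tilde X}\cong\calox$.

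For part~(ii), since $X$ is Gorenstein the sheaf $\omega_X$ is invertible, and crepancy says $\omega_{\tilde X}\cong \pi^*\omega_X$. The (underived, because $\omega_X$ is locally free) projection formula gives
\[
\R\pi_*\omega_{\tilde X} \;\cong\; \R\pi_*(\pi^*\omega_X) \;\cong\; \R\pi_*\calo_{\tilde X}\otimes_{\calox}\omega_X.
\]
Tensoring both sides with $\omega_X^{-1}$ and invoking GR vanishing on the left forces $R^i\pi_*\calo_{\tilde X}=0$ for all $i>0$. Together with $\pi_*\calo_{\tilde X}=\calox$, which holds because $X$ is normal and $\pi$ is birational, this is by definition the statement that $X$ has rational singularities.

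I expect the main obstacle to be managing the Grothendieck-duality formalism---in particular the identification of $\omega_X^\bullet$ with a shift of $\omega_X$ precisely when $X$ is Cohen--Macaulay, and the dualizing-complex biduality invoked to invert $\R\cHom_\calox(-,\omega_X^\bullet)$---rather than anything delicate about $\pi$ itself; once duality is in hand, both halves reduce to GR vanishing and a formal manipulation with an invertible sheaf. Observe that part~(ii) does not logically require part~(i) as a black box: the projection-formula argument supplies the vanishing of higher direct images directly.
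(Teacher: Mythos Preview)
Your argument is correct. The paper does not supply its own proof of this proposition: it merely states the result, appends a \qed, and remarks beforehand that it follows from GR vanishing, citing \cite[p.~50]{Kempf-Knudsen-Mumford}. Your approach is precisely the standard one found in that reference (Kempf's criterion for rational singularities via Grothendieck duality), so there is nothing to compare beyond noting that you have filled in what the paper deliberately left as a citation.

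A couple of minor points worth tightening. In part~(ii) you invoke $\pi_*\calo_{\tilde X}=\calox$ ``because $X$ is normal''; strictly speaking the proposition only assumes $X$ is a complex algebraic variety and is Gorenstein. Normality is implicit here because the paper's Definition~\ref{def:ratl-crep} of a crepant resolution is stated for normal varieties, so the hypothesis ``$X$ has a crepant resolution'' already carries normality with it --- but you might make that explicit. Similarly, in the converse direction of part~(i) the biduality step $\R\cHom_\calox(\R\cHom_\calox(-,\omega_X^\bullet),\omega_X^\bullet)\cong\id$ is valid on $\D^b(\coh X)$ for any variety admitting a dualizing complex, not only CM ones; the CM hypothesis is used solely to identify $\omega_X^\bullet$ with $\omega_X[d]$.
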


Not every rational singularity has a crepant resolution.  Here are two
examples.

\begin{example}
  \label{eg:Lin}
  Let $R$ be the diagonal hypersurface ring
  $\CC[x,y,z,t]/(x^3+y^3+z^3+t^2)$.  Then $R$ is quasi-homogeneous
  with the variables given weights $2$, $2$, $2$, and $3$.  The
  $a$-invariant of $R$ is thus $6-(2+2+2+3) = -3 <0$, and $R$ has
  rational singularities by Fedder's criterion~\cite[Example
  3.9]{Huneke:CBMS}.  However, Lin~\cite{Lin:2002} shows that a
  diagonal hypersurface defined by $x_0^{r}+x_1^{d}+\cdots +x_d^d$ has
  a crepant resolution of singularities if and only if $r$ is
  congruent to $0$ or $1$ mod $d$.
\end{example}

\begin{example}
  \label{eg:ADE}
  Quotient singularities $X = Y/G$, where $Y$ is smooth and $G$ is a
  finite group of automorphisms, have rational
  singularities~\cite{Viehweg:1977}.  Consider quotient singularities
  $\CC^n/G$, where $G \subset \SL(n,\CC)$ is finite.  These are
  by~\cite{Watanabe:1974} the Gorenstein quotient singularities. 

  If $n=2$, the results are the \emph{rational double points}, also
  known as Kleinian singularities or Du Val singularities, which are
  the quotient singularities $X = \CC^2/G = \Spec (\CC[u,v]^G)$, where
  $G \subset \SL(2,\CC)$ is a finite subgroup.  These are also
  described as ADE hypersurface rings $\CC[x,y,z]/(f(x,y,z))$ with
  explicit equations as follows.
  \begin{equation}\label{eq:ADE}
    \begin{split}
      (A_n):& \qquad x^2 + y^{n+1} + z^2\,, \qquad n \geq 1\\
      (D_n):& \qquad x^2y + y^{n-1} + z^2\,, \qquad n \geq 4\\
      (E_6):& \qquad x^3 + y^4 + z^2\\
      (E_7):& \qquad x^3 + xy^3 + z^2\\
      (E_8):& \qquad x^3 + y^5 + z^2
    \end{split}
  \end{equation}
  For these singularities, a crepant resolution always exists and is
  unique.  In fact, a normal affine surface singularity $R$ over $\CC$
  admits a crepant resolution if and only if every local ring of $R$
  is (at worst) a rational double point.  I will return to the
  rational double points in \S\ref{sect:McKay} below.

  If $n=3$, $\CC^3/G$ always has a crepant resolution as well, though
  they are no longer unique, thanks to the existence of \emph{flops}
  (see the next \S{}).  There is a classification of the finite
  subgroups of $\SL(3,\CC)$ up to conjugacy, and existence of crepant
  resolutions was verified on a case-by-case basis by
  Markushevich~\cite{Markushevich:1997}, Roan~\cite{Roan:1994,
    Roan:1996}, Ito~\cite{Ito:1995a,Ito:1995b}, and
  Ito--Reid~\cite{Ito-Reid:1996}.  See Theorem~\ref{thm:BKR} below for
  a unified statement.
  
  For $n\geq 4$, quotient singularities need not have crepant resolutions
  of singularities.  For example, the quotient of $\CC^4$ by the
  involution $(x,y,z,w) \mapsto (-x,-y,-z,-w)$ admits no crepant
  resolution~\cite[Example 5.4]{Reid:LaCorrMcKay}.
\end{example}

\section{The minimal model program}\label{sect:MMP}

A key motivation for categorical desingularizations in general, and
non-commutative crepant resolutions in particular, is the
\emph{minimal model program} of Mori and Reid.  This is an attempt to
find a unique ``best'' representative for the birational equivalence
class of any algebraic variety.  For curves, this is obvious, since
there is in each equivalence class a unique smooth projective
representative.

It is also the case that every surface is birationally equivalent to a
smooth projective surface, but now matters are complicated by the fact
that the blowup of a smooth surface at a point is again
smooth. However, every birational morphism of surfaces factors as a
sequence of blowups, so must have a {$(-1)$-curve}, that is, a
rational curve $C \cong \PP^1$ with self-intersection $-1$, lying over
a smooth point.  One can compute that if $C$ is a $(-1)$-curve on a
surface $X$, then $K_X \cdot C = -1$, where $K_X$ is the canonical
divisor.

By Castelnuovo's criterion, a $(-1)$-curve can always be blown down,
essentially undoing the blowup.  The algorithm for obtaining a minimal
model is thus to contract all the $(-1)$-curves, and one obtains the
classification of minimal models for
surfaces~\cite[V.5]{Hartshorne}: the result of the algorithm is a
smooth projective surface $S$ which is either $\PP^2$, a ruled surface
over a curve (the ``Fano'' case), or such that $K_S \cdot C \geq 0$
for every curve $C$ in $S$.  In this last case say that $K_S$ is
\emph{nef.}

The minimal model program is a framework for extending this
simple-minded algorithm to one that will work for threefolds and
higher-dimensional varieties.  The theory turns out to be much richer,
in part because it turns out that one must allow minimal models to be
a little bit singular.  Here ``a little bit'' means in codimension
$\geq 2$. Precisely, a projective variety $X$ is a \emph{minimal
  model} if every birational map $Y \dashrightarrow X$ is either a
contraction of a divisor to a set of codimension at least two, or is
an isomorphism outside sets of codimension at least
two~\cite{Kollar:WhatIsMinMod}.  There are compelling reasons to allow
singular minimal models; for example, there exists a
three-dimensional smooth variety which is not birational to any smooth
variety with nef canonical divisor~\cite{MathOverflow:flips}.  Mori
and Reid realized that this meant minimal models need not be smooth;
they can be taken to be \emph{terminal} instead.

I won't worry about the technical definitions of terminal and
canonical singularities here, but only illustrate with a class of
examples.  A diagonal hypersurface singularity defined by $x_0 ^{a_0} +
x_1^{a_1} + \cdots + x_d^{a_d}$ is
\begin{enumerate}
\item \emph{canonical} if and only if $a_1 + \cdots + a_d > 1$, and
\item \emph{terminal} if and only if $a_1 + \cdots + a_d > 1 + \frac{1}{\lcm(a_i)}$.
\end{enumerate}
For Gorenstein singularities, canonical singularities are the same as
rational singularities, so
Proposition~\ref{prop:crep-ratl}(\ref{item:crep-ratl}) says that the
existence of a crepant resolution implies canonical singularities.

In this language, a projective variety $X$ is a minimal model if and
only if it is $\QQ$-factorial (i.e.\ the divisor class group of every
local ring is torsion), has nef canonical divisor, and has terminal
singularities.

In dimension two, minimal models are unique up to isomorphism by
definition.  Terminal surface singularities are smooth, and the canonical
surface singularities are the rational double points of
Example~\ref{eg:ADE}~\cite[(2.6.2)]{Kollar:Bourbaki}.  

In dimension three, terminal singularities are well-understood, cf.\
\cite{Reid:1983} or~\cite[2.7]{Kollar:Bourbaki}. The Gorenstein ones
are precisely the isolated compound Du Val (cDV) singularities.
(Recall that a cDV singularity is a hypersurface defined by $f(x,y,z)
+ t g(x,y,z,t)$, where $f$ is a simple singularity as
in~\eqref{eq:ADE} and $g$ is arbitrary.)  However, minimal models of
threefolds are no longer unique~\cite{Corti:WhatIsFlip}.  Here is the
simplest example. 

\begin{example}
  [The ``classic flop''] \label{eg:Atiyah} Let $X$ be the
  three-dimensional ($A_1$) singularity over $\CC$, so $X = \Spec
  \CC[u,v,x,y]/(uv-xy)$.  Consider the blowup $f \colon Y \to X$ of
  the plane $u=x=0$.  It's easy to check that $Y$ is smooth, and
  that $f \colon Y \to X$ is a birational map which contracts a line
  $L \cong \PP^1$ to the origin. Thus $f$ is a small resolution,
  whence crepant.  Furthermore $Y$ is a minimal model. 

  One could also have considered the plane $u=y=0$ and its blowup $f'
  \colon Y' \to X$.  Symmetrically, $Y'$ is smooth, $f'$ contracts a
  line $L' \cong \PP^1$ and is crepant, and $Y'$ is a minimal model.

  The resolutions $Y$ and $Y'$ are almost indistinguishable, but they
  are not isomorphic over $X$.  One can check that the birational
  transforms of the plane $u=x=0$ to $Y$ and $Y'$ have intersection
  number $+1$ with $L$ and $-1$ with $L'$.

  On the other hand, the induced birational map $\phi \colon Y
  \dashrightarrow Y'$ is an isomorphism once one removes $L$ from $Y$
  and $L'$ from $Y"$. This $\phi$ is called a (or ``the classic'')
  \emph{flop.}  It is also sometimes called the ``Atiyah flop''
  after~\cite{Atiyah:1958}, though Reid traces it back through work of
  Zariski in the 1930s, and assigns it a birthdate of around 1870.

  Let $Z$ be the blowup of the origin of $X$.  Then $Z$ is in fact the
  closed graph of $\phi$ and there is a diagram
  \[
  \xymatrix{
    & Z \ar[dl] \ar[dr] \\
    Y \ar[dr]_f \ar@{-->}[rr]^\phi& & Y' \ar[dl]^{f'}\\
    & X }
  \] 
  The exceptional surface of $Z \to X$ is the quadric $Q = \PP^1
  \times \PP^1$, which  is cut out by two families of lines.
  The lines $L$ and $L'$ are the contractions of $Q$ along these two
  rulings, and conversely $Q$ is the blowup of $L \subset Y$, resp.\
  $L' \subset Y'$.
\end{example}

The next definition is a special case of the usual
definition of a flop~\cite[6.10]{Kollar-Mori:1998} (in general, one
need not assume $Y$ and $Y'$ are smooth, nor that $X$ is Gorenstein).

\begin{definition}
  \label{def:flipflop}
  Let $Y$ and $Y'$ be smooth projective varieties.  A birational map
  $\phi \colon Y \dashrightarrow  Y'$ is a \emph{flop} if
  there is a diagram
  \[
  \xymatrix{
    Y \ar@{-->}[rr]^\phi \ar[dr]_f && Y' \ar[dl]^{f'}\\
    &X
  }
  \]
  where $X$ is a normal projective Gorenstein variety, $f$ and $f'$
  are small resolutions of singularities, and there is a divisor $D$
  on $Y$ such that, if $D'$ is the strict transform of $D$ on $Y'$,
  then $-D'$ is ample.

  Say $\phi \colon  Y \dashrightarrow Y'$ is a \emph{generalized
    flop} if for some (equivalently, for every) diagram
  \[
  \xymatrix{
    & Z \ar[dl]_\pi \ar[dr]^{\pi'}\\
    Y  \ar@{-->}[rr]^\phi& & Y' 
  }\] 
  with $Z$ smooth, there is  an equality $\pi^*K_Y ={\pi'}^*K_{Y'}$.
\end{definition}

It is known that the existence of a crepant resolution forces
canonical singularities, so that in particular if $X$ participates in
a flop as above, it has canonical singularities.  On the other hand,
if $X$ is $\QQ$-factorial and has terminal singularities, then it can
have no crepant resolution of singularities~\cite[Corollary
4.11]{Kollar:1989} (this is one explanation of the name ``terminal'').

Bondal and Orlov~\cite{Bondal-Orlov:2002} observed that one ingredient
of the minimal model program, namely the blowup $\tilde X$ of a smooth
variety $X$ at a smooth center, induces a fully faithful functor on
derived categories $\D^b(\coh X) \to \D^b(\coh \tilde X)$.  They
propose that each of the operations of the program should induce such
fully faithful embeddings.  In particular, they make the following
conjecture.

\begin{conj}
  [Bondal--Orlov]
  \label{conj:BO}
  For any generalized flop $\phi \colon Y \dashrightarrow Y'$ between
  smooth varieties, there is an equivalence of triangulated categories
  $\catf \colon \D^b(\coh Y') \to \D^b(\coh Y)$.
\end{conj}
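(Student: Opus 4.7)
The plan is to produce a Fourier--Mukai functor with an explicit kernel built from the geometry of the flop, and then to verify that it is an equivalence. Since the statement is Zariski-local on a common base, and a generalized flop is an isomorphism outside a set of codimension at least two, I would first reduce to the case that $X = \Spec R$ is an affine neighbourhood of the flopped locus and that both $f \colon Y \to X$ and $f' \colon Y' \to X$ are small crepant resolutions of a Gorenstein $X$ with canonical singularities.

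Next I would construct the candidate functor. Form the fibre product $W = Y \times_X Y'$, with its projections $q \colon W \to Y$ and $q' \colon W \to Y'$, and define
\[
\catf = \R q'_*\, \mathbf{L} q^* \colon \D^b(\coh Y) \longrightarrow \D^b(\coh Y'),
\]
that is, the Fourier--Mukai transform with kernel $\calo_W \in \D^b(\coh(Y \times Y'))$. Boundedness follows from properness of $q, q'$ (inherited from $Y, Y' \to X$) and smoothness of $Y, Y'$. On the open locus where $\phi$ is an isomorphism, $W$ is the graph of $\phi$ and $\catf$ reduces to pushforward along $\phi$, which is visibly an equivalence. To globalize, I would bring in a common smooth resolution $Z$ with maps $\pi \colon Z \to Y$ and $\pi' \colon Z \to Y'$ satisfying $\pi^* K_Y = {\pi'}^* K_{Y'}$ as in Definition~\ref{def:flipflop}, and compare $\catf$ with $\R \pi'_*\, \mathbf{L} \pi^*$; the crepancy equality, combined with Grothendieck duality and GR Vanishing (Theorem~\ref{thm:GRvan}), should ensure the natural transformations intertwining the two functors are quasi-isomorphisms on a generating family.

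The main obstacle, and the reason this is still a conjecture in general, is the verification over the flopping locus itself. One must show the adjoints of $\catf$ are two-sided inverses, which reduces, by Nakayama-type arguments, to analysing $\catf$ on skyscraper sheaves at singular points of $X$ and then invoking a Bondal--Orlov style recognition criterion for equivalences of derived categories. In dimension three, Bridgeland~\cite{Bridgeland:2002} executed exactly this strategy by realising $Y'$ as a fine moduli space of perverse point sheaves on $Y$, thereby identifying the universal family with a suitable twist of $\calo_W$. In higher dimensions no analogous moduli construction is known to behave well, and this is precisely where the argument currently breaks down.

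An alternative route, more in the spirit of the present article, is to seek a non-commutative crepant resolution $\Lam$ of $R$ together with tilting bundles $T$ on $Y$ and $T'$ on $Y'$ whose endomorphism algebras are both isomorphic to $\Lam$. Theorem~\ref{thm:geomtilt} would then produce
\[
\D^b(\coh Y) \;\simeq\; \D^b(\lmod \Lam) \;\simeq\; \D^b(\coh Y'),
\]
and the composite is the desired equivalence $\catf$. The obstruction is the same in either approach: constructing a compatible triple $(\Lam, T, T')$ is delicate even for three-dimensional compound Du Val singularities (which is essentially Van den Bergh's theorem) and is wide open in higher dimensions, where it is effectively equivalent to the conjecture itself.
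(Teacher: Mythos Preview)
This statement is a \emph{conjecture}, not a theorem; the paper offers no proof and explicitly records that it remains open beyond dimension three. So there is nothing in the paper's own argument to compare your proposal against, and your text is better read as a survey of possible attacks than as a proof.

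That said, your first approach has a concrete defect the paper itself flags. You take as kernel the structure sheaf $\calo_W$ of $W = Y \times_X Y'$ and assert that crepancy together with Grothendieck duality and GR Vanishing ``should ensure'' the resulting functor is an equivalence. Immediately after stating the conjecture the paper notes that exactly this natural Fourier--Mukai functor $\R\pi_* \L{\pi'}^*(-)$ is \emph{known not to be fully faithful in general}, so some genuinely new idea is required; no amount of GR Vanishing or duality repairs this. Your summary of Bridgeland's argument is also a bit glib: the kernel he uses is not merely ``a suitable twist of $\calo_W$'' but a carefully constructed object $\calp \in \D^b(\coh(Y \times_X Y'))$ coming from the moduli of perverse point sheaves, and the paper stresses that building $\calp$ is ``very difficult and is the heart of the proof''.

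Your second route, via a common non-commutative crepant resolution $\Lam$ with compatible tilting bundles on $Y$ and $Y'$, is precisely the philosophy behind Van den Bergh's Theorem~\ref{thm:VdBmain} and Conjecture~\ref{conj:VdB}. But as you yourself concede, producing such a triple $(\Lam, T, T')$ in arbitrary dimension is essentially equivalent to the conjecture. So neither route amounts to a proof, which is consistent with the statement's status as an open conjecture.
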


Notice that even though there always exists the natural Fourier-Mukai
type functor $\R\pi_* \L{\pi'}^*(-) \colon \D^b(\coh Y') \to \D^b(\coh
Y)$, this is known not to be fully faithful in general, so some new
idea is needed.

Bondal and Orlov proved Conjecture~\ref{conj:BO} in some special cases
in dimension three, and Bridgeland~\cite{Bridgeland:2002} gave a
complete proof for threefolds.  Here is Bridgeland's result.

\begin{theorem}
  [Bridgeland]
  \label{thm:bridgeland}
  Let $X$ be a projective complex threefold with terminal
  singularities.  Let $f \colon Y \to X$ and $f' \colon Y' \to X$ be
  crepant resolutions of $X$.  Then $\D^b(\coh Y) \simeq \D^b(\coh
  Y')$. \qed 
\end{theorem}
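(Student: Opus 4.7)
The plan is to construct the equivalence as a Fourier--Mukai transform $\Phi \colon \D^b(\coh Y) \to \D^b(\coh Y')$ whose kernel is obtained by realizing $Y'$ as a moduli space of objects inside $\D^b(\coh Y)$. As noted in the remark just before the theorem, the naive candidate $\R\pi_*\L{\pi'}^*$ fails to be fully faithful in general, so a genuinely new kernel is needed, and moduli theory will supply it.

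First I would reduce to the local case. By the three-dimensional minimal model theory (Kawamata, Kollár, Mori, Reid), any two crepant resolutions of a projective threefold with terminal singularities are connected over $X$ by a finite chain of flops, and derived equivalences compose; so it suffices to assume $\phi \colon Y \dashrightarrow Y'$ is a single elementary flop. Shrinking on the base, I may further assume $X$ is affine with an isolated compound Du Val singularity, whence the exceptional loci of $f$ and $f'$ are trees of rational curves.

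The central tool is a tilted heart $\mathrm{Per}(Y/X) \subset \D^b(\coh Y)$ of \emph{perverse coherent sheaves relative to $f$}, obtained from the standard t-structure by tilting along the torsion pair cut out by $\R f_*$-vanishing: $E \in \mathrm{Per}(Y/X)$ iff $H^i(E) = 0$ for $i \neq -1, 0$, with $\R^0 f_* H^{-1}(E) = 0$, $\R^1 f_* H^0(E) = 0$, and $\Hom(H^0(E), C) = 0$ for every coherent $C$ with $f_*C = 0$ supported on the exceptional fiber. Smallness of $f$ together with the crepancy condition $\omega_{Y/X} \cong \caloy$ makes $\mathrm{Per}(Y/X)$ a well-behaved noetherian abelian category in which structure sheaves of points away from $\mathrm{Exc}(f)$ remain simple, while points on $\mathrm{Exc}(f)$ are replaced by new simple \emph{perverse point sheaves}.

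The crux, and the main expected obstacle, is to show that the moduli functor $M$ of simple objects of $\mathrm{Per}(Y/X)$ with the K-theoretic class of a skyscraper $\calo_y$ is represented by a smooth fine moduli space projective over $X$, and that this $M$ is canonically isomorphic to $Y'$ over $X$. Representability demands a GIT/stability setup, smoothness of $M$ reduces to $\Ext^2(E,E) = 0$ for each perverse point sheaf $E$ (where the relative Calabi--Yau property $\omega_{Y/X}\cong \caloy$ plus Serre duality is decisive), and the identification $M \cong Y'$ is obtained by matching universal properties, exploiting the symmetric fact that $Y$ itself can be recovered as the analogous moduli space for $f'$. Once $M \cong Y'$ is in hand, the universal family $\calu \in \D^b(\coh(Y \times_X Y'))$ is the sought-for Fourier--Mukai kernel, and $\Phi = \R (p_{Y'})_*(\calu \lotimes p_Y^*(-))$ is checked to be an equivalence by the Bondal--Orlov spanning-class criterion applied to skyscrapers on $Y$ (whose images are the simple perverse point sheaves on $Y'$), with essential surjectivity following from the existence of Serre functors intertwined by $\Phi$.
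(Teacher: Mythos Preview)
The paper does not actually supply a proof of this theorem; it is stated as a cited result of Bridgeland with only the brief post-statement remark that the equivalence is a Fourier--Mukai functor with kernel $\calp \in \D^b(\coh(Y \times_X Y'))$ whose construction ``is very difficult and is the heart of the proof.'' Your sketch is precisely Bridgeland's construction of that kernel: reduce to a single flop via three-dimensional MMP, tilt to the abelian category $\mathrm{Per}(Y/X)$ of perverse sheaves relative to $f$, build the fine moduli space of perverse point sheaves (with smoothness from $\Ext^2$-vanishing via relative Serre duality and $\omega_{Y/X}\cong\caloy$), identify it with the flop, and take the universal family as kernel. So your approach and the paper's (referenced) approach coincide.

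One small correction to your outline: the identification of the moduli space $M$ with $Y'$ is not really by ``matching universal properties'' in Bridgeland's argument. Rather, he shows directly that $M \to X$ is a crepant resolution distinct from $Y$, and then invokes the uniqueness of three-dimensional flops (Koll\'ar) to conclude $M \cong Y'$. The ``symmetric'' observation you mention is used, but for a different purpose---to verify that the resulting Fourier--Mukai functor is an equivalence rather than merely fully faithful. Also, the reduction to the affine base is not strictly necessary: the category $\mathrm{Per}(Y/X)$ and the moduli construction work globally for any projective birational $f$ with one-dimensional fibres and $\R f_*\caloy = \calox$, and the projective hypothesis on $X$ is what ultimately lets one invoke the Bondal--Orlov criterion via Serre functors.
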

The equivalence in this theorem is a Fourier-Mukai type functor of the
form $\R f_* (\calp \lotimes {f'}^* (-))$, where $\calp$ is a
well-chosen object of $\D^b(\coh (Y \times_X Y'))$.  In fact the
construction of $\calp$ is very difficult and is the heart of the
proof.

\section{Categorical desingularizations}\label{sect:cat-desing}

Now let us combine the philosophical ramblings of
\S\ref{sect:non-existence} with the concrete problems of
\S\S\ref{sect:resns} and~\ref{sect:MMP}.  Treating commutative and
non-commutative varieties---in the form of their derived
categories---on equal footing, one can entertain the notion of a
\emph{resolution of a commutative algebraic variety by a
  non-commutative one.}  Bondal and Orlov~\cite{Bondal-Orlov:2002}
seem to have been the first to articulate such a possibility in pure
mathematics.  Other authors have considered modified or specialized
versions, e.g.~\cite{Bezrukavnikov:2006, Kuznetsov:2008, Lunts:2010}.

To begin, let us consider resolutions of singularities from a
categorical point of view.  Let $X$ be a normal algebraic variety, and
let $\pi \colon \tilde X \to X$ be a resolution of singularities.
There are two natural functors between derived categories, namely the
derived pushforward $\R \pi_* \colon \D^b(\coh \tilde X) \to \D^b(\coh
X)$ and the derived pullback $\L\pi^* \colon \D(\coh X) \to \D(\coh
\tilde X)$.  The derived pullback may not take bounded complexes to
right-bounded ones, so does not generally give a functor on $\D^b$.
One could restrict $\L \pi^*$ to the {perfect complexes} over $X$
and write instead $\L \pi^* \colon \D^{\perf}(\coh X) \to
\D^{\perf}(\coh \tilde X) = \D^b(\coh X)$.

The pullback and pushforward form an adjoint pair.  If $X$ is assumed
to have rational singularities, much more can be said.  For an object
$\cale$ in $\D^b(\coh \tilde X)$ and a perfect complex $\calp$ over
$X$, the derived projection formula gives
\[
\R\pi_*(\cale \lotimes_{\calo_{\tilde X}} \L\pi^* \calp) = \R\pi_*
\cale \lotimes_{\calox} \calp\,.
\]
In particular, setting $\cale = \calo_{\tilde X}$ and taking into
account $\R \pi_* \calo_{\tilde X} = \calox$, this yields
\[
\R\pi_* \L \pi^* \calp = \calp
\]
for every perfect complex $\calp$ in $\D^b(\coh X)$.  Otherwise said,
$\R \pi_* \colon \D^b(\coh \tilde X) \to \D^b(\coh X)$ identifies the
target with the quotient of the source by the kernel of $\R \pi_*$.
Bondal and Orlov propose to take this as a template:
\begin{definition}
  [Bondal--Orlov]\label{def:cat-desing}
  A \emph{categorical desingularization} of a
  triangulated category $\D$ is an abelian category $\catc$ of finite
  homological dimension and a triangulated subcategory $\catk$ of
  $\D^b(\catc)$, closed under direct summands, such that
  $\D^b(\catc)/\catk \simeq \D$.
\end{definition}

One problem with this definition is the assumption that $\catc$ have
finite homological dimension.  As observed in
\S\ref{sect:non-existence}, this is a very weak condition when $\catc$
is the category of modules over a non-commutative ring.  There are a
number of proposals for a better---that is, more restrictive---notion
of \emph{smoothness} for a (triangulated) category, but as far as I
can tell, no consensus on a best candidate~\cite{Kontsevich-Soibelman,
  Toen-Vaquie, Kuznetsov:2008, Lunts:2010}.

As an aside, I note here that the condition for $\pi\colon \tilde X
\to X$ to be crepant can be translated into categorical language as
``the right adjoint functor $\pi^{!}$, which is locally represented by
$\Hom_\calox(\calo_{\tilde X},-)$, is isomorphic to $\pi^*$.''  We
won't need this.

Let us reconsider Example~\ref{eg:Atiyah} from the point of view of
categorical geometry. This can be thought of as a warmup for
\S\ref{sect:detX}.

\begin{example}
  \label{eg:Atiyah2}
  Set $R = \CC[u,v,x,y]/(uv-xy)$, so that $X= \Spec R$ is the
  three-dimensional ordinary double point as in
  Example~\ref{eg:Atiyah}.  Let $I = (u,x)$ and $I' = (u,y)$.  Then in
  fact $I' = I^{-1} = I^* = \Hom_R(I,R)$ is the dual of $I$.  Notice
  too that $\End_R(I) = R$, either by direct computation or by
  Theorem~\ref{thm:Grauert-Remmert} below.

  Let $f \colon Y \to X$ and $f' \colon Y ' \to X$ be the blowups of
  $I$ and $I'$ as before.  On $Y$, consider the locally free sheaf
  $\cale = \caloy \oplus \caloy(1)$, which is the pullback of $\calox
  \oplus \cali$, where $\cali$ is the ideal sheaf of $I$.
  Straightforward calculations (or see \S\ref{sect:detX}) show that
  $\cale$ is a \emph{tilting bundle} on $Y$
  (Definition~\ref{def:tilt-obj}), and hence
  $\rHom_{\caloy}(\cale,-)\colon \D^b(\coh Y) \to \D^b(\lmod \Lam)$ is
  an equivalence, where $\Lam = \End_\caloy(\cale)$.  Furthermore, we
  have 
  \[
  \Lam \cong f_* \cEnd_\caloy(\cale) = \End_R(R \oplus I)\,,
  \]
  which can also be written as a block-matrix ring
  \[
  \Lam = 
  \begin{pmatrix}R & I \\ I^{-1} & \End_R(I)=R\end{pmatrix}\,.
  \]
  The induced functor $\D^b(\lmod \Lam) \to \D^b(\coh X)$ is then
  obviously a categorical desingularization.

  Repeating the construction above with $\cale' = \calo_{Y'} \oplus
  \calo_{Y'}(1)$ on $Y'$, one obtains $\Lam' = \End_R(R \oplus I')$.
  But since $I' = I^{-1}$, $\Lam'$ is isomorphic to $\Lam$.  This
  implies equivalences 
  \[
  \D^b(\coh Y) \simeq \D^b(\lmod \Lam) \simeq \D^b(\coh Y')\,.
  \]
\end{example}

Inspired by the example above and others from the minimal model
program, Bondal and Orlov expect that for a singular variety $X$, the
category $\D^b(\coh X)$ should have a \emph{minimal} categorical
desingularization, i.e.\ one embedding in any other.  Such a
category would be unique up to derived equivalence.  They propose in
particular the following conjecture.

\begin{conj}
  [Bondal--Orlov~\cite{Bondal-Orlov:2002}]\label{conj:BO2}
  Let $X$ be a complex algebraic variety with canonical singularities
  and let $f \colon Y \to X$ be a finite morphism with $Y$ smooth.
  Then $\cala = \cEnd_\calox(f_* \caloy)$ gives a minimal categorical
  desingularization, in the sense that $\lmod \cala$ has finite global
  dimension and if $\tilde X \to X$ is any other resolution of
  singularities of $X$, then there exists a fully faithful embedding
  $\D^b(\lmod \cala) \to \D^b(\coh \tilde X)$.  Moreover, if $\tilde X
  \to X$ is crepant, then the embedding is an equivalence. 
\end{conj}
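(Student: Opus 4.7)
The plan is to work locally and exhibit $\cala = \End_R(S)$ (where $R = \calo_{X,x}$ and $S = (f_*\caloy)_x$) as a non-singular $R$-order, then for each resolution $\pi\colon\tilde X\to X$ to produce a perfect complex $\cale\in\D^b(\coh\tilde X)$ whose derived endomorphism algebra is $\cala$ concentrated in degree zero. Geometric tilting (Theorem~\ref{thm:geomtilt}) will then provide the fully faithful embedding $\D^b(\lmod\cala)\into\D^b(\coh\tilde X)$, and crepancy should upgrade this to an equivalence.

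\textbf{Finite global dimension of $\cala$.} Since $Y$ is smooth and $f$ is finite, $S$ is regular and is maximal Cohen--Macaulay as an $R$-module, so $\cala$ is an $R$-order (Definition~\ref{def:order}). Because $X$ is normal and $f$ dominant, the inclusion $R\into S$ makes $S$ a generator for $\lmod R$, and Proposition~\ref{prop:projectivization} yields an equivalence $\Hom_R(S,-)\colon\add S \xrightarrow{\sim}\add\cala$. For a $\cala$-module $N$ of finite length, I would build a finite projective $\cala$-resolution by exploiting this projectivization together with the regularity of $S$: resolutions on the smooth side translate through $\Hom_R(S,-)$ into $\cala$-projective resolutions, with extension-closure of $\add\cala$ allowing them to be spliced. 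By Proposition~\ref{prop:center-finite}(iv) it suffices to bound $\pd_\cala N$ for such $N$; the bound $\gldim\cala \leq \dim R$ then follows, and $\cala$ is non-singular by Proposition~\ref{prop:nonsingorder}.

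\textbf{The embedding.} Canonical singularities are rational (Proposition~\ref{prop:crep-ratl}), so $\R\pi_*\calo_{\tilde X}=\calox$, and the projection formula gives $\R\pi_*\L\pi^*\calf \cong \calf$ for every $\calf\in\D^b(\coh X)$. Set $\cale = \L\pi^*(f_*\caloy)\in\D^b(\coh\tilde X)$; adjunction yields
\[
\R\cHom_{\calo_{\tilde X}}(\cale,\cale) \;\cong\; \R\cHom_\calox\!\left(f_*\caloy,\, \R\pi_*\cale\right) \;\cong\; \R\cHom_\calox(f_*\caloy, f_*\caloy)\,.
\]
The central technical step is to show the right-hand side is concentrated in degree zero, where it equals $\cala$. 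I would attempt this by translating higher $\Ext_R^i(S,S)$ into $\Ext$ groups over the smooth ring $S$ via a change-of-rings spectral sequence, exploiting that $f$ is finite and $S$ is regular. Once the vanishing is in hand, $\cale$ is a ``partial'' tilting object on $\tilde X$ and Theorem~\ref{thm:geomtilt} produces the claimed fully faithful embedding $-\lotimes_\cala\cale \colon \D^b(\lmod\cala) \into \D^b(\coh\tilde X)$.

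\textbf{Crepant case and main obstacle.} When $\pi$ is crepant, the relative dualizing complex $\omega_{\tilde X/X}$ is trivial, so $\pi^!\cong\pi^*$. Combined with Serre duality on $\tilde X$, this symmetry forces any object $\calg$ in the right-orthogonal of $\cale$ to satisfy $\R\pi_*\calg=0$, and then $\calg=0$ itself by the $\pi^!\cong\pi^*$ identification; hence $\cale$ classically generates $\D^b(\coh\tilde X)$ and the embedding is an equivalence. The hardest step in the entire program is the vanishing of $\Ext_\calox^{>0}(f_*\caloy, f_*\caloy)$: this is the single place where the smoothness of $Y$, the canonical singularities of $X$, and (in the crepant case) the crepancy of $\pi$ must all be exploited simultaneously. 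The known special cases --- Bridgeland's threefold theorem (Theorem~\ref{thm:bridgeland}) and the McKay correspondence --- confirm the shape of the argument, but no general mechanism is known, which is precisely why the conjecture remains open.
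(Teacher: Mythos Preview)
The statement you are addressing is a \emph{conjecture}, not a theorem: the paper explicitly labels it as such (attributed to Bondal--Orlov) and offers no proof. The surrounding text says only that ``the next \S{} will consider another family of examples providing strong evidence for this conjecture,'' and indeed the McKay correspondence in \S\ref{sect:McKay} and Van den Bergh's results in \S\ref{sect:NCCRs} are presented as \emph{evidence}, never as a proof. So there is nothing in the paper to compare your argument against; you are attempting to prove an open problem, and your own final paragraph concedes as much.

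That said, your outlined strategy has a concrete technical obstruction beyond the acknowledged $\Ext$-vanishing difficulty. You set $\cale = \L\pi^*(f_*\caloy)$ and want to invoke geometric tilting, but $f_*\caloy$ is a MCM sheaf on the \emph{singular} variety $X$, hence typically has infinite Tor-dimension over $\calox$. Consequently $\L\pi^*(f_*\caloy)$ is unbounded below and does not lie in $\D^b(\coh\tilde X)$, let alone in $\D^\perf(\tilde X)$; Theorem~\ref{thm:geomtilt} therefore does not apply to it. The successful instances of the conjecture (Theorems~\ref{thm:kap-vass}, \ref{thm:BKR}, \ref{thm:VdBmainflops}) do not proceed by pulling back $f_*\caloy$ along $\pi$; rather, they construct a genuine tilting bundle on $\tilde X$ by other means (the universal family on a Hilbert scheme, or extensions of an ample line bundle) and then identify its endomorphism ring with $\cala$ after the fact. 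Finding such a bundle in general is precisely the missing ingredient.
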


In the next \S{} I will consider another family of examples providing
strong evidence for this conjecture.

\section{Example: the McKay correspondence}\label{sect:McKay}

In this \S{} I sketch a main motivating example, already
foreshadowed in Example~\ref{eg:ADE}.  The finite subgroups of
$\SL(2,\CC)$ were carefully studied by Klein in the 1880s, and the
resolutions of the corresponding singularities $\CC^2/G = \Spec
\CC[u,v]^G$ were understood by Du Val in the 1930s.  The structure of
the resolution faithfully reflects the representation theory of the
group $G$, as observed by McKay~\cite{McKay}, and the correspondence
naturally extends to the reflexive modules over the (completed)
coordinate ring $\CC[\![u,v]\!]^G$.  Even more, there is a natural
resolution of singularities of the quotient singularity, built from
the group $G$, which is derived equivalent to a certain
non-commutative ring built from these reflexive modules.  Thus the
group $G$ already knows the geometry of $\CC^2/G$ and its resolution of
singularities.

This \S{} is about this circle of ideas, which together go by
the name ``McKay correspondence.''  I consider first, more generally,
finite subgroups $G \subset \GL(n,k)$ with $n \geq 2$ and $k$ a field
of characteristic relatively prime to $|G|$.  Then I specialize to
$n=2$ and subgroups of $\SL$, where the strongest results hold.
See~\cite{Leuschke-Wiegand:BOOK} or~\cite{Yoshino:book} for proofs.

Let $S = k[\![x_1, \dots, x_n]\!]$ be a power series ring over an
algebraically closed field $k$ with $n \geq 2$.  Let $G \subset
\GL(n,k)$ be a finite subgroup with order invertible in $k$.  Make $G$
act on $S$ by linear changes of variables, and set $R = S^G$, the ring
of invariants. The ring $R$ is noetherian, local, and complete, of
dimension $n$.  It is even CM by the Hochster--Eagon
theorem~\cite{Hochster-Eagon:1971}.  Furthermore, $S$ is a
module-finite $R$-algebra, and is a maximal Cohen--Macaulay
$R$-module.

The central character in the story is the skew, or twisted, group
algebra $\SG$. As an $S$-module, $\SG$ is free on the elements of $G$,
and the product of two elements $s \cdot \sigma$ and $t \cdot \tau$,
with $s,t \in S$ and $\sigma, \tau \in G$, is defined by
\(
(s \cdot \sigma) (t \cdot \tau) = s \sigma(t) \cdot \sigma \tau\,.
\)
Thus moving $\sigma$ past $t$ ``twists'' the ring element.

Left modules over $\SG$ are precisely $S$-modules with a compatible
action of $G$, and one computes $\Hom_{\SG}(M,N) = \Hom_S(M,N)^G$ for
$\SG$-modules $M$ and $N$. Since the order of $G$ is invertible,
taking invariants is an exact functor, whence $\Ext_{\SG}^i(M,N) =
\Ext_S^i(M,N)^G$ for all $i>0$ as well.  It follows that an
$\SG$-module $P$ is projective if and only if it is free over
$S$. This, together with a moment's contemplation of the
($G$-equivariant) Koszul complex over $S$ on $x_1, \dots, x_n$, gives
the following observation.

\begin{prop}
  \label{prop:SGfingldim}
  The twisted group ring $\SG$, where $S = k[\![x_1,\dots, x_n]\!]$
  and $G$ is a finite group of linear automorphisms of $S$ with order
  invertible in $k$, has finite global dimension equal to $n$.  \qed
\end{prop}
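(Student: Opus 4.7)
The plan is to sandwich $\gldim \SG$ between $n$ and $n$, using the $\Ext$-invariance formula recalled just above for the upper bound and the Koszul complex of a regular system of parameters for the lower.

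For the upper bound, $S = k[\![x_1, \dots, x_n]\!]$ is a regular local ring of Krull dimension $n$, so $\gldim S = n$ and $\Ext^i_S(M,N) = 0$ for all $i>n$ and all $S$-modules $M,N$. Applying the identification $\Ext^i_{\SG}(M,N) = \Ext^i_S(M,N)^G$ for $\SG$-modules $M,N$, one concludes $\Ext^i_{\SG}(M,N) = 0$ for $i>n$, whence $\pd_{\SG} M \le n$ for every $\SG$-module $M$ and therefore $\gldim \SG \le n$.

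For the lower bound, I would pinpoint $\pd_{\SG} k = n$ exactly, where $k = S/\m$ is regarded as an $\SG$-module with the trivial $G$-action (legitimate because $G$ acts linearly on $S$ and so preserves $\m$ while fixing the constants). Let $V = k x_1 \oplus \cdots \oplus k x_n$, equipped with the $G$-action inherited from $G \subset \GL(n,k)$, and consider the Koszul complex
$$
0 \to S \otimes_k \w^n V \to \cdots \to S \otimes_k V \to S \to k \to 0\,.
$$
This is exact since $x_1, \dots, x_n$ is a regular sequence in the regular local ring $S$, and it is $G$-equivariant for the diagonal action on each $S \otimes_k \w^i V$: the compatibility $d(g(s)\otimes g(v)) = g(sv) = g\cdot d(s\otimes v)$ is immediate from $G$ acting by ring automorphisms. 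Each $S \otimes_k \w^i V$ is free over $S$, hence projective over $\SG$ by the criterion recorded just before the proposition, so the displayed complex is an $\SG$-projective resolution of $k$ of length $n$, giving $\pd_{\SG} k \le n$. Conversely, any $\SG$-projective resolution of $k$ is simultaneously a free $S$-resolution, so $\pd_{\SG} k \ge \pd_S k = n$. Combining, $\pd_{\SG} k = n$, hence $\gldim \SG \ge n$.

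The single point in the argument that really leans on the hypothesis $|G|^{-1}\in k$ is the characterisation of $\SG$-projectivity as $S$-freeness (equivalently, the exactness of the invariants functor $(-)^G$); once that is in hand, no serious obstacle remains.
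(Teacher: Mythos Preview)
Your proof is correct and follows essentially the same approach as the paper, which merely sketches it in the paragraph preceding the proposition: the $\Ext$-invariance formula $\Ext^i_{\SG}(M,N)=\Ext^i_S(M,N)^G$ together with the criterion ``$\SG$-projective $\iff$ $S$-free'' and the $G$-equivariant Koszul complex on $x_1,\dots,x_n$. You have carefully unpacked the ``moment's contemplation'' the paper invokes, including the observation that any $\SG$-projective resolution of $k$ is simultaneously an $S$-free resolution, forcing $\pd_{\SG}k\ge\pd_S k=n$.
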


The ``skew'' multiplication rule in $\SG$ is cooked up precisely so
that the homomorphism $\gamma \colon \SG \to \End_R(S)$, defined by
$\gamma(s\cdot \sigma)(t) = s \sigma(t)$, is a ring homomorphism
extending the group homomorphism $G \to \End_R(S)$ defining the action
of $G$ on $S$.  In general, $\gamma$ is neither injective nor
surjective, but under an additional assumption on $G$, it is both.
Recall that a \emph{pseudo-reflection} is an element $\sigma \in
\GL(n,k)$ of finite order which fixes a hyperplane. 

\begin{theorem}
  [Auslander~{\cite{Auslander:1962, Auslander:rationalsing}}]
  \label{thm:Aus-McKay}
  Let $S = k[\![x_1, \dots, x_n]\!]$, $n \geq 2$, let $G \subset
  \GL(n,k)$ be a finite group acting on $S$, and assume $|G|$ is
  invertible in $S$.  Set $R = S^G$.  If $G$ contains no non-trivial
  pseudo-reflections then the homomorphism $\gamma \colon
  \SG \to \End_R(S)$ is an isomorphism.  

  Consequently, in this case $\End_R(S)$ has finite global dimension
  and as an $R$-module is isomorphic to a direct sum of copies of $S$,
  so in particular is a MCM $R$-module.  \qed
\end{theorem}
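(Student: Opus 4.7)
The plan is to establish $\gamma$ is an isomorphism by working at three levels: the kernel (linear independence of characters), the generic fibre (classical Galois theory), and the height-one primes (using no-pseudo-reflections as étaleness in codimension one).

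For injectivity, suppose $\sum_\sigma s_\sigma\cdot\sigma\in\Ker\gamma$, so that $\sum_\sigma s_\sigma\sigma(t) = 0$ for all $t\in S$. Passing to the fraction fields $L$ of $S$ and $K$ of $R$, the elements $\sigma$ become distinct $K$-algebra automorphisms of $L$, and Dedekind's linear independence of characters forces $s_\sigma = 0$ for all $\sigma$. Next, $L/K$ is Galois with group $G$, and the classical identification of the twisted group algebra attached to a Galois extension of fields with the full endomorphism ring shows that $\gamma$ becomes an isomorphism after tensoring with $K$.

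The main hurdle is promoting surjectivity from the generic fibre back to $S$. The first step is to translate the hypothesis: no-pseudo-reflections is equivalent to the extension $R\subset S$ being étale in codimension one, since a non-identity element of the inertia group at a height-one prime of $S$ must fix a hyperplane in $V$ pointwise and hence be a pseudo-reflection. Given $\phi\in\End_R(S)$, write $\phi_L = \sum_\sigma c_\sigma\sigma$ with $c_\sigma\in L$ uniquely determined; the task is to verify $c_\sigma\in S$. Fix a height-one prime $\p$ of $R$: since $S_\p$ is finite flat over the DVR $R_\p$, it is free of rank $|G|$, with $R_\p$-basis $e_1,\dots,e_{|G|}$, say. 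The system $\phi(e_i) = \sum_\sigma c_\sigma\sigma(e_i)$ has matrix form $\vec\phi = M\vec c$ where $M_{i\sigma} = \sigma(e_i)\in S_\p$; since $\det(M)^2$ is the discriminant of $\{e_i\}$, which by étaleness is a unit in $R_\p$, the determinant $\det(M)$ itself is a unit in $S_\p$, and Cramer's rule delivers $c_\sigma\in S_\p$. As this holds at every height-one prime of $R$, and every height-one prime of $S$ lies over one in $R$ by going-down for the integral extension of normal domains, normality of $S$ gives $c_\sigma\in\bigcap_\q S_\q = S$.

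The consequences fall out immediately: $\End_R(S)\cong\SG$ inherits finite global dimension $n$ from Proposition~\ref{prop:SGfingldim}, and as an $R$-module is isomorphic to $\bigoplus_{\sigma\in G}S \cong S^{|G|}$, which is MCM since $S$ is.
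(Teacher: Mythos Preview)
The paper does not prove this theorem; it is stated with a terminal \qed and attributed to Auslander, with the remark immediately afterward that the no-pseudo-reflections hypothesis is equivalent to $R\hookrightarrow S$ being unramified in codimension one. So there is no in-text argument to compare against.

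Your argument is correct and follows the standard line: injectivity via Dedekind independence of characters, isomorphism at the generic point by Galois theory and a dimension count, and then promotion of surjectivity from codimension one back to $S$ using the discriminant criterion for unramifiedness together with normality of $S$. The passage from $c_\sigma\in S_\p$ for all height-one $\p\subset R$ to $c_\sigma\in S$ is handled correctly: each height-one $\q\subset S$ contracts to a height-one $\p\subset R$ by integrality over a normal base, whence $S_\p\subseteq S_\q$, and $S=\bigcap_\q S_\q$ by normality of $S$.

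One spot that deserves an extra sentence is your parenthetical that a nontrivial inertia element at a height-one prime must be a pseudo-reflection. The clean way to see this: if $\sigma$ lies in the inertia group at $\q=(f)$ then $\sigma(x_i)-x_i\in(f)$ for every variable $x_i$; since $\sigma$ acts linearly these differences are linear forms, so either they all vanish (and $\sigma=1$) or $f$ is, up to a unit, a linear form $\ell$ and $\sigma-1$ has image contained in $k\ell$ on $V^*$, forcing $\sigma$ to fix a hyperplane.
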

The condition that $G$ contain no non-trivial pseudo-reflections is
equivalent to the extension $R \into S$ being unramified in
codimension one~\cite[Lemma 10.7]{Yoshino:book}. 

Let $\varrho \colon G \to \GL(W)$ be a representation of $G$ on
the finite-dimensional $k$-vector space $W$.  Then $S \otimes_k W$,
with the diagonal action of $G$, is a finitely generated $\SG$-module.
It is free over $S$, whence projective over $\SG$.  The submodule of
fixed points, $M_\varrho=(S \otimes_k W)^G$, is naturally an $R$-module.  If
$\varrho$ is irreducible, then one can show that $M_\varrho$ is a
direct summand of $S$ as an $R$-module.
Conversely, given any $R$-direct summand of $S$, the corresponding
idempotent in $\End_R(S)$ defines an $\SG$-direct summand $P$ of $\SG$,
whence a representation $P/(x_1, \dots, x_n) P$ of $G$. 

\begin{corollary}
  \label{cor:McKay-gen}
  These operations induce equivalences between the categories
  $\add_R(S)$ of $R$-direct summands of $S$, $\add \End_R(S)$ of
  finitely generated projective $\End_R(S)$-modules, $\add \SG$ of
  finitely generated projective $\SG$-modules, and $\rep_k G$ of
  finite-dimensional representations of $G$.\qed
\end{corollary}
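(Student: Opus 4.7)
The plan is to chain together three equivalences. By Theorem~\ref{thm:Aus-McKay}, the ring isomorphism $\gamma \colon \SG \xrightarrow{\cong} \End_R(S)$ transports finitely generated projectives to finitely generated projectives, yielding $\add \SG \simeq \add \End_R(S)$ essentially for free. So the content lies in the other two equivalences.

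For $\add_R(S) \simeq \add \End_R(S)$, I would invoke Proposition~\ref{prop:projectivization} (projectivization) with $\Lam = R$ and $M = S$. The only hypothesis to verify is that $S$ is a generator for $\lmod R$, equivalently $R \in \add S$. Since $|G|$ is invertible in $k$, the Reynolds operator $\rho \colon S \to R$ defined by $\rho(s) = \tfrac{1}{|G|}\sum_{g \in G} g(s)$ is an $R$-linear retraction of the inclusion $R \hookrightarrow S$, so $R$ is a direct summand of $S$. Projectivization then delivers the equivalence $\Hom_R(S,-)\colon \add_R S \xrightarrow{\simeq} \add \End_R(S)^\op$, which is the desired equivalence up to the standard identification between $\add \Gam$ and $\add \Gam^\op$ (passage between left and right finitely generated projectives).

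For the final equivalence $\add \SG \simeq \rep_k G$, I would use the mutually inverse functors $F \colon W \mapsto S \otimes_k W$ (with diagonal $G$-action) and $H \colon P \mapsto P/\m P = P \otimes_S k$ (with the residual $G$-action), where $\m = (x_1, \dots, x_n) \subset S$. That $F(W) \in \add \SG$ is exactly the observation preceding Proposition~\ref{prop:SGfingldim}: $S \otimes_k W$ is free over $S$, hence $\SG$-projective. The composition $H \circ F$ is canonically the identity, $(S \otimes_k W) \otimes_S k \cong W$ as $G$-representations. For $F \circ H \cong \id$, I would verify the isomorphism first on $P = \SG$---where $\SG \otimes_S k \cong k[G]$ as $G$-representations and $S \otimes_k k[G] \to \SG$, $s \otimes g \mapsto s\cdot g$, is patently an $S$-linear, $G$-equivariant isomorphism---and then extend by additivity and passage to direct summands.

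The main technical hurdle is the last assertion: showing that the natural comparison map $S \otimes_k (P/\m P) \to P$ is an isomorphism of $\SG$-modules for every $P \in \add \SG$, not merely a surjection. This rests on the fact that any $P \in \add \SG$ is free as an $S$-module (by the argument preceding Proposition~\ref{prop:SGfingldim}), so Nakayama's lemma lifts a $k$-basis of $P/\m P$ to an $S$-basis of $P$; one then checks that the $G$-action on $P$ is determined by its action on the fiber $P/\m P$ because the twisted multiplication $(s\cdot \sigma)(t \cdot \tau) = s\sigma(t)\cdot \sigma\tau$ makes the $\SG$-module structure $S$-semilinear in a way that is pinned down once a basis is specified modulo $\m$.
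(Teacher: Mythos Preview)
Your handling of $\add_R(S) \simeq \add \End_R(S) \simeq \add \SG$ via projectivization and the ring isomorphism $\gamma$ is correct and is essentially what the paper's discussion intends.

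The third leg has a genuine gap, and in fact $\add \SG \simeq \rep_k G$ cannot be an equivalence of categories: your functor $H = (-)\otimes_S k$ is not faithful, since $\Hom_{\SG}(\SG,\SG) \cong \SG^{\op}$ maps onto $\Hom_{k[G]}(k[G],k[G]) \cong k[G]^{\op}$ with kernel $\m\SG \neq 0$. Both of your arguments for $F\circ H \cong \id$ break down accordingly. The isomorphism $S \otimes_k k[G] \xrightarrow{\sim} \SG$, $s\otimes g\mapsto s\cdot g$, is not natural with respect to $\SG$-endomorphisms (right multiplication by any $a \in \m\SG$ is annihilated by $FH$ but not by $\id$), so it does not propagate to direct summands. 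And in your last paragraph, lifting a $k$-basis of $P/\m P$ to an $S$-basis of $P$ gives an $S$-linear bijection $S \otimes_k (P/\m P) \to P$, but this map is $G$-equivariant only if the lifted basis spans a $G$-stable $k$-subspace of $P$; an arbitrary Nakayama lift has no reason to do so, and the $S$-semilinearity of the $G$-action does not force the matrices of $G$ in that basis to lie in $k$.

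What the paper's preceding discussion actually records (under the word ``equivalences'') is a bijection on isomorphism classes of indecomposables. The clean justification uses that $S$ is complete local and $\SG/\m\SG = k[G]$ is semisimple by Maschke, so $\SG$ is semiperfect; hence indecomposable projective $\SG$-modules are exactly the projective covers of the simple $k[G]$-modules, and $S \otimes_k W$ is visibly projective with top $W$. This yields $FH(P)\cong P$ for every $P\in\add\SG$, but not naturally---which is precisely the McKay bijection the paper summarizes.
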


As a final ingredient, define a quiver from the data of the 
representation theory of $G$, or equivalently---given the
correspondences above---of the $R$-module structure of $S$.
\begin{definition}[McKay~\cite{McKay}]
  \label{def:McKayquiver}
  The \emph{McKay quiver} of $G \subset \GL(n,k)$ has vertices
  $\varrho_0, \dots, \varrho_d$, a complete set of the non-isomorphic
  irreducible $k$-representations of $G$, with $\varrho_0$ the trivial
  irrep.  Denote by $\varpi$ the given $n$-dimensional representation
  of $G$ as a subgroup of $\GL(n,k)$.  Then draw $m_{ij}$ arrows
  $\varrho_i \to \varrho_j$ if the multiplicity of $\varrho_i$ in
  $\varpi \otimes_k \varrho_j$ is equal to $m_{ij}$.
\end{definition}

Now let us specialize to the case $n=2$.  Here the MCM $R$-modules are
precisely the reflexive ones.  This case is unique thanks to the
following result, which fails badly for $n\geq 3$.

\begin{lemma}
  [Herzog~{\cite{Herzog:1978}}] \label{lem:herzog} Let $S = k[\![u,
  v]\!]$, let $G\subset \GL(2,k)$ be a finite group of order
  invertible in $k$, and let $R = S^G$.  Then every finitely generated
  reflexive $R$-module is a direct summand of a direct sum of copies
  of $S$ as an $R$-module.  In particular, the MCM $R$-modules
  coincide with $\add_R(S)$, and there are only finitely many
  indecomposable ones.\qed
\end{lemma}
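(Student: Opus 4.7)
The plan is to translate the question via the projectivization functor $\Hom_R(S,-)$ into one about projective $\SG$-modules, exploiting Auslander's isomorphism $\SG \cong \End_R(S)$ (Theorem \ref{thm:Aus-McKay}).

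First I would reduce to the case that $G$ contains no non-trivial pseudo-reflections. Let $H \trianglelefteq G$ be the normal subgroup generated by the pseudo-reflections; by the Chevalley--Shephard--Todd theorem, $S^H$ is again a two-dimensional regular local ring, $G/H$ acts on $S^H$ without pseudo-reflections, and $R = (S^H)^{G/H}$. The Reynolds operator for $H$ exhibits $S^H$ as an $R$-summand of $S$, so $\add_R(S^H) \subseteq \add_R(S)$, and it suffices to prove the lemma for $(S^H, G/H)$. Assume henceforth that $G$ has no pseudo-reflections, so Theorem \ref{thm:Aus-McKay} gives $\SG \cong \End_R(S)$.

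Next, set $\Gamma = \End_R(S)^{\op}$. The Reynolds operator for $G$ shows that $R$ is an $R$-summand of $S$, so $S$ is a generator for $\lmod R$. Proposition \ref{prop:projectivization} then provides a fully faithful functor $\Hom_R(S,-)\colon \lmod R \to \lmod \Gamma$ that restricts to an equivalence $\add_R(S) \simeq \add \Gamma$ between $R$-summands of $S^n$ and finitely generated projective $\Gamma$-modules. Consequently, to prove that a reflexive $M$ belongs to $\add_R(S)$, it is enough to show that $\Hom_R(S,M)$ is projective over $\Gamma$, equivalently---by the characterization of $\SG$-projectives recalled in the excerpt---that $\Hom_R(S,M)$ is $S$-free.

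The main step is precisely this freeness. Reflexivity of $M$ over the two-dimensional normal CM ring $R$ means that $M$ is MCM, and writing
\[
\Hom_R(S,M) \cong \Hom_R(S,(M^*)^*) \cong (S \otimes_R M^*)^*
\]
exhibits $\Hom_R(S,M)$ as an $R$-dual. Dualizing a presentation $R^a \to R^b \to N \to 0$ yields $0 \to N^* \to R^b \to R^a$, and a standard depth chase with this sequence shows that any dual satisfies Serre's $(S_2)$ condition over the two-dimensional normal ring $R$, hence is MCM. Since $R \to S$ is a finite homomorphism of local rings, depth over $R$ coincides with depth over $S$, so $\Hom_R(S,M)$ is MCM over the regular two-dimensional local ring $S$; by Auslander--Buchsbaum it is $S$-free. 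This yields $M \in \add_R(S)$, and the final clause of the lemma follows from Corollary \ref{cor:McKay-gen}, which puts the indecomposables in $\add_R(S)$ in bijection with the finitely many irreducible $k$-representations of $G$. The dimension-two hypothesis is used crucially in the step ``MCM over a regular local ring implies free''; in higher dimension $\Hom_R(S,M)$ can be MCM without being $S$-free, and the lemma genuinely fails---this is the expected main obstacle to any naive generalization.
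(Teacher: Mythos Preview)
The paper does not give a proof of this lemma; it simply cites Herzog and marks the statement with \qed. So there is nothing to compare against, only your argument to evaluate.

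Your proof is correct. The reduction to the pseudo-reflection-free case via Chevalley--Shephard--Todd is legitimate (the subgroup generated by pseudo-reflections is normal since conjugates of pseudo-reflections are pseudo-reflections, and the Reynolds operator for $H$ is $S^H$-linear, hence $R$-linear, giving $S^H \in \add_R(S)$). The core step---that $\Hom_R(S,M)$ is $S$-free because it is MCM over the two-dimensional regular local ring $S$---is exactly right, and the passage from $\Gamma$-projectivity back to $M \in \add_R(S)$ is justified by full faithfulness of $\Hom_R(S,-)$ on all of $\lmod R$, not merely on $\add_R(S)$.

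Two minor remarks. First, your detour through $(S \otimes_R M^*)^*$ to show $\depth_R \Hom_R(S,M) \geq 2$ is correct but unnecessary: the paper itself notes (\S\ref{sect:MCMendo}) that $\Hom_R(-,N)$ always has depth $\geq 2$ when $N$ does, by presenting $\Hom_R(S,M)$ as a kernel of a map between copies of $M$ and applying the Depth Lemma. Second, when you invoke ``$S\#G$-projective $\iff$ $S$-free'' you are tacitly using it for right modules (since $\Gamma = \End_R(S)^{\op}$); this is fine because $(S\#G)^{\op} \cong S\#G$ via $g \mapsto g^{-1}$, or more directly because the argument ``projective $\Rightarrow$ summand of a free $\Rightarrow$ $S$-free, and conversely $S$-free $\Rightarrow$ $\Ext_S^{>0}=0 \Rightarrow \Ext_{S\#G}^{>0}=0$'' works on either side.
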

The one-one correspondences that hold for arbitrary $n$ can thus be
augmented in dimension two, giving a correspondence between the
irreducible representations of $G$ and the indecomposable MCM
$R$-modules.

Specialize one last time, to assume now that $G \subset \SL(2,k)$.
Note that then $G$ automatically contains no non-trivial
pseudo-reflections.  Furthermore, $R=S^G$ is Gorenstein by a result of
Watanabe~\cite{Watanabe:1974}; in fact, it is
classical~\cite{Klein:1884} that $\Spec R$ embeds as a hypersurface in
$k^3$, so $R \cong k[\![x,y,z]\!]/f(x,y,z)$ for some polynomial $f$.
As long as $k$ has characteristic not equal to $2$, $3$, or $5$, the
polynomials arising are precisely the ADE polynomials
of~\eqref{eq:ADE} defining the rational double points.  

The rational double points are distinguished among normal surface
singularities by the fact that their local rings have unique crepant
resolutions of singularities, which are the minimal resolutions of
singularities.  They are particularly easy to compute, being achieved
by a sequence of blowups of points (no normalization required).  The
preimage of the singular point is a bunch of rational curves $E_1,
\dots, E_n$ on the resolution.  These curves define the \emph{dual
  graph of the desingularization}: it has for vertices the irreducible
components $E_1, \dots, E_n$, with an edge joining $E_i$ to $E_j$ if
$E_i \cap E_j \neq 0$.  This graph is related to the other data as
follows.
\begin{theorem}
  [Classical McKay Correspondence]
  \label{thm:McKay2}
  Let $k$ be an algebraically closed field of characteristic not $2$,
  $3$, or $5$, and let $G \subset \SL(2,k)$ be a finite subgroup of
  order invertible in $k$.  Set $S = k[\![u,v]\!]$, with a natural
  linear action of $G$, set $R = S^G$, and let $\pi \colon \tilde X
  \to \Spec R$ be the minimal resolution of singularities with
  exceptional curves $E_1, \dots, E_n$.  Then
  \begin{enumerate}
  \item There is a one-one correspondence between
    \begin{enumerate}
    \item the exceptional curves $E_i$;
    \item the irreducible representations of $G$; and
    \item the indecomposable MCM $R$-modules.
    \end{enumerate}
  \item {}(McKay) The dual graph of the desingularization is
    isomorphic to the McKay quiver after deleting the trivial vertex
    and replacing pairs of opposed arrows by edges.  It is an ADE
    Coxeter-Dynkin diagram.
\[
\begin{array}{cc} 
{A}_n : & \xymatrix@R=0.2cm@C=0.3cm{ 
\circ \ar@{-}[r] & \circ \ar@{-}[r] & \cdots \ar@{-}[r] &
  \circ \ar@{-}[r] & \circ} \\ 
\raisebox{-0.7cm}{${D}_n :$} & \xymatrix@R=0.2cm@C=0.3cm{ 
 & & & & & \circ \\
 \circ \ar@{-}[r] & \circ \ar@{-}[r] & \cdots \ar@{-}[r] &
 \circ \ar@{-}[r] & \circ \ar@{-}[ru] \ar@{-}[rd] \\
 & & & & & \circ} \\
{E}_6 : & \xymatrix@R=0.2cm@C=0.3cm{
\circ \ar@{-}[r] & \circ \ar@{-}[r] & \circ \ar@{-}[r] \ar@{-}[d] &
  \circ \ar@{-}[r] & \circ \\ 
& & \circ & &  } \\
{E}_7 : & \xymatrix@R=0.2cm@C=0.3cm{
\circ \ar@{-}[r] & \circ \ar@{-}[r] & \circ \ar@{-}[r] \ar@{-}[d] &
  \circ \ar@{-}[r] & \circ \ar@{-}[r] & \circ \\ 
& & \circ & & &} \\
{E}_8 : & \xymatrix@R=0.2cm@C=0.3cm{
\circ \ar@{-}[r] & \circ \ar@{-}[r] & \circ \ar@{-}[r] \ar@{-}[d] &
  \circ \ar@{-}[r] & \circ \ar@{-}[r] & \circ \ar@{-}[r] & \circ  \\ 
 & & \circ }
\end{array}
\] \qed
  \end{enumerate}
\end{theorem}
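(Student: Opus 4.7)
The plan is to assemble part~(i) from the algebraic machinery already in hand, then to establish part~(ii) by matching two combinatorial recipes that both land on an ADE diagram.

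\emph{Part (i).} Since $G \subset \SL(2,k)$ contains no non-trivial pseudo-reflections, Theorem~\ref{thm:Aus-McKay} gives $\SG \cong \End_R(S)$ with finite global dimension. Corollary~\ref{cor:McKay-gen} puts the indecomposable projective $\SG$-modules in bijection with the irreducible representations $\varrho_0,\dots,\varrho_d$ of $G$ and, via Proposition~\ref{prop:projectivization}, with the indecomposable $R$-summands of $S$; Lemma~\ref{lem:herzog} identifies these summands with every indecomposable MCM $R$-module. Write $M_\varrho$ for the MCM module attached to $\varrho$, so that $M_{\varrho_0} = R$. This yields the bijection between (b) and (c). To tie these to the exceptional curves, I would pull each $M_\varrho$ back to $\tilde X$ and divide out torsion, producing the \emph{full sheaf} $\widetilde{M}_\varrho$, locally free of rank $\dim\varrho$. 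Via the Gonzalez-Sprinberg--Verdier construction, computing $c_1(\widetilde{M}_\varrho)$ against the exceptional intersection form attaches to each non-trivial $\varrho$ a unique curve $E_{i(\varrho)}$, and a rank count confirms that the assignment is bijective.

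\emph{Part (ii).} Define the symmetric matrix $(m_{ij})$ by $\varpi \otimes_k \varrho_j = \bigoplus_i \varrho_i^{\oplus m_{ij}}$. Self-duality of $\varpi$, a consequence of $G \subset \SL(2,k)$, forces $m_{ij} = m_{ji}$ and collapses the McKay quiver to an ordinary graph. McKay's case-by-case calculation over Klein's classification of finite subgroups of $\SL(2,\CC)$ --- cyclic, binary dihedral, binary tetrahedral, binary octahedral, binary icosahedral --- shows that the McKay quiver is the extended (affine) ADE Dynkin diagram, so removing the vertex $\varrho_0$ yields the corresponding ordinary ADE Dynkin diagram. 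On the other side, Du Val's classical analysis of the minimal resolution of the ADE hypersurface~\eqref{eq:ADE} exhibits an exceptional fibre of $(-2)$-curves whose dual graph is the same ADE diagram. The bijection of Part~(i) then matches the two graphs vertex by vertex.

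\emph{Main obstacle.} The delicate step is the geometric half of Part~(i): verifying that $c_1(\widetilde{M}_\varrho)$ is dual to a \emph{single} exceptional curve, so that $\varrho \mapsto E_{i(\varrho)}$ really is a bijection. The classical argument (Gonzalez-Sprinberg--Verdier, Artin--Verdier) works case by case through the ADE classification, computing $\widetilde{M}_\varrho$ explicitly. A more conceptual route, anticipating the non-commutative resolutions of the later sections, is to exhibit $\widetilde{T} := \bigoplus_\varrho \widetilde{M}_\varrho$ as a tilting bundle on $\tilde X$ with $\End_{\calo_{\tilde X}}(\widetilde{T}) \cong \SG$; Theorem~\ref{thm:geomtilt} then yields a derived equivalence $\D^b(\coh \tilde X) \simeq \D^b(\lmod \SG)$ under which the simple $\SG$-modules correspond (up to shift and twist) to structure sheaves of the $E_i$, and McKay's numerical identity drops out as a Chern-class computation on $\tilde X$.
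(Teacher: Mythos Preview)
The paper does not prove this theorem: it is stated with a terminal \qed{} and no argument, and the paragraph immediately following it simply surveys the history, crediting McKay for the graph isomorphism, Gonzalez-Sprinberg--Verdier for the geometric link between representations and the resolution, and Artin--Verdier, Esnault, and Kn\"orrer for the explicit correspondences with reflexive modules. So there is no ``paper's own proof'' to compare against.

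Your outline is faithful to that cited literature and to the scaffolding the paper builds in the preceding pages. The bijection between irreducible representations and indecomposable MCM modules is exactly what the paper assembles from Corollary~\ref{cor:McKay-gen} and Lemma~\ref{lem:herzog}; your use of full sheaves and first Chern classes to reach the exceptional curves is the Gonzalez-Sprinberg--Verdier/Artin--Verdier argument the paper names; and your ``more conceptual route'' via a tilting bundle with endomorphism ring $\SG$ is precisely the content of the Kapranov--Vasserot theorem the paper states immediately afterward. One small point worth flagging: as you implicitly note, the bijection in part~(i) is really between \emph{non-trivial} irreducibles, \emph{non-free} indecomposable MCM modules, and exceptional curves; the trivial representation and the free module $R$ have no curve attached. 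The paper's statement is slightly loose here, but part~(ii) acknowledges the asymmetry by deleting the trivial vertex.
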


Shortly after McKay's original observation~\cite{McKay} of the
isomorphism of graphs above, Gonzalez-Sprinberg and
Verdier~\cite{Gonzalez-Sprinberg-Verdier} gave, in characteristic
zero, a geometric construction linking the representation
theory of $G$ and the resolution of singularities $\tilde X$.  Later
constructions by Artin--Verdier~\cite{Artin-Verdier:1985},
Esnault~\cite{Esnault:1985}, and Kn\"orrer~\cite{Knorrer:1985} made
explicit the correspondences between the exceptional curves $E_i$, the
indecomposable reflexive $R$-modules, and the irreducible
representations of $G$.

The first intimation of a ``higher geometric McKay correspondence''
appeared in string theory in the mid-1980s.  Dixon, Harvey, Vafa, and
Witten~\cite{DHVW} observed that for certain $G \subseteq \SL(3,\CC)$,
and a certain crepant resolution $\tilde X \to \CC^3/G$, there is an
equality between the Euler characteristic $\chi(\tilde X)$ and the
number of conjugacy classes (= number of irreducible representations)
of $G$.  There followed a great deal of work on the existence of
crepant resolutions of singularities for quotient singularities of the
form $Y/G$, where $Y$ is an arbitrary smooth variety of dimension two
or more.  Specifically, one can ask for the existence of a crepant
resolution $\tilde X \to Y/G$ and a derived equivalence between
$\tilde X$ and the $G$-equivariant coherent sheaves on $Y$.  Let
$\D^b_G(Y)$ denote the bounded derived category of the latter.

Such an equivalence was first constructed by Kapranov and Vasserot in
the setting of Theorem~\ref{thm:McKay2}.  In this case, the minimal
resolution of singularities $\tilde X$ has an alternative
construction, as \emph{Nakamura's $G$-Hilbert scheme}
$\Hilb^G(\CC^2)$~\cite{Nakamura:2001, Ito-Nakamura:1999}.  This is an
irreducible component of the subspace of the Hilbert scheme of points
in $\CC^2$ given by the ideal sheaves $\cali \subseteq \calo_{\CC^2}$
such that the $\calo_{\CC^2}/\cali \cong \CC[G]$ as $G$-modules.  

\begin{theorem}
  [Kapranov--Vasserot~{\cite{Kapranov-Vasserot:2000}}]
  \label{thm:kap-vass}
  Let $G \subset \SL(2, \CC)$ be a finite group, $S = \CC[u,v]$, $R
  = S^G$, and $X = \Spec R$.  Set $H = \Hilb^G(\CC^2)$.  Then there is
  a commutative triangle 
  \[
  \xymatrix{
    **[l]\D^b(\lmod{\SG}) = \D^b_G(\coh \CC^2) \ar[rr]^-\Phi \ar[dr] &&
    \D^b(\coh H) \ar[dl]\\
    & \D^b(\coh X)
  }
  \]
  in which $\Phi$ is an equivalence of triangulated categories.  \qed
\end{theorem}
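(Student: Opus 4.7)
The approach is to realize $\Phi$ as the equivalence produced by geometric tilting theory (Theorem~\ref{thm:geomtilt}) applied to an explicit tilting bundle on $H$ whose endomorphism algebra is Morita equivalent to $\SG$. The natural candidate comes from the universal $G$-cluster $\calz\subset H\times\CC^2$: writing $p\colon\calz\to H$ for the projection, the sheaf $p_*\calo_\calz$ is locally free of rank $|G|$ on $H$ and carries a fibrewise $G$-action whose fibres realize the regular representation $\CC[G]$. Its isotypic decomposition
\[
p_*\calo_\calz\;\cong\;\bigoplus_{\rho\in\mathrm{Irr}(G)}\calr_\rho\otimes_\CC\rho
\]
defines the tautological bundles $\calr_\rho$ on $H$, with $\calr_{\rho_0}\cong\calo_H$ for the trivial representation, and I would take $\calt=\bigoplus_\rho\calr_\rho$ as the tilting candidate.

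To identify $\End_{\calo_H}(\calt)$, I use that $\pi\colon H\to X$ is the minimal resolution of a Du Val singularity, hence crepant and therefore rational by Proposition~\ref{prop:crep-ratl}, so $\R\pi_*\calo_H=\calo_X$. The projection formula, together with the fact that $\pi$ is an isomorphism outside a codimension-two subset and that the $\calr_\rho$ are locally free, gives $\pi_*\calt\cong\bigoplus_\rho M_\rho$, where $M_\rho=(S\otimes_\CC\rho)^G$ is the indecomposable MCM $R$-summand of $S$ attached to $\rho$ in Corollary~\ref{cor:McKay-gen} (Lemma~\ref{lem:herzog} ensures every such summand arises in this way). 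Full faithfulness of $\pi_*$ on $\add\calt$---a standard consequence of rationality and reflexivity---then yields $\End_{\calo_H}(\calt)\cong\End_R(\bigoplus_\rho M_\rho)$, which is Morita equivalent to $\End_R(S)\cong\SG$ by Auslander's Theorem~\ref{thm:Aus-McKay} combined with Corollary~\ref{cor:intertwine}. This already supplies $\D^b(\lmod{\End_{\calo_H}(\calt)})\simeq\D^b(\lmod\SG)=\D^b_G(\coh\CC^2)$.

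The substantive step, and the one I expect to be the main obstacle, is verifying that $\calt$ really is a tilting bundle in the sense of Definition~\ref{def:tilt-obj}. Compactness is automatic. Since $\pi$ is crepant, $\omega_H$ is trivial, so Serre duality on the smooth surface $H$ reduces the vanishing of $\Ext^{>0}_{\calo_H}(\calr_\rho,\calr_{\rho'})$ to a cohomological computation for the restrictions $\calr_\rho|_{E_i}$ to the exceptional $\PP^1$'s; the classical McKay correspondence (Theorem~\ref{thm:McKay2}) supplies precisely the intersection-theoretic control on these restrictions needed to force the higher $\Ext$ groups to vanish. For classical generation of $\D^\perf(\coh H)=\D^b(\coh H)$ (the two agree since $H$ is smooth), one combines the fact that the classes $[\calr_\rho]$ together with the structure sheaves of the exceptional curves span $K_0(H)$ with a Ravenel--Neeman-style compactness argument.

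With $\calt$ established as a tilting bundle, Theorem~\ref{thm:geomtilt} supplies an equivalence $\rHom_{\calo_H}(\calt,-)\colon\D^b(\coh H)\xto{\sim}\D^b(\lmod{\End_{\calo_H}(\calt)})$, and composition with the Morita equivalence identified above yields $\Phi^{-1}$. Commutativity of the triangle then reduces to a formal check: the descent $\D^b_G(\coh\CC^2)\to\D^b(\coh X)$ is the composition of forgetting the $\SG$-action and taking $G$-invariants, so sends $S\otimes_\CC\rho$ to $M_\rho$; the descent $\D^b(\coh H)\to\D^b(\coh X)$ is $\R\pi_*$, which sends $\calr_\rho$ to $M_\rho$ by the identification above. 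Both descents agree on the respective generators, and hence, being triangulated functors, on the entire derived categories.
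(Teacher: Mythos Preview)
The paper does not give a proof of this theorem; it is stated without argument and attributed to Kapranov--Vasserot. The only content supplied is the sentence immediately following the statement, which identifies $\Phi$ as an explicit equivariant Fourier--Mukai functor $\Phi(-)=(\R p_*\L q^*(-))^G$ built from the universal subscheme $\calz$ and its two projections.

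Your approach via geometric tilting on $\calt=\bigoplus_\rho\calr_\rho$ is a different packaging, and is in fact the strategy underlying Theorem~\ref{thm:VdBmainflops} later in the paper. The two are closely related: the Fourier--Mukai kernel $\calo_\calz$ is exactly what produces your $p_*\calo_\calz\cong\bigoplus_\rho\calr_\rho\otimes\rho$, so your tilting functor $\rHom_{\calo_H}(\calt,-)$ is the inverse of $\Phi$ up to the Morita equivalence you invoke. The Fourier--Mukai framing makes commutativity of the triangle nearly automatic (both downward arrows are pushforwards through the same correspondence), whereas the tilting framing makes the link to $\End_R(S)\cong\SG$ and to the material of \S\ref{sect:NCCRs} more transparent.

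One technical caution: your appeal to ``Serre duality on the smooth surface $H$'' is not quite right, since $H$ is only projective over the affine base $X=\Spec R$, not over $\CC$, so global Serre duality in the usual form does not apply. The cleaner route is the Leray spectral sequence: since $X$ is affine, $\Ext^i_{\calo_H}(\calr_\rho,\calr_{\rho'})\cong H^0\bigl(X,\R^i\pi_*\cHom_{\calo_H}(\calr_\rho,\calr_{\rho'})\bigr)$, and since the fibres of $\pi$ have dimension at most one, only $\R^1\pi_*$ can contribute. Its vanishing is then exactly the computation on the exceptional curves that you describe, controlled by the classical McKay data. With this adjustment your outline is sound.
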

The equivalence $\Phi$ is given by an explicit ``equivariant''
Fourier-Mukai type functor, $\Phi(-) = (\R p_* \L q^*(-))^G$, where $Z
\subseteq X \times \CC^2$ is the incidence variety and $p,q$ are the
projections onto the factors.

In dimension greater than two, there is no minimal resolution of
singularities.  However, Nakamura's $G$-Hilbert scheme is still a
candidate for a crepant resolution of singularities in dimension
three.  Bridgeland, King, and Reid proved the following general
result about the $G$-Hilbert scheme.

\begin{theorem}
  [Bridgeland--King--Reid~{\cite{Bridgeland-King-Reid:2001}}]
  \label{thm:BKR}
  Suppose that $Y$ is a smooth and quasi-projective complex variety, and
  that $G \subseteq \Aut Y$ is a finite group of automorphisms such
  that the quotient $Y/G$ has Gorenstein singularities.  Let $H =
  \Hilb^G(Y)$.  If
  \[
  \dim \left(H \times_{Y/G} H\right) \leq \dim H +1\,,
  \]
  then $H$ is a crepant resolution of singularities of $Y/G$ and there
  is an equivalence (explicitly given by a Fourier-Mukai functor) of
  derived categories $\D^b(\Hilb^G(Y)) \to \D^b_G(Y)$, where
  $\D^b_G(Y)$ is the bounded derived category of $G$-equivariant
  coherent sheaves on $Y$.  \qed
\end{theorem}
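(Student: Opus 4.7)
The plan is to construct an explicit Fourier--Mukai functor $\Phi \colon \D^b(\coh H) \to \D^b_G(Y)$ and show it is fully faithful, then upgrade to an equivalence using Serre duality; smoothness of $H$ and crepancy of the map to $Y/G$ will drop out essentially for free once the derived equivalence is in hand. For the kernel, I would use the universal $G$-cluster: let $\calz \subset H \times Y$ be the universal closed subscheme with projections $p \colon \calz \to H$ and $q \colon \calz \to Y$. Then $\calo_\calz$ carries a natural $G$-action on the $Y$-factor, so $\Phi(-) := \R q_*\bigl(p^*(-) \lotimes \calo_\calz\bigr)$ lands in $\D^b_G(Y)$. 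The Hilbert--Chow morphism $\tau \colon H \to Y/G$ factors the ``invariant part'' of $\Phi$, and on closed points sends an ideal sheaf $\cali_y \subset \calo_Y$ with $\calo_Y/\cali_y \cong \CC[G]$ to the $G$-orbit supporting it; so at the set-theoretic level $\tau$ is birational over the free locus of the $G$-action and surjective.

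Next I would invoke the Bondal--Orlov criterion for full faithfulness of a Fourier--Mukai functor on smooth quasi-projective varieties: $\Phi$ is fully faithful iff for each pair of closed points $y_1, y_2 \in H$ one has $\Hom(\Phi\calo_{y_1}, \Phi\calo_{y_2}[i]) = 0$ whenever $i \notin [0,\dim H]$, together with the expected values $\CC$ when $y_1 = y_2$ at $i=0$ and $0$ when $y_1 \neq y_2$ at $i=0$. The $y_1 = y_2$ calculation uses smoothness of $Y$ and the $G$-cluster condition $\calo_Y/\cali_y \cong \CC[G]$ to identify $\End_G(\Phi\calo_y)$ with $\CC$. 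The orthogonality for $y_1 \neq y_2$ is the heart of the matter, and this is precisely where the dimension hypothesis $\dim(H \times_{Y/G} H) \leq \dim H + 1$ enters: two clusters contribute nontrivial Ext only when they share a point of $Y/G$, so the fiber product parametrizes exactly the pairs that could cause obstructions, and its dimension controls, stratum by stratum, how much higher cohomology the derived pushforward $\R q_*$ can produce. The technical tool is an ``intersection theorem'' estimate bounding $\codim$ and $\depth$ along the diagonal-like strata of $\calz \times_Y \calz$, reducing the required Ext vanishing to the stated dimension bound.

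I expect the stratification/fiber-dimension analysis in the previous step to be the main obstacle: one needs to show that the hypothesis forces the relevant Ext sheaves to be supported in codimension large enough that the spectral sequence computing $\Hom(\Phi\calo_{y_1}, \Phi\calo_{y_2}[i])$ degenerates outside the allowed range. Granted full faithfulness, I would pass to an equivalence by a Serre-functor argument: since $Y/G$ is Gorenstein and $G$ acts preserving a local volume form, the $G$-equivariant canonical on $Y$ is trivial in the sense that the Serre functor on $\D^b_G(Y)$ is simply the shift $[\dim Y]$; on the $H$ side, full faithfulness forces $\omega_H$ to act by the same shift, so $H$ is Calabi--Yau, hence in particular Gorenstein with trivial canonical. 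A fully faithful, $\Hom$-finite, triangulated functor between categories with compatible Serre functors whose essential image is closed under the Serre functor is automatically an equivalence; the generating condition reduces to checking that $\Phi$ hits the equivariant skyscrapers at points of $Y$, which holds because any free $G$-orbit is an honest $G$-cluster.

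Finally, the equivalence transports finite homological dimension from $\D^b_G(Y)$ (where it is manifest since $Y$ is smooth and $|G|$ is invertible) to $\D^b(\coh H)$, forcing $H$ to be smooth; the birationality of $\tau$ over the free locus together with properness of Hilbert schemes then makes $\tau \colon H \to Y/G$ a resolution of singularities, and the identification $\omega_H \cong \tau^*\omega_{Y/G}$ (coming from the Serre-functor compatibility above) is exactly the crepancy condition. The Fourier--Mukai character of the equivalence is built into the construction of $\Phi$ from the start.
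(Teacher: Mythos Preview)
The paper does not actually prove this theorem: it is stated with a terminal \qed and attributed to Bridgeland--King--Reid~\cite{Bridgeland-King-Reid:2001}, with only a brief remark afterward that the fiber-product hypothesis is automatic when $\dim Y \leq 3$. So there is nothing in the paper to compare your argument against.

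That said, your outline tracks the structure of the original BKR proof fairly closely: Fourier--Mukai functor built from the universal $G$-cluster, a spanning-class/Bondal--Orlov style criterion for full faithfulness, the ``intersection theorem'' codimension estimate as the place where the dimension hypothesis enters, and a Serre-functor argument to promote full faithfulness to an equivalence, with smoothness and crepancy of $H$ read off at the end. One point to be careful about: the Bondal--Orlov criterion as usually stated requires the source variety to be smooth so that skyscrapers form a spanning class, yet you postpone smoothness of $H$ until after the equivalence is established. In the actual BKR argument this apparent circularity is resolved by a careful variant of the spanning-class argument (and by working with the irreducible component of $\Hilb^G$ dominating $Y/G$); you should flag explicitly how you intend to avoid assuming smoothness of $H$ when invoking the criterion.
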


The assumption on the fiber product $H \times_{Y/G} H$ is automatic if
$\dim Y \leq 3$, so this result implies a derived McKay correspondence
for three-dimensional quotient singularities $\CC^3/G$ with $G
\subset \SL(3,\CC)$.  In particular, such singularities have a
crepant resolution, which had been verified on a case-by-case basis
using the classification of finite subgroups of $\SL(3,\CC)$.  The
full details of the correspondences in dimension three are still being
worked out~\cite{Cautis-Logvinenko:2009}.

In dimension four, the hypothesis on $H \times_{Y/G} H$ need not hold
if $H \to Y/G$ contracts a divisor to a point.  Indeed, we have seen
in Example~\ref{eg:ADE} that some quotients $\CC^4/G$ have
\emph{no} crepant resolutions of singularities.  Furthermore, even
when a crepant resolution exists, the $G$-Hilbert scheme may be
singular, or non-crepant, or both~\cite[Example
5.4]{Reid:LaCorrMcKay}.  In general, the following conjecture is due
to Reid.
\begin{conj}[Derived McKay Correspondence Conjecture]
  \label{conj:derived-mckay}
  For a crepant resolution of singularities $\tilde X \to \CC^n/G$,
  should one exist, there is an equivalence between $\D^b(\coh
  \tilde X)$ and $\D^b_G(\CC^n)$.
\end{conj}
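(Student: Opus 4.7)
The plan is to route the desired equivalence through the twisted group algebra. Since $G$-equivariant coherent sheaves on $\CC^n = \Spec S$ with $S = \CC[x_1,\ldots,x_n]$ are literally $\SG$-modules, we have $\D^b_G(\CC^n) \simeq \D^b(\lmod{\SG})$. By Proposition~\ref{prop:SGfingldim} the algebra $\SG$ has finite global dimension $n$, and by Theorem~\ref{thm:Aus-McKay} (using that $G \subset \SL(n,\CC)$ contains no non-trivial pseudo-reflections) it is isomorphic to $\End_R(S)$ for $R = S^G$. Thus it suffices to produce an equivalence
\[
\D^b(\coh \tilde X) \simeq \D^b(\lmod{\SG}),
\]
and the natural mechanism is Geometric Tilting Theory (Theorem~\ref{thm:geomtilt}): it is enough to construct a tilting bundle $T$ on $\tilde X$ whose endomorphism algebra is Morita equivalent to $\SG$.

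The tilting bundle $T$ should be assembled from tautological summands indexed by the irreducible representations $\rho_0, \ldots, \rho_d$ of $G$; that is, $T = \bigoplus_{i=0}^d \calr_i$, where $\calr_i$ is the geometric avatar on $\tilde X$ of the $\SG$-module $S \otimes_\CC \rho_i$. By Corollary~\ref{cor:intertwine}, to conclude Morita equivalence of $\End_{\calo_{\tilde X}}(T)$ with $\SG$ it suffices to identify $\add T$ with the additive closure of $S \otimes_\CC \CC[G]$ under a suitable functor. When $\tilde X$ is Nakamura's $G$-Hilbert scheme, the $\calr_i$ are summands of the tautological bundle on the universal $G$-cluster; the three tilting conditions---perfectness, classical generation of $\D^\perf(\coh \tilde X)$, and vanishing of higher self-Exts---can be verified via the explicit Fourier--Mukai kernel of Theorem~\ref{thm:BKR}; and the conjecture holds whenever the fiber-dimension hypothesis of that theorem is satisfied. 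This covers dimensions $n \leq 3$ completely.

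The principal obstacle emerges for $n \geq 4$, where crepant resolutions need not coincide with the $G$-Hilbert scheme (and when they exist need not be unique, being related by flops). In this generality there is no universal family from which to extract tautological summands. A two-step strategy suggested by the surrounding paper is: first verify that $\SG$ is a non-commutative crepant resolution of $R$ on the algebraic side, packaging the required finiteness and symmetry data; then invoke a general principle, modelled on Bridgeland's Theorem~\ref{thm:bridgeland} for three-dimensional flops, asserting that any two crepant resolutions---commutative or non-commutative---of the same Gorenstein singularity are derived equivalent. The hard part is this second step: one would have to construct a Fourier--Mukai-type kernel linking $\tilde X$ with $\SG$ (a complex of $(\SG,\calo_{\tilde X})$-bimodules) and control fiber dimensions strongly enough to force fully faithfulness of the induced functor. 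Absent a higher-dimensional minimal model program that reduces the problem to flop-like transitions between adjacent crepant resolutions, a uniform proof in arbitrary dimension appears out of reach.
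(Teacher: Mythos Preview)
The statement you are addressing is labeled in the paper as a \emph{Conjecture} (due to Reid), not a theorem, and the paper offers no proof of it. Immediately after stating Conjecture~\ref{conj:derived-mckay}, the paper simply records the known special cases: when $G$ preserves a complex symplectic form on $\CC^n$ (Bezrukavnikov--Kaledin) and when $G$ is abelian (Kawamata). There is therefore no ``paper's own proof'' to compare your proposal against.

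Your write-up is not really a proof either, and to your credit you say as much in the final paragraph: you sketch the natural tilting/Fourier--Mukai strategy, observe that it succeeds via Theorem~\ref{thm:BKR} whenever the fiber-dimension hypothesis holds (in particular for $n\leq 3$), and then correctly identify the genuine obstruction in higher dimensions---the absence of a uniform construction of the Fourier--Mukai kernel or tilting bundle when $\tilde X$ is not the $G$-Hilbert scheme. That diagnosis is accurate, but it means what you have written is a discussion of why the conjecture is open rather than a proof of it. You should present it as such. You might also note the two additional known cases the paper cites (symplectic and abelian $G$), which go beyond the $n\leq 3$ range handled by BKR and which your sketch does not account for.
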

Compare with Conjecture~\ref{conj:BO} above.  The derived McKay
correspondence conjecture is known when $G$ preserves a complex
symplectic form on $\CC^n$ \cite{Bezrukavnikov-Kaledin:2004}, and when
$G$ is abelian \cite{Kawamata:2006}.

Notice, for a last comment, that the ``resolution'' $\SG
\cong \End_R(S)$ of Theorem~\ref{thm:Aus-McKay} exists in any
dimension for $G \subset \GL(n,k)$ having no non-trivial
pseudo-reflections, and delivers a derived equivalence $\D^b(\SG)
\simeq \D^b_G(\CC^n)$ by definition. In dimension two, it is even
derived equivalent to the ``preferred'' desingularization
$\Hilb^G(\CC^2)$.  As we shall see in the next \S{}, it is even in a
certain sense ``crepant,'' so represents a potential improvement on
the geometric situation.

\section{Non-commutative crepant resolutions}\label{sect:NCCRs}

Now I come to the title character of this article.  It is an attempt,
due to Van den Bergh, to define a concrete
algebraic object whose derived category will realize a categorical
desingularization in the sense of Definition~\ref{def:cat-desing}, and
which will also verify Conjecture~\ref{conj:BO2}.  The main
motivations are Examples~\ref{eg:Atiyah} and~\ref{eg:Atiyah2}, and
\S\ref{sect:McKay}.

Let $R$ be a commutative ring.  Recall from
Definitions~\ref{def:order} and~\ref{def:non-sing} that a ring $\Lam$
is an $R$-order if it is finitely generated and MCM as an $R$-module,
and is non-singular if $\gldim \Lam_\p = \dim R_\p$ for all $\p \in
\Spec R$.  Let us also agree that a module-finite algebra $\Lam$ over
a domain $R$ is \emph{birational} to $R$ if $\Lam \otimes_R K \cong
M_n(K)$ for some $n$, where $K$ is the quotient field.  If $\Lam$ is
torsion-free as an $R$-module, this is equivalent to asking that $\Lam
\subseteq M_n(K)$ and that $\Lam$ spans $M_n(K)$ when scalars are
extended to $K$.  The terminology is consistent with our determination
to identify objects that are Morita equivalent; the birationality of
$\Lam$ should mean that $\Lam\otimes_R K$ is Morita equivalent to $K$,
and the only candidates are the matrix rings $M_n(K)$.

Here is a provisional definition, to be improved shortly.

\begin{provdef}
  \label{provdef}
  Let $R$ be a CM normal domain with quotient field $K$.  A
  \emph{non-commutative desingularization} of $R$ is a non-singular
  birational $R$-order $\Lam$.
\end{provdef}

There is also a natural candidate for a ``crepancy'' condition.  

\begin{definition}
  \label{def:symmetric}
  Let $R$ be a local ring, and let $\Lam$ be a
  module-finite $R$-algebra.  Let us say that $\Lam$ is a
  \emph{symmetric} $R$-algebra if $\Hom_R(\Lam,R) \cong \Lam$ as a
  $(\Lam\text{-}\Lam)$-bimodule. 
\end{definition}


Notice immediately that if $\Lam$ is a symmetric $R$-algebra, then for
any left $\Lam$-module $M$, there are  natural isomorphisms
\[
\Hom_\Lam(M,\Lam) \cong \Hom_\Lam(M,\Hom_R(\Lam,R)) \cong
\Hom_R(\Lam\otimes_\Lam M ,R) \cong \Hom_R(M,R)\,.
\]
We also have the following direct consequence of
Proposition~\ref{prop:nonsingorder}:

\begin{corollary}
  \label{cor:symm-fingldim}
  Let $R$ be a Gorenstein local ring.  If $\Lam$ is a symmetric
  $R$-order of finite global dimension, then $\gldim \Lam = \dim R$.
  In particular, $\Lam$ is non-singular.
\end{corollary}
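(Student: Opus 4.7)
The plan is to reduce everything to Proposition~\ref{prop:nonsingorder}(iv), which says a module-finite $R$-algebra $\Lam$ is non-singular as soon as $\Hom_R(\Lam,\omega)$ is a projective left $\Lam$-module and $\gldim \Lam$ is finite. The finiteness of global dimension is part of the hypothesis, so the only real task is to verify the projectivity of $\Hom_R(\Lam,\omega)$.

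This is exactly where the symmetric assumption enters. Since $R$ is Gorenstein, $R$ itself serves as a canonical module, so we may take $\omega = \omega_R = R$. The symmetric condition then reads
\[
\Hom_R(\Lam,\omega_R) \;=\; \Hom_R(\Lam,R) \;\cong\; \Lam
\]
as $(\Lam\text{-}\Lam)$-bimodules, hence in particular as left $\Lam$-modules. Since $\Lam$ is (free, hence) projective over itself, Proposition~\ref{prop:nonsingorder}(iv) now applies and $\Lam$ is non-singular.

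To pin down the exact value $\gldim \Lam = \dim R$, I would apply the Auslander--Buchsbaum-type equality $\pd_\Lam M + \depth_R M = \gldim \Lam$, valid for any finitely generated module $M$ over a non-singular $R$-order (cited in the paragraph just before Proposition~\ref{prop:nonsingorder}), to the choice $M = \Lam$. Since $\Lam$ is projective over itself and is MCM as an $R$-module by the order hypothesis, this collapses to $0 + \dim R = \gldim \Lam$. A more direct alternative is simply to localize the definition of non-singularity at the maximal ideal $\m$ of $R$, yielding $\gldim \Lam = \gldim \Lam_\m = \dim R_\m = \dim R$.

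There is no real obstacle here: the corollary is essentially a one-line consequence of Proposition~\ref{prop:nonsingorder} once one notices that the combination ``symmetric $R$-algebra'' plus ``$R$ Gorenstein'' is engineered precisely to exhibit $\Hom_R(\Lam,\omega)$ as a free $\Lam$-module of rank one. The only mildly delicate point is remembering to use the MCM property of $\Lam$ in the last step, without which the desired numerical equality would be out of reach.
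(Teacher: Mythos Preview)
Your argument is correct and is exactly the approach the paper intends: the text introduces the corollary as ``a direct consequence of Proposition~\ref{prop:nonsingorder}'', and you have spelled out precisely that deduction via part~(iv), using that $R$ Gorenstein gives $\omega_R=R$ and that symmetry gives $\Hom_R(\Lam,R)\cong\Lam$. Your extraction of the numerical equality $\gldim\Lam=\dim R$ from non-singularity by evaluating at the maximal ideal of the local ring $R$ is the cleanest way to finish.
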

Notice that this Corollary fails badly for non-Gorenstein $R$; a
counterexample is Example~\ref{eg:scroll} below.

Here finally is the definition~\cite{VandenBergh:flops}.

\begin{definition}
  \label{def:NCCR}
  Let $(R, \m)$ be a CM local normal domain with quotient field $K$.
  A \emph{non-commutative crepant resolution} of $R$ (or of $\Spec R$)
  is a symmetric, birational, $R$-order $\Lam$ having finite global
  dimension.
\end{definition}

I first observe that the definition is Morita-invariant, i.e.\ if
$\Lam$ and $\Gam$ are Morita-equivalent $R$-algebras and $\Lam$ is a
symmetric birational $R$-order of finite global dimension, then so is
$\Gam$. Indeed, global dimension is known to pass across Morita
equivalence.  Suppose $\Lam$ is MCM over $R$ and $\Gam = \End_\Lam(P)$
for some $\Lam$-progenerator $P$.  Since $P$ is a progenerator, $P$ is
a direct summand of $\Lam^n$ for some $n$, and it follows that $\Gam$
is a direct summand of $\End_\Lam(\Lam^n) \cong M_n(\Lam)$ as an
$R$-module.  Thus $\Gam$ is a MCM $R$-module as well.  Symmetry is
similarly easy to verify.

Before considering other possible definitions and addressing the
examples from previous sections, I point out a connection with the
classical theory of orders~\cite{AG:maximalorders,
  Reiner:MaximalOrders}, following~\cite{Iyama-Reiten:2008}.  Let $R$
be a domain with quotient field $K$.  Recall that a module-finite
$R$-algebra $\Lam$, contained in a finite-dimensional division algebra
$D$ over $K$, is called a \emph{classical order} in $D$ if $\Lam$
spans $D$ over $K$, and is called \emph{maximal} in $D$ if it is
maximal among classical orders in $D$ with respect to containment.
Maximal orders over Dedekind domains have been completely understood
for many years; the following facts are well-known.
\begin{itemize}
\item Every finite-dimensional division $K$-algebra $D$ contains a
  unique maximal order $\Delta_D$.
\item A classical order is maximal if and only if it is Morita
  equivalent to $\Delta_{D_1} \times \dots \times \Delta_{D_k}$ for
  finite-dimensional division algebras $D_1, \dots, D_k$ over $K$.
\item A classical order is \emph{hereditary}, that is, has global
  dimension at most one, if and only if it is Morita equivalent to a
  ring of the form $T_{n_1}(\Delta_{D_1}) \times \dots \times
  T_{n_k}(\Delta_{D_k})$, where $T_n(\Delta)$ denotes the subring of
  $M_n(\Delta)$ containing matrices $(a_{ij})$ with $a_{ij} \in
  \rad(\Delta)$ for $i>j$.
\end{itemize}
With these facts in mind, let $R$ be a complete discrete valuation
ring and $\Lam$ a module-finite $R$-algebra.  If $\Lam$ is a symmetric
$R$-algebra of global dimension $1$, then it follows that $\Lam$ is a
maximal order.  Indeed, $\Lam$ is hereditary, so Morita equivalent to
$T_{n_1}(\Delta_{D_1}) \times \dots \times T_{n_k}(\Delta_{D_k})$ as
above.  One can check, however, that $T_n(\Delta)$ is symmetric only
for $n=1$.  Thus $\Lam$ is maximal.

Now, a classical order $\Lam$ over a normal domain $R$ is maximal if
and only if $\Lam$ is reflexive as an $R$-module and $\Lam_\p$ is a
maximal order for all primes $\p$ of height one in
$R$~\cite[1.5]{AG:maximalorders}, \cite[11.5]{Reiner:MaximalOrders}.
Combining this with the discussion above gives the following result.

\begin{prop}
  \label{prop:NCCR-maximal}
  Let $R$ be a normal domain with quotient field $K$, and $\Lam$ a
  symmetric birational $R$-order of finite global dimension. Then
  $\Lam$ is a maximal order. \qed
\end{prop}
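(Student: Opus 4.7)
The plan is to invoke the criterion stated just above the proposition: a classical order over a normal domain $R$ is maximal if and only if it is reflexive as an $R$-module and $\Lam_\p$ is a maximal order for every height-one prime $\p$. So the proof reduces to three checks: that $\Lam$ is genuinely a classical order in $M_n(K)$, that $\Lam$ is $R$-reflexive, and that each $\Lam_\p$ at a height-one prime is maximal.

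The first two are quick. Since $\Lam$ is an $R$-order it is finitely generated and MCM over the domain $R$, hence torsion-free; combined with the birationality isomorphism $\Lam\otimes_R K \cong M_n(K)$ this produces an embedding $\Lam\hookrightarrow M_n(K)$ whose $K$-span is all of $M_n(K)$, so $\Lam$ is a classical order. Reflexivity is automatic from a standard criterion over normal domains: a finitely generated torsion-free $R$-module satisfying Serre's condition $S_2$ is reflexive, and the MCM hypothesis on $\Lam$ yields $S_2$.

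For maximality at a height-one prime $\p$, I would localize. Then $R_\p$ is a DVR (in particular a Gorenstein local ring of dimension one), $\Lam_\p$ remains module-finite, MCM over $R_\p$, and birational (the generic fiber is unchanged). Symmetry localizes cleanly because $\Lam$ is finitely generated as an $R$-module, so $\Hom_R(\Lam,R)_\p\cong \Hom_{R_\p}(\Lam_\p,R_\p)$, and $\gldim\Lam_\p\leq\gldim\Lam<\infty$. Applying Corollary~\ref{cor:symm-fingldim} gives $\gldim\Lam_\p=\dim R_\p=1$. The discussion directly preceding the proposition then finishes the job: a hereditary classical order over a DVR is, up to Morita equivalence, of the form $T_{n_1}(\Delta_{D_1})\times\cdots\times T_{n_k}(\Delta_{D_k})$, and the stated fact that $T_n(\Delta)$ is symmetric only when $n=1$ forces every $n_i=1$, so $\Lam_\p$ is Morita equivalent to $\prod\Delta_{D_i}$, that is, maximal.

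Combining these three points with the Auslander--Goldman criterion gives that $\Lam$ is a maximal order. I do not anticipate an essential obstacle; the only care needed is that symmetry, the MCM property, and finiteness of global dimension all descend under localization (immediate since $\Lam$ is finitely generated over $R$ and Cohen--Macaulayness is preserved under localization), and that the structural facts about hereditary orders recalled in the excerpt for complete DVRs remain valid over an arbitrary DVR—this is standard, and can in any case be recovered by faithfully flat descent from the completion.
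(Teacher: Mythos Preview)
Your proof is correct and follows exactly the route the paper sketches in the paragraphs preceding the proposition: verify reflexivity from the MCM hypothesis, localize at height-one primes to a DVR, use Corollary~\ref{cor:symm-fingldim} to get global dimension one, and then invoke the classification of hereditary orders together with the observation that $T_n(\Delta)$ is symmetric only for $n=1$. You are in fact slightly more careful than the paper, which states the hereditary-order facts only over complete DVRs and leaves the passage to arbitrary DVRs (via completion or otherwise) implicit.
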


The connection with the classical theory of (classical) orders gives a
structure theorem for symmetric non-singular orders, via the following
results of Auslander--Goldman~\cite[Lemma 4.2]{AG:maximalorders} and
Auslander~\cite[Lemma 5.4]{Auslander:rationalsing}.

\begin{theorem}
  \label{thm:AG-A}
  Let $R$ be a  normal domain with quotient field $K$. 
  \begin{enumerate}
  \item \label{item:AG} Let $\Lam$ be a classical order over $R$ in
    $M_n(K)$.  Then $\Lam$ is a maximal order if and only if there
    exists a finitely generated reflexive $R$-module $M$ such that
    $\Lam \cong \End_R(M)$.
  \item \label{item:endosymm} Let $M$ be a reflexive $R$-module, and
    set $\Lam = \End_R(M)$.  Then $\Lam$ is reflexive as an $R$-module
    and the map
    \(
    \alpha \colon \Lam\to \Hom_R(\Lam, R)
    \)
    defined by $\alpha(f)(g) = \tr(f g)$, where $\tr\colon \End_K(K
    \otimes_R M) \to \End_R(M)$ is the usual trace map, is an
    isomorphism of $(\Lam\text{-}\Lam)$-bimodules.  Hence $\Lam$ is a
    symmetric $R$-algebra. \qed
  \end{enumerate}
\end{theorem}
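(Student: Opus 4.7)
The plan is to exploit the local--global characterization of maximal orders over normal noetherian domains invoked just before the statement: a classical order $\Lam \subseteq M_n(K)$ is maximal if and only if $\Lam$ is reflexive as an $R$-module and $\Lam_\p$ is maximal for every height-one prime $\p$ of $R$. Since $R$ is a normal domain, $R_\p$ is a DVR at each such prime, so finitely generated reflexive $R_\p$-modules are free, and $M_n(R_\p)$ is the unique maximal order in $M_n(K)$. Everything below reduces to this local picture.

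For the easy direction of (i), suppose $M$ is a finitely generated reflexive $R$-module and set $\Lam = \End_R(M) = \Hom_R(M,M)$. First I would observe that $\Lam$ is reflexive, using the standard fact that $\Hom_R(P, N)$ is reflexive whenever $P$ is finitely generated and $N$ is reflexive over a normal noetherian domain. Localizing at a height-one $\p$, $M_\p$ is free of rank $n := \rank_K(K \otimes_R M)$, so $\Lam_\p \cong M_n(R_\p)$, which is the maximal order in $\End_K(K\otimes_R M)$. For the converse, start with $\Lam$ maximal in $M_n(K)$, take $V = K^n$ as the standard $\Lam$-module, choose any finitely generated $R$-submodule $N \subseteq V$ with $KN = V$, and set $M = (\Lam \cdot N)^{**}$. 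Then $M$ is finitely generated, reflexive, and $\Lam$-stable, giving $\Lam \subseteq \End_R(M) \subseteq M_n(K)$; since $\End_R(M)$ is again a classical order in $M_n(K)$, maximality of $\Lam$ forces equality.

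For (ii), given $M$ reflexive and $\Lam = \End_R(M)$, the reflexivity of $\Lam$ is handled as above, and the reflexivity of $\Hom_R(\Lam, R)$ follows similarly (dual of a finitely generated module into the reflexive module $R$ over a normal domain). The map $\alpha$ is well-defined because $\Lam$ embeds into $\End_K(K\otimes_R M)$, on which $\tr$ is defined, and cyclicity of the trace gives $\tr((afb)g) = \tr(f(bga))$, which is precisely the bimodule compatibility $\alpha(afb) = a\, \alpha(f)\, b$ for the natural $(\Lam\text{-}\Lam)$-bimodule structure on $\Hom_R(\Lam, R)$. The crux is showing $\alpha$ is an isomorphism; since source and target are both reflexive (hence satisfy the $S_2$ gluing property), it suffices to verify this after localizing at each height-one prime $\p$. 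There $M_\p$ is free, so $\Lam_\p \cong M_n(R_\p)$, and $\alpha_\p$ is exactly the standard trace pairing on a matrix algebra over a commutative ring, which is well known to be perfect. The symmetry of $\Lam$ in the final sentence is then immediate from the definition.

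The main obstacle is really the local step in (ii): knowing that the matrix trace pairing over a DVR is a perfect $R_\p$-bilinear pairing, together with the reflexivity/$S_2$ bookkeeping needed to propagate the height-one isomorphism back to a global one. Both of these are essentially formal once one commits to the ``reduce to height one, where everything becomes a matrix algebra'' strategy, which is why Auslander--Goldman's characterization in (i) does most of the work and (ii) is then a short matrix-algebra calculation glued together by normality.
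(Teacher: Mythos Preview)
The paper does not actually prove this theorem: the statement ends with a \qed and is simply attributed to Auslander--Goldman \cite[Lemma 4.2]{AG:maximalorders} and Auslander \cite[Lemma 5.4]{Auslander:rationalsing}. So there is no in-paper proof to compare against; your task is really to check whether your sketch is a valid reconstruction of the classical argument.

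It is. Your strategy---reduce both (i) and (ii) to height-one primes using the characterization of maximal orders quoted just before the theorem, and use that over a DVR a finitely generated torsion-free (hence reflexive) module is free---is exactly the standard route, and the details you outline are correct. One small point you glossed over: in (ii) you need $\alpha(f)(g) = \tr(fg)$ to land in $R$, not just in $K$, before you can even ask whether $\alpha$ is an isomorphism. This follows from the very same height-one reasoning (at each height-one $\p$, $\Lam_\p \cong M_n(R_\p)$ so $\tr(fg) \in R_\p$, and $R = \bigcap_{\operatorname{ht}\p = 1} R_\p$ by normality), but it is worth saying explicitly. With that addition, your argument is complete and matches the approach in the cited sources.
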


Here are a few definitions which, at least under certain hypotheses,
are equivalent to Definition~\ref{def:NCCR}.  Part~(\ref{item:homhom})
of the next Proposition is the original definition of a
non-commutative crepant resolution~\cite{VandenBergh:flops,
  VandenBergh:crepant}.  For that definition, say that $\Lam$ is
\emph{homologically homogeneous} over the central subring $R$ if it is
finitely generated as an $R$-module and every simple $\Lam$-module has
the same projective dimension, equal to $\dim
R$~\cite{Brown-Hajarnavis:1984,Brown-Hajarnavis-MacEacharn:1982}.
This condition seems first to have been introduced by
Vasconcelos~\cite{Vasconcelos:1973} under the name ``moderated
algebras.''  If $R$ is equidimensional, it is equivalent to asking
that for every $\p \in \Spec R$ the localization $\Lam_\p$ is MCM as
an $R_\p$-module and $\gldim \Lam_\p = \dim
R_\p$~\cite{Brown-Hajarnavis-MacEacharn:1983}.

\begin{prop}
  \label{prop:defequiv1}
  Let $R$ be a Gorenstein local normal domain and let $\Lam$ be a
  module-finite $R$-algebra.  Then the following sets of conditions on
  $\Lam$ are equivalent.
  \begin{enumerate}
  \item \label{item:NCCR} $\Lam$ is a symmetric birational $R$-order
    and has finite global dimension.
  \item\label{item:homhom} $\Lam \cong \End_R(M)$ for some reflexive
    $R$-module $M$, and $\Lam$ is homologically homogeneous.
  \item\label{item:endo} $\Lam \cong \End_R(M)$ for some reflexive
    $R$-module $M$, $\Lam$ is MCM as an $R$-module, and $\gldim \Lam <
    \infty$. \qed
  \end{enumerate}
\end{prop}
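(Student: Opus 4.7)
The plan is to use Theorem~\ref{thm:AG-A} as the bridge between the ``abstract'' formulation~(\ref{item:NCCR}) and the endomorphism formulations~(\ref{item:homhom}) and~(\ref{item:endo}), then deploy Proposition~\ref{prop:nonsingorder} together with the Ramras inequalities of Proposition~\ref{prop:center-finite} to trade the global hypothesis of finite global dimension for the pointwise condition of homological homogeneity. I would prove (\ref{item:NCCR}) $\Leftrightarrow$ (\ref{item:endo}) first, and then tackle (\ref{item:endo}) $\Leftrightarrow$ (\ref{item:homhom}).

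For (\ref{item:NCCR}) $\Rightarrow$ (\ref{item:endo}), Proposition~\ref{prop:NCCR-maximal} already identifies $\Lam$ as a maximal order in $M_n(K)$ (with $n$ dictated by birationality), so Theorem~\ref{thm:AG-A}(\ref{item:AG}) gives $\Lam \cong \End_R(M)$ for some reflexive $R$-module $M$; the order property and finite global dimension are exactly the remaining hypotheses of~(\ref{item:NCCR}). Conversely, for (\ref{item:endo}) $\Rightarrow$ (\ref{item:NCCR}), Theorem~\ref{thm:AG-A}(\ref{item:endosymm}) immediately makes $\End_R(M)$ a symmetric $R$-algebra, and birationality falls out of extending scalars to the fraction field: $\End_R(M)\otimes_R K \cong \End_K(M\otimes_R K) \cong M_n(K)$ with $n = \dim_K (M\otimes_R K)$. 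The MCM hypothesis gives the order condition, and finite global dimension is already assumed.

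For (\ref{item:endo}) $\Rightarrow$ (\ref{item:homhom}), once $\Lam$ is known (by the previous step) to be a symmetric $R$-order of finite global dimension, Corollary~\ref{cor:symm-fingldim} pins down $\gldim \Lam = \dim R$; since $R$ is Gorenstein one has $\omega_R \cong R$, so $\Hom_R(\Lam,\omega_R) \cong \Lam$ is projective over $\Lam$, and Proposition~\ref{prop:nonsingorder} then certifies $\Lam$ as non-singular, i.e.\ $\gldim \Lam_\p = \dim R_\p$ at every prime $\p$. Combined with the fact that being MCM localizes, this is precisely the equidimensional reformulation of homological homogeneity quoted just before Proposition~\ref{prop:defequiv1}. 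For (\ref{item:homhom}) $\Rightarrow$ (\ref{item:endo}), the same equidimensional criterion delivers MCM locally, hence globally; and Proposition~\ref{prop:center-finite}(iv)---that $\gldim \Lam$ is the supremum of projective dimensions of finite-length $\Lam$-modules---combined with the fact that any such module has a composition series whose simple factors have projective dimension $\dim R$, forces $\gldim \Lam \leq \dim R < \infty$.

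The delicate step is (\ref{item:endo}) $\Rightarrow$ (\ref{item:homhom}): one must simultaneously exploit that $R$ is Gorenstein (both to invoke Corollary~\ref{cor:symm-fingldim} and to identify $\omega_R$ with $R$) and that $\Lam$ has been shown symmetric (so that $\Hom_R(\Lam,\omega_R)$ is $\Lam$-projective), in order to apply Proposition~\ref{prop:nonsingorder}. Everything else is bookkeeping, and the Gorenstein hypothesis is essential here---the scroll example referenced after Corollary~\ref{cor:symm-fingldim} shows that the conclusion $\gldim \Lam = \dim R$ fails without it, and with it the whole chain.
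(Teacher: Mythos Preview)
Your proof is correct and rests on the same key ingredients as the paper—Theorem~\ref{thm:AG-A} for the endomorphism-ring characterization and symmetry, and Corollary~\ref{cor:symm-fingldim} (equivalently Proposition~\ref{prop:nonsingorder}) for non-singularity—only you arrange them as two equivalences (i)$\Leftrightarrow$(iii) and (iii)$\Leftrightarrow$(ii) rather than the paper's cycle (i)$\Rightarrow$(ii)$\Rightarrow$(iii)$\Rightarrow$(i). The paper dispatches (ii)$\Rightarrow$(iii) in a word (``clearly'', reading the equidimensional characterization at the maximal ideal), while your route through Proposition~\ref{prop:center-finite}(iv) and a composition-series argument is a legitimate alternative.
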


\begin{proof}
  Assume first that $\Lam$ satisfies (\ref{item:NCCR}).  Then $\Lam$
  is MCM over $R$ by definition, and this localizes well.  By
  Theorem~\ref{thm:AG-A}(\ref{item:AG}), $\Lam$ is an endomorphism
  ring of a reflexive module $M$, and by
  Corollary~\ref{cor:symm-fingldim} $\Lam$ is non-singular, giving
  (\ref{item:homhom}).  Clearly (\ref{item:homhom}) implies
  (\ref{item:endo}).  Finally, if $\Lam \cong \End_R(M)$ for a
  reflexive $R$-module $M$, then $\Lam$ is birational to $R$, and is
  symmetric by Theorem~\ref{thm:AG-A}(\ref{item:endosymm}).
\end{proof}
The implication (\ref{item:endo}) $\implies$ (\ref{item:homhom}) fails
if $R$ is not Gorenstein. Again see Example~\ref{eg:scroll} below.


Now it is clear that Auslander's Theorem~\ref{thm:Aus-McKay} proves
that, for any $n \geq 2$ and any finite group $G \subset \SL(n,k)$
with order invertible in $k$, the ring of invariants $R = k[\![x_1,
\dots, x_n]\!]^G$ has a non-commutative crepant resolution.  Namely,
with $S$ denoting the power series ring, the endomorphism ring
$\End_R(S)$ has finite global dimension and, since $S$ is a MCM
$R$-module, is an $R$-order.  Thus $\End_R(S) \cong \SG$ is a
non-commutative crepant resolution.  One can also prove directly that
the twisted group ring $\SG$ is symmetric over $R$.

Similarly, the three-dimensional ordinary double point in
Example~\ref{eg:Atiyah2} admits the non-commutative crepant resolution
$\Lam = \End_R(R \oplus I)$, which is derived equivalent to the
resolutions of singularities $Y$ and $Y'$.

\bigskip

For the next equivalent definition we need the notion of a
\emph{$d$-Calabi-Yau} algebra. There are a few approaches to topics
with this name. I follow~\cite{Iyama-Reiten:2008, Braun:2007}; see
also~\cite{Ginzburg:Calabi-Yau, Bocklandt:2009}.  I will always
assume that the base ring is local, which eases the exposition
considerably. 

\begin{definition}
  Let $R$ be a local ring and let $\Lam$ be a module-finite
  $R$-algebra. Write $D(-) = \Hom_R(-,E)$ for Matlis duality over $R$,
  where $E$ is the injective hull of the residue field.  Say that
  $\Lam$ is \emph{$d$-Calabi-Yau ($d$-CY)} if there is a functorial
  isomorphism
  \[
  \Hom_{\D(\Lmod \Lam)}(X,Y[d]) \cong D \Hom_{\D(\Lmod \Lam)}(Y,X)
  \]
  for all $X$ and $Y$ in $\D^b(\fl \Lam)$, the bounded derived
  category of finite-length $\Lam$-modules.  Similarly, $\Lam$ is
  \emph{$d$-CY$^-$} if an isomorphism as above holds for all $X$ in
  $\D^b(\fl \Lam)$ and all $Y$ in $\K^b(\add \Lam)$.
\end{definition}

These definitions are perhaps a bit much to swallow all at once.  Here
are some basic facts about the Calabi-Yau conditions. Let $R$ be a
local ring and $\Lam$ a module-finite $R$-algebra.  Then $\Lam$ is
$n$-CY for some integer $n$ if and only if $\Lam$ is $n$-CY$^-$ and
has finite global dimension.  Indeed, if $\gldim \Lam < \infty$ then
$\D^b(\fl \Lam) \subset \D^b(\lmod \Lam) = \K^b(\add \Lam)$.  The
``only if'' part is proved by completing and considering the
finite-length $\Lam$-module $Y/\rad(\Lam)^n Y$ for $Y$ in $\K^b(\add
\Lam)$. 

Calabi-Yau algebras are best-behaved when $R$ is Gorenstein.  In that
case~\cite[Theorem 3.2]{Iyama-Reiten:2008}, if $\Lam$ is $n$-CY or
$n$-CY$^-$ for some $n$, then $n = \dim R$.  Furthermore, $\Lam$ is
$d$-CY$^-$ if and only if $\Lam$ is a symmetric $R$-order. (This is
one point where life is easier because $R$ is local.  Iyama and Reiten
give an example, which they credit to J. Miyachi, of a $d$-CY$^-$
algebra over a non-local Gorenstein ring which is not symmetric, even
though $R \to \Lam$ is injective.  It is \emph{locally} symmetric.)
More precisely, the following equivalent conditions hold.

\begin{prop}
  [{~\cite{Iyama-Reiten:2008}}]
  \label{prop:defequiv2}
  Let $(R, \m, k)$ be a Gorenstein local ring with $\dim R = d$, and
  let $\Lam$ be a module-finite $R$-algebra.  The following are
  equivalent for any integer $n$.
  \begin{enumerate}
  \item $\Lam$ is $n$-CY$^-$.
  \item As functors on $\fl \Lam$, $\Ext_\Lam^n(-,\Lam)$ is
    isomorphic to the Matlis duality functor $D(-) = \Hom_R(-,E)$,
    and $\Ext_\Lam^i(-,\Lam)=0$ for $i \neq n$.
  \item $\rHom_R(\Lam,R) \cong \Lam[n-d]$ in the bounded derived
    category of $(\Lam\text{-}\Lam)$-bimodules.
  \item $\Lam$ is a CM $R$-module of dimension $n$ and
    $\Ext_R^{d-n}(\Lam,R) \cong \Lam$ as $(\Lam\text{-}\Lam)$-bimodules.
  \end{enumerate}
  In particular, a birational module-finite algebra $\Lam$ is $d$-CY
  if and only if it is symmetric and has finite global dimension. \qed
\end{prop}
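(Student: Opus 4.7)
The plan is to organize all four equivalences around the relative dualizing complex
\[
\omega_\Lam := \rHom_R(\Lam, R)\,,
\]
a complex of $(\Lam\text{-}\Lam)$-bimodules. Conditions (2), (3), (4) are three manifestations of the isomorphism $\omega_\Lam \simeq \Lam[n-d]$ in the derived category of bimodules: (3) is the statement itself, (4) is its cohomological reformulation, and (2) is what remains after pairing both sides with finite-length modules. Condition (1) is the intrinsic CY$^-$ property.

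I would dispose of the easy implications first. For (3) $\Leftrightarrow$ (4), $\Lam$ being finitely generated over the Gorenstein local ring $R$ of dimension $d$ ensures that $\omega_\Lam$ is quasi-isomorphic to a single module placed in one cohomological degree exactly when $\Lam$ is Cohen-Macaulay over $R$; the degree is $d-n$ iff $\dim_R \Lam = n$, and the cohomology there is $\Ext_R^{d-n}(\Lam, R)$. For (3) $\Rightarrow$ (2), use the adjunction $\rHom_\Lam(X, \omega_\Lam) \cong \rHom_R(X, R)$: when $X$ is finite-length over $\Lam$, hence over $R$, local duality identifies the right-hand side with $D(X)$ placed in cohomological degree $d$, and substituting (3) translates this into $\Ext^n_\Lam(X, \Lam) \cong D(X)$ with all other Ext groups vanishing. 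For (1) $\Rightarrow$ (2), specialize the CY$^-$ isomorphism to $Y = \Lam \in \K^b(\add \Lam)$ and use $\Hom_{\D}(\Lam, X[n-i]) = H^{n-i}(X)$, which vanishes on a module $X$ unless $i = n$. Conversely (2) $\Rightarrow$ (1) is a devissage: the iso holds for $Y = \Lam$, extends along summands of $\Lam^r$ and along distinguished triangles through all of $\K^b(\add \Lam)$, and extends $X$ from $\fl \Lam$ to $\D^b(\fl \Lam)$ in the same way.

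The main obstacle is (2) $\Rightarrow$ (3). I would tackle it in two steps. First, show that $\Lam$ is Cohen-Macaulay of $R$-dimension $n$: apply (2) to the finite-length quotients $\Lam/\m^k\Lam$ and take the direct limit in $k$ to get $H^i_\m(\Lam) = \varinjlim_k \Ext_\Lam^i(\Lam/\m^k\Lam, \Lam) = 0$ for $i \neq n$, which is exactly the CM condition. This already reduces $\omega_\Lam$ to a single bimodule $N := \Ext_R^{d-n}(\Lam, R)$ placed in degree $d-n$. Second, identify $N \cong \Lam$ as bimodules: combining (3) $\Rightarrow$ (2) applied to $N[n-d]$ with the natural iso of (2) gives $\rHom_\Lam(X, N) \simeq D(X)[-n] \simeq \rHom_\Lam(X, \Lam)$ functorially on $\fl \Lam$, with the right $\Lam$-action on both sides matching because the ``as functors'' clause of (2) encodes the bimodule compatibility. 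To promote this functorial iso on test modules to an honest bimodule iso, pass to the $\m$-adic completion of $R$ (faithfully flat, so bimodule isomorphisms descend): over $\widehat R$, Matlis duality interchanges finite-length and finitely generated modules, and the family $\rHom_\Lam(-, Z)\big|_{\fl \Lam}$ pins down a bounded, finitely generated complex $Z$ up to unique iso. This is a truncated version of the uniqueness of Yekutieli's rigid dualizing complex.

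Finally, the closing sentence is packaging. If $\Lam$ is module-finite and birational over $R$, the generic fibre $\Lam \otimes_R K \cong M_r(K)$ has Krull dimension $d$, so $\dim_R \Lam = d$ and the only available CY$^-$ index is $n = d$. The equivalence $(1) \Leftrightarrow (4)$ at $n = d$ identifies $d$-CY$^-$ with $\Lam$ being MCM together with the bimodule iso $\Hom_R(\Lam, R) \cong \Lam$, i.e.\ symmetric in the sense of Definition~\ref{def:symmetric}. Being $d$-CY is $d$-CY$^-$ together with finite global dimension, and for a birational module-finite algebra with finite global dimension, symmetry already forces MCM (by comparison of injective dimensions and Ramras's inequality, Proposition~\ref{prop:center-finite}), so the stated characterization of $d$-CY falls out.
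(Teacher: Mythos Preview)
The paper does not prove this proposition; it is stated with attribution to Iyama--Reiten and closed with a \qed. So there is no ``paper's own proof'' to compare against, and your outline must stand on its own.

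Your treatment of the four equivalences is sound and follows the natural architecture: (3)$\Leftrightarrow$(4) is Gorenstein duality, (3)$\Rightarrow$(2) is the Hom--tensor adjunction combined with local duality, (1)$\Leftrightarrow$(2) is d\'evissage, and (2)$\Rightarrow$(3) is the substantive step. Your two-stage attack on (2)$\Rightarrow$(3) is correct in outline. The identification $H^i_\m(\Lam) = \varinjlim_k \Ext_\Lam^i(\Lam/\m^k\Lam,\Lam)$ deserves a sentence of justification (it holds because $\Hom_\Lam(\Lam/\m^k\Lam,-)$ computes $\m^k$-torsion and direct limits of Ext over $\Lam$-injectives vanish), but it is standard. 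The completion-and-uniqueness argument for promoting the functorial isomorphism on $\fl\Lam$ to a bimodule isomorphism is the right idea.

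There is one soft spot in your final paragraph. You assert that for a birational module-finite $\Lam$ with finite global dimension, symmetry forces MCM ``by comparison of injective dimensions and Ramras's inequality.'' Ramras's inequality only gives $\depth_R\Lam \le \dim R \le \gldim\Lam$, which is the wrong direction. The honest argument is via uniqueness of dualizing complexes: since $\gldim\Lam<\infty$, the bimodule $\Lam$ itself is a dualizing complex for $\Lam$, as is $\omega_\Lam=\rHom_R(\Lam,R)$; uniqueness forces $\omega_\Lam\cong L[s]$ for an invertible bimodule $L$ and some shift $s$, so $\omega_\Lam$ is concentrated in a single degree and $\Lam$ is CM of some dimension $n$. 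Birationality gives $\dim_R\Lam=d$, and symmetry gives $H^0(\omega_\Lam)=\Hom_R(\Lam,R)\cong\Lam\neq 0$, forcing $s=0$ and hence $n=d$. You should replace the parenthetical with this reasoning.
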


Of course the value of a definition, even one as motivated as this one
has been, is in the theorems.  Here is the main result
of~\cite{VandenBergh:flops}. 

\begin{theorem}
  \label{thm:VdBmainflops}
  Let $R$ be a Gorenstein normal $\CC$-algebra, $X = \Spec R$, and
  $\pi \colon \tilde X \to X$ a crepant resolution of singularities.
  Assume that the fibers of $\pi$ have dimension at most one.  Then
  there exists a MCM $R$-module $M$ such that the endomorphism ring
  $\Lam = \End_R(M)$ is homologically homogeneous.  In particular,
  $\Lam$ is a non-commutative crepant resolution of $R$.  Furthermore,
  $\tilde X$ and $\Lam$ are derived equivalent: $\D^b(\coh \tilde X)
  \simeq \D^b(\lmod \Lam)$. \qed
\end{theorem}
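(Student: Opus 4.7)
The plan is to realize the conclusion as an instance of Geometric Tilting Theory (Theorem~\ref{thm:geomtilt}), by constructing an explicit tilting bundle $\calt$ on $\tilde X$ whose endomorphism ring descends to an endomorphism ring $\End_R(M)$ over the base. Concretely, if one can produce a vector bundle $\calt$ on $\tilde X$ that is (i) a classical generator of $\D^\perf(\coh \tilde X)$, (ii) has no higher self-extensions, and (iii) satisfies $\R^i\pi_* \cEnd(\calt) = 0$ for $i>0$, then Theorem~\ref{thm:geomtilt} supplies the derived equivalence $\D^b(\coh \tilde X) \simeq \D^b(\lmod \Lam)$ with $\Lam = \End_{\calo_{\tilde X}}(\calt)$, the smoothness of $\tilde X$ gives $\gldim \Lam < \infty$, and the projection formula gives $\Lam = H^0(\tilde X, \cEnd(\calt)) \cong \End_R(\pi_* \calt)$, so $M := \pi_*\calt$ is the module we want.

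The construction of $\calt$ is the technical heart and exploits the standing hypothesis that $\pi$ has fibers of dimension at most one. First I would choose a $\pi$-very-ample line bundle $\call$ on $\tilde X$. Because relative dimension is $\leq 1$, higher direct images satisfy $\R^i\pi_* \calf = 0$ for $i\geq 2$ and any coherent $\calf$, so the obstruction theory for producing generators is controlled by a single $\R^1$. Following Van den Bergh, one forms extensions of $\calo_{\tilde X}$ by appropriate powers $\call^{-n}$ to kill the $\R^1\pi_*$ groups, producing locally free sheaves $\calm_i$ whose direct sum $\calt = \calo_{\tilde X} \oplus \calm_1 \oplus \cdots \oplus \calm_r$ is $\pi$-acyclic and generates $\D^\perf(\coh\tilde X)$ via a Be\u\i linson-style resolution of the diagonal along $\pi$. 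Self-Ext vanishing is checked fibrewise using ampleness and the one-dimensional fiber hypothesis; this is where the relative dimension bound is truly essential, since otherwise higher $\R^i\pi_*$ obstructions cannot be arranged away.

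Once $\calt$ is in hand, the remaining verifications are comparatively formal. Since $X$ is Gorenstein and admits the crepant resolution $\pi$, Proposition~\ref{prop:crep-ratl}(\ref{item:crep-ratl}) gives that $R$ has rational singularities, i.e.\ $\R\pi_*\calo_{\tilde X} = \calox$; combined with relative Serre duality, crepancy ($\pi^*\omega_X = \omega_{\tilde X}$), and Grauert--Riemenschneider (Theorem~\ref{thm:GRvan}), this implies that $\pi_*\calt$ is MCM as an $R$-module. Hence $M$ is reflexive MCM, $\Lam = \End_R(M)$ is an $R$-order birational to $R$ (since $\pi$ is an isomorphism over a dense open), symmetric by Theorem~\ref{thm:AG-A}(\ref{item:endosymm}), and of finite global dimension by Theorem~\ref{thm:geomtilt}(3). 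Proposition~\ref{prop:defequiv1} then packages these into the statements that $\Lam$ is a non-commutative crepant resolution and is homologically homogeneous. The main obstacle throughout is the construction of the tilting bundle itself: the extension-theoretic recipe works precisely because fibers have dimension one, and it is this combinatorial--homological step, rather than any of the subsequent formal deductions, where the geometric hypothesis is genuinely consumed.
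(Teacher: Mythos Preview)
Your overall strategy matches the paper's: construct a tilting bundle on $\tilde X$ from an ample line bundle plus an extension designed to kill self-$\Ext$, then invoke geometric tilting to get the derived equivalence and push forward to identify $\Lam$ with $\End_R(M)$. The broad architecture---tilting bundle, fiber-dimension hypothesis consumed in the $\Ext$-vanishing step, formal cleanup via rational singularities and Proposition~\ref{prop:defequiv1}---is exactly what the paper sketches.

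The details of your construction, however, diverge from the paper's and are somewhat garbled. The paper uses a \emph{single} ample line bundle $\call$ generated by global sections; by \cite[4.2.4]{Bondal-VdB:2003} and the fiber hypothesis, $\calo_{\tilde X}\oplus\call$ already generates $\D(\Qcoh\tilde X)$, so no Be\u\i linson-style diagonal argument or multiple powers $\call^{-n}$ are needed. The extension goes the other way from what you wrote: one takes $0 \to \calo_{\tilde X}^r \to \calm' \to \call \to 0$ representing a generating set for $\Ext^1_{\calo_{\tilde X}}(\call,\calo_{\tilde X})$ as an $R$-module (so an extension of $\call$ by copies of $\calo$, not of $\calo$ by $\call^{-n}$), and then $\calm = \calm' \oplus \calo_{\tilde X}$ is the tilting bundle. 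This single ``universal extension'' trick is cleaner than the iterated construction you describe. Finally, the paper remarks that the proof that both $\Lam$ and $M$ are MCM is ``more involved'' and does not reduce to the GR-vanishing argument you sketch; you should not expect that step to be as formal as you suggest.
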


Here is a sketch of the proof of Theorem~\ref{thm:VdBmainflops}.  We
know that existence of a crepant resolution implies that $X$ has
rational singularities.  Let $\call$ be an ample line bundle on the
smooth variety $\tilde X$ generated by global sections.  Then by the
hypothesis on the fibers of $\pi$ (\cite[4.2.4]{Bondal-VdB:2003}),
$\calo_{\tilde X} \oplus \call$  generates $\D(\Qcoh
\tilde X)$, that is, if $\caln$ in $\D(\Qcoh \tilde X)$ satisfies
$\Hom_{\D(\Qcoh \tilde X)}(\calo_{\tilde X}\oplus \call, \caln[i]) =
0$ for $i \neq 0$, then $\caln = 0$ (see the discussion after
Definition~\ref{def:tilt-obj}).  Take an extension $0 \to
\calo_{\tilde X}^r \to \calm' \to \call \to 0$ corresponding to a set
of $r$ generators for $\Ext_{\calo_{\tilde X}}^1(\call, \calo_{\tilde
  X})$ as an $R$-module.  Set $\calm = \calm' \oplus \calo_{\tilde
  X}$.  Then $\calm$ also generates $\D(\Qcoh \tilde
X)$. One can show that $\Ext_{\calo_{\tilde X}}^i(\calm,\calm)=0$ for
$i >0$ (this takes a good bit of work).  Thus $\calm$ is a tilting
bundle on $\tilde X$.  Set $\Lam = \End_{\calo_{\tilde X}}(\calm)$;
then the vanishing of the derived pushforwards $\R^i
\pi_*\cEnd_{\calo_{\tilde X}}(\calm) = \Ext_{\calo_{\tilde
    X}}^i(\calm, \calm)$ implies that $\Lam \cong \End_R(M)$, where
$M = \Gam(\tilde X,\calm)$.  The proofs that $\Lam$ and $M$ are both
MCM are more involved.

Van den Bergh also proves a result converse to
Theorem~\ref{thm:VdBmainflops}, constructing a geometric crepant
resolution $\pi \colon \tilde X \to \Spec R$ from a non-commutative
one under certain assumptions~\cite[\S6]{VandenBergh:crepant}.  The
method is roughly as follows: let $\Lam$ be a non-commutative crepant
resolution of $R$, and take for $\tilde X$ a moduli space of certain
stable representations of $\Lam$. Then he proves that if
$\dim(\tilde X \times_{\Spec R} \tilde X) \leq \dim R +1$, then
$\tilde X \to \Spec R$ is a crepant resolution and there is an
equivalence of derived categories $\D^b(\coh \tilde X) \simeq
\D^b(\lmod \Lam)$.  Observe that the hypothesis is exactly similar to
that of Theorem~\ref{thm:BKR}.  In particular, the hypothesis holds if $\dim R
\leq 3$, giving the following theorem.  

\begin{theorem}
  \label{thm:VdBmain}
  Let $R$ be a three-dimensional Gorenstein normal $\CC$-algebra with
  terminal singularities.
  \begin{enumerate}
  \item\label{item:CR-NCCR} There is a non-commutative crepant
    resolution of $R$ if and only if $X = \Spec R$ has a 
    crepant resolution of singularities.
  \item\label{item:allCRs} All crepant resolutions of
    $R$---geometric as well as non-commutative---are derived
    equivalent. \qed
  \end{enumerate}
\end{theorem}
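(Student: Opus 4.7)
The plan is to combine Theorem~\ref{thm:VdBmainflops}, Van den Bergh's converse construction sketched immediately before the statement, and Bridgeland's Theorem~\ref{thm:bridgeland}, verifying only that the dimension hypotheses appearing in each become automatic when $R$ is a three-dimensional Gorenstein normal $\CC$-algebra with terminal singularities.

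For the ``if'' direction of~(i), let $\pi\colon \tilde X \to \Spec R$ be a geometric crepant resolution. The key observation is that $\pi$ must be small. Indeed, any prime $\pi$-exceptional divisor $E$ would satisfy $\mathrm{ord}_E(K_{\tilde X} - \pi^*K_X) > 0$ (essentially the definition of terminal), contradicting crepancy $K_{\tilde X} = \pi^*K_X$. Hence the exceptional locus has codimension at least $2$ in the three-dimensional $\tilde X$, so every fiber of $\pi$ has dimension at most one. Theorem~\ref{thm:VdBmainflops} then furnishes a reflexive $R$-module $M$ such that $\Lam = \End_R(M)$ is an NCCR of $R$ with $\D^b(\coh \tilde X) \simeq \D^b(\lmod \Lam)$.

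For the ``only if'' direction, start from an NCCR $\Lam$ and apply Van den Bergh's converse, constructing $\tilde X$ as a moduli space of suitably stable $\Lam$-representations together with a tautological morphism $\tilde X \to \Spec R$. The hypothesis $\dim(\tilde X \times_{\Spec R} \tilde X) \leq \dim R + 1$ is automatic in dimension $\leq 3$, exactly as in Theorem~\ref{thm:BKR}: the diagonal contributes dimension $3 = \dim \tilde X$, and over each of the finitely many singular points the fiber of $\tilde X \to \Spec R$ has dimension at most $2$, so the local fiber-products have dimension at most $4 = \dim R + 1$. Van den Bergh's construction then produces a crepant resolution $\tilde X \to \Spec R$ with $\D^b(\coh \tilde X) \simeq \D^b(\lmod \Lam)$.

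For~(ii), assemble the derived equivalences established so far: every geometric crepant resolution is derived equivalent to some NCCR by Theorem~\ref{thm:VdBmainflops}; every NCCR is derived equivalent to some geometric crepant resolution by the converse just invoked; and any two geometric crepant resolutions of $\Spec R$ are derived equivalent to one another by Bridgeland's Theorem~\ref{thm:bridgeland}. Composing these equivalences places all crepant resolutions of $R$, commutative and non-commutative alike, in a single derived-equivalence class. The main obstacle throughout is the fiber-product dimension estimate needed for the converse construction; the remaining steps are bookkeeping together with the classical fact that crepant resolutions of terminal threefold singularities are small.
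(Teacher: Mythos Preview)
Your proposal is essentially the argument the paper sketches in the paragraph preceding the theorem: apply Theorem~\ref{thm:VdBmainflops} for one direction of~(\ref{item:CR-NCCR}) (after noting that a crepant resolution of a terminal singularity is small, hence has one-dimensional fibers), invoke Van den Bergh's moduli-space converse for the other direction with the fiber-product estimate automatic in dimension three, and then link everything through Bridgeland's Theorem~\ref{thm:bridgeland} for~(\ref{item:allCRs}).

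Two small points worth flagging. First, the paper states Bridgeland's theorem only for \emph{projective} threefolds, while here $X=\Spec R$ is affine; the result does hold in this local setting (and indeed Van den Bergh's paper~\cite{VandenBergh:flops} reproves it via the NCCR machinery), but you are silently extending the statement as written. Second, your justification of the fiber-product bound $\dim(\tilde X\times_{\Spec R}\tilde X)\leq 4$ assumes you already know $\dim\tilde X=3$ and that the singular locus is finite; both are true (terminal Gorenstein threefold singularities are isolated, and the moduli space has the expected dimension), but neither is quite as immediate as your sentence suggests---the paper simply asserts the bound holds for $\dim R\leq 3$ and defers to~\cite{VandenBergh:crepant}, which is the honest thing to do here as well.
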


The second statement verifies Conjecture~\ref{conj:BO} of Bondal and
Orlov in this case.  Iyama and Reiten~\cite{Iyama-Reiten:2008} have
recently shown that, even without the assumption on the singularities
of $R$ being terminal, all non-commutative crepant resolutions of $R$
are derived equivalent.  Even more recently, Iyama and
Wemyss~\cite{Iyama-Wemyss:NCBondalOrlov} have announced a sufficient
criterion for the existence of a derived equivalence between the
noncommutative crepant resolutions of $R$. When $d\leq 3$ this
criterion is always satisfied, recovering the Iyama--Reiten result.

Van den Bergh suggests the following extension of
Theorem~\ref{thm:VdBmain}(\ref{item:allCRs}).
\begin{conj}
  [Van den Bergh]\label{conj:VdB}
  Let $R$ be a Gorenstein normal $\CC$-algebra and $X = \Spec R$.
  Then all crepant resolutions of $R$---geometric as well as
  non-commutative---are derived equivalent.
\end{conj}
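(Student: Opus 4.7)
The plan is to decompose the conjecture into three linked assertions and attack them separately, then glue. Write $\mathsf{CR}(R)$ for the (conjectural) set of derived equivalence classes of crepant resolutions. I would try to show: (a) any two geometric crepant resolutions $\pi\colon \tilde X\to \Spec R$ and $\pi'\colon \tilde X'\to \Spec R$ are derived equivalent; (b) any geometric crepant resolution is derived equivalent to at least one non-commutative crepant resolution; and (c) any two non-commutative crepant resolutions of $R$ are derived equivalent. Granting (a)--(c), the conjecture follows, since (c) is already essentially the Iyama--Reiten theorem cited after Theorem~\ref{thm:VdBmain} (which is unconditional on dimension), so the real work is (a) and (b).

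For (b), my plan is to globalize Van den Bergh's construction from the proof of Theorem~\ref{thm:VdBmainflops}. Given $\pi\colon \tilde X\to \Spec R$ crepant, choose an ample line bundle $\call$ on $\tilde X$ generated by global sections, and iteratively form extensions of $\calo_{\tilde X}^{r_i}$ by $\call^{\otimes n_i}$ killing $\Ext^{>0}$. The payoff sought is a tilting bundle $\calm$ on $\tilde X$ with endomorphism ring $\Lam=\End_R(M)$, where $M=\Gamma(\tilde X,\calm)$, that will be both MCM over $R$ and of finite global dimension; the crepancy of $\pi$ should force $M$ to be reflexive and $\Lam$ symmetric via Grauert--Riemenschneider (Theorem~\ref{thm:GRvan}) plus Proposition~\ref{prop:crep-ratl}. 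Theorem~\ref{thm:geomtilt} then yields $\D^b(\coh \tilde X)\simeq \D^b(\lmod \Lam)$.

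For (a), given (b) for both $\tilde X$ and $\tilde X'$, and given (c), I would obtain derived equivalences $\D^b(\coh \tilde X)\simeq \D^b(\lmod \Lam)\simeq \D^b(\lmod \Lam')\simeq \D^b(\coh \tilde X')$. Thus (a) is not independent: it follows from (b) and (c). Alternatively, one can try to verify (a) directly by the Bondal--Orlov--Kawamata strategy of factoring the birational map $\tilde X\dashrightarrow \tilde X'$ into a sequence of flops over $\Spec R$ (using the relative minimal model program), and then invoking a relative generalization of Bridgeland's Theorem~\ref{thm:bridgeland} at each flop. This second route would also give (a) independently, providing a cross-check.

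The hard part, and where I expect the plan to stall, is step (b)---specifically, the existence of tilting bundles on crepant resolutions in dimensions $\geq 4$. The extension procedure of Van den Bergh crucially uses the hypothesis that the fibers of $\pi$ have dimension at most one, which controls the vanishing $\Ext^{>0}_{\calo_{\tilde X}}(\calm,\calm)=0$ via relative Serre vanishing. Without a fiber-dimension bound, there is no reason for the iterated extensions to terminate in a bundle with trivial higher self-extensions, and there are genuine obstructions: $K_0(\tilde X)$ may have torsion (as noted after Theorem~\ref{thm:geomtilt}) or $\tilde X$ may simply not admit a tilting bundle. A potential workaround, in the spirit of Bridgeland--King--Reid (Theorem~\ref{thm:BKR}), is to replace the literal tilting bundle by a Fourier--Mukai kernel $\calp\in \D^b(\coh (\tilde X\times_{\Spec R}\tilde X'))$ whose integral transform is an equivalence, proving (a) directly by producing a universal family; but controlling $\calp$ requires a higher-dimensional analogue of the fiber product estimate $\dim(\tilde X\times_{\Spec R}\tilde X')\leq \dim R+1$, which is precisely what fails outside the settings covered by Theorems~\ref{thm:BKR} and~\ref{thm:VdBmain}. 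I would therefore expect a full resolution of the conjecture to require substantial new input on either the relative MMP or the representation theory of symmetric orders in higher dimensions.
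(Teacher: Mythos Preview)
The statement is a \emph{conjecture}, not a theorem: the paper gives no proof of it. It is presented as Van den Bergh's proposed extension of Theorem~\ref{thm:VdBmain}(\ref{item:allCRs}) beyond dimension three, and is listed again among the open problems in \S\ref{sect:omissions}. So there is nothing in the paper for your proposal to be compared against; what you have written is a reasonable research plan, not a proof, and you yourself acknowledge as much in your final paragraph.

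That said, your plan contains a genuine error. You claim that step (c)---derived equivalence of any two non-commutative crepant resolutions---is ``essentially the Iyama--Reiten theorem cited after Theorem~\ref{thm:VdBmain} (which is unconditional on dimension)''. It is not. The Iyama--Reiten result discussed there is for $\dim R \leq 3$; the paper says explicitly that Iyama--Wemyss give only a \emph{sufficient criterion} in general, which ``when $d\leq 3$ \dots\ is always satisfied, recovering the Iyama--Reiten result''. So (c) is itself open in higher dimension, and your reduction of (a) to (b)+(c) does not buy you what you think it does. In short, none of (a), (b), (c) is known in the generality of the conjecture, and your identification of (b) as ``the hard part'' understates the difficulty: all three steps are open.
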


\section{Example: normalization}\label{sect:normalization}

The appearance of endomorphism rings as ersatz resolutions of
singularities may initially be unsettling.  It does, however, have a
precedent.  One can think of the \emph{normalization} $\bar R$ of an
integral domain $R$, i.e.\ the integral closure in its quotient field,
as a partial resolution of singularities, one that is especially
tractable since it does not leave the category of noetherian rings.
This result of Grauert and Remmert~\cite{Grauert-Remmert:1971,
  Grauert-Remmert:1984} interprets the normalization as an
endomorphism ring.

\begin{theorem}
  [Grauert--Remmert]
  \label{thm:Grauert-Remmert}
  Let $R$ be an integral domain and $I$ a non-zero integrally
  closed ideal of $R$ such that $R_\p$ is normal for every $\p
  \not\supset I$.  Then the following are equivalent.
  \begin{enumerate}
  \item $R$ is normal;
  \item For all non-zero fractional ideals $J$ of $R$, $\Hom_R(J,J) = R$;
  \item For all non-zero ideals $J$ of $R$, $\Hom_R(J,J)=R$;
  \item $\Hom_R(I,I)=R$. \qed
  \end{enumerate}
\end{theorem}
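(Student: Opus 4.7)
The plan is to identify $\Hom_R(J,J)$ with the subring $\{x \in K : xJ \subseteq J\}$ of the quotient field $K$ via $\phi \mapsto \phi(a)/a$ for any non-zero $a \in J$ (independence of $a$ is a quick check, using that $R$ is a domain). Under this identification $R$ sits inside $\Hom_R(J,J)$, and the implications $(2)\Rightarrow(3)\Rightarrow(4)$ are immediate since any non-zero ideal is a non-zero fractional ideal.

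For $(1)\Rightarrow(2)$ the argument is the classical determinant trick. If $xJ \subseteq J$ for a finitely generated $J = (j_1,\dots,j_n)$, writing $x j_i = \sum_{j} a_{ij} j_j$ and applying Cayley-Hamilton to the matrix $(a_{ij})$ yields a monic polynomial equation for $x$ with coefficients in $R$ (the nonzero $j_i$ act as a witness in the domain $R$), so $x \in \bar R$. Normality of $R$ then forces $x \in R$.

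The heart of the theorem is $(4)\Rightarrow(1)$, and the key observation I would exploit is a multiplication trick that upgrades an integral equation over the ring $R$ to an integral equation over the ideal $I$. Take $x \in \bar R$ satisfying $x^n + r_1 x^{n-1} + \cdots + r_n = 0$ with $r_i \in R$, and let $a \in I$ be arbitrary; multiplying the integral equation through by $a^n$ produces
\[
(xa)^n + (r_1 a)(xa)^{n-1} + (r_2 a^2)(xa)^{n-2} + \cdots + (r_n a^n) = 0,
\]
whose coefficient of $(xa)^{n-i}$ is $r_i a^i \in I^i$. This is precisely the condition that $xa$ be integral over the ideal $I$. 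Because $I$ is integrally closed by hypothesis, we conclude $xa \in I$, so $xI \subseteq I$, which places $x$ in $\Hom_R(I,I) = R$. Hence $\bar R \subseteq R$, i.e.\ $R$ is normal.

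The principal obstacle is the $(4)\Rightarrow(1)$ step, and specifically noticing that one should work with integral dependence of elements over the \emph{ideal} $I$ rather than just over the ring; the multiplication-by-$a^i$ trick is what bridges these two notions of integrality, and integral-closedness of $I$ is exactly what is needed to then land back inside $I$. The hypothesis that $R_\p$ be normal for $\p \not\supset I$ makes the theorem into a practical normality test---one chooses $I$ to cut out a set containing the non-normal locus---but the formal chain of implications above does not logically require it.
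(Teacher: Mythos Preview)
Your identification of $\Hom_R(J,J)$ with $\{x\in K : xJ\subseteq J\}$ and the chain $(1)\Rightarrow(2)\Rightarrow(3)\Rightarrow(4)$ are fine, and match the remark the paper makes immediately after the statement. The paper itself does not supply a proof beyond that remark, so there is nothing to compare for those implications.

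The gap is in your proof of $(4)\Rightarrow(1)$, and it is exactly at the point you flag as the ``principal obstacle.'' Your multiplication trick correctly shows that $xa$ satisfies an equation $(xa)^n + c_1(xa)^{n-1}+\cdots+c_n=0$ with $c_i=r_ia^i\in I^i$. But the phrase ``$I$ is integrally closed'' means that the integral closure of $I$ \emph{inside $R$} is $I$; it says nothing about elements of $\bar R\setminus R$. Since $x\in\bar R$ and $a\in R$, all you know a priori is $xa\in\bar R$, and you have no mechanism to force $xa\in R$. Without that, the hypothesis ``$I$ integrally closed'' cannot be invoked, and the argument stops.

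This is not a repairable oversight: the hypothesis on $R_\p$ that you dismiss as merely ``practical'' is genuinely needed for the implication. Take $R=k[t^2,t^3]\subset k[t]=\bar R$ and let $I$ be the maximal ideal $(t^2-1,t^3-1)$ at the smooth point $t=1$. Then $I$ is integrally closed (maximal ideals always are), and one checks $t\cdot(t^2-1)=t^3-t\notin R$, so $t\notin\Hom_R(I,I)$; since any ring between $R$ and $k[t]$ is either $R$ or $k[t]$, this forces $\Hom_R(I,I)=R$. Yet $R$ is not normal. The hypothesis fails here precisely because the non-normal point $(t^2,t^3)$ does not lie in $V(I)$.

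The correct argument for $(4)\Rightarrow(1)$ proceeds by contrapositive and \emph{uses} the hypothesis: if $R\subsetneq\bar R$, choose an associated prime $\p$ of the finitely generated $R$-module $\bar R/R$; then $\p$ contains the conductor, hence (by the hypothesis) $\p\supseteq I$. There is then $y\in\bar R\setminus R$ with $\p y\subseteq R$, so $Iy\subseteq\p y\subseteq R$. Now one combines $Iy\subseteq R$ with $Iy\subseteq I\bar R$ and the inclusion $I\bar R\cap R\subseteq \bar I=I$ (this last step is a determinant-trick computation, and is where ``$I$ integrally closed'' actually enters) to conclude $Iy\subseteq I$, i.e.\ $y\in\Hom_R(I,I)\setminus R$.
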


For any fractional ideal $J$, the containments $R \subseteq \Hom_R(J,J)
\subseteq \bar R$ always hold.  The latter inclusion sends $\phi \colon J \to
J$ to the fraction $\phi(r)/r$ for any fixed non-zerodivisor $r \in
J$; this is well-defined.  In particular, $\Hom_R(J,J)$ is a
commutative (!) ring.

Theorem~\ref{thm:Grauert-Remmert} was used by de Jong~\cite{deJong} to
give an algorithm for computing the normalization $\bar R$ of an
affine domain over a perfect field, or slightly more generally.  Let
$R$ be a local domain such that its normalization $\bar R$ is a
finitely generated $R$-module; equivalently, the completion $\widehat
R$ is reduced.  One needs to determine a non-zero integrally closed
ideal $I$ such that $V(I)$ contains the non-normal locus of $R$.  If
$R$ is affine over a perfect field, then the Jacobian criterion
implies that the radical of the Jacobian ideal will work; there are
other choices in other cases.  Set $R' = \Hom_R(I,I)$.  If $R' =R$,
then $R$ is normal, so stop.  Otherwise, replace $R$ by $R'$ and
repeat.  The algorithm has been refined and extended
since~\cite{Decker-deJong-Greuel-Pfister:1999,
  Greuel-Laplagne-Seelisch:2010}. 

It follows from Serre's criterion for normality that if $R$ is the
coordinate ring of an irreducible curve singularity, then the
normalization $\bar R$ is regular, whence is the coordinate ring of a
resolution of singularities of $\Spec R$.  Thus in this situation,
desingularization can be achieved as an iterated endomorphism ring.
In fact, as long as $R$ is affine over a perfect field, one actually
has $\bar R = \Hom_R(\bar R,\bar R)$, a single endomorphism ring of a
finitely generated module giving resolution of singularities.

\section{MCM endomorphism rings}\label{sect:MCMendo}

The requirement that a non-commutative crepant resolution of
singularities should be an order, i.e.\ a MCM module, raises a basic
question: Does the depth of $\Hom_R(M,M)$ depend in any predictable
way on the depth of $M$?  The short answer is No.  In this \S{} we
look at some examples.

First, observe that there is at least a lower bound on the depth of
a $\Hom$ module: If $R$ is any local ring and $M$, $N$ are finitely
generated modules with $\depth N \geq 2$, then $\Hom_R(M,N)$ has depth
$\geq 2$ as well.  Indeed, applying $\Hom_R(-,N)$ to a free
presentation of $M$ displays $\Hom_R(M,N)$ as the kernel of a map
between direct sums of copies of $N$, so the Depth Lemma gives the
conclusion. That's about the end of the good news.

Next notice that the depth of $\Hom_R(M,M)$ can be strictly greater
than that of $M$.  Indeed, let $R$ be a CM normal domain and let $J$
be any non-zero ideal of $R$.  Then $\Hom_R(J,J)=R$ by
Theorem~\ref{thm:Grauert-Remmert}, even though $\depth_R J$ can take
any value between $1$ (if, say, $J$ is a maximal ideal) and $d$ (if
for example $J$ is principal).  Furthermore, $R$ can be taken to be
Gorenstein, or even a hypersurface ring, so finding a class of rings
that avoids this problem seems hopeless.

One might hope at least that if $M$ is MCM then $\Hom_R(M,M)$ is MCM
as well.  This question was raised by
Vasconcelos~\cite{Vasconcelos:1968} for $R$ a Gorenstein local ring.
It also has a negative answer, though it is at least harder.  A
counterexample is given by Jinnah~\cite{Jinnah:1975}, based
on~\cite[Example 5.9]{Hochster:CMmodules}.

\begin{example}\label{eg:Jinnah}
  Let $k$ be a field and set $A = k[x,y,z]/(x^3+y^3+z^3)$, $B =
  k[u,v]$.  Let $R$ be the Segre product of $A$ and $B$, the graded
  ring defined by $R_n = A_n\otimes_k B_n$.  Then $R$ is the subring
  of $A[u,v]$ generated by $x u,xv,y u,y v,z u,z v$, a three-dimensional
  normal domain of depth $2$.  The ideal $I = v A[u,v] \cap R$ has
  depth $3$ over $R$.

  Write $R$ as a quotient of a graded complete intersection ring $S$
  of dimension $3$.  Then $I$ has depth $3$ over $S$ as well, but
  $\Hom_S(I,I) = \Hom_R(I,I) = R$ has depth two as an $S$-module.
  Localizing $S$ at its irrelevant ideal gives a local example.
\end{example}


Here is a useful characterization of the depth of
$\Hom_R(M,N)$.

\begin{lemma}
  [{\cite{Iyama:higher-dim,Dao:2010}}]
  \label{lem:Dao}
  Let $R$ be a CM local ring and let $M$ and $N$ be finitely generated
  $R$-modules.  Fix $n \geq 2$ and consider the following properties.
  \begin{enumerate}
  \item\label{item:hom} $\Hom_R(M,N)$ satisfies ($S_{n+1}$); and
  \item\label{item:exti} $\Ext_R^i(M,N)=0$ for $i = 1, \dots, n-1$.
  \end{enumerate}
  If $M$ is locally free in codimension $n$ and $N$ satisfies ($S_n$),
  then (\ref{item:hom}) $\implies$ (\ref{item:exti}).  If $N$ satisfies
  ($S_{n+1}$) then (\ref{item:exti}) $\implies$ (\ref{item:hom}).
\end{lemma}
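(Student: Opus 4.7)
My plan is to apply $\Hom_R(-, N)$ to a free resolution $\cdots \to F_1 \to F_0 \to M \to 0$ of $M$, producing a cochain complex
\[
C^\bullet \colon \quad 0 \to N^{r_0} \xrightarrow{d^0} N^{r_1} \xrightarrow{d^1} N^{r_2} \to \cdots
\]
with $H^0(C^\bullet) = \Hom_R(M, N)$ and $H^j(C^\bullet) = \Ext^j_R(M, N)$ for $j \geq 1$. Writing $Z^j = \ker d^j$ and $B^j = \operatorname{image}(d^{j-1})$, both implications come from feeding the short exact sequences hidden inside $C^\bullet$ to the depth lemma. The technical difference between the two directions is that (i) $\Rightarrow$ (ii) must use the local-freeness hypothesis on $M$ to force any nonvanishing $\Ext$ into sufficiently high codimension.

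For (ii) $\Rightarrow$ (i), the hypothesis $\Ext^j_R(M, N) = 0$ for $1 \leq j \leq n-1$ gives $Z^j = B^j$ in that range, so $C^\bullet$ splits into short exact sequences
\[
0 \to Z^{j-1} \to N^{r_{j-1}} \to Z^j \to 0, \qquad j = 1, \ldots, n-1,
\]
with $Z^0 = \Hom_R(M, N)$. Localize at an arbitrary prime $P$ of dimension $d$. The kernel $Z^{n-1}_P \subseteq N^{r_{n-1}}_P$ has $\depth Z^{n-1}_P \geq \min(2, \depth N_P)$ by a direct depth-lemma calculation (submodules of $N_P^{r_n}$ inherit positive depth from $N_P$), and then the form $\depth Z^{j-1}_P \geq \min(\depth N_P, \depth Z^j_P + 1)$ of the depth lemma combined with $\depth N_P \geq \min(n, d)$ gives by induction $\depth Z^{n-k}_P \geq \min(n+1, d, k+1)$; taking $k = n$ yields the $(S_{n+1})$ estimate on $\Hom_R(M, N)$.

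For the contrapositive of (i) $\Rightarrow$ (ii), suppose $\Ext^i_R(M, N) \neq 0$ for some $i \in \{1, \ldots, n-1\}$, choose $i$ minimal, and let $P$ be a minimal prime of $\Supp \Ext^i_R(M, N)$. Local freeness of $M$ in codimension $n$ kills $\Ext^i_R(M, N)_P$ whenever $\dim R_P \leq n$, so necessarily $\dim R_P \geq n+1$, and $\Ext^i_R(M, N)_P$ has finite length after localizing at $P$. The minimality of $i$ again gives short exact sequences $0 \to Z^{j-1} \to N^{r_{j-1}} \to Z^j \to 0$ for $1 \leq j < i$, while at step $i$ one has
\[
0 \to Z^{i-1} \to N^{r_{i-1}} \to B^i \to 0, \qquad 0 \to B^i \to Z^i \to \Ext^i_R(M, N)_P \to 0.
\]
Propagating depth \emph{forwards} from $\depth Z^0_P \geq n+1$ via $\depth Z^j_P \geq \min(\depth Z^{j-1}_P - 1, \depth N_P)$ loses at most one per step, so $\depth B^i_P \geq n - i + 1$, which is at least $2$ because $i \leq n - 1$.

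The punchline is then automatic: combined with $\depth \Ext^i_R(M, N)_P = 0$ (finite length) and $\depth B^i_P \geq 2$, the last short exact sequence forces $\depth Z^i_P \leq 0$ via the depth lemma. But $Z^i_P \hookrightarrow N^{r_i}_P$ and $\depth N_P \geq \min(n, d) \geq 2$ force $\depth Z^i_P \geq 1$, a contradiction. The main obstacle I expect is the depth bookkeeping in the contrapositive: one must juggle $\depth N_P$, $\dim R_P$, and the index simultaneously, and the restriction $i \leq n-1$ is exactly what makes $n - i + 1 \geq 2$ close the argument.
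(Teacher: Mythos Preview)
Your argument is correct and close in spirit to the paper's. For (ii) $\Rightarrow$ (i) both proofs are the same Depth Lemma computation along the exact complex $0 \to \Hom_R(M,N) \to N^{r_0} \to \cdots \to N^{r_{n-1}}$. For (i) $\Rightarrow$ (ii) the paper organizes the reduction differently: it localizes at a prime of height $n+1$ and uses induction on $\dim R$ to assume $M$ is free on the punctured spectrum (so that every $\Ext^i_R(M,N)$ has finite length at once), then invokes the Acyclicity Lemma \cite[Ex.\ 1.4.23]{BH} to conclude that the truncated complex $\Hom_R(P_\bullet,N)$ is exact. You instead choose the minimal $i$ with $\Ext^i \neq 0$, localize at a minimal prime of its support, and run the depth inequalities by hand to reach a contradiction. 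This is essentially an inline proof of the relevant instance of the Acyclicity Lemma, so your route is more self-contained at the price of the index bookkeeping you anticipated, while the paper's is shorter but presupposes that lemma. One slip to fix: in the (ii) $\Rightarrow$ (i) direction you wrote $\depth N_P \geq \min(n,d)$, but the hypothesis there is $(S_{n+1})$ on $N$, giving $\min(n+1,d)$; your inductive bound $\min(n+1,d,k+1)$ already relies on the correct inequality, so this is only a typo.
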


Since this result is used in some later sections, I'll sketch the
proof.  First assume that $M$ is locally free in codimension $n$ and
that $N$ satisfies ($S_n$).  If $n \geq \dim R$ then $M$ is free and
there is nothing to prove, so one may localize at a prime ideal of
height $n+1$ to assume by induction that $M$ is locally free on the
punctured spectrum, and so $\Ext_R^i(M,N)$ has finite length for $i
\geq 1$.  Take a free resolution $P_\bullet$ of $M$ and consider the
first $n$ terms of the complex $\Hom_R(P_\bullet, N)$.  Since the
cohomologies of this complex, namely $\Ext_R^i(M,N)$ for $i = 1,
\dots, n-1$, all have finite length, $\Hom_R(P_\bullet,N)$ is exact by
the Acyclicity Lemma~\cite[Ex.\ 1.4.23]{BH}.  
For the
second statement, take once again the free resolution $P_\bullet$ of
$M$ and consider the first $n$ terms of $\Hom_R(P_\bullet,N)$, which
form an exact sequence by the assumption.  The Depth Lemma then
implies $\depth \Hom_R(M,N)_\p \geq \min\{ n+1 , \depth N_\p\}$ for
every $\p \in \Spec R$, which gives the conclusion.

The homological consequences of Lemma~\ref{lem:Dao} are even stronger
than is immediately apparent.  To describe these, recall that module
$N$ over a commutative ring $R$ is called \emph{$\Tor$-rigid} if
whenever $\Tor_i^R(M,N)=0$ for some $i \geq 0$ and some finitely
generated $R$-module $M$, necessarily $\Tor_j^R(M,N)=0$ for all $j
\geq i$.  Deciding whether a given module is $\Tor$-rigid is generally
a delicate problem, as Dao observes~\cite{Dao:2010}.  The following
result of Jothilingam~\cite{Jothilingam:1975} (see
also~\cite{Jorgensen:2008}) gives a very useful necessary condition.

\begin{prop}
  [Jothilingam]
  \label{prop:Joth}
  Let $R$ be a local ring and let $M$, $N$ be finitely generated
  $R$-modules.  Assume that $N$ is $\Tor$-rigid.  If $\Ext_R^1(M,N)=0$,
  then the natural map $\Phi_{M,N} \colon M^* \otimes_R N \to
  \Hom_R(M,N)$ is an isomorphism.  In particular, if $\Ext_R^1(N,N)
  =0$ then $N$ is free. \qed
\end{prop}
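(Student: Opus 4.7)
My plan is to realize $\Phi_{M,N}$ as the middle map of the \emph{Auslander--Bridger} four-term exact sequence, and then invoke $\Tor$-rigidity. First I pick a finite free presentation $F_1 \xrightarrow{d} F_0 \to M \to 0$ and dualize to obtain $0 \to M^* \to F_0^* \xrightarrow{d^*} F_1^* \to \Tr M \to 0$, the defining sequence of the Auslander transpose $\Tr M = \cok(d^*)$. Applying $\Hom_R(-,N)$ to the original presentation and using $\Hom_R(F_i,N) \cong F_i^* \otimes_R N$ produces the four-term sequence
\[
0 \to \Hom_R(M,N) \to F_0^* \otimes_R N \to F_1^* \otimes_R N \to \Ext^1_R(M,N) \to 0,
\]
so in particular $\Ext^1_R(M,N) \cong \cok(d^* \otimes 1_N) = \Tr M \otimes_R N$. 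Factoring the dualized presentation as two short exact sequences through $\image(d^*)$ and tensoring each with $N$---exploiting the vanishing of $\Tor^R_{i \geq 1}(F_j^*, N)$ since $F_j^*$ is free---a brief diagram chase yields the Auslander--Bridger sequence
\[
0 \to \Tor_2^R(\Tr M,N) \to M^* \otimes_R N \xrightarrow{\Phi_{M,N}} \Hom_R(M,N) \to \Tor_1^R(\Tr M,N) \to 0.
\]

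With this in hand, the first assertion is nearly automatic. The hypothesis $\Ext^1_R(M,N) = 0$ translates to $\Tr M \otimes_R N = \Tor_0^R(\Tr M, N) = 0$, and $\Tor$-rigidity of $N$, applied to the finitely generated module $\Tr M$, propagates this vanishing to $\Tor_i^R(\Tr M, N) = 0$ for all $i \geq 0$, in particular for $i = 1,2$. Both outer terms of the Auslander--Bridger sequence therefore vanish, and $\Phi_{M,N}$ is an isomorphism.

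For the ``in particular'' statement, I specialize to $M = N$; the just-established isomorphism $\Phi_{N,N} \colon N^* \otimes_R N \xrightarrow{\sim} \End_R(N)$ lets me choose a preimage $\sum_{i=1}^{k} f_i \otimes n_i$ of $\id_N$. Unwinding the definition of $\Phi$ gives the dual basis identity $n = \sum_i f_i(n)\, n_i$ for every $n \in N$, so $N$ is finitely generated projective; since $R$ is local, $N$ is free.

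The main obstacle is the setup step. The Auslander--Bridger sequence is classical, but its derivation demands careful bookkeeping: one must compare the two four-term sequences (from $\Hom_R(-,N)$ applied to $F_\bullet \to M$, and from $(-) \otimes_R N$ applied to $F_\bullet^* \to \Tr M$), then identify $\ker \Phi_{M,N}$ and $\cok \Phi_{M,N}$ with the connecting $\Tor$s coming from the factorization of $d^*$ through its image. Everything downstream of that identification---both the $\Tor$-rigidity step and the dual basis argument---is purely formal.
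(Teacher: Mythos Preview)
Your Auslander--Bridger four-term sequence is derived correctly, and the dual-basis argument for the ``in particular'' is fine. The gap is the identification $\Ext^1_R(M,N) \cong \Tr M \otimes_R N$. Applying $\Hom_R(-,N)$ to a presentation $F_1 \xrightarrow{d} F_0 \to M \to 0$ yields only the left-exact sequence $0 \to \Hom_R(M,N) \to \Hom_R(F_0,N) \to \Hom_R(F_1,N)$; the cokernel of the rightmost map is indeed $\Tr M \otimes_R N$, but this equals $\Ext^1_R(M,N)$ only when $d$ is already injective. For general $M$ one must continue the resolution to $F_2$, and $\Ext^1_R(M,N) = \ker\big(\Hom_R(F_1,N)\to\Hom_R(F_2,N)\big)/\image$ is merely a \emph{submodule} of $\Tr M\otimes_R N$. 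Concretely, take $R$ regular local of dimension at least $2$, $M=R/\m$, $N=R$: then $\Ext^1_R(M,N)=0$ while $\Tr M\otimes_R N=\Tr M\neq 0$. Worse, over a local ring $\Tr M\otimes_R N=0$ forces $\Tr M=0$ or $N=0$ by Nakayama, so your claimed implication would make $\Ext^1_R(M,N)=0$ force $M$ free or $N=0$, which is plainly false.

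Without $\Tor_0^R(\Tr M,N)=0$ you cannot invoke rigidity at $i=0$ to kill $\Tor_1$ and $\Tor_2$. The paper itself supplies no proof, citing Jothilingam and Jorgensen; their arguments do pass through the Auslander--Bridger sequence, but the bridge from $\Ext^1_R(M,N)=0$ to the needed $\Tor$-vanishing goes via the first syzygy rather than $\Tor_0$. From $0\to\Omega M\to F_0\to M\to 0$ and $\Ext^1_R(M,N)=0$ one gets that the surjection $\Hom_R(F_0,N)\twoheadrightarrow\Hom_R(\Omega M,N)$ factors through $\Phi_{\Omega M,N}$, forcing the latter to be surjective and hence $\Tor_1^R(\Tr\Omega M,N)=0$; rigidity is then applied at $i=1$ for $\Tr\Omega M$, and further work relates this back to $\Phi_{M,N}$.
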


It follows immediately that if $R$ is a local ring satisfying ($R_2$)
and ($S_3$), and $M$ is a reflexive $R$-module with a non-free direct
summand which is $\Tor$-rigid, then $\Lam = \End_R(M)$ is \emph{not}
MCM, whence is not a non-commutative crepant resolution.  Indeed, let
$N$ be a $\Tor$-rigid summand of $M$ which is not free.  Then $N$ is
reflexive, so satisfies ($S_2$), and is free in codimension two as $R$
is regular on that locus.  Moreover, $\Hom_R(N,N)$ is a direct summand
of $\Hom_R(M,M)$.  If $\Hom_R(M,M)$ were MCM, then $\Hom_R(N,N)$ would
also be, so would satisfy ($S_3$).  But then $\Ext_R^1(N,N)=0$ by
Lemma~\ref{lem:Dao}, contradicting Proposition~\ref{prop:Joth}.

It is now easy to bolster Example~\ref{eg:Jinnah} by constructing,
over any CM local ring $(R, \m, k)$ of dimension 3 or more, a MCM module $M$
such that $\Hom_R(M,M)$ is not MCM\footnote{I'm grateful to Hailong
  Dao for pointing this out to me.}.  Indeed, take $M$ to be a high
enough syzygy of the residue field $k$; since $k$ is $\Tor$-rigid, the
same is true of $M$, and it is locally free on the punctured
spectrum.  By Lemma~\ref{lem:Dao} and Proposition~\ref{prop:Joth},
then, $\Hom_R(M,M)$ has depth at most $2$.

From Proposition~\ref{prop:Joth} and progress on understanding
$\Tor$-rigid modules over hypersurface rings, Dao derives the next
theorem, which identifies obstructions to the existence of
non-commutative crepant resolutions.

\begin{theorem}
  [{\cite{Dao:2008, Dao:2010,Dao:decency}}]
  \label{thm:Dao}
  Let $R = S/(f)$ be a local hypersurface ring with $S$ an
  equicharacteristic or unramified regular local ring and $f \in S$ a
  non-zero non-unit.  Assume that $R$ is regular in codimension two.
  \begin{enumerate}
  \item If $\dim R = 3$ and $R$ is $\QQ$-factorial, then every
    finitely generated $R$-module is $\Tor$-rigid, so $R$ admits no
    non-commutative crepant resolution.
  \item\label{item:evendim} If $R$ has an isolated singularity and
    $\dim R$ is an even number greater than $3$, then $\Hom_R(M,M)$
    satisfies ($S_3$) only if $M$ is free, so $R$ admits no
    non-commutative crepant resolution. \qed
  \end{enumerate}
\end{theorem}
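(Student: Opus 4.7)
The plan for both parts is to extract a contradiction from the hypothetical existence of an NCCR $\Lam$ of $R$. By Theorem~\ref{thm:AG-A}\eqref{item:AG} such a $\Lam$ has the form $\End_R(M)$ for a reflexive $R$-module $M$, and since $\Lam$ is MCM, $\Hom_R(M,M)$ satisfies $(S_3)$. The assumption that $R$ is regular in codimension two makes $M$ locally free in codimension two, so Lemma~\ref{lem:Dao} applied with $n=2$ yields $\Ext^1_R(M,M)=0$. If this already forces $M$ to be free, the contradiction is immediate: $\End_R(R^n)\cong M_n(R)$ has finite global dimension only when $R$ itself is regular, contrary to the assumption that the hypersurface $R$ is singular. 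So the content of the theorem is that each of the hypotheses (i) and (ii) promotes the vanishing of $\Ext^1_R(M,M)$ into the freeness of $M$.

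For (i), I would invoke the earlier result of Dao showing that for a three-dimensional $\QQ$-factorial isolated hypersurface singularity, Hochster's theta pairing $\theta^R(-,-)$ vanishes identically on finitely generated modules. Combined with the two-periodicity of $\Tor$ over a hypersurface, $\theta^R\equiv 0$ upgrades to $\Tor$-rigidity of every finitely generated $R$-module: a single vanishing $\Tor_i^R(M,N)=0$ propagates to $\Tor^R_{i+2k}(M,N)=0$ by $2$-periodicity, and $\theta^R(M,N)=0$ forces the in-between odd-indexed lengths to match the vanishing even-indexed ones, so they also vanish. With $M$ thus $\Tor$-rigid and $\Ext^1_R(M,M)=0$ in hand, Proposition~\ref{prop:Joth} (Jothilingam) concludes that $M$ is free.

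For (ii), $\Tor$-rigidity is not available, and a different engine is needed. Here I would invoke Auslander--Reiten duality on the stable MCM category of the isolated Gorenstein singularity $R$, which for MCM $M$ gives an isomorphism $D\,\underline{\Hom}_R(M,M)\cong \underline{\Ext}^1_R(M,\tau M)$, where $\tau$ is the AR translate and $D$ is Matlis duality. For a hypersurface of even dimension $d\geq 4$, the stable two-periodicity $\Omega^2\cong\id$ identifies $\tau$ with the identity functor on the stable MCM category, collapsing the AR formula to the self-duality $D\,\underline{\End}_R(M)\cong \underline{\Ext}^1_R(M,M)$. Vanishing of $\Ext^1_R(M,M)$ then forces $\underline{\End}_R(M)=0$, so $\id_M$ factors through a free module, whence $M$ is a direct summand of a free module, and hence free.

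The main technical obstacle is the AR computation in (ii), namely verifying that $\tau\cong\id$ on the stable MCM category of an even-dimensional isolated hypersurface singularity, which rests on Auslander's formula $\tau=\Omega^{d}\circ\mathrm{Tr}$ together with matrix factorizations and a careful tracking of $R$-duality through the syzygy operator. The parity hypothesis is essential: for odd $d$ the same calculation introduces an extra factor of $\Omega$ or $(-)^*$, and the resulting identity no longer forces $M$ to be free from the vanishing of $\Ext^1$ alone. It is precisely this parity obstruction that makes the $\QQ$-factorial/$\Tor$-rigidity route of part (i) necessary in dimension three, and that isolates even dimension $\geq 4$ as the regime where part (ii) applies.
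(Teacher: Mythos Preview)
Your argument for (i) is on target and matches the mechanism the paper sketches just before the theorem: vanishing of Hochster's $\theta$-pairing over a three-dimensional $\QQ$-factorial isolated hypersurface singularity yields $\Tor$-rigidity of every module, and then Jothilingam's criterion (Proposition~\ref{prop:Joth}) turns $\Ext^1_R(M,M)=0$ into freeness of $M$.

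For (ii), however, you diverge from the paper and there is a gap. The paper explicitly says that \emph{both} parts follow ``from Proposition~\ref{prop:Joth} and progress on understanding $\Tor$-rigid modules over hypersurface rings'': the input for (ii) is again $\Tor$-rigidity, now obtained from Dao's results on the $\theta$-pairing for even-dimensional isolated hypersurface singularities. So your assertion that ``$\Tor$-rigidity is not available'' in this case is simply wrong --- it is available, and it is the engine Dao uses. Crucially, that route applies to any reflexive $M$, which is exactly what the definition of an NCCR requires and what the paper's discussion preceding the theorem sets up.

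Your Auslander--Reiten alternative is correct as far as it goes: $\underline{CM}(R)$ is $(d-1)$-Calabi--Yau, the two-periodicity $\Omega^2\cong\id$ over a hypersurface forces $\tau\cong\id$ when $d$ is even, and then $\Ext^1_R(M,M)=0$ gives $\underline{\End}_R(M)=0$, so $M$ is free. But this argument lives entirely in the stable category of \emph{MCM} modules, whereas you began (correctly) with $M$ merely reflexive, and you never supply the reduction from reflexive to MCM. That reduction is not routine --- whether the module underlying an NCCR can always be taken MCM is listed among the open problems in \S\ref{sect:omissions} --- so as written your proof of (ii) does not reach the stated conclusion that $R$ admits no non-commutative crepant resolution.
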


Recall from Example~\ref{eg:Lin} that the isolated hypersurface
singularity defined by $x_0^r+x_1^d + \dots + x_d^d=0$
has a crepant resolution of singularities only if $r\equiv 0$ or $1$
modulo $d$.  Part (\ref{item:evendim}) of Dao's theorem thus implies
that the extension of Van den Bergh's
Theorem~\ref{thm:VdBmain}(\ref{item:CR-NCCR}) to higher dimensions has
a negative answer, at least without some further assumptions.

\begin{example}
  [{\cite[Example 3.6]{Dao:2010}, \cite[\S2]{BIKR:2008}}]
  \label{eg:Dao}
  Theorem~\ref{thm:Dao} allows some progress toward deciding which of
  the three-dimensional ADE singularities (see~\eqref{eq:ADE}) have
  non-commutative crepant resolutions.  Let $k$ be an algebraically
  closed field of characteristic zero.  The $3$-dimensional versions
  of ($A_{2\ell}$), ($E_6$), and ($E_8$) are factorial, so do not
  admit a non-commutative crepant resolution at all.
  
  Let $R = k[\![x,y,z,w]\!]/(xy+z^2-w^{2\ell+2})$, an ($A_{2\ell +1}$)
  singularity, with $\ell \geq 1$.  (Observe that the case $\ell =0$
  is the ordinary double point of Example~\ref{eg:Atiyah}.)  Then I
  claim that $R$ has a non-commutative crepant resolution $\Lam
  = \End_R(M)$ in which $M$ is MCM\@.  Indeed, the indecomposable MCM
  $R$-modules are completely known~\cite[Example 5.12]{Yoshino:book};
  they are the free module $R$, the ideal $I = (x,z+w^{\ell+1})$, the
  dual ideal $I^* = (y,z-w^{\ell+1})$, and $\ell$ indecomposables
  $M_1, \dots, M_\ell$ of rank two.

  Each $M_i$ is its own Auslander-Reiten translate, $\tau M_i \cong
  M_i$, so in particular $\Ext_R^1(M_i,M_i)\neq 0$ for $i =1, \dots,
  \ell$.  By Lemma~\ref{lem:Dao}, no $M_i$ can be a constituent in a
  non-commutative crepant resolution.  On the other hand, $I$ and
  $I^*$ satisfy $\Hom_R(I,I) \cong \Hom_R(I^*,I^*) \cong R$ by
  Theorem~\ref{thm:Grauert-Remmert}.  Thus at least $\End_R(R\oplus
  I)$ and $\End_R(R \oplus I^*)$ are symmetric $R$-orders; it will
  follow from the results in the next \S{} that since $R \oplus I$ and
  $R \oplus I^*$ are cluster tilting modules
  (Theorem~\ref{thm:Iyama-dim3}), the endomorphism rings have global
  dimension equal to $3$, so are non-commutative crepant resolutions.
\end{example}

\section{Global dimension of endomorphism rings}\label{sect:cluster}

The tendency for endomorphism rings to have finite global dimension
was first observed by Auslander~\cite[\S III.3]{Auslander:QueenMary}.
Recall that $\Lam$ is an \emph{artin algebra} if the center of $\Lam$
is a commutative artin ring and $\Lam$ is a finitely generated module
over its center.

\begin{theorem}
  [Auslander]
  \label{thm:Ausrepdim}
  Let $\Lam$ be an artin algebra with radical $\r$ and assume that
  $\r^n =0$, $\r^{n-1}\neq 0$.  Set $M = \bigoplus_{i=0}^n
  \Lam/\r^{i}$.  Then $\Gam = \End_\Lam(M)$ is a coherent artin
  algebra of global dimension at most $n+1$. \qed
\end{theorem}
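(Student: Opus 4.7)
The approach uses projectivization together with an inductive construction of ``$M$-resolutions'' built from the $\r$-adic filtration. Since $\r^n = 0$, we have $\Lam = \Lam/\r^n \in \add M$, so $M$ is a generator for $\lmod\Lam$. By Proposition~\ref{prop:projectivization} the functor $F := \Hom_\Lam(M,-)$ is fully faithful and restricts to an equivalence $\add M \xto{\sim} \add\Gam$; in particular the indecomposable projective $\Gam$-modules are $F(N)$ for $N$ an indecomposable summand of $M$, and the simple $\Gam$-modules $S_N$ are parametrized similarly. Because $\Gam$ is an artin algebra (finitely generated over the artinian center of $\Lam$), Proposition~\ref{prop:center-finite}(iv) reduces the problem to showing $\pd_\Gam S_N \le n+1$ for each simple $S_N$.

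The engine is the following claim: every $\Lam$-module $Y$ satisfying $\r^k Y = 0$ admits an exact sequence
\[
0 \to L_{k-1} \to L_{k-2} \to \cdots \to L_0 \to Y \to 0
\]
with each $L_j \in \add M$, which remains exact after applying $F$. I prove this by induction on $k$. The base case $k=1$ is immediate, as $Y$ is then semisimple and lies in $\add(\Lam/\r) \subseteq \add M$. For $k \ge 2$, I choose $L_0 \to Y$ to be a minimal right $\add M$-approximation, which exists because $\add M$ is functorially finite over the artin algebra $\Lam$; since $\Lam \in \add M$, this approximation is surjective. Moreover, any map $\Lam/\r^j \to Y$ with $j > k$ factors through the quotient $\Lam/\r^j \twoheadrightarrow \Lam/\r^k$, so minimality permits the choice $L_0 \in \add(\Lam/\r \oplus \cdots \oplus \Lam/\r^k)$. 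Then $\r^k L_0 = 0$, and a Nakayama-style argument places the kernel $Y_1 := \ker(L_0 \to Y)$ inside $\r L_0$, whence $\r^{k-1} Y_1 \subseteq \r^k L_0 = 0$. The induction hypothesis applied to $Y_1$ produces a resolution of length $k-2$ which, when prepended with $L_0$, yields the desired length-$(k-1)$ resolution. Each $L_j \to Y_j$ being a right $\add M$-approximation, $F$ preserves exactness throughout, so $\pd_\Gam F(Y) \le k-1$.

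To bound $\pd_\Gam S_N$, start from the projective cover $F(N) \twoheadrightarrow S_N$ with kernel $\rad F(N)$, and take a projective cover $F(N') \twoheadrightarrow \rad F(N)$ with $N' \in \add M$. Full faithfulness of $F$ on $\add M$ lifts the composite $F(N') \to F(N)$ to a morphism $\psi\colon N' \to N$ in $\lmod\Lam$, and left exactness of $F$ identifies the kernel of this composite with $F(\ker\psi)$. Since $N' \in \add M$ is killed by $\r^n$, so is $\ker\psi \subseteq N'$, and the preceding claim yields $\pd_\Gam F(\ker\psi) \le n-1$. Splicing the short exact sequences
\[
0 \to F(\ker\psi) \to F(N') \to \rad F(N) \to 0, \qquad 0 \to \rad F(N) \to F(N) \to S_N \to 0
\]
raises projective dimension by at most two, giving $\pd_\Gam S_N \le n+1$.

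The principal obstacle is the inductive step: one must verify that the minimal right $\add M$-approximation can be chosen so that $L_0$ contains no summand $\Lam/\r^j$ with $j > k$ (so $\r^k L_0 = 0$), and that the Nakayama inclusion $Y_1 \subseteq \r L_0$ genuinely forces $\r^{k-1} Y_1 = 0$, rather than merely $\r^n Y_1 = 0$. A secondary subtlety is the passage from the $\Gam$-module map $F(N') \to F(N)$ to an honest $\Lam$-map $\psi$, and the identification of the second $\Gam$-syzygy of $S_N$ with $F(\ker\psi)$; both steps rely essentially on the full faithfulness part of Proposition~\ref{prop:projectivization}.
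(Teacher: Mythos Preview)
The paper does not prove this theorem; it is stated with attribution to Auslander and closed with a \qed. The only related argument the paper gives is the short sketch following Theorem~\ref{thm:Aus-alg} (the finite-representation-type case), where one takes an arbitrary $\Gam$-module, writes a projective presentation $\Hom_\Lam(M,M_1)\to\Hom_\Lam(M,M_0)$ via Proposition~\ref{prop:projectivization}, and observes that $\Hom_\Lam(M,\ker f)$ completes this to a length-two resolution because the kernel again lies in $\add M$. Your argument is exactly the natural extension of that sketch to the radical-layers setting: the same two-step start, followed by an induction on Loewy length to bound $\pd_\Gam \Hom_\Lam(M,\ker f)$ by $n-1$.

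The structure is sound, and the inductive claim is the right engine. The point you flag as the ``principal obstacle'' is genuinely the crux, and your phrase ``a Nakayama-style argument'' undersells what is needed. Right minimality of an $\add M$-approximation does \emph{not}, for an arbitrary additive subcategory, force the kernel into the radical of the source; one really must use the special shape of $M=\bigoplus_i \Lam/\r^i$. Concretely: once you have arranged $L_0\in\add(\bigoplus_{i\le k}\Lam/\r^i)$, suppose some $z=(z_j)\in\ker f$ has a unit component $z_{j_0}$. Choosing $j_0$ with $a_{j_0}$ maximal among indices where $z$ has a unit component lets you build a non-invertible $g\in\End_\Lam(L_0)$ with $fg=f$ by rerouting the $j_0$-th summand through the others, but this requires $\r^{a_{j_0}}z_j=0$ for those $j$ with $a_j>a_{j_0}$, which is not automatic. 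So either one argues more carefully that minimality rules out such configurations, or one bypasses the issue by taking $L_0=P/\r^kP$ for $P\to Y$ a projective cover (so $\ker\subseteq\r L_0$ for free) and then separately verifying---or augmenting $L_0$ so that---the map is an $\add M$-approximation. Either route works, but neither is a one-liner; your proof would be complete once this step is made precise.
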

Based on this result Auslander was led to define the
\emph{representation dimension} of an artin algebra $\Lam$ as the
least value of $\gldim \End_\Lam(M)$ as $M$ runs through all finitely
generated $\Lam$-modules which are generators-cogenerators for $\Lam$,
that is, $M$ contains as direct summands all indecomposable projective
and injective $\Lam$-modules.  Observe that
Theorem~\ref{thm:Ausrepdim} does not prove finiteness of the
representation dimension; while $M$ has a non-zero free direct
summand, it need not be a cogenerator unless $\Lam$ is self-injective.

Auslander proved in~\cite{Auslander:QueenMary} that $\repdim \Lam \leq
2$ if and only if $\Lam$ has finite representation type
(see~\S\ref{sect:FCMT}), but it was not until 2003 that Rouquier
constructed the first examples with representation dimension greater
than $3$~\cite{Rouquier:repdim}. 
Incidentally, Rouquier's proof uses the notion of the
\emph{dimension}~\cite{Bondal-VdB:2003, Rouquier:2008} of the derived category
$\D^b(\lmod \Lam)$.  The dimension of a triangulated category is a
measure of how many steps are required to obtain it starting from a
single object and inductively taking the closure under shifts, direct
sums and summands, and distinguished triangles.  Rouquier proved that
if $\Lam$ is a finite-dimensional algebra over a perfect field $k$,
then $\dim\D^b(\lmod \Lam) \leq \repdim \Lam$.

Iyama showed in 2003~\cite{Iyama:finrepdim} that the representation
dimension of a finite-dimensional algebra is always finite. He also
extended the definition of representation dimension to CM local rings
of positive Krull dimension.  

\begin{definition}
  \label{def:repdim}
  Let $R$ be a complete CM local ring with canonical module
  $\omega$. Set
  \[
  \repdim R = \inf_M\left\{ \gldim \End_R(R \oplus \omega \oplus
    M)\right\}\,,
  \]
  where the infimum is taken over all MCM $R$-modules $M$.
\end{definition}

Iyama's techniques involved \emph{maximal $n$-orthogonal modules,} now
called \emph{cluster tilting modules}~\cite{Iyama:higher-dim}.  Here I
will not say anything about cluster algebras or cluster categories;
see~\cite{Buan-Marsh} for an exposition.  Here is a direct definition
of cluster tilting modules~\cite{BIKR:2008}.

\begin{definition}
  \label{def:clustertilting}
  Let $R$ be a CM local ring and $M$ a MCM $R$-module.   Fix $n \geq 1$.
  \begin{enumerate}
  \item Set 
    \[
    M^{\perp_n} = \left\{ X \ \middle |\ X \text{ is MCM and }
      \Ext_R^i(M,X)=0 \text{ for } 1 \leq i \leq n\right\}
    \]
    and symmetrically
    \[
    {}^{\perp_n}M = \left\{ X \ \middle|\  X \text{ is MCM and }
      \Ext_R^i(X,M)=0 \text{ for } 1 \leq i \leq n\right\}\,.
    \]
  \item Say that $M$ is \emph{cluster tilting} if
    \[
    M^{\perp_1} = \add M = {}^{\perp_1}M\,.
    \]
  \end{enumerate}
\end{definition}

There are some isolated results about cluster tilting in small dimension.  For
one example,~\cite{BIKR:2008} constructs and classifies cluster
tilting modules for the one-dimensional ADE hypersurface
singularities.  When $R$ is two-dimensional and Gorenstein, the
Auslander--Reiten translate $\tau$ is the identity, so $\Ext_R^1(M,M)$
is never zero for MCM $M$; this rules out cluster tilting in this
case.

To describe the connection between cluster tilting modules and
non-commutative crepant resolutions, let's consider the following
theorem of Dao-Huneke~\cite[Theorem 3.2]{Dao-Huneke:2010}.
\begin{theorem}
  \label{thm:D-H}
  Let $R$ be a CM local ring of dimension $d \geq 3$.  Let $M$ be a
  MCM $R$-module with a non-zero free direct summand, and set $\Lam
  = \End_R(M)$.  Assume that $\Lam$ is MCM as an $R$-module.  Consider
  the following conditions.
  \begin{enumerate}
  \item \label{DH1} $M^{\perp_{d-2}} = \add M$.
  \item \label{DH2} There exists an integer $n$ with $1 \leq n \leq
    d-2$ such that $M^{\perp_n} = \add M$.
  \item \label{DH3} $\gldim \Lam \leq d$.
  \item \label{DH4} $\gldim \Lam = d$.
  \end{enumerate}
  Then (\ref{DH1}) $\implies$ (\ref{DH2}) $\implies$ (\ref{DH3})
  $\iff$ (\ref{DH4}).  If $R$ has an isolated singularity, then all
  four are equivalent. \qed
\end{theorem}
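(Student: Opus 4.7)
The plan is to take the easy implications first. (1) $\implies$ (2) is immediate on setting $n = d - 2$. For (3) $\iff$ (4): since $\Lam$ is MCM as an $R$-module, Ramras's inequality (Proposition~\ref{prop:center-finite}(\ref{Ramras})) gives $d = \dim R = \depth_R \Lam \leq \gldim \Lam$ whenever $\gldim \Lam$ is finite, so a finite global dimension bounded by $d$ must equal $d$. What remains is the implication (2) $\implies$ (3) in general, and (4) $\implies$ (1) under the isolated singularity hypothesis.

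For (2) $\implies$ (3), I would start by choosing, for an arbitrary finitely generated $\Lam$-module $L$, a $\Lam$-projective resolution $\cdots \to Q_1 \to Q_0 \to L \to 0$. Projectivization (Proposition~\ref{prop:projectivization}) allows one to realize each $Q_i = \Hom_R(M, N_i)$ with $N_i \in \add M$ and differentials induced by $R$-maps $f_i \colon N_i \to N_{i-1}$. Writing $I_i = \image(f_i)$ and $K_i = \ker(f_i)$, left exactness of $\Hom_R(M, -)$ gives $\ker(Q_i \to Q_{i-1}) = \Hom_R(M, K_i)$. Matching this against the exactness of the $\Lam$-complex, together with the $R\hookrightarrow M$ summand trick (which turns equalities of $\Hom_R(M,-)$-images into equalities of the underlying $R$-modules), forces $I_{i+1} = K_i$ for $i \geq 1$, so the $R$-complex $\cdots \to N_1 \to N_0$ is itself exact; the same comparison forces $\Ext_R^1(M, K_{i+1}) = 0$ for $i \geq 1$. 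Hence $\Omega^i_\Lam L = \Hom_R(M, I_i)$ for $i \geq 2$.

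Next, I would propagate Ext-vanishing. The exact sequences $0 \to I_{i+1} \to N_i \to I_i \to 0$ for $i \geq 1$, together with $\Ext_R^j(M, N_i) = 0$ for $1 \leq j \leq n$ (from $N_i \in \add M = M^{\perp_n}$), supply dimension shifts $\Ext_R^j(M, I_{i+1}) \cong \Ext_R^{j-1}(M, I_i)$ for $2 \leq j \leq n$ and $i \geq 1$. Iterating from the base vanishing $\Ext_R^1(M, I_j) = 0$ for $j \geq 3$ yields $\Ext_R^k(M, I_j) = 0$ for all $1 \leq k \leq n$ and $j \geq k + 2$. Taking $j = d$, the constraint $n \leq d - 2$ is exactly what provides $d \geq n + 2$, so $\Ext_R^k(M, I_d) = 0$ for $1 \leq k \leq n$. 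Simultaneously, iterating the Depth Lemma along the $\Lam$-resolution (each $Q_i$ is MCM over $R$ because $\Lam$ is) shows $\Omega^d_\Lam L = \Hom_R(M, I_d)$ is MCM over $R$, and $I_d$ is then MCM because $R \hookrightarrow M$ makes it a direct summand of $\Hom_R(M, I_d)$. Condition (2) now concludes $I_d \in \add M$, so $\Omega^d_\Lam L \in \add \Lam$ is $\Lam$-projective and $\pd_\Lam L \leq d$.

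For (4) $\implies$ (1) under the isolated singularity hypothesis, let $N$ be MCM with $N \in M^{\perp_{d-2}}$. The isolated singularity assumption makes $M$ locally free on the punctured spectrum, so Lemma~\ref{lem:Dao} (applied with $n = d - 1$) combined with $N$ satisfying $(S_d)$ and the Ext-vanishing $\Ext_R^i(M, N) = 0$ for $1 \leq i \leq d - 2$ delivers that $\Hom_R(M, N)$ satisfies $(S_d)$, hence is MCM over $R$. Hypothesis (4), together with $\Lam$ being an $R$-order, identifies $\Lam$ as a non-singular order via Proposition~\ref{prop:nonsingorder}; its MCM modules coincide with its projectives, so $\Hom_R(M, N)$ is $\Lam$-projective. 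Projectivization now provides some $N' \in \add M$ with $\Hom_R(M, N) \cong \Hom_R(M, N')$, and full faithfulness of $\Hom_R(M, -)$ yields $N \cong N' \in \add M$. The main obstacle throughout the argument is the bookkeeping in paragraphs two and three—the identification of $\Lam$-syzygies with $R$-images and the inductive propagation of Ext-vanishing; the numerical constraint $n \leq d - 2$ is precisely what makes all the dimension shifts close up.
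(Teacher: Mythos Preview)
Your argument is correct and matches the paper's approach: the substantive implication (\ref{DH2}) $\implies$ (\ref{DH3}) is handled in both by lifting a $\Lam$-resolution to an $\add M$-resolution over $R$ via Proposition~\ref{prop:projectivization}, then combining the Depth Lemma with dimension-shifting of $\Ext_R^{\bullet}(M,-)$ to force the $d$-th kernel into $M^{\perp_n}=\add M$. One small omission in your (\ref{DH4}) $\implies$ (\ref{DH1}): you establish only the inclusion $M^{\perp_{d-2}} \subseteq \add M$, whereas (\ref{DH1}) also requires $\add M \subseteq M^{\perp_{d-2}}$, i.e.\ $\Ext_R^i(M,M)=0$ for $1\le i\le d-2$; this follows from Lemma~\ref{lem:Dao} (the direction (\ref{item:hom}) $\implies$ (\ref{item:exti}) with $n=d-1$) since $\Lam=\Hom_R(M,M)$ is MCM and, under the isolated-singularity hypothesis, $M$ is locally free on the punctured spectrum.
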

The main assertion here is (\ref{DH2}) $\implies$ (\ref{DH3}).
Everything else is relatively straightforward or follows from
Lemma~\ref{lem:Dao}.  To prove (\ref{DH2}) $\implies$ (\ref{DH3}),
Dao and Huneke use Proposition~\ref{prop:projectivization} to get, for
any $R$-module $N$ satisfying ($S_2$), a long exact sequence
\[
\sigma: \qquad \cdots \to M^{n_{j+1}} \to M^{n_j} \to \cdots \to
M^{n_0} \to N \to 0
\]
such that $\Hom_R(M,\sigma)$ is exact.  Let $N_j$ be the kernel at the
$j^\text{th}$ spot; then one shows by induction on $j$ that
$\Ext_R^1(M,N_j) \subseteq \Ext_R^1(M,M)^{n_{j+1}}$, so that $N_{d-2}
\in M^{\perp_{d-2}} = \add M$.  It follows that $\Hom_R(M,N_{d-2})$ is
$\Lam$-projective.  Thus every $\Lam$-module of the form $\Hom_R(M,N)$
has projective dimension at most $d-2$, so that $\gldim \Lam \leq d$.

As a corollary of Theorem~\ref{thm:D-H}, Dao and Huneke obtained
another proof of the following result of Iyama, which nicely
encapsulates the significance of cluster tilting modules to
non-commutative crepant resolutions.

\begin{theorem}[Iyama{~\cite[Theorem 5.2.1]{Iyama:Aus-corr}}]
  \label{thm:Iyama-dim3}  Let $R$ be a CM local ring of dimension $d
  \geq 3$ and with canonical module $\omega$.  Assume that $R$ has an
  isolated singularity.  Let $M$ be a MCM $R$-module and set $\Lam
  = \End_R(M)$.  The following conditions are equivalent.
  \begin{enumerate}
  \item $M$ contains $R$ and $\omega$ as direct summands, $\Lam$ is
    MCM, and $\gldim \Lam = d$.
  \item $M^{\perp_{d-2}} = \add M = {}^{\perp_{d-2}}M$.
  \end{enumerate}
  In particular, if $d=3$ and $R$ is a Gorenstein isolated
  singularity, then a MCM $R$-module $M$ gives a non-commutative
  crepant resolution if and only if it is a cluster tilting module.  \qed
\end{theorem}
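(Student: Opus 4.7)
\emph{Overall strategy.} The plan is to use the Dao--Huneke theorem (Theorem~\ref{thm:D-H}) as the main engine: under the isolated singularity hypothesis it already supplies the equivalence between $M^{\perp_{d-2}}=\add M$ and $\gldim\End_R(M)=d$, provided $M$ contains $R$ as a direct summand and $\End_R(M)$ is MCM. The remaining content of Iyama's theorem---the simultaneous appearance of the summands $R$ and $\omega$, and the two-sided nature of the perpendicular condition---is extracted by complementing Dao--Huneke with the canonical duality $D=\Hom_R(-,\omega)$ on MCM modules.

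\emph{Proof of $(2)\Rightarrow(1)$.} First, extract the summands. Since $R$ is self-projective, $\Ext^i_R(R,M)=0$ for $i\geq 1$, so $R\in{}^{\perp_{d-2}}M=\add M$. Since $M$ is MCM and $\omega$ is canonical, $\Ext^i_R(M,\omega)=0$ for $i\geq 1$, so $\omega\in M^{\perp_{d-2}}=\add M$. Next, $M\in\add M=M^{\perp_{d-2}}$ yields $\Ext^i_R(M,M)=0$ for $1\leq i\leq d-2$; combined with the fact that $M$ is locally free on the punctured spectrum (isolated singularity), Lemma~\ref{lem:Dao} forces $\Lam=\Hom_R(M,M)$ to satisfy $(S_d)$, hence to be MCM. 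Theorem~\ref{thm:D-H} now applies with all its hypotheses in place and gives $\gldim\Lam\leq d$; Ramras's bound (Proposition~\ref{prop:center-finite}(\ref{Ramras})) supplies the reverse inequality, because $\Lam$ is MCM of depth $d$ and contains $R$ as an $R$-summand (hence is torsion-free).

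\emph{Proof of $(1)\Rightarrow(2)$ and the $d=3$ corollary.} The equality $M^{\perp_{d-2}}=\add M$ is again Theorem~\ref{thm:D-H}, now in the converse direction. For the symmetric equality ${}^{\perp_{d-2}}M=\add M$, the plan is to dualize: set $N=DM$, which is MCM and contains $R=D\omega$ as a summand, and note that $\End_R(N)\cong\End_R(M)^{\op}$ is therefore MCM of global dimension $d$ (global dimension being left-right symmetric for module-finite algebras over a commutative noetherian ring; cf.\ \S\ref{sect:non-existence}). Applying the already-established direction to $N$ yields $N^{\perp_{d-2}}=\add N$; the standard isomorphism $\Ext^i_R(X,Y)\cong\Ext^i_R(DY,DX)$ for MCM $X,Y$ and $i\geq 1$ (valid because $D$ is an involutive contravariant self-equivalence of $\mathrm{MCM}(R)$) converts this into ${}^{\perp_{d-2}}M=\add M$. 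Finally, for the Gorenstein $d=3$ case, $\omega\cong R$ so the summand requirement collapses to $R\in\add M$ and (2) is literally the cluster tilting condition; on the NCCR side, symmetry is automatic from Theorem~\ref{thm:AG-A}(\ref{item:endosymm}), birationality holds for any reflexive $M$ over a normal domain, and $\gldim\Lam=3$ is forced by Corollary~\ref{cor:symm-fingldim}, so the main equivalence specializes directly.

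\emph{Main obstacle.} The delicate step is the dualization in $(1)\Rightarrow(2)$: one must verify carefully that $D$ really does exchange the two perpendicular conditions. This rests on the facts that $D$ is an involutive anti-equivalence on $\mathrm{MCM}(R)$ which identifies $\End_R(DM)$ with $\End_R(M)^{\op}$, and that it induces isomorphisms on all positive Ext between MCM modules---standard but nontrivial inputs from Cohen--Macaulay approximation theory, which must be imported before the syllogism with Dao--Huneke can be closed.
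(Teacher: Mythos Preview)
Your proof is correct and follows exactly the route the paper indicates: the paper does not give a detailed argument for this theorem, only remarking that Dao and Huneke obtained it as a corollary of Theorem~\ref{thm:D-H}. You have filled in precisely those details---using Theorem~\ref{thm:D-H} for the one-sided perpendicular condition, Lemma~\ref{lem:Dao} for the MCM property of $\Lambda$, Proposition~\ref{prop:center-finite}(\ref{Ramras}) for equality of global dimension, and canonical duality $D=\Hom_R(-,\omega)$ to obtain the two-sided condition---so your approach matches the paper's intended one.

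One small remark on the $d=3$ corollary: your reduction of condition~(1) to the NCCR condition tacitly uses that $R\in\add M$, which is part of~(1) but not literally part of the definition of an NCCR. In the cluster-tilting direction this is automatic (as you note, $R\in{}^{\perp_1}M$ always), so the implication ``cluster tilting $\Rightarrow$ NCCR'' is clean. For the converse, if $\End_R(M)$ is an NCCR with $M$ MCM, you need $R\in\add M$ before Theorem~\ref{thm:D-H} applies; this is harmless in practice (one replaces $M$ by $M\oplus R$, which yields a Morita-equivalent endomorphism ring since $M$ is a progenerator over $\End_R(M)$ whenever the latter has finite global dimension and $M$ is reflexive over a normal domain), but it is worth making explicit. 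The paper's informal phrasing ``$M$ gives a non-commutative crepant resolution'' is standardly read with this generator hypothesis built in.
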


For dimension $3$, this result gives a very clear picture of the
landscape of non-commutative crepant resolutions.  In higher
dimension, however, the assumption of isolated singularity becomes
more restrictive.  Moreover, as Dao and Huneke observe, for $d \geq 4$
the condition $\add M = M^{\perp_{d-2}}$ rules out a large class of
examples.  Specifically, if $\Ext_R^2(M,M)=0$ for a MCM module $M$
over a complete intersection ring $R$, then $M$ is necessarily free,
since one can complete and lift $M$ to a regular local
ring~\cite{AusDingSolberg}.

Back in dimension $3$, one can obtain even stronger results, and
address possible extensions of
Theorem~\ref{thm:VdBmain}(\ref{item:CR-NCCR}), by imposing geometric
hypotheses.  Recall that a cDV singularity (see \S\ref{sect:MMP}) is a
three-dimensional hypersurface singularity defined by a polynomial
$f(x,y,z) + t g(x,y,z,t)$, where $f$ is ADE and $g$ is arbitrary.  A
cDV singularity is called $c A_n$ if the generic hyperplane section is a
surface singularity of type ($A_n$).

\begin{theorem} 
  [{\cite[Theorem 5.5]{BIKR:2008}}] Let $(R, \m)$ be a local isolated
  cDV singularity.  Then $\Spec R$ has a crepant resolution of
  singularities if and only if $R$ has a non-commutative crepant
  resolution, and these both occur if and only if there is a cluster
  tilting module in the stable category $\underline{CM}(R)$.  If $R$
  is a $c A_n$ singularity defined by $g(x,y) + z t$, then these are
  equivalent to the number of irreducible power series in a prime
  decomposition of $g(x,y)$ being $n+1$. \qed
\end{theorem}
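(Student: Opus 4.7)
The plan is to combine the machinery assembled earlier for the first two equivalences, then handle the $cA_n$ classification by an explicit analysis with matrix factorizations. By Reid's classification, a three-dimensional isolated Gorenstein terminal singularity is precisely an isolated cDV singularity, so Van den Bergh's Theorem~\ref{thm:VdBmain}(\ref{item:CR-NCCR}) applies directly and gives the equivalence between existence of a geometric crepant resolution of $\Spec R$ and existence of a non-commutative crepant resolution of $R$. For the equivalence with cluster tilting, I would invoke Iyama's Theorem~\ref{thm:Iyama-dim3} with $d=3$: since $R$ is Gorenstein, $\omega_R \cong R$, so the requirement that $M$ contain both $R$ and $\omega_R$ as summands collapses to containing a free summand, and the condition $M^{\perp_{d-2}} = \add M = {}^{\perp_{d-2}} M$ reduces to $M^{\perp_1} = \add M = {}^{\perp_1} M$, which is exactly the cluster-tilting definition. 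Since the free summand vanishes in $\underline{CM}(R)$---a Hom-finite $2$-Calabi--Yau triangulated category, as $R$ is a three-dimensional Gorenstein isolated singularity---and contributes trivially to all Ext-vanishing conditions, ``$M \oplus R$ is cluster tilting in $CM(R)$'' is equivalent to ``$M$ is cluster tilting in $\underline{CM}(R)$''.

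The $cA_n$ analysis is the substance of the proof. Write $R = k[\![x,y,z,t]\!]/(g(x,y)+zt)$ and let $g = g_1 \cdots g_m$ factor into irreducible power series (with multiplicities) in $k[\![x,y]\!]$. For each divisor $h$ of $g$, the ideal $I_h = (h,z)$ is a rank-one MCM $R$-module arising from the matrix factorization $\left(\begin{smallmatrix} h & -t \\ z & g/h \end{smallmatrix}\right)$ of $g(x,y) + zt$, and an elementary analysis of matrix factorizations shows that these exhaust the non-free rank-one MCM $R$-modules (up to isomorphism $h$ is well-defined modulo units, and $I_g \cong R$ because $z$ is a non-zero-divisor). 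A direct computation with the explicit matrix factorizations yields $\Ext^1_R(I_h, I_{h'}) = 0$ whenever $h \mid h'$ or $h' \mid h$, so any chain $1 \mid h_1 \mid \cdots \mid h_\ell \mid g$ of proper divisors provides a candidate cluster-tilting module $M = I_{h_1} \oplus \cdots \oplus I_{h_\ell}$, and any such chain has $\ell \leq m-1$. The forward direction of the last assertion is then to verify that when $m = n+1$, a maximal flag (length $\ell = m-1 = n$) saturates $M^{\perp_1} = \add M$; this is a finite check against the classified list of indecomposables, and it then delivers a cluster tilting object and hence, by the first paragraph, an NCCR.

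The main obstacle is the converse direction: showing that when $m < n+1$, no cluster tilting object exists in $\underline{CM}(R)$. The strategy is to exploit the $2$-Calabi--Yau structure---the number of indecomposable summands of any cluster tilting object in a Hom-finite $2$-CY triangulated category is an invariant of the category, equal to the ``rank'' of its associated cluster structure, and for $cA_n$ this rank equals $n$ (matching the count of exceptional $\PP^1$s in a crepant resolution, when one exists). When $m < n+1$, some $g_i$ appears with multiplicity $e_i \geq 2$; the divisor ideals $I_{h}$ for $h$ involving positive powers of $g_i$ lie in Auslander--Reiten $\tau$-orbits that are too short to accommodate $n$ mutually Ext-orthogonal summands, and a careful bookkeeping of non-trivial self-extensions (obstructed by the same matrix-factorization calculation used above) rules out every other configuration of divisor ideals. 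Alternatively, once existence in the case $m = n+1$ is established, one may deduce non-existence when $m < n+1$ by the geometric reformulation from the first paragraph combined with Dao--Huneke-style non-existence obstructions specializing Theorem~\ref{thm:Dao}, applied to the small deformation corresponding to collapsing two irreducible factors.
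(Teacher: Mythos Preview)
The paper does not supply a proof of this theorem: it is stated as a citation to \cite[Theorem 5.5]{BIKR:2008} and closed immediately with \qed, as is typical for results quoted in this survey. There is therefore no argument in the paper to compare your proposal against.

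That said, your reduction of the first two equivalences to Theorems~\ref{thm:VdBmain} and~\ref{thm:Iyama-dim3} is exactly the synthesis the surrounding exposition is designed to support, and is correct as stated. Your treatment of the $cA_n$ clause, however, is only a sketch: the claim that the ideals $I_h=(h,z)$ exhaust the rank-one MCM modules needs justification, the asserted Ext-vanishing along divisor chains is plausible but not argued, and the converse (no cluster tilting object when $m<n+1$) is left to two heuristic alternatives, neither of which you carry out. The actual proof in \cite{BIKR:2008} proceeds via Kn\"orrer periodicity to reduce to the one-dimensional curve singularity $k[\![x,y]\!]/(g)$, where the classification of rigid and cluster tilting objects is done combinatorially in terms of the irreducible factors of $g$; the equivalence with the geometric crepant resolution on the $cA_n$ side goes through Katz's criterion relating small resolutions to the factorization of $g$. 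If you want a self-contained argument, that reduction is the missing structural idea.
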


\section{Rational singularities}\label{sect:ratsings}

As we saw in Proposition~\ref{prop:crep-ratl}, GR
Vanishing implies that any complex algebraic variety
with a crepant resolution of singularities has rational singularities.
Furthermore, the idea of a categorical, or non-commutative,
desingularization is really only well-behaved for rational
singularities.  It would therefore be most satisfactory if existence
of a non-commutative crepant resolution---a symmetric birational order
of finite global dimension---implied rational singularities.  This is
true by work of Stafford and Van den Bergh~\cite{Stafford-VdB:2008}.
Their result is somewhat more general.  Recall from the discussion
preceding Proposition~\ref{prop:defequiv1} that $\Lam$ is
homologically homogeneous if every simple $\Lam$-module has the same
projective dimension.

\begin{theorem}
  [Stafford-Van den Bergh]
  \label{thm:ratsings}
  Let $k$ be an algebraically closed field of characteristic zero, and
  let $\Lam$ be a prime affine $k$-algebra which is finitely generated
  as a module over its center $R$.  If $\Lam$ is homologically
  homogeneous then the center $R$ has rational singularities.  In
  particular, if $R$ is a Gorenstein normal affine domain and has a
  non-commutative crepant resolution of singularities, then it has
  rational singularities. \qed
\end{theorem}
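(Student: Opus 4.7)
The plan is to reduce the ``in particular'' clause to the general statement, then prove the general statement by (a) showing that $R$ is Cohen--Macaulay and normal, and (b) establishing rationality via Grauert--Riemenschneider on a resolution of $X = \Spec R$, using a non-commutative analogue of the dualizing sheaf. The reduction is immediate from Proposition~\ref{prop:defequiv1}: when $R$ is Gorenstein and normal, a non-commutative crepant resolution is precisely a homologically homogeneous endomorphism algebra $\End_R(M)$ that is module-finite over $R$. So assume throughout that $\Lam$ is prime, affine over $k$, module-finite over its center $R$, and homologically homogeneous.

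For the Cohen--Macaulay and normal properties of $R$, observe first that homological homogeneity forces $R$ to be equidimensional (simple modules at minimal primes of differing heights would have distinct projective dimensions), and Proposition~\ref{prop:center-finite} then makes $\Lam$ maximal Cohen--Macaulay over $R$. Since $\Lam$ is prime and module-finite over its center, the generic fiber $\Lam \otimes_R K$ is a central simple $K$-algebra of some degree $n$, and the reduced trace $\mathrm{trd} \colon \Lam \otimes_R K \to K$ sends $\Lam$ into the integral closure $\bar R$ of $R$ in $K$; the map $\tfrac{1}{n}\mathrm{trd}$ then splits the inclusion $\bar R \hookrightarrow \Lam$ of $\bar R$-modules, so $\bar R$ is a direct summand of $\Lam$ and inherits the MCM property. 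Normality follows by checking $(R_1)$: at every height-one prime $\p$ of $R$ we have $\gldim \Lam_\p = 1$, so $\Lam_\p$ is a hereditary order over the one-dimensional local ring $R_\p$, and classical hereditary order theory forces $R_\p$ to be a discrete valuation ring. Thus $R = \bar R$ is normal Cohen--Macaulay.

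For rationality, fix a resolution $\pi \colon \tilde X \to X$, available in characteristic zero. By Proposition~\ref{prop:crep-ratl}, since $R$ is CM, it suffices to produce an isomorphism $\pi_* \omega_{\tilde X} \cong \omega_R$; the higher direct images $R^i \pi_* \omega_{\tilde X}$ vanish for $i > 0$ by Grauert--Riemenschneider (Theorem~\ref{thm:GRvan}). The idea is to transfer the trace splitting $\omega_R \hookrightarrow \omega_\Lam := \Hom_R(\Lam, \omega_R)$ through $\pi_*$, using that homological homogeneity makes $\omega_\Lam$ an invertible $\Lam$-bimodule with $\Ext^i_R(\Lam,\omega_R) = 0$ for $i > 0$. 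Concretely, one constructs a coherent sheaf of $\calo_{\tilde X}$-algebras $\tilde \Lam$ on $\tilde X$, locally free of finite rank and of finite global dimension, satisfying $\pi_* \tilde \Lam \cong \Lam$. The relative dualizing bimodule of $\tilde \Lam$ over $\calo_{\tilde X}$ is then $\tilde \Lam \otimes_{\calo_{\tilde X}} \omega_{\tilde X}$, and pushing the trace splitting back through $\pi_*$ yields the desired isomorphism.

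The hard part is the construction of $\tilde \Lam$ with its homological properties intact. The naive candidate $\pi^* \Lam$ is inadequate because $\Lam$ is not locally free over $R$ in general: its Azumaya locus is typically a proper open subset of $\Spec R$, and the exceptional locus of $\pi$ often sits precisely where $\Lam$ fails to be locally free. One must instead build $\tilde \Lam$ as a reflexive or maximal-order extension of $\pi^* \Lam$ across the exceptional fibers, then verify both that $\tilde \Lam$ remains homologically homogeneous on $\tilde X$ and that its trace map intertwines correctly with $\pi_*$. This descent-and-Macaulayfication step for non-commutative algebras is the technical core of the Stafford--Van den Bergh argument.
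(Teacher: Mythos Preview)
Your reduction and your sketch that $R$ is Cohen--Macaulay and normal are essentially right (though the trace-splitting step as written assumes $\bar R \subseteq \Lam$, which is not automatic since $R$, not $\bar R$, is the center; one usually establishes normality first via the hereditary-order argument at height one, and only then splits $R$ itself off $\Lam$).

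The genuine gap is in your rationality argument. You propose to build a locally free sheaf of orders $\tilde\Lam$ on a resolution $\tilde X$ with $\pi_*\tilde\Lam \cong \Lam$ and finite global dimension, then transport a trace splitting. Your own final paragraph concedes that this construction is the ``technical core'' without actually supplying it, and you misattribute it to Stafford--Van den Bergh. In fact there is no general procedure for producing such a $\tilde\Lam$: a maximal-order or reflexive-hull extension of $\pi^*\Lam$ across the exceptional locus has no reason to push forward to $\Lam$, nor to remain homologically homogeneous. The paper avoids this problem entirely by using an \emph{algebraic} criterion for rational singularities (Lemma~\ref{lem:ratsings}): a CM normal affine $k$-algebra $R$ has rational singularities iff for every regular affine $S$ with $R \subseteq S \subseteq K$ one has $\omega_R \subseteq \Hom_R(S,\omega_R)$ inside $\Hom_R(K,\omega_R)$. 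The noncommutative input is that homological homogeneity makes $\omega_\Lam = \Hom_R(\Lam,R)$ an invertible $\Lam$-bimodule with $\omega_\Lam[d]$ a dualizing complex for $\Lam$; the criterion is then checked by passing through $\Lam$, never globalizing $\Lam$ over a geometric resolution.
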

Van den Bergh gave a proof of the final sentence in case $R$ is graded
in~\cite[Proposition 3.3]{VandenBergh:crepant}.  (This argument in the
published version of~\cite{VandenBergh:crepant} is not quite correct;
see the updated version online for a corrected proof.)

Here are a few comments on the proof, only in the case where $\Lam$ is
a non-commutative crepant resolution of its Gorenstein center $R$, so
is symmetric, birational and of finite global dimension.  The first
step is a criterion for rational singularities, which is an
algebraicization of the criterion $\pi_* \omega_{\tilde X} = \omega_X$
of Proposition~\ref{prop:crep-ratl}.
\begin{lemma}
  \label{lem:ratsings}
  Let $R$ be a CM normal affine $k$-algebra, where $k$ is an
  algebraically closed field of characteristic zero.  Let $K$ be the
  quotient field of $R$ and let $\omega_R$ be the canonical module for
  $R$.  Then $R$ has rational singularities if and only if for every
  regular affine $S$ with $R \subseteq S \subseteq K$, one has
  $\omega_R \subseteq \Hom_R(S,\omega_R)$ inside
  $\Hom_R(K,\omega_R)$. 
\end{lemma}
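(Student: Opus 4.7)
My plan is to reduce the stated algebraic criterion to the geometric equivalence in Proposition~\ref{prop:crep-ratl}(i), which says that the Cohen--Macaulay hypothesis plus $\pi_*\omega_{\tilde X} = \omega_R$ characterizes rational singularities for a resolution $\pi \colon \tilde X \to X := \Spec R$ (obtainable by Hironaka since $k$ has characteristic zero). The bridge is a Grothendieck-duality identification: for the smooth morphism $f \colon \Spec S \to X$ coming from a regular affine $S$ with $R \subseteq S \subseteq K$, the canonical $S$-module $f^!\omega_R \cong \omega_S$ is what $\Hom_R(S,\omega_R)$ represents when both it and $\omega_R$ are viewed inside the generic-fibre object $\omega_K := \omega_R \otimes_R K$ (and $\Hom_R(K,\omega_R)$ is correspondingly read as $\omega_K$). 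With this dictionary the criterion reads: $R$ has rational singularities iff $\omega_R \subseteq \omega_S$ in $\omega_K$ for every such $S$.

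For the forward direction, assume rational singularities and let $S$ be given. I would first realize $\Spec S$ as an affine open inside some resolution of $X$: Nagata's compactification factors the dominant finite-type morphism $\Spec S \to X$ as an open immersion $\Spec S \hookrightarrow \bar{Y}$ followed by a proper birational map $\bar{Y} \to X$, and Hironaka resolution of $\bar{Y}$ produces $\pi \colon \tilde X \to X$ in which $\Spec S$ sits as an affine open $U$ (since $\tilde X \to \bar{Y}$ is an isomorphism over the smooth locus of $\bar{Y}$, in particular over $\Spec S$). Proposition~\ref{prop:crep-ratl}(i) yields $\omega_R = H^0(\tilde X, \omega_{\tilde X})$, and restriction to $\omega_S = H^0(U, \omega_{\tilde X}|_U)$ supplies the desired inclusion inside $\omega_K$. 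For the converse, take any resolution $\pi \colon \tilde X \to X$ with an affine open cover $\{U_i = \Spec S_i\}$; the hypothesis gives $\omega_R \subseteq \omega_{S_i}$ for each $i$, so intersecting over the cover,
\[
\omega_R \;\subseteq\; \bigcap_i \omega_{S_i} \;=\; H^0(\tilde X, \omega_{\tilde X}) \;=\; \pi_*\omega_{\tilde X}(X).
\]
The reverse inclusion $\pi_*\omega_{\tilde X} \subseteq \omega_R$ is automatic from $X$ being normal (both sheaves satisfy $(S_2)$ and agree in codimension one), so Proposition~\ref{prop:crep-ratl}(i) concludes.

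The principal obstacle will be the Grothendieck-duality identification of $\Hom_R(S,\omega_R)$ inside $\Hom_R(K,\omega_R)$ with $\omega_S$ inside $\omega_K$ for the typically non-finite inclusion $R \subseteq S$: working at the level of underived module homomorphisms is misleading (for localization-type extensions one can have the literal $\Hom_R(S,\omega_R)$ collapse to zero), so the comparison must be done via the twisted inverse image $f^!$ and phrased in terms of fractional ideals in $\omega_K$. Once that identification is secured, the Nagata--Hironaka realization of $\Spec S$ as an affine open of a resolution, together with the standard $(S_2)$ argument for $\pi_*\omega_{\tilde X} \subseteq \omega_R$ over normal $X$, complete the proof.
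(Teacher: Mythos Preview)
The paper does not actually prove this lemma; it is stated as a criterion borrowed from Stafford and Van den Bergh~\cite{Stafford-VdB:2008}, with only the remark that it is ``an algebraicization of the criterion $\pi_*\omega_{\tilde X}=\omega_X$ of Proposition~\ref{prop:crep-ratl}.'' So there is no in-paper proof to compare against, but your approach is exactly the one that remark suggests, and it is essentially correct.

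Your argument is sound once the interpretive issue you flag is resolved. You are right that the literal module $\Hom_R(S,\omega_R)$ can vanish for non-finite $R\subseteq S$, so the statement must be read as a comparison of $\omega_R$ and $\omega_S$ as $R$-lattices inside the one-dimensional $K$-vector space $\omega_K\cong\Omega^d_{K/k}$. The cleanest way to justify this is not through $f^!$ for the (generally neither finite nor flat) map $\Spec S\to\Spec R$, but directly: for a normal variety $Y$ with function field $K$, the canonical sheaf is $j_*\omega_{Y_{\mathrm{reg}}}$ and hence sits canonically inside the constant sheaf $\Omega^d_{K/k}$. With that in hand, your forward direction via Nagata compactification plus Hironaka (realizing $\Spec S$ as an affine open of some resolution) and your converse via an affine cover of a fixed resolution both go through. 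For the automatic inclusion $\pi_*\omega_{\tilde X}\subseteq\omega_R$, note that you only need $\pi_*\omega_{\tilde X}$ to be torsion-free (it is, being a subsheaf of the constant sheaf $\omega_K$) and to agree with $\omega_R$ in codimension one; then it embeds in its reflexive hull, which is $\omega_R$ since $R$ is normal. One small point worth making explicit: the independence of rational singularities from the choice of resolution is being used in the forward direction, since the $\tilde X$ produced by Nagata--Hironaka depends on $S$.
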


Given the lemma, the derivation of the theorem is somewhat technical.
Here I simply note that one key idea is to show (\cite[Proposition
2.6]{Stafford-VdB:2008}) that if $\Lam$ is homologically homogeneous
of dimension $d$ then $\omega_\Lam = \Hom_R(\Lam,R)$ is an invertible
$\Lam$-module, and furthermore the shift $\omega_\Lam[d]$ is a
dualizing complex for $\Lam$ in the sense of
Yekutieli~\cite{Yekutieli:1992}.  This result has been
extended~\cite[Theorem 5.1.12]{Macleod:thesis} to remove the
hypothesis of finite global dimension (so $\Lam$ is assumed to be
``injectively homogeneous'') and the hypotheses on the field $k$.

The theorem of Stafford and Van den Bergh does require an assumption
on the characteristic of $k$, as they observe~\cite[page
671]{Stafford-VdB:2008}: there is a homologically homogeneous ring in
characteristic $2$ with CM center $R$ for which $R$ fails to have
rational singularities (in any reasonable sense).  The root cause of
this bad behavior seems to be the failure of a fixed ring $S^G$ to be
a direct summand of $S$ in bad characteristic.  It is reasonable to
ask, then, as Stafford and Van den Bergh do: Suppose $\Lam$ is a
homologically homogeneous ring whose center $R$ is an affine
$k$-algebra for a field $k$ of characteristic $p >0$, and assume that
$R$ is an $R$-module direct summand of $\Lam$.  Must $R$ have rational
singularities? 

One application of Theorem~\ref{thm:ratsings} is to rule out overly
optimistic thoughts on the existence of ``generalized''
non-commutative desingularizations.  For example, one might remove the
assumption that $\Lam$ be an $R$-order and simply say that a
\emph{weak non-commutative desingularization} is an $R$-algebra $\Lam
= \End_R(M)$, where $M$ is a reflexive $R$-module, such that $\gldim
\Lam < \infty$.  One might then hope that such things exist quite
generally, for, say, every Gorenstein normal
domain~\cite{MathOverflow:ExistenceNCDesings}. However, in dimension
two this definition would coincide with that of a non-commutative
crepant resolution since endomorphism rings of reflexive $R$-modules
have depth at least two, so would only exist for rational
singularities by Theorem~\ref{thm:ratsings}.  Therefore a
counterexample to the hope would be something like
$\CC[x,y,z]/(x^3+y^3+z^3)$, which is a Gorenstein normal domain but
does not have rational singularities.

\section{Examples: finite representation type}\label{sect:FCMT}

Let $\Lam$ be an artin algebra of \emph{finite representation type},
i.e.\ there are only a finite number of non-isomorphic indecomposable
finitely generated $\Lam$-modules.  Auslander defined what is now
called the \emph{Auslander algebra} of $\Lam$ to be $\Gam
= \End_\Lam(M_1\oplus \cdots \oplus M_t)$, where $M_1, \dots, M_t$ is
a complete set of non-isomorphic indecomposable finitely generated
$\Lam$-modules.  By Corollary~\ref{cor:intertwine}, $\Gam$ is Morita
equivalent to any other algebra of the form $\End_\Lam(N)$, where $N$
is a \emph{representation generator} for $\Lam$, that is, contains
every indecomposable finitely generated $\Lam$-module as a direct
summand.  These algebras are distinguished by the following
result.

\begin{theorem}
  [Auslander~{\cite{Auslander:QueenMary}}]
  \label{thm:Aus-alg}
  Let $\Lam$ be an artin algebra of finite representation type with
  representation generator $M$.  Assume that $\Lam$ is not semisimple.
  Set $\Gam = \End_\Lam(M)$.  Then $\gldim \Gam = 2$. \qed
\end{theorem}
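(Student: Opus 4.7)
The plan is to compute $\gldim \Gam$ by bounding the projective dimension of every simple $\Gam$-module, using Auslander--Reiten (almost split) sequences in $\lmod \Lam$ together with the projectivization functor of Proposition~\ref{prop:projectivization}. Since $\Lam$ has finite representation type, every indecomposable module lies in $\add M$, and projectivization identifies $\add M$ with $\add \Gam$; thus the indecomposable projective $\Gam$-modules are exactly the $P_i := \Hom_\Lam(M, M_i)$ as $M_i$ runs over indecomposable summands of $M$, and the simples $S_i = P_i/\rad P_i$ are indexed by the same data. Since $\Gam$ is an artin algebra, its global dimension is the supremum of the projective dimensions of the $S_i$.

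For the upper bound I would resolve each $S_i$ using an appropriate ``right almost split'' morphism into $M_i$. When $M_i$ is non-projective as a $\Lam$-module, the AR sequence
\[
0 \to \tau M_i \to E_i \xrightarrow{\ p\ } M_i \to 0
\]
has all three terms in $\add M$. Applying $\Hom_\Lam(M,-)$ gives
\[
0 \to \Hom_\Lam(M,\tau M_i) \to \Hom_\Lam(M,E_i) \xrightarrow{\ p_*\ } P_i,
\]
and the defining property of an almost split sequence---that every non-split epimorphism to $M_i$ factors through $p$---forces the image of $p_*$ to coincide with $\rad P_i$. Splicing with $0 \to \rad P_i \to P_i \to S_i \to 0$ yields a projective resolution of $S_i$ of length two. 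When $M_i$ is projective over $\Lam$, the inclusion $\rad M_i \hookrightarrow M_i$ is right almost split (since $M_i$ is local, any non-surjective map into $M_i$ lands inside $\rad M_i$), and the same argument gives a resolution of length at most one. Hence $\gldim \Gam \leq 2$.

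For the lower bound I use the assumption that $\Lam$ is not semisimple: this guarantees some simple $\Lam$-module has positive projective dimension, so there exists a non-projective indecomposable $M_i$. For this $M_i$ the AR translate $\tau M_i$ is nonzero, so $\Hom_\Lam(M,\tau M_i)$ is a nonzero projective summand. Minimality of the resolution (the induced map cannot split, because the AR sequence is non-split) then forces $\pd_\Gam S_i = 2$ exactly, so $\gldim \Gam \geq 2$.

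The main technical obstacle I anticipate is the identification $\image(p_*) = \rad P_i$. This requires translating the module-theoretic condition ``not a split epimorphism $M \to M_i$ in $\lmod \Lam$'' into the ring-theoretic condition ``lies in the Jacobson radical of $P_i$ as a $\Gam$-module.'' The translation rests on the Krull--Schmidt decomposition of $M$ together with the locality of $\End_\Lam(M_i)$ for indecomposable $M_i$: a morphism $f \colon M \to M_i$ is a split epimorphism if and only if some component $M_j \to M_i$ with $M_j \cong M_i$ is an isomorphism, which is in turn the precise complement of $\rad \Gam \cdot e_i$.
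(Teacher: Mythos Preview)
Your argument is correct, but the paper takes a more elementary route. Rather than resolving simple $\Gam$-modules via AR sequences, the paper bounds $\pd_\Gam X$ for an \emph{arbitrary} finitely generated $\Gam$-module $X$ directly: given a projective presentation $P_1 \xrightarrow{\phi} P_0 \to X \to 0$, projectivization (Proposition~\ref{prop:projectivization}) writes $P_i = \Hom_\Lam(M,M_i)$ with $M_i \in \add M$ and $\phi = \Hom_\Lam(M,f)$ for some $f\colon M_1 \to M_0$. Left-exactness of $\Hom_\Lam(M,-)$ then gives $\ker\phi = \Hom_\Lam(M,\ker f)$, and since $M$ is a representation generator, $\ker f \in \add M$, so $\ker\phi$ is already projective. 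This avoids AR theory entirely. Your approach, on the other hand, buys finer information: it pins down exactly which simple $\Gam$-modules have projective dimension $2$ (those indexed by non-projective indecomposables) versus at most $1$ (those indexed by projective $M_i$), and it is precisely the strategy the paper adopts later for the higher-dimensional analogue in Theorem~\ref{thm:FCMT-gldim}, where the kernel-in-$\add M$ trick fails because kernels of maps between MCM modules need not be MCM.
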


The proof of this theorem is quite direct from
Proposition~\ref{prop:projectivization} and the left-exactness of
$\Hom_\Lam(M,-)$.  Indeed, assume that $\Lam$ is not semisimple and
let $X$ be a finitely generated $\Gam$-module, with projective
presentation $P_1 \xto{\ \phi\ } P_0 \to X \to 0$.  The projective modules
$P_i$ are each of the form $\Hom_\Lam(M,M_i)$ for $\Lam$-modules $M_1$
and $M_0$, both in $\add M$. Similarly, $\phi = \Hom_\Lam(M,f)$ for
some $f \colon M_1 \to M_0$.  Put $M_2 = \ker f$.  Then
\[
0 \to \Hom_\Lam(M,M_2) \to \Hom_\Lam(M,M_1) \xto{\Hom_\Lam(M,f)}
\Hom_\Lam(M,M_0) \to X \to 0
\]
is a projective resolution of $X$ of length two.

Auslander and Roggenkamp~\cite{Auslander-Roggenkamp} proved a version
of this theorem in Krull dimension one, specifically for (classical)
orders over complete discrete valuation rings.  For their result,
define an order $\Lam$ over a complete DVR $T$ to have finite
representation type if there are only a finite number of
non-isomorphic indecomposable finitely generated $\Lam$-modules which
are free over $T$; these are called \emph{$\Lam$-lattices}.  If $M$
contains all indecomposable $\Lam$-lattices as direct summands, then
$\Gam = \End_\Lam(M)$ is proven to have global dimension at most two;
the proof is nearly identical to the one sketched above.  One need
only observe that the kernel $M_2$ of a homomorphism between
$\Lam$-lattices $f \colon M_1 \to M_0$ is again a $\Lam$-lattice.

In general, say that a (commutative) local ring $R$ has
\emph{finite representation type}, or finite Cohen-Macaulay type, if
there are only a finite number of non-isomorphic indecomposable
maximal Cohen-Macaulay (MCM) $R$-modules.  Recall that when $R$ is
complete, a finitely generated $R$-module $M$ is MCM if and only if it
is free over a Noether normalization of $R$.

We have already met, in \S\ref{sect:McKay}, the two-dimensional
complete local rings of finite representation type, at least over
$\CC$.  By results of Auslander and
Esnault~\cite{Auslander:rationalsing,Esnault:1985}, they are precisely
the quotient singularities $R = \CC[\![u,v]\!]^G$, where $G \subset
\GL(2,\CC)$ is a finite group.  Moreover, Herzog's
Lemma~\ref{lem:herzog} implies that in that case the power series ring
$S = \CC[\![u,v]\!]$ is a representation generator for (the MCM
modules over) $R$.  Once again the proof above applies nearly verbatim
to show (redundantly, cf.\ Proposition~\ref{prop:SGfingldim}) that
$\End_R(S)$ has global dimension two.  

In dimension three or greater, the kernel $M_2 = \ker(M_1 \to M_0)$ is
no longer a MCM module.  When the ring $R$ is CM, however, one can
replace it by a high syzygy to obtain the following result.

\begin{theorem}
  [Iyama~{\cite{Iyama:Aus-corr}},
  Leuschke~{\cite{Leuschke:finrepdim}},
  Quarles~{\cite{Quarles:thesis}}] 
  \label{thm:FCMT-gldim}
  Let $R$ be a CM local ring of finite representation type and let $M$
  be a representation generator for $R$.  Set $\Lam = \End_R(M)$.
  Then $\Lam$ has global dimension at most $\max\left\{2,\dim
    R\right\}$, and equality holds if $\dim R \geq 2$.  More
  precisely, $\pd_\Lam S =2$ for every simple $\Lam$-module
  $S$ except the one corresponding to $R$, which has projective
  dimension equal to $\dim R$. \qed
\end{theorem}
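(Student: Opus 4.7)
The plan is to invoke Proposition~\ref{prop:center-finite}(iv), by which $\gldim \Lam$ equals the supremum of $\pd_\Lam L$ over finite-length $\Lam$-modules~$L$; this supremum is attained on simple $\Lam$-modules via composition series. By projectivization (Proposition~\ref{prop:projectivization}), the fully faithful functor $\Hom_R(M,-)$ identifies $\add M$ with $\add \Lam$, so the indecomposable projective $\Lam$-modules are $P_i := \Hom_R(M, M_i)$, where $M_0 = R, M_1, \ldots, M_t$ enumerate the (finitely many) indecomposable MCM $R$-modules, and the simples $S_i$ are the tops $P_i/\rad P_i$. It therefore suffices to show $\pd_\Lam S_i = 2$ for $i \ne 0$ and $\pd_\Lam S_0 = d := \dim R$.

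For the simples $S_i$ with $i \ne 0$, the plan mimics Auslander's dimension-zero argument (Theorem~\ref{thm:Aus-alg}). Finite CM type over a CM local ring forces $R$ to have an isolated singularity, so Auslander--Reiten sequences exist in $\mathsf{CM}(R)$. For the non-free indecomposable MCM module $M_i$, take its AR sequence
\[
0 \to \tau M_i \to E_i \to M_i \to 0.
\]
Its outer terms are MCM, hence lie in $\add M$, and the defining almost split property---every non-split morphism from $\add M$ to $M_i$ factors through $E_i$---ensures that applying the left-exact functor $\Hom_R(M,-)$ yields the four-term exact sequence
\[
0 \to \Hom_R(M, \tau M_i) \to \Hom_R(M, E_i) \to \Hom_R(M, M_i) \to S_i \to 0,
\]
which is a projective resolution of $S_i$ of length~$2$. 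Hence $\pd_\Lam S_i = 2$.

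The main obstacle is computing $\pd_\Lam S_0 = d$. For the upper bound, the strategy is to extend the ``kernel-of-a-surjection-between-MCM-modules-is-MCM'' trick that suffices in dimension two to general dimension~$d$ via repeated applications of the depth lemma: in a short exact sequence $0 \to K \to N' \to N \to 0$ of $R$-modules with $N', N$ both MCM over a CM local ring of dimension~$d$, one has $\depth K \geq \min\{d,\depth N + 1\} = d$, so $K$ is again MCM. Starting from a minimal projective presentation of $S_0$ and systematically lifting the syzygies to morphisms in $\add M$ via the equivalence of Proposition~\ref{prop:projectivization}, one shows that after at most $d$ iterations the syzygy is the image under $\Hom_R(M,-)$ of a MCM $R$-module, hence lies in $\add \Lam$ and terminates the resolution. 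For the matching lower bound, identify $S_0$ with the residue field $k$ as an $R$-module (with $\Lam$ acting through $\Lam/\rad\Lam$), and use the nonvanishing $\Ext_R^d(k,R) \ne 0$ (standard for $R$ Cohen--Macaulay of depth~$d$, being essentially the definition of the canonical module) together with the full faithfulness of $\Hom_R(M,-) \colon \lmod R \to \lmod \Lam$ to produce a nonvanishing $\Ext^d_\Lam(S_0, P_0) \ne 0$, forcing $\pd_\Lam S_0 \geq d$. Combining the two simple computations gives $\gldim \Lam = \max\{2, d\}$, with equality when $d \geq 2$ (since then $d = \max\{2,d\}$).
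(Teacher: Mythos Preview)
Your treatment of the simples $S_i$ with $i \neq 0$ via AR sequences, and your upper bound $\pd_\Lam S_0 \leq d$ via iterated applications of the depth lemma, both match the paper's approach. (One detail you skate over: the lifted maps $g\colon M' \to N_j$ are surjective because $R$ is a summand of $M$, so surjectivity of $\Hom_R(M,g)$ can be read off on the $\Hom_R(R,-)$-summand.)

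The gap is in your lower bound $\pd_\Lam S_0 \geq d$. Full faithfulness of $\Hom_R(M,-)$ gives isomorphisms on $\Hom$, not on higher $\Ext$: the functor is only left exact, so it does not carry $R$-free resolutions to $\Lam$-projective resolutions, and $S_0$ is not in general in its essential image (for instance $\Hom_R(M,k)$ has $k$-dimension $\mu_R(M)$, not~$1$). There is thus no mechanism by which $\Ext_R^d(k,R)\neq 0$ produces $\Ext_\Lam^d(S_0,P_0)\neq 0$. The paper itself flags this trap in a footnote: a direct lower-bound argument appeared in the published version of~\cite{Leuschke:finrepdim} and was later seen to be incorrect, the underlying obstruction being that $\Lam = \End_R(M)$ need not be MCM over $R$, so the $\Lam$-projectives $P_i = \Hom_R(M,M_i)$ do not have depth $d$ and no depth- or $\Ext$-count over $R$ transfers to~$\Lam$.

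The paper's repair is indirect. Ramras's inequality (Proposition~\ref{prop:center-finite}(\ref{Ramras})) gives $\dim R \leq \gldim \Lam$ for any module-finite torsion-free $R$-algebra of finite global dimension. Combined with $\gldim\Lam = \max_i \pd_\Lam S_i$ (Proposition~\ref{prop:center-finite}(iv)) and the fact that every simple other than $S_0$ has projective dimension~$2$, this forces $\pd_\Lam S_0 = \gldim\Lam = d$ once $d > 2$.
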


Recall that the projective module corresponding to an indecomposable
direct summand $N$ of $M$ is $P_N = \Hom_R(M,N)$, and the
corresponding simple module is $S_N = P_N/\rad P_N$.

The proof of the assertion $\gldim \Lam \leq \max\left\{2,\dim
  R\right\}$ is exactly similar to the argument sketched
above.\footnote{In the published version of~\cite{Leuschke:finrepdim},
  I gave an incorrect argument for the equality $\gldim \Lam = \dim R$
  if $\dim R \geq 2$, pointed out to me by C.~Quarles and I.~Burban.
  I claimed that if $S$ is a simple $\Lam$-module, then a
  $\Lam$-projective resolution of $S$ consists of MCM $R$-modules, so
  has length at least $\dim R$ by the depth lemma.  That's not true,
  since $\Lam$ isn't MCM\@. The equality can be rescued by
  appealing to Proposition~\ref{prop:center-finite}(\ref{Ramras}).}
For the more precise statement about the projective dimensions of the
simple modules, recall that over a CM local ring of finite
representation type, every non-free indecomposable MCM module $X$ has
an \emph{AR (or almost split) sequence}.  This is a non-split short
exact sequence of MCM modules, $0 \to Y \to E \to X \to 0$, such that
every homomorphism $Z \to X$ from a MCM module $Z$ to $X$, which is
not a split surjection, factors through $E$.  In particular, one can
show that if $M$ is a representation generator, then applying
$\Hom_R(M,-)$ to the AR sequence ending in $X$ yields the exact
sequence
\[
0 \to \Hom_R(M,Y) \to \Hom_R(M,E) \to \Hom_R(M,X) \to S_X \to 0\,,
\]
where $S_X$ is the simple $\End_R(M)$-module corresponding to $X$.  In
particular, this displays a projective resolution of $S_X$ for every
non-free indecomposable MCM module $X$.  The simple $S_R$
corresponding to $R$ is thus very special, and has projective
dimension equal to $\dim R$ by
Proposition~\ref{prop:center-finite}(\ref{Ramras}).  Observe that this
argument relies essentially on the fact that $R$ has a representation
generator; below is an example where $\pd_\Lam S > \dim R$
for a simple $S$ even though $\Lam$ has finite global dimension.

Among other things, the statement about simple modules implies that
when $\dim R \geq 3$, the endomorphism ring of a representation
generator is never homologically homogeneous, so is never a
non-commutative crepant resolution.  A concrete example of this
failure has already appeared in Example~\ref{eg:Dao}.  Here is another
example in the non-Gorenstein case.

\begin{example}
  [{\cite[Example 12]{Leuschke:finrepdim}, \cite{Smith-Quarles:2005}}]
  \label{eg:scroll}
  Let $k$ be an infinite field and let $R$ be the complete scroll of
  type $(2,1)$, that is, $R = k[\![x,y,z,u,v]\!]/I$, where $I$ is
  generated by the $2 \times 2$ minors of the matrix
  $\left(\begin{smallmatrix} x& y & u \\ y & z &
      v\end{smallmatrix}\right)$.  Then $R$ is a three-dimensional CM
  normal domain which is not Gorenstein, and has finite representation
  type~\cite{Auslander-Reiten:1989}.  The only non-free indecomposable
  MCM modules are, up to isomorphism,
  \begin{itemize}
  \item the canonical module $\omega \cong (u,v)R$;
  \item the first syzygy of $\omega$, isomorphic to $\omega^* =
    \Hom_R(\omega,R)$ and to $(x,y,u)R$;
  \item the second syzygy $N$ of $\omega$, rank two and $6$-generated;
    and
  \item the dual $L = \Hom_R(\omega^*,\omega)$ of $\omega^*$, isomorphic to
    $(x,y,z)R$.
  \end{itemize}
  By Theorem~\ref{thm:FCMT-gldim}, $\Lam = \End_R( R \oplus \omega
  \oplus \omega^* \oplus N \oplus L)$ has global dimension $3$.  However,
  $\Lam$ is not MCM as an $R$-module, since none of $L^*$, $N^*$, and
  $\Hom_R(\omega,\omega^*)$ is MCM\@.  One can check with, say,
  \texttt{Macaulay2}~\cite{M2} that $\End_R(R\oplus \omega)$ and
  $\End_R(R\oplus \omega^*)$ are up to Morita equivalence the only
  endomorphism rings of the form $\End_R(D)$, with $D$ non-free MCM\@,
  that are themselves MCM\@. In fact  $\End_R(R \oplus \omega)
  \cong \End_R(R \oplus \omega^*)$ as rings.  

  Set $\Gam = \End_R(R \oplus \omega)$.  Then $\Gam$ has two simple
  modules $S_\omega$ and $S_R$.  Using Lemma~\ref{lem:Dao} and the
  known structure of the AR sequences over $R$, Smith and
  Quarles~\cite{Smith-Quarles:2005} show that $\pd_\Gam S_\omega =4$
  and $\pd_\Gam S_R = 3$.  Thus $\Gam$ has global dimension equal to
  $4$ by Proposition~\ref{prop:center-finite}(\ref{Ramras}), but is not a
  non-commutative crepant resolution of $R$.
\end{example}

\begin{example}
  \label{eg:Veronese}
  There is only one other known example of a non-Gorenstein CM
  complete local ring of finite representation type in dimension three
  or more.  It is the (completion of the) homogeneous coordinate ring
  of the cone over the Veronese embedding $\PP^2 \into \PP^5$.
  Explicitly, set $R = \CC[\![x^2,xy,xz, y^2,yz,z^2]\!] \subset
  \CC[\![x,y,z]\!] = S$.  Then the indecomposable non-free MCM
  $R$-modules are the canonical module $\omega = (x^2,xy,xz)R$ and its
  first syzygy $N$.  Observe that $S \cong R \oplus \omega$ as
  $R$-modules, so by Theorem~\ref{thm:Aus-McKay}, $\End_R(R\oplus
  \omega) \cong S\#(\ZZ_2)$ has finite global dimension.  Since
  $\End_R(S) \cong S \oplus S$, $\Lam = \End_R(S)$ is a
  non-commutative crepant resolution for $R$.

  By Theorem~\ref{thm:FCMT-gldim}, $\Gam = \End_R(R \oplus \omega
  \oplus N)$ has global dimension $3$.  But $\Hom_R(N,R)$ and
  $\Hom_R(N,N)$ have depth $2$, so $\Gam$ is not a non-commutative
  crepant resolution.
\end{example}

\section{Example: the generic determinant}\label{sect:detX}

The most common technique thus far for constructing non-commutative
crepant resolutions has been to exploit a known (generally crepant)
resolution of singularities and a tilting object on it.  In fact, the
basic technique is already present in Van den Bergh's proof of
Theorem~\ref{thm:VdBmain}.  This has been used in several other
families of examples.  This \S{} is
devoted to describing a particular example of this technique in
action, namely the generic determinantal hypersurface ring.

Let $k$ be a field and $X = (x_{ij})$ the generic square matrix of
size $n \geq 2$, whose entries $x_{ij}$ are thus a family of $n^2$
indeterminates over $k$.  Set $S = k[X] = k[\{x_{ij}\}]$ and let $R$
be the hypersurface ring $S/(\det X)$ defined by the determinant of
$X$.  Then $R$ is a normal Gorenstein domain of dimension $n^2-1$. 

Fix a free $S$-module $\F$ of rank $n$.  Left-multiplication with the
matrix $X$ naturally defines the generic $S$-linear map $\F \to \F$.
The exterior powers $\w^a X \colon \w^a \F \to \w^a \F$ define natural
$S$-modules 
\[
M_a = \cok \w^a X
\]
for $a = 1, \dots, n$.  In fact each $M_a$ is annihilated by $\det X$,
so is naturally an $R$-module.  The pair $(\w^a X, \w^{n-a}X^T)$
forming a \emph{matrix factorization} of $\det X$, the $M_a$ are even
MCM modules over $R$~\cite{Eisenbud:1980}.  They are in particular
reflexive, of rank $\binom{n-1}{a-1}$.

Set $M = \bigoplus_{a=1}^n M_a$ and $\Lam =\End_R(M)$.  The
crucial result of~\cite{Buchweitz-Leuschke-VandenBergh:2010}, in this
case, is then

\begin{theorem}
  \label{thm:BLV}
  The $R$-algebra $\Lam$ provides a non-commutative crepant resolution
  of $R$. \qed
\end{theorem}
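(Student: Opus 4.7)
The strategy is geometric tilting: construct a (crepant) resolution $p \colon Y \to \Spec R$ together with a tilting bundle $T$ on $Y$ such that $\End_{\calo_Y}(T) \cong \Lambda$, and then read off the required properties of $\Lambda$ from Theorem~\ref{thm:geomtilt}.

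For the resolution, take the incidence variety
\[
Y = \{(\ell, \phi) \in \PP(\F) \times \Spec S : \phi(\ell) = 0\},
\]
with projections $q \colon Y \to \PP(\F) \cong \PP^{n-1}$ and $p \colon Y \to \Spec R$ (the image of $p$ lies in $\Spec R$ since $\phi(\ell) = 0$ with $\ell \ne 0$ forces $\det \phi = 0$). The map $q$ realizes $Y$ as the total space of a rank-$n(n-1)$ vector bundle over $\PP$, so $Y$ is smooth of dimension $n^2 - 1 = \dim R$; meanwhile $p$ is birational, since a generic singular matrix has corank one and hence a unique kernel line. A comparison of canonical divisors shows that $p$ is in fact crepant. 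For the tilting bundle, take the pullback $T = q^* T_0$ of a suitable version of Be\u\i linson's tilting bundle from Theorem~\ref{thm:beilinson}. Because $q$ is a vector bundle, the tilting properties transfer from $\PP$ to $Y$: Ext-vanishing comes from Lemma~\ref{lem:beilinson} via the projection formula, and classical generation lifts by faithful flatness of $q$.

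The crux is to identify $\End_{\calo_Y}(T)$ with $\Lambda$. The embedding $Y \into \PP \times \Spec S$ is cut out by the regular section $\phi \mapsto \phi(\ell)$ of the bundle $\calo_\PP(1) \otimes_k \F$, and its Koszul complex resolves $\calo_Y$. Twisting this resolution by the summands of $T$ and pushing down along $p$, one verifies the vanishing of the relevant higher direct images, and the remaining sequence of pushforwards assembles into precisely the generic matrix factorization $(\w^a X, \w^{n-a} X^T)$ whose cokernel defines $M_a$. Block by block this yields $\Hom_{\calo_Y}(T_a, T_b) \cong \Hom_R(M_a, M_b)$, and hence $\End_{\calo_Y}(T) \cong \End_R(M) = \Lambda$.

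Once the identification is in place, Theorem~\ref{thm:geomtilt} delivers the derived equivalence $\D^b(\coh Y) \simeq \D^b(\lmod \Lambda)$ and, since $Y$ is smooth, finite global dimension for $\Lambda$. The remaining conditions of Definition~\ref{def:NCCR} follow quickly: birationality from the generic rank count $\sum_{a=1}^n \binom{n-1}{a-1} = 2^{n-1}$; symmetry from Theorem~\ref{thm:AG-A}(\ref{item:endosymm}) since each $M_a$ is reflexive over the Gorenstein ring $R$; and the MCM property of $\Lambda$ over $R$ from the Ext-vanishing $\Ext^i_R(M,M) \cong \Ext^i_{\calo_Y}(T,T) = 0$ for $i > 0$ combined with Lemma~\ref{lem:Dao}. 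I expect the real obstacle to be the identification $\End_{\calo_Y}(T) \cong \Lambda$: tracking the Koszul resolution of $\calo_Y$ through Be\u\i linson's computations, and systematically verifying the vanishing of all the higher direct images along the way, is where the actual work sits.
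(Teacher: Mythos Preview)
Your strategy is the paper's: the incidence variety (there called $Z$), the pullback of Be\u\i linson's bundle $\bigoplus_a \Omega^{a-1}(a)$, and the Koszul double-complex computation identifying the pushforward with the $M_a$ all appear verbatim, and you have correctly located the hard work in that pushforward calculation.

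There is one genuine gap, in your derivation of the MCM property. You assert $\Ext^i_R(M,M) \cong \Ext^i_{\calo_Y}(T,T)$ and then invoke Lemma~\ref{lem:Dao}, but nothing you have set up yields that isomorphism. The tilting equivalence identifies $\Ext^i_{\calo_Y}(T,T)$ with $\Ext^i_\Lam(\Lam,\Lam)$, which is trivially zero; it says nothing about $\Ext^i_R(M,M)$. To get the latter via adjunction you would need $\L p^* M_a \cong T_a$, and that is not a consequence of $\R p_* T_a = M_a$ alone. (Pulling back the two-periodic matrix-factorization resolution of $M_a$ does not obviously collapse to $T_a$.)

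The paper avoids this entirely. The same Koszul bookkeeping that computes $\R p_* T_a$ also computes $\R p_* \cHom_{\calo_Y}(T_b,T_a)$, and shows it is quasi-isomorphic in $\D^b(\lmod S)$ to a single map of free $S$-modules sitting in degrees $-1$ and $0$ (Theorem~\ref{thm:BLVtilt}). Since the negative-degree cohomology of a derived pushforward vanishes, that map is injective, so $\Hom_R(M_b,M_a) = q_*\cHom_{\calo_Y}(T_b,T_a)$ has projective dimension at most one over the regular ring $S$, hence depth at least $\dim S - 1 = \dim R$. That gives MCM directly, with no need for $\Ext_R$-vanishing or Lemma~\ref{lem:Dao}. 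So the MCM property falls out of the very computation you already flagged as the crux, rather than requiring a separate argument.
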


The proof in~\cite{Buchweitz-Leuschke-VandenBergh:2010} proceeds by
identifying the $M_a$ as geometric objects with tilting in their
ancestries, as follows.  Let $F$ be a $k$-vector space of dimension
$n$, and set $\PP = \PP(F^\svee) \cong \PP_k^{n-1}$ be the projective
space over $R$, viewed as equivalence classes $[\lambda]$ of linear
forms $\lambda\colon F \to k$.  Put
\[
Y = \PP \times \Spec S\,,
\]
with canonical projections $\tilde p \colon Y \to \PP$ and $\tilde q
\colon Y \to \Spec S$.  Identify $\Spec S$ with the space of $(n
\times n)$ matrices $A$ over the field $k$, with coordinate functions
given by the indeterminates $x_{ij}$. Then the \emph{incidence variety}
\[
Z = \left\{ ([\lambda],A) \ \middle|\  \image A \subseteq \ker
  \lambda\right\}
\]
is a resolution of singularities of $\Spec R$.  (Compare with
Example~\ref{eg:Atiyah}, which is the case $n=2$.) Indeed, the image
of $Z$ under $\tilde q \colon Y \to \Spec S$ is precisely the locus of
matrices $A$ with $\rank A < n$, that is, $\Spec R$.  Furthermore, the
singular locus of $\Spec R$ consists of the matrices of rank $< n-1$,
and $q := \tilde q |_{Z} \colon Z \to \Spec R$ is an isomorphism away
from these points.  One can explicitly write down the equations
cutting $Z$ out of $Y$, and verify that $Z$ is smooth, and is a
complete intersection in $Y$; if in particular  $j \colon Z \to
Y$ is the inclusion, then this implies that $j_* \caloz$ is resolved
over $\caloy$ by a Koszul complex on the Euler form $F \otimes_k
\caloy(-1) \to \caloy$.

Here is a pictorial description of the situation.
\begin{equation}\begin{gathered}\label{eq:detX}
\xymatrix{%
Z \ar[dr]_j \ar@/^2ex/[drr]^p \ar[dd]_{q} \\
& Y \ar[r]^{\tilde p} \ar[d]_{\tilde q} & \PP \\
\Spec R \ar@{^{(}->}[r] & \Spec S 
}
\end{gathered}
\end{equation}
Recall from \S\ref{sect:beilinson} that $T = \bigoplus_{a=1}^n
\Omega^{a-1}(a)$, where $\Omega = \Omega_{\PP/k}$ is the sheaf of
differential forms on $\PP$ and $\Omega^j = \w^j \Omega$, is a tilting
bundle on $\PP$.  Set \( \calm_a = p^* \Omega^{a-1}(a) \) for $a=1,
\dots, n$, a locally free sheaf on the resolution $Z$.  As the
typography hints, $\calm_a$ is a geometric version of $M_a$, in the
following sense.

\begin{proposition}
  \label{prop:Ma}
  As $R$-modules, 
  \(
  \R q_* \calm_a = M_a\,.
  \)
  More precisely, $\R^j q_* \calm_a = 0$ for $j>0$ and $q_* \calm_a =
  M_a$ for all $a$. \qed
\end{proposition}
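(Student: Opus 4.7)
The plan is to resolve $j_*\calm_a$ on $Y$ by a Koszul-type complex, push it down to $\Spec S$ through $\tilde q$, and collapse the resulting spectral sequence to a single differential identifiable with $\w^a X$. Factor $q=\tilde q\circ j$; since $j$ is a closed immersion, $\R q_*=\R\tilde q_*\circ j_*$. Because $\calm_a=j^*\tilde p^*\Omega^{a-1}(a)$, the projection formula gives $j_*\calm_a=j_*\caloz\otimes_{\caloy}\tilde p^*\Omega^{a-1}(a)$. Tensoring the stated Koszul resolution of $j_*\caloz$ by $\tilde p^*\Omega^{a-1}(a)$ yields a locally free resolution $K^\bullet$ of $j_*\calm_a$ on $Y$ whose term in cohomological degree $-i$ is
\[K^{-i}=\w^i F\otimes_k\tilde p^*\bigl(\Omega^{a-1}(a-i)\bigr),\qquad 0\le i\le n.\]

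Applying $\R\tilde q_*$ and using the flat base change / K\"unneth isomorphism $\R^q\tilde q_*\tilde p^*\calf\cong H^q(\PP,\calf)\otimes_k S$, the hypercohomology spectral sequence becomes
\[E_1^{p,q}=\w^{-p}F\otimes_k H^q\bigl(\PP,\Omega^{a-1}(a+p)\bigr)\otimes_k S\;\Longrightarrow\;\R^{p+q}q_*\calm_a.\]
Bott's theorem on $\PP^{n-1}$ shows $H^q(\PP,\Omega^{a-1}(a+p))$ vanishes except at $(p,q)=(0,0)$, where the Euler sequence identifies $H^0(\PP,\Omega^{a-1}(a))\cong\w^a F$, and at $(p,q)=(-a,a-1)$, where $H^{a-1}(\PP,\Omega^{a-1})\cong k$. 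Hence the only surviving entries are $E_1^{0,0}\cong\w^a F\otimes_k S$ and $E_1^{-a,a-1}\cong\w^a F\otimes_k S$.

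These two nonzero entries sit in distinct rows and columns, so all differentials $d_r$ with $r<a$ vanish; the only possible higher differential is $d_a\colon E_a^{-a,a-1}\to E_a^{0,0}$, an $S$-linear endomorphism of $\w^a F\otimes_k S$. Since $j_*\calm_a$ is an ordinary sheaf, $\R^i\tilde q_*(j_*\calm_a)=0$ for $i<0$; the surviving entry in total degree $-1$ must therefore die, forcing $d_a$ to be injective. Granting the identification of $d_a$ with the exterior power $\w^a X$, we conclude $q_*\calm_a=\cok(\w^a X)=M_a$ and $\R^j q_*\calm_a=0$ for $j\ne 0$, as required.

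The main obstacle is this identification of $d_a$ with $\w^a X$. The differential arises from the Koszul boundary of $K^\bullet$, which is contraction with the universal form $F\otimes\caloy(-1)\to\caloy$ whose coefficients encode the generic matrix $X$ pulled back from $\Spec S$; it must be chased through a zig-zag of length $a$ in the double complex and reconciled with the Bott/Euler isomorphisms $H^0(\Omega^{a-1}(a))\cong\w^a F$ and $H^{a-1}(\Omega^{a-1})\cong k$. One can proceed by brute calculation in local coordinates on $Y$, or more cleanly invoke naturality: both $d_a$ and $\w^a X$ are $\GL(F)$-equivariant endomorphisms of a free $S$-module of rank $\binom{n}{a}$ whose cokernel must be $M_a$, so they agree up to a nonzero scalar, which can be pinned down by specialization to an explicit matrix.
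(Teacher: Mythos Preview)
Your approach is correct and takes a genuinely different route from the paper's. The paper also factors $\R q_* = \R\tilde q_* \circ j_*$ and resolves $j_*\caloz$ by the Koszul complex on $Y$, but rather than keeping $\Omega^{a-1}(a)$ intact it further resolves this sheaf by line bundles via the truncated Euler complex $0 \to \Omega^{a-1}(a) \to \w^{a-1}F \otimes \calop(1) \to \cdots \to \calop(a) \to 0$, pulled back along $p$. The result is a bicomplex on $Y$ whose terms all have the form $\w^i F \otimes \w^j F \otimes \caloy(t)$, so pushing down needs only the cohomology of line bundles on $\PP$, not Bott's formula for $\Omega^p(t)$. The rows and columns of the resulting double complex of free $S$-modules are degree strands of Koszul complexes, hence acyclic except at their ends, and the total complex collapses explicitly to $\w^a X \colon \w^a \F \to \w^a \F$. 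Your route is shorter and the spectral sequence degenerates more dramatically, but it needs Bott as input and leaves the identification of the page-$a$ differential as a loose end; the paper's route is bulkier but more elementary in its inputs, and the final map is visibly assembled from explicit Koszul differentials, so no separate identification step is required.

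One caution on your closing paragraph: $\GL(F)$-equivariance under the conjugation action on $X$, together with homogeneity of degree $a$, does \emph{not} pin $d_a$ down to a scalar multiple of $\w^a X$ --- for instance $(\tr X)^a \cdot \id_{\w^a F}$ is another such map --- and the clause ``whose cokernel must be $M_a$'' is circular, since that is exactly what you are proving. The argument can be salvaged by passing to the larger $\GL_n \times \GL_n$ symmetry that treats the source and target of $X$ as independent $n$-dimensional spaces; Cauchy's formula then shows the relevant equivariant $\Hom$ space is one-dimensional. Alternatively the zig-zag chase you mention does the job, and the paper's double-complex reduction can be read as a systematic execution of precisely that chase.
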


The proof of the Proposition involves juggling two Koszul complexes.
Tensoring~\eqref{eq:Omegares} with $\calop(a)$ and truncating gives an
exact sequence
\[
0 \to \Omega^{a-1}(a) \to \w^{a-1}F \otimes_k \calop(1) \to \cdots \to
F \otimes_k \calop(a-1) \to \calop(a) \to 0\,.
\]
The projection $p$ being flat, the pullback $p^*$ is exact, yielding
\[
0 \to \calm_a \to \w^{a-1}F \otimes_ \caloz(1) \to \cdots \to F
\otimes_k \caloz(a-1) \to \caloz(a) \to 0\,.
\]
Compute $\R q_*$ as $\R {\tilde q}_* j_*$.  As $j_* \caloz$ is
resolved over $\caloy$ by a Koszul complex, we may replace the former
with the latter and obtain a double complex in the fourth quadrant,
with $\w^{a-1}F \otimes_k \caloy(1)$ at the origin and $\w^{a-i+1}F
\otimes_k \w^{-j}F \otimes_k \caloy(i+j+1)$ in the $(i,j)$ position.
Now apply $\R q_*$.  By~\cite[Ex. III.8.4]{Hartshorne}, the higher
direct images of the projective bundle $q \colon Y \to \Spec S$ are
completely known,
\[
\R^j {\tilde q}_* \caloy(t) = 
\begin{cases}
0 & \text{if $t < 0$ or $1 < j < n-1$;} \\
\Sym_t(F) \otimes_k S = \Sym_t(\F) & \text{for $j=0$; and} \\
0 & \text{for $j=n-1$ if $t \geq -n$.}
\end{cases}
\]
This already proves $\R^j q_* \calm_a = 0$ for $j>0$, and allows one
to represent $q_* \calm_a$ by the homology of the total complex of the
following double complex of free $S$-modules.  (For notational
simplicity write $\w^i$ and $\Sym_j$ instead of $\w^i\F$ and
$\Sym_j\F$.)
\small
\[
\xymatrix{
 & 0  & & 0 & 0 \\
 0 \ar[r] & \w^{a-1} \otimes \Sym_1  \ar[r] \ar[u] & \cdots \ar[r] &
   \w^1 \otimes\Sym_{a-1} \ar[r] \ar[u] & \Sym_a  \ar[r] \ar[u] & 0 \\
 0 \ar[r] & \w^{a-1} \otimes \w^1 \ar[r] \ar[u] & \cdots \ar[r] & \w^1 \otimes \w^1 \otimes \Sym_{a-2} \ar[r] \ar[u] & \w^1 \otimes \Sym_{a-1}  \ar[r]
   \ar[u] & 0\\
 & 0 \ar[r] \ar[u] & \cdots \ar[r] & \w^1 \otimes \w^2  \otimes
 \Sym_{a-3}  \ar[r] \ar[u] & \w^2  \otimes
 \Sym_{a-2}  \ar[r] \ar[u] & 0 \\
 & & & \vdots \ar[u] & \vdots \ar[u] \\
 & & 0 \ar[r] & \w^1 \otimes \w^{a-1}  \ar[r] \ar[u] & \w^{a-1} \otimes \Sym_1 
 \ar[r] \ar[u] & 0 \\
 & & & 0 \ar[u] & \w^a  \ar[u] \\
 & & & & 0 \ar[u] 
}
\]
\normalsize
Here the $j^\text{th}$ column is obtained by tensoring the strand of
degree $j$ in the Koszul complex with $\w^{a-j-1}\F$, so is
acyclic~\cite[A2.10]{Eisenbud:book}.  Similarly, the $(-i)^\text{th}$ row
is the degree $a$ strand in a Koszul complex tensored with $\w^{i}\F$,
and so is exact with the exceptions of the top and bottom rows.  The
top row has homology equal to $\w^a \F$ at the leftmost end, while the
bottom row has homology $\w^a \F$ on the right.  One checks from the
explicit nature of the maps that the total complex is thus reducible
to $\w^a X \colon \w^a \F \to \w^a \F$, whence $q_* \calm_a =
M_a$, as claimed.

Now it is relatively easy to prove that 
\[
\R^j q_* \cHom_{\caloz}(\calm_b, \calm_a) = 
\begin{cases}
\Hom_R(M_b,M_a) & \text{if $j=0$, and}\\
0 & \text{otherwise,}
\end{cases}
\]
and to establish the rest of the assertions in the next theorem.

\begin{theorem}
  \label{thm:BLVtilt}
  The object $\R q_* \cHom_\caloz(\calm_b,\calm_a)$ is isomorphic in
  the bounded derived category $\D^b(\lmod S)$ to a single morphism
  between free $S$-modules situated in (cohomological) degrees $-1$
  and $0$.  Therefore the $R$-module $q_*
  \cHom_\caloz(\calm_b,\calm_a)= \Hom_R(M_b,M_a)$ is a MCM $R$-module
  and the higher direct images vanish, so that in particular
  \begin{equation*}
  \R^1q_* \cHom_{\caloz}(\calm_b,\calm_a) = \Ext_R^1(M_b,M_a) =0\,.
  \qed
  \end{equation*}
\end{theorem}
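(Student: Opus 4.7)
The plan is to mimic and extend the double-complex argument used for Proposition~\ref{prop:Ma}. Since $\calm_c = p^*\Omega^{c-1}(c)$, one has
\[
\cHom_\caloz(\calm_b,\calm_a) \;=\; p^*\calg, \qquad \calg \;:=\; \cHom_\calop\bigl(\Omega^{b-1}(b),\Omega^{a-1}(a)\bigr),
\]
a pullback of a locally free sheaf on $\PP$. Setting $\calh := \tilde p^*\calg$ on $Y$, the projection formula for the closed immersion $j$ gives $j_*p^*\calg = \calh\otimes_\caloy j_*\caloz$. Because $Z$ is cut out of $Y$ by a regular section of $F\otimes_k\caloy(1)$, the Koszul complex $K^\bullet$ with $K^{-k} = \w^k F\otimes_k\caloy(-k)$ for $0\leq k\leq n$ resolves $j_*\caloz$, so that
\[
\R q_*\cHom_\caloz(\calm_b,\calm_a) \;\simeq\; \R\tilde q_*\bigl(\calh\otimes_\caloy K^\bullet\bigr).
\]
Flat base change along $\tilde q$ then represents the right-hand side, up to quasi-isomorphism, by a total complex of free $S$-modules built from pieces $\w^k F\otimes_k H^j(\PP,\calg(-k))\otimes_k S$.

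The key computation is to determine the cohomology groups $H^j(\PP,\calg(-k))$ for $0\leq k\leq n$ and $0\leq j\leq n-1$. Using the standard self-duality $\Omega^{b-1}(b)^\svee\cong \Omega^{n-b}(n-b)$ I would rewrite $\calg \cong \Omega^{a-1}(a)\otimes_\calop\Omega^{n-b}(n-b)$, and analyze the twists $\calg(-k)$ by the usual Koszul-filtration technique on $\PP$ together with Bott's vanishing theorem (equivalently, Borel--Bott--Weil on the Grassmannian of hyperplanes in $V$). The tilting property of $\bigoplus\Omega^{i-1}(i)$ in Lemma~\ref{lem:beilinson}(ii) handles the $k=0$ column directly, and Serre duality relates $k=n$ to the $k=0$ case with $a$ and $b$ exchanged.

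The expected upshot is that, although $H^j(\PP,\calg(-k))$ may be nonzero for intermediate $k$, these contributions occur only at bidegrees $(-k,j)$ whose total degree $j-k$ lies in $\{-1,0\}$. Since $\R q_*$ of a sheaf has no cohomology in negative degrees, the spectral sequence then forces the total cohomology to be concentrated in degree $0$, where it equals $\Hom_R(M_b,M_a)$. Tracking the higher differentials (in particular the $d_n$ linking the corner $(k,j)=(n,n-1)$ to $(0,0)$) identifies an explicit free presentation $F^{-1}\xrightarrow{\;\phi\;}F^0$ of $\Hom_R(M_b,M_a)$ over $S$, exhibiting the two-term complex asserted by the theorem.

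Given this, $\Hom_R(M_b,M_a)$ has projective dimension at most one over the regular ring $S$, so the Auslander--Buchsbaum formula yields $\depth_S\Hom_R(M_b,M_a)\geq n^2-1 = \dim R$. Since $\Hom_R(M_b,M_a)$ is annihilated by $\det X$, its $S$-depth coincides with its $R$-depth, proving the maximal Cohen--Macaulay property; the vanishing of higher direct images then immediately gives $\Ext_R^1(M_b,M_a)=0$. The hard part will be the sharp vanishing statements for $H^j(\PP,\calg(-k))$ at intermediate $k$ together with the explicit identification of $\phi$ as a block-matrix version of an exterior power of $X$; the bookkeeping of which combinations of exterior-power and cohomology factors appear in $F^{-1}$ and $F^0$ is the technical heart of the argument.
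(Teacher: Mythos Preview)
Your approach is in the same spirit as the paper's, which gives no detailed argument here beyond asserting that the technique of Proposition~\ref{prop:Ma} carries over.  There is, however, one organizational difference worth noting.  The paper's template first resolves $\Omega^{a-1}(a)$ (and, for the $\cHom$, also $(\Omega^{b-1}(b))^\svee\cong\Omega^{n-b}(n-b)$) by line bundles before tensoring with the Koszul complex for $j_*\caloz$.  All the resulting twists on $Y$ then lie in $[1-n,\,a+n-b]$, so only $\R^0\tilde q_*$ survives and one obtains an honest double (or triple) complex of free $S$-modules sitting entirely in cohomological degree $0$; the reduction to a two-term complex then proceeds by the same row/column Koszul-strand acyclicity as in Proposition~\ref{prop:Ma}.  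By keeping $\calg$ unresolved you trade the easy line-bundle cohomology for the harder groups $H^j(\PP,\calg(-k))$, and your $E_1$ page is spread over two antidiagonals rather than a single row.

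Your diagonal vanishing claim is correct and does follow from Bott, but the step ``tracking the higher differentials \dots\ identifies an explicit free presentation $F^{-1}\to F^0$'' needs a word of care: the $d_r$ for $r\geq 2$ live on subquotients of $E_1$, so one cannot literally sum them to a map of free modules.  The clean fix is to argue inductively on the column filtration.  Each graded piece $\R\tilde q_*(\w^kF\otimes\calh(-k))$, placed in total degree, lies in $\D^{[-1,0]}(\lmod S)$ with free cohomology, hence (since $\Ext_S^2(\text{free},\text{free})=0$) is itself isomorphic to a two-term complex of free modules; and the cone on any morphism in $\D^b(\lmod S)$ between two such two-term free complexes is again of that shape, because the morphism is represented by an actual chain map.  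Iterating over the filtration shows $\R q_*\cHom_\caloz(\calm_b,\calm_a)$ is quasi-isomorphic to a complex $[F^{-1}\to F^0]$ of free $S$-modules, after which your Auslander--Buchsbaum argument for the MCM property goes through.
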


It remains to see that $\bigoplus_{a=1}^n \calm_a$ is a tilting object
on $Z$, so that $\Lam = \End_R(\bigoplus_a M_a) = q_*
\cEnd_\caloz(\bigoplus_a \calm_a)$ has finite global dimension, whence
is a non-commutative crepant resolution of $R$.  It suffices for this
to compute the cohomology
\[
H^i(\PP, \cHom_\calop(\Omega^{b-1}(b), \Omega^{a-1}(a))(c))\,;
\]
since $p$ is flat, this will compute $\Ext_\caloz^i(\calm_b,\calm_a)$
as well. In~\cite{Buchweitz-Leuschke-VandenBergh:2010} we gave a
characteristic-free proof of this vanishing, and another appears in
the appendix by Weyman to~\cite{Eisenbud-Schreyer-Weyman:2003}.  In
characteristic $0$, one can compute the cohomology with Bott
vanishing~\cite[Chapter 4]{Weyman:book}.  This allows the following
proposition and theorem.

\begin{proposition}
  \label{prop:BLVtilt}
  The $\caloz$-module $\calm = \bigoplus_a \calm_a = \bigoplus_{a=1}^n
  p^* \Omega^{a-1}(a)$ is a tilting bundle in $\D^b(\coh Z)$.  In
  detail, with $\cala = \End_{\D^b(\coh Z)}(\calm)$,
  \begin{enumerate}
  \item $\Ext_\caloz^i(\calm,\calm) := \Hom_{\D^b(\coh
      Z)}(\calm,\calm[i]) =0$ for $i >0$;
  \item $\rHom_\caloz(\calm,-) \colon \D^b(\coh Z) \to
    \D^b(\lmod {\cala})$ is an  equivalence of
    triangulated categories, with $-\lotimes_{\cala} \calm $ as inverse;
  \item $\cala$ has finite global dimension.
  \item $\cala \cong \Lam = \End_R(M)$. \qed
  \end{enumerate}
\end{proposition}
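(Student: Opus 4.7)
The plan is to establish that $\calm$ is a tilting object on $Z$ in the sense of Definition~\ref{def:tilt-obj}, then invoke geometric tilting theory (Theorem~\ref{thm:geomtilt}) for conclusions (2) and (3), while (4) reads off from the pushforward calculation in Theorem~\ref{thm:BLVtilt}.

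Verification of the three defining properties of a tilting object is straightforward except for one point. Perfectness (i) is automatic, since $\calm$ is locally free of finite rank on the smooth variety $Z$. The vanishing of self-extensions (iii), which is conclusion (1) of the proposition, follows from Theorem~\ref{thm:BLVtilt} combined with the affineness of $\Spec R$: properness of $q\colon Z \to \Spec R$ collapses the Leray spectral sequence to
\[
\Ext^i_\caloz(\calm_b,\calm_a) \cong \Gamma(\Spec R,\, \R^i q_* \cHom_\caloz(\calm_b,\calm_a))\,,
\]
and the higher direct images on the right vanish for $i > 0$ by Theorem~\ref{thm:BLVtilt}.

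The substantive step is generation (ii). From the description of the incidence variety, $p\colon Z \to \PP$ realizes $Z$ as the total space of the rank $n(n-1)$ vector bundle $F^{\svee} \otimes_k \calk$, where $\calk \subset F \otimes_k \calop$ is the tautological kernel subbundle; in particular $p$ is affine and faithfully flat. Be\u\i linson's Theorem~\ref{thm:beilinson} states that $T_2 = \bigoplus_{a=1}^n \Omega^{a-1}(a)$ classically generates $\D^\perf(\coh \PP)$, equivalently (by the Ravenel--Neeman theorem cited after Definition~\ref{def:tilt-obj}) it is a compact generator of $\D(\Qcoh \PP)$. I propose to transfer this property through $p^{*}$: if $\caln \in \D(\Qcoh Z)$ satisfies $\Hom(\calm[i],\caln) = 0$ for all $i$, then the $(p^{*},\R p_*)$ adjunction yields $\Hom(T_2[i],\R p_* \caln) = 0$, forcing $\R p_* \caln = 0$; since $p$ is affine, $\R p_* = p_*$ is exact and conservative on quasi-coherent sheaves, so $\caln = 0$. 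Combined with perfectness of $\calm$, this shows $\calm$ classically generates $\D^\perf(\coh Z)$.

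With the tilting properties in hand, Theorem~\ref{thm:geomtilt} provides the derived equivalence of (2) and, via smoothness of $Z$, the finite global dimension of $\cala$ in (3). For (4), compute
\[
\cala = \End_\caloz(\calm) = \Gamma(\Spec R,\, q_* \cEnd_\caloz(\calm)) = \bigoplus_{a,b=1}^n \Hom_R(M_b,M_a) = \End_R(M) = \Lam\,,
\]
where the middle equality is the $H^0$ part of Theorem~\ref{thm:BLVtilt}. The most delicate link in this chain is the transfer of generation from $\PP$ to $Z$ through the affine morphism $p$; the self-$\Ext$ vanishing that looks most technically demanding is in fact already subsumed in Theorem~\ref{thm:BLVtilt}, after which the proposition assembles as a bookkeeping exercise in the tilting dictionary.
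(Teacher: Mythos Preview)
Your proof is correct. The generation argument via the affine morphism $p$ and conservativity of $p_*$ is exactly right, and your derivation of conclusions (2)--(4) from geometric tilting theory and Theorem~\ref{thm:BLVtilt} is clean.

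There is one genuine difference in route worth flagging. For the self-$\Ext$ vanishing, you pull it out of Theorem~\ref{thm:BLVtilt} via the affine target $\Spec R$ of $q$: since $\R^i q_*\cHom_\caloz(\calm_b,\calm_a)=0$ for $i>0$ and $\Spec R$ is affine, Leray collapses and you are done. The paper instead computes $\Ext_\caloz^i(\calm_b,\calm_a)$ via the \emph{other} projection $p\colon Z\to\PP$: since $p$ is affine with $p_*\caloz$ a sum of line bundles, the $\Ext$ groups on $Z$ unwind into the cohomology groups $H^i(\PP,\cHom_\calop(\Omega^{b-1}(b),\Omega^{a-1}(a))(c))$, whose vanishing is then established directly (by Bott vanishing in characteristic zero, or by the characteristic-free argument in the original paper). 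Your route is shorter once Theorem~\ref{thm:BLVtilt} is in hand; the paper's route via $\PP$ is independent of that theorem and is where the real cohomological work in the original source lies. Either way the generation step, which you correctly identify as the remaining substantive point, is handled by the same pullback-along-affine-$p$ argument you give.
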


\begin{theorem}
  \label{thm:BLV-nccr}
  Let $k$ be a field, $X$ an $(n \times n)$ matrix of indeterminates,
  $n \geq 2$, and $R = k[X]/(\det X)$ the generic determinantal
  hypersurface ring.  Let $M_a = \cok \w^a X$ for $a=1, \dots, n$, and
  put $M = \bigoplus_a M_a$. Then the $R$-algebra $\Lam = \End_R(M)$
  has finite global dimension and is MCM as an $R$-module.  It is in
  particular a non-commutative crepant resolution of $R$. \qed
\end{theorem}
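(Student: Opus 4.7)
The plan is to realize $\Lam$ as the endomorphism algebra of a tilting bundle on the geometric crepant resolution $q\colon Z\to\Spec R$ of diagram~\eqref{eq:detX}, and then transfer finite global dimension (and the MCM property) down via $\R q_*$. The candidate tilting object is the pullback $\calm=p^*T=\bigoplus_{a=1}^n \calm_a$, with $\calm_a=p^*\Omega^{a-1}(a)$, of Be\u\i linson's tilting bundle $T$ on $\PP$ (Theorem~\ref{thm:beilinson}). The argument has three ingredients: (i) identify each $M_a$ geometrically as $q_*\calm_a$, with no higher pushforward; (ii) check that $\calm$ is a tilting bundle on $Z$; and (iii) descend the resulting equivalence to establish the defining properties of a non-commutative crepant resolution for $\Lam$.

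For step (i), I would follow the double-complex approach sketched between Proposition~\ref{prop:Ma} and Theorem~\ref{thm:BLVtilt}. Twisting the tautological Koszul complex~\eqref{eq:Omegares} by $\calop(a)$ and truncating at $\Omega^{a-1}(a)$ gives an explicit resolution of $\Omega^{a-1}(a)$ by direct sums of copies of $\calop(i)$; pulling back via the flat projection $p$ then resolves $\calm_a$ by twists $\caloz(i)$. Because $j\colon Z\into Y$ is a complete intersection cut out by the Euler form $F\otimes_k\caloy(-1)\to\caloy$, the sheaf $j_*\caloz$ itself carries a Koszul resolution over $\caloy$. Splicing these two Koszul resolutions into a fourth-quadrant double complex of free $S$-modules and invoking the known higher direct images of $\caloy(t)=\calop(t)\boxtimes\calos$ along $\tilde q$ produces a double complex whose rows and columns are exact Koszul strands, except at the extremes where the homology $\w^a\F$ appears. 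A careful inspection of the surviving differential identifies the total complex with $\w^aX\colon \w^a\F\to\w^a\F$, yielding $q_*\calm_a=M_a$ and $\R^{>0}q_*\calm_a=0$. I expect this bookkeeping, particularly the identification of the surviving map with $\w^aX$, to be the main technical obstacle.

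For step (ii), flatness of $p$ (together with the projection formula) reduces the required higher-$\Ext$ vanishing to cohomology computations on $\PP$: each $\Ext^{>0}_{\caloz}(\calm_b,\calm_a)$ is assembled from groups of the form $H^i\bigl(\PP,\,\cHom_\calop(\Omega^{b-1}(b),\Omega^{a-1}(a))(c)\bigr)$, all of which vanish by the quiverised-Koszul calculation behind Lemma~\ref{lem:beilinson}. Classical generation of $\D^b(\coh Z)$ by $\calm$ follows from the generation of $\D^b(\coh\PP)$ by $T$ combined with the relative version of the resolution-of-the-diagonal argument of \S\ref{sect:beilinson} applied to $Y=\PP\times\Spec S$ and restricted along $j$. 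With $\cala=\End_{\caloz}(\calm)$, geometric tilting (Theorem~\ref{thm:geomtilt}) then furnishes a derived equivalence $\rHom_{\caloz}(\calm,-)\colon\D^b(\coh Z)\xto{\simeq}\D^b(\lmod\cala)$, and smoothness of $Z$ forces $\gldim\cala<\infty$.

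The final step (iii) identifies $\cala$ with $\Lam$ and verifies the axioms of Definition~\ref{def:NCCR}. The same double-complex analysis applied to $\cHom_{\caloz}(\calm_b,\calm_a)$ shows that this sheaf has vanishing higher pushforward and realises $\Hom_R(M_b,M_a)$ as the cokernel of a map between free $S$-modules; hence $\cala\cong q_*\cEnd_{\caloz}(\calm)\cong \End_R(M)=\Lam$, giving $\gldim\Lam<\infty$ and showing that $\Lam$ is MCM over $R$, so an $R$-order. Each $M_a$ is reflexive because it arises from a matrix factorization of $\det X$ over the Gorenstein ring $S$, so $M$ is reflexive, and Theorem~\ref{thm:AG-A}(\ref{item:endosymm}) then delivers symmetry of $\Lam$ over the Gorenstein ring $R$. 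Finally, at the generic point of $\Spec R$ the matrix $X$ has rank $n-1$, so each $M_a$ becomes free over the quotient field $K$ of the expected rank $\binom{n-1}{a-1}$; summing, $\Lam\otimes_R K\cong \End_K(K^{2^{n-1}})\cong M_{2^{n-1}}(K)$, establishing birationality. This completes the verification that $\Lam$ is a non-commutative crepant resolution of $R$.
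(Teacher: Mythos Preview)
Your proposal is correct and follows essentially the same route as the paper's argument in \S\ref{sect:detX}: you pull back Be\u\i linson's tilting bundle along $p$, compute $\R q_*\calm_a$ via the double Koszul complex to recover $M_a$ (Proposition~\ref{prop:Ma}), establish the tilting property of $\calm$ on $Z$ through the cohomology groups $H^i(\PP,\cHom_\calop(\Omega^{b-1}(b),\Omega^{a-1}(a))(c))$, and read off finite global dimension and the MCM property of $\Lam\cong\cala$ from Proposition~\ref{prop:BLVtilt} and Theorem~\ref{thm:BLVtilt}. Your added verifications of symmetry and birationality are routine consequences of Theorem~\ref{thm:AG-A} and the generic rank of $M$, exactly as the paper implicitly uses.
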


In~\cite{Buchweitz-Leuschke-VandenBergh:2010} we replace the square
matrix $X$ by an $(m \times n)$ matrix with $n \geq m$ and $R$ with
the quotient by the maximal minors $k[X]/I_m(X)$, which defines the
locus in $\Spec k[X]$ of matrices with non-maximal rank.  The same
construction $M_a = \cok \w^a X$ yields an algebra $\Lam
= \End_R(\bigoplus_{a=1}^m M_a)$ which is still MCM as an $R$-module
and still has finite global dimension.  In this case, however, $\Lam$
is not a non-singular  $R$-algebra, so not a non-commutative crepant
resolution according to our definition.  This is directly attributable
to the fact that quotients by minors are Gorenstein if and only if
$n=m$, so that Corollary~\ref{cor:symm-fingldim} fails for non-square
matrices.  In a forthcoming
paper~\cite{Buchweitz-Leuschke-VandenBergh:crepresdetX2}, we establish
the same result for the quotient by arbitrary minors $k[X]/I_t(X)$,
with $1 \leq t \leq m$, using a tilting bundle on the
Grassmannian~\cite{Buchweitz-Leuschke-VandenBergh:tiltGrass}.  In
particular we obtain non-commutative crepant resolutions when the
matrix is square.

Similar techniques, i.e.\ constructions using tilting objects on known
resolutions of singularities, are used by
Kuznetsov~\cite{Kuznetsov:2008} to give non-commutative
desingularizations for several more classes of examples, including
cones over Veronese/Segre embeddings and Grassmannians, as well as
Pfaffian varieties.  

\section{Non-commutative blowups}\label{sect:blowups}

It was clear from early on in the development of non-commutative
(projective) geometry that it would be most desirable to have a
non-commutative analogue of the most basic birational transformation,
the blowup.  This \S{} sketches a few approaches to the problem.  

First recall that if $R$ is a commutative ring and $I$ is an ideal of
$R$, the \emph{blowup} $\Bl_I(X)$ of $X = \Spec R$ at (the closed
subscheme defined by) $I$ is $\Proj R[It]$, where $R[It]$ is the Rees
algebra $R \oplus I \oplus I^2 \oplus \cdots$.  The exceptional locus
of the blowup is the \emph{fiber cone} $\Proj R[It]/IR[It] = R/I
\oplus I/I^2 \oplus \cdots$.

One might hope to mimic this definition for sufficiently nice
non-commutative rings.  This turns out to give unsatisfactory results.
For example~(\cite{Artin:1997}, \cite[page 2]{VdB:memoir}) set $\Lam =
k\langle x,y\rangle /(yx-xy-y)$, and consider the ideal $\m = (x,y)$
``corresponding'' to the origin of this non-commutative surface.  Then
$\m^n = (x^n,y)$ for all $y$, so the fiber cone $R[\m t]/\m R[\m t]$
is one-dimensional in each degree, and is isomorphic to $k[z]$.  This
means that the exceptional locus is in some sense zero-dimensional,
whereas one should expect the exceptional divisor of a blowup of a point
in a surface to have dimension $1$.

Van den Bergh~\cite{VdB:memoir} constructs an analogue of the Rees
algebra directly over projective quasi-schemes $\ncProj \Lam$ (see
\S\ref{sect:non-existence}).  Specifically, if $X = \ncProj \Lam$ is a
quasi-scheme, he gives a construction of the blowup of a smooth point
$p$ in a commutative curve $Y$ contained in $X$.  (This means that
$\Qcoh Y \simeq \ncProj(\Lam/x\Lam)$ for some $x \in \Lam$.)   Using
this construction, Van den Bergh considers blowups of quantum
projective planes at small numbers of points, in particular
non-commutative deformations of the del Pezzo surfaces obtained by
blowing up in $\leq 8$ points.  I won't go into the details of the
construction or the applications here.

There is a more recent proposal for a definition of the phrase
``non-commutative blowup,'' which is inspired by the classic flop of
Example~\ref{eg:Atiyah2} and by Theorem~\ref{thm:Grauert-Remmert}.  In
general, the idea is that for an ideal $I$ of a ring $\Lam$, the
\emph{non-commutative blowup} of $\Lam$ in $I$ is the ring
\[
\Bl^{\nc}_I(\Lam) = \End_\Lam(\Lam \oplus I)\,.
\]
In the situation of Example~\ref{eg:Atiyah2}, we saw that
$\Bl^{\nc}_I(R)$ was derived equivalent to the usual blowup $\Bl_I(\Spec
R)$.  Thus suggests the following question, a version of which I first
heard from R.-O.~Buchweitz.
\begin{question}\label{q:ROB}
  Can one generalize or imitate the normalization algorithm of
  \S\ref{sect:normalization} to show that there is a sequence of
  non-commutative blowups starting with $\Lam = \Lam_0$ and continuing
  with $\Lam_{i+1} = \Bl^{\nc}_{I_i}(\Lam_i) = \End_{\Lam_i}(\Lam_i
  \oplus I_i)$ for some ideals $I_i \subset \Lam_i$, such that $\Lam_i$
  eventually has finite global dimension?
\end{question}
One might try to follow Hironaka and blow up only in ``smooth
centers,'' i.e.\ assume that $\Lam_i/I_i$ is non-singular, as is the
case in Example~\ref{eg:Atiyah2}.

Very recent work of Burban--Drozd~\cite{Burban-Drozd:tilting} confirms
that the non-commutative blowup as above is a sort of categorical
desingularization whenever $X$ is a reduced algebraic curve
singularity having only nodes and cusps for singular points, and $I$
is the conductor ideal.  They observe that $\cala =
\cEnd_\calox(\calox \oplus I)$ has global dimension equal to $2$, that
$(\calox \oplus I)\otimes_\calox - \colon \coh X \to \lmod \cala$ is
fully faithful, and that $\Hom_\cala(\calox \oplus I,-) \colon \lmod
\cala \to \coh X$ is exact.

In his Master's thesis, Quarles constructs a direct connection between
blowups and non-commutative blowups, the only one I know of.  Let $(R,
\m, k)$ be a Henselian local $k$-algebra, with $k$ an algebraically
closed field.  Let $I$ be an ideal of $R$ which is MCM and reflexive
as an $R$-module, and set $\Lam = \Bl_{I^*}^{\nc}(R) = \End_R(R \oplus
I^*)$.  Then Quarles defines~\cite[Section 7]{Quarles:thesis} a
bijection between the closed points of $\Bl_I(\Spec R) = \Proj R[It]$
and the set of indecomposable $\Lam$-modules $X$ arising as extensions
$0 \to S_R \to X \to S_{I^*} \to 0$ of the two simple modules $S_R$
and $S_I$.  The bijection is just as sets, and carries no known
algebraic information; in particular, it is not known to be a moduli
space.

There are some immediate problems.  For example, in
Example~\ref{eg:scroll} we have $\End_R(R\oplus \omega) \cong \End_R(R \oplus
\omega^*)$, but $R[\omega t] \not \cong R[\omega^* t]$, since one is
regular and the other is not.  The associated projective schemes
are isomorphic, of course.  It is not clear how to reconcile this.

A similar approach has been suggested in prime
characteristic~\cite{Toda:2009, Toda:StabAndCY,
  Toda:BirationalCY3folds, Toda-Yasuda:2009, Yasuda:2009}.  For the
rest of this \S{} let $k$ be an algebraically closed field of
characteristic $p>0$.  Let $X$ and $Y$ be normal algebraic schemes
over $k$, and let $f \colon Y \to X$ be a finite dominant morphism.
  Then Yasuda~\cite{Yasuda:2009} proposes
to call the endomorphism ring $\cEnd_\calox(f_* \caloy)$ the
non-commutative blowup attached to $f$.

In particular, consider the non-commutative blowup of the
Frobenius. For every $e \geq 1$, set $X_e=X$ and let $F_X^e \colon X_e
\to X$ be the $e^\text{th}$ iterate of the Frobenius morphism.  Assume
that $F_X$ is finite. Then the non-commutative blowup of the
$e^\text{th}$ Frobenius, $\cEnd_\calox({F_X^e}_* \calo_{X_e})$ is
locally given by $\End_R(R^{1/p^e})$, where $R^{1/p^e}$ is the ring of
$(p^e)^\text{th}$ roots of elements of $R$.  It is isomorphic to
$\End_{R^{p^e}}(R)$, where now $R^{p^e}$ is the subring of
$(p^e)^\text{th}$ powers.  The ring $\End_{R^{p^e}}(R)$ consists of
differential operators on $R$~\cite{Smith-VandenBergh:1997} and is
sometimes a non-commutative crepant resolution of $R$.

\begin{theorem}
  [Toda--Yasuda~\cite{Toda-Yasuda:2009}] Let $R$ be a complete local
  ring of characteristic $p$ which is one of the following.
  \begin{enumerate}
  \item\label{item:dimone} a one-dimensional domain;
  \item\label{item:A1} the ADE hypersurface singularity of type
    ($A_1$) (and $p \neq 2$); or
  \item\label{item:tame} a ring of invariants $k[\![x_1, \dots,
    x_n]\!]^G$, where $G \subset \GL(n,k)$ is a finite subgroup with
    order invertible in $k$.
  \end{enumerate}
  Then for $e \gg 0$, $\End_{R}(R^{1/p^e})$ has finite global
  dimension.  However it is not generally MCM as an $R$-module, so is
  not a non-commutative crepant resolution.  \qed
\end{theorem}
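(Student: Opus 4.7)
The proof naturally splits according to case, unified by the following strategy: show that for $e \gg 0$ the $R$-module $R^{1/p^e}$ becomes a representation generator for an appropriate Krull--Schmidt subcategory of $\lmod R$, then conclude by transporting finite global dimension along a Morita equivalence via Corollary~\ref{cor:intertwine}.

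\textbf{Cases (2)--(3).}  Set $S_e = S^{1/p^e}$, a regular local ring since $S$ is, and extend the $G$-action by $g(a^{1/p^e}) := (g\cdot a)^{1/p^e}$, which is well-defined because $p^e$-th roots are unique in the perfect closure of the domain $S$. The same uniqueness forces every $p^e$-th root of an element of $R = S^G$ to be $G$-fixed, giving $R^{1/p^e} = S_e^G$.  By Kunz's theorem $S_e$ is free over $S$, hence lies in $\add_R(S)$; because $|G|$ is invertible in $k$, the averaging idempotent $\frac{1}{|G|}\sum_{g\in G} g$ splits the inclusion $R^{1/p^e}\hookrightarrow S_e$ as $R$-modules, so $R^{1/p^e}\in\add_R(S)$ as well.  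Using McKay (Corollary~\ref{cor:McKay-gen}) one writes $S \cong \bigoplus_\rho M_\rho^{\oplus\dim\rho}$, and a refined F-signature calculation shows the multiplicity of $M_\rho$ in $R^{1/p^e}$ is asymptotic to $(\dim\rho)\,|G|^{-1}\,p^{en}$, hence strictly positive for $e \gg 0$.  Therefore $\add_R(R^{1/p^e}) = \add_R(\bigoplus_\rho M_\rho) = \add_R S$, and Corollary~\ref{cor:intertwine} yields a Morita equivalence $\End_R(R^{1/p^e}) \sim \End_R(S) \cong S\#G$, which has finite global dimension equal to $n$ by Theorem~\ref{thm:Aus-McKay} and Proposition~\ref{prop:SGfingldim}.

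\textbf{Case (1).}  For $R$ a one-dimensional complete local domain every finitely generated torsion-free $R$-module is MCM, and $R^{1/p^e}$, being torsion-free, is MCM.  The crucial input is \emph{finite $F$-representation type}: the indecomposable $R$-summands occurring in any $R^{1/p^e}$ are drawn from a fixed finite list $N_1,\dots,N_r$.  This is established by embedding $R^{1/p^e}$ into $\bar R^{1/p^e}$, where the integral closure $\bar R$ is a DVR (so $\bar R^{1/p^e}$ is easily understood), and using the conductor $(R:\bar R)$ to control how $R^{1/p^e}$ sits inside this ambient regular ring.  For $e \gg 0$ each $N_i$ appears as a summand of $R^{1/p^e}$, so $\add_R(R^{1/p^e}) = \add_R(\bigoplus_i N_i)$, and Corollary~\ref{cor:intertwine} gives a Morita equivalence $\End_R(R^{1/p^e}) \sim \End_R(\bigoplus_i N_i)$.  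An Auslander--Roggenkamp-style argument now finishes: given a projective presentation $P_1 \to P_0 \to X \to 0$ of an $\End_R(\bigoplus_i N_i)$-module, Proposition~\ref{prop:projectivization} identifies the kernel with $\Hom_R(\bigoplus_i N_i, N')$ for $N'$ the $R$-kernel of a map between objects of $\add(\bigoplus_i N_i)$; in dimension one $N'$ is torsion-free, and the FFRT hypothesis forces $N' \in \add(\bigoplus_i N_i)$, producing a projective resolution of length at most $2$.

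\textbf{Main obstacle.}  The delicate step is case (1): establishing FFRT and, crucially, the closure of $\add(\bigoplus_i N_i)$ under kernels of internal maps.  Both rest on the one-dimensional conductor analysis and have no direct higher-dimensional analogue---which is consistent with the theorem's limitation to $\dim R = 1$ outside the quotient-singularity cases.  The subsequent assertion that $\End_R(R^{1/p^e})$ need not be MCM is natural and does not contradict the theorem: $R^{1/p^e}$ generally fails to contain the canonical module $\omega_R$ as a summand, and Corollary~\ref{cor:symm-fingldim} shows that precisely such a symmetry hypothesis would be needed to upgrade finite global dimension to an order-theoretic (non-commutative crepant) resolution.
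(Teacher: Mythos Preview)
Your treatment of case~(\ref{item:tame}) is essentially the paper's argument: both show that for $e\gg 0$ every module of covariants $(S\otimes_k W_\rho)^G$ occurs as a summand of $R^{1/p^e}$, whence $\add_R(R^{1/p^e})=\add_R(S)$ and Corollary~\ref{cor:intertwine} transports finite global dimension from $\End_R(S)\cong S\#G$.  The paper attributes the key multiplicity computation to Smith and Van~den~Bergh rather than to an $F$-signature calculation, but the content is the same.

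Your approach to case~(\ref{item:dimone}), however, has a genuine gap.  You argue via finite $F$-representation type: the summands of the $R^{1/p^e}$ lie in a fixed finite list $N_1,\dots,N_r$, and for $e\gg 0$ all of them appear, so $\add_R(R^{1/p^e})=\add_R(\bigoplus_i N_i)$.  You then run the Auslander--Roggenkamp argument and claim that the kernel $N'$ of a map between objects of $\add(\bigoplus_i N_i)$ again lies in $\add(\bigoplus_i N_i)$ because ``the FFRT hypothesis forces'' it.  But FFRT says only that summands of Frobenius pushforwards lie among the $N_i$; it says nothing about arbitrary torsion-free modules, and a one-dimensional complete local domain can have FFRT while having infinite CM type.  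The closure-under-kernels step is exactly what makes the Auslander--Roggenkamp argument work for lattices over an order, and it does not follow from FFRT alone.

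The paper's argument for~(\ref{item:dimone}) sidesteps this entirely.  Let $S\cong k[\![t]\!]$ be the normalization.  For $e$ large enough that $S^{p^e}\subseteq R$ (which happens once $p^e$ exceeds the conductor exponent), $R$ is a torsion-free, hence free, module over the DVR $S^{p^e}$, of rank $p^e$.  Then
\[
\End_{R^{p^e}}(R)\cong M_{p^e}\bigl(\End_{R^{p^e}}(S^{p^e})\bigr)=M_{p^e}(S^{p^e}),
\]
using $\End_{R^{p^e}}(S^{p^e})=S^{p^e}$ since $S^{p^e}$ is the normalization of $R^{p^e}$.  This is Morita equivalent to the regular ring $S^{p^e}$, so has global dimension~$1$.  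No FFRT, no kernel closure.

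Finally, you fold case~(\ref{item:A1}) into~(\ref{item:tame}) by treating the $A_1$ singularity as a $\ZZ/2$-quotient.  That is valid only in dimension two; the paper's proof handles $R=k[\![x_1,\dots,x_d]\!]/(\sum x_i^2)$ in arbitrary $d$ by showing directly that $R$ is a representation generator for $R^{p^e}$ (via the finite CM type of $A_1$ singularities, with separate arguments for $d$ even and odd) and then invoking Theorem~\ref{thm:FCMT-gldim}.
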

Let me make a few comments on the proofs.  For (\ref{item:dimone}),
consider the integral closure $S \cong k[\![x]\!]$ of $R$.  Then for
any $e \geq 1$, one checks that $\End_{R^{p^e}}(S^{p^e})
= \End_{S^{p^e}}(S^{p^e}) = S^{p^e}$.  Take $e$ large enough that
$S^{p^e} \subseteq R$.  Then $R$ is free over $S^{p^e}$ of rank $p^e$,
so $\End_{R^{p^e}}(R) \cong M_{p^e}(\End_{R^{p^e}}(S^{p^e})) =
M_{p^e}(S^{p^e})$.  This is Morita equivalent to $S^{p^e} \cong S$, so has
global dimension equal to $1$.  It is also clearly MCM.

For (\ref{item:A1}), assume $p \neq 2$ and set $R = k[\![x_1, \dots,
x_d]\!]/(x_1^2 + \cdots + x_d^2)$.  Then one can show that for all $e
\geq 1$, $R$ is a representation generator for $R^{p^e}$.  (This
requires separate arguments for $d$ odd/even.)  By
Theorem~\ref{thm:FCMT-gldim}, $\End_{R^{p^e}}(R)$ has finite global
dimension.  It is not a non-commutative crepant resolution by
Theorem~\ref{thm:Dao}.

Finally, for (\ref{item:tame}), Toda and Yasuda use results of Smith
and Van den Bergh to show that if $S = k[\![x_1, \dots, x_d]\!]$ and
$R=S^G$ as in the statement, then for $e \gg 0$ every module of
covariants $(S \otimes_k W)^G$ appears as an $R^{p^e}$-direct summand
in $R$, in $S$, and in $S^{p^e}$.  Thus $\End_{R^{p^e}}(S^{p^e}) \cong
S^{p^e} \# G$ (Theorem~\ref{thm:Aus-McKay}) is Morita equivalent to
$\End_{R^{p^e}}(R)$ by Corollary~\ref{cor:intertwine}, and they
simultaneously have finite global dimension.

In general, there are non-trivial obstructions to $\End_R(R^{1/p^e})$
being a non-commutative crepant resolution.  For example, Dao points
out~\cite{Dao:2010} that when $R$ is a complete intersection ring,
$R^{1/p^e}$ is known to be $\Tor$-rigid~\cite{Avramov-Miller:2001}, so
if $R$ satisfies ($R_2$) then $\End_R(R^{1/p^e})$ is not MCM for any
$e\geq 1$ by the discussion following Proposition~\ref{prop:Joth}.

\section{Omissions and open questions}\label{sect:omissions}

In addition to the examples already mentioned in previous \S\S, there
is a large and growing array of examples of non-commutative crepant
resolutions and related constructions.  Lack of space and expertise
prevent me from describing them in full, but here are a few references
and comments.

Deformations of the Kleinian singularities $\CC^2/G$, with $G
\subset \SL(2,\CC)$, have non-commutative crepant
resolutions~\cite{Gordon-Smith:2004}, which are identified as deformed
preprojective algebras in the sense
of~\cite{CrawleyBoevey-Holland:1998}.   

In a different direction, Wemyss has considered the non-Gorenstein
case of the classical McKay correspondence, where $G \not \subset
\SL(2,\CC)$~\cite{Wemyss:An, Wemyss:Dn-I, Wemyss:Dn-II}; much of
Theorem~\ref{thm:McKay2} breaks down, but much can be recovered by
restricting to the so-called ``special'' representations.  This leads
to the \emph{reconstruction algebra,} which is the endomorphism ring
of the special MCM modules.  

Beil~\cite{Beil:2008} shows that \emph{square superpotential
  algebras,} which are certain quiver algebras with relations coming
from cyclic derivatives of a superpotential, are non-commutative
crepant resolutions of their centers (which are three-dimensional
toric Gorenstein normal domains). In fact,
Broomhead~\cite{Broomhead:2009} constructs a non-commutative crepant
resolution for every Gorenstein affine toric threefold, from
superpotential algebras called dimer models.  Similar algebras
associated to brane tilings have non-commutative crepant resolutions
as well~\cite{Mozgovoy:2009a,Mozgovoy:2009b}.

Finally, Bezrukavnikov~\cite{Bezrukavnikov:2006} constructs a
non-commutative version of the Springer resolution $Z$
from~\eqref{eq:detX}, which is different from that
in~\cite{Buchweitz-Leuschke-VandenBergh:2010}. 

\bigskip

Many other topics have been omitted that could have played a role.
For example, I have said nothing about (semi-)orthogonal
decompositions of triangulated categories and exceptional sequences.
These grew out of Be\u\i linson's result in \S\ref{sect:beilinson},
via the Rudakov seminar~\cite{Rudakov:1990}.  See~\cite[Chapter
1]{Huybrechts} or~\cite{Bondal-Orlov:1995}.

Connections of this material with string theory appear at every
turn~\cite{Seiberg-Witten:1999}. For example, the derived category
$\D(X)$ appears in string theory as the category of branes propagating
on the space $X$.  Non-commutativity arises naturally in this context
from the fact that \emph{open} strings can be glued together in two
different ways, unlike \emph{closed}
strings~\cite{Berenstein-Leigh:2001}.  Furthermore, the Calabi-Yau
condition of \S\ref{sect:NCCRs} is essential to the string-theoretic
description of spacetime~\cite{Broomhead:2009, Ooguri-Yamazaki:2009}.
Most obviously, high energy physics has been a driving force in
non-commutative desingularizations and the higher geometric McKay
correspondence.  I am not competent to do more than gesture at these
connections.

\bigskip

I end the article with a partial list of open problems.  Some of these
are mentioned in the text, while others are implicit.

\begin{enumerate}[(1)]\addtolength{\itemsep}{0.5\baselineskip}
\item Conjecture~\ref{conj:BO} of Bondal and Orlov, that a generalized
  flop between smooth varieties induces a derived equivalence, is
  still largely open outside of dimension three.  The related
  Conjecture~\ref{conj:VdB} of Van den Bergh, which asks for derived
  equivalence of both geometric and non-commutative crepant
  resolutions, is similarly
  open. See~\cite{Iyama-Wemyss:NCBondalOrlov} for some very recent
  progress on the non-commutative side.
\item Existence of a non-commutative crepant resolution is \emph{not}
  equivalent to existence of a crepant resolution of singularities in
  dimension four or higher.  See the end of Example~\ref{eg:ADE} for
  examples with non-commutative resolutions but no geometric ones, and
  Example~\ref{eg:Dao} for failure of the other direction.  However,
  it still may hold in general in dimension three.  One might also be
  optimistic and ask for additional hypotheses to rescue the case of
  dimension four.
\item Various results in the text fail for rings that are not
  Gorenstein, notably Corollary~\ref{cor:symm-fingldim} and
  Proposition~\ref{prop:defequiv1}.  Is there a better definition of
  non-commutative crepant resolutions which would satisfy these
  statements over non-Gorenstein Cohen-Macaulay rings?  Perhaps we
  should not expect one, since crepant resolutions of singularities
  exist only in the Gorenstein case.

  On a related note, is symmetry (Definition~\ref{def:symmetric}) too
  strong a condition?  The relevant property
  in~\cite{Stafford-VdB:2008} is that $\Hom_R(\Lam,R)$ is an
  \emph{invertible} $(\Lam\text{-}\Lam)$-bimodule, rather than insisting that
  $\Hom_R(\Lam,R) \cong \Lam$.  This would, unfortunately, rule out
  endomorphism rings $\End_R(M)$, since they are automatically
  symmetric by Theorem~\ref{thm:AG-A}(\ref{item:endosymm}).  Or
  perhaps the appropriate generalization to non-Gorenstein rings is
  that $\Hom_R(\Lam,\omega_R) = \Lam$.
\item Crepant resolutions of singularities are very special: they
  exist only for canonical singularities, not in general for terminal
  singularities.  The non-commutative version is  more
  general.  One might therefore hope that Theorem~\ref{thm:VdBmain} is
  true for canonical threefolds as well.  Van den Bergh's proof of
  Theorem~\ref{thm:VdBmain} applies verbatim for any canonical
  threefold admitting a crepant resolution of singularities with
  one-dimensional fibers.
\item In nearly all of the examples of non-commutative crepant
  resolutions, the module $M$ such that $\Lam = \End_R(M)$ can be
  taken maximal Cohen-Macaulay.  Lemma~\ref{lem:Dao} indicates one
  obstruction to $M$ having high depth.  Are there general situations
  where a non-commutative crepant resolution exists, but no MCM module
  will suffice?  Or situations (other than surfaces) where every
  non-commutative crepant resolution is given by a MCM module?
  See~\cite[5.12]{Iyama-Wemyss:ARduality} for one result in this
  direction.
\item Van den Bergh points out in~\cite{VandenBergh:crepant} that one
  might try to build a theory of rational singularities for
  non-commutative rings, extending the results of
  \S\ref{sect:ratsings}.  It would be essential to have a
  non-commutative analogue of the Grauert--Riemenschneider Vanishing
  theorem (\ref{thm:GRvan}), but none seems to be known.  There is an
  algebraic reformulation of GR Vanishing due to Sancho de
  Salas~\cite{SanchodeSalas}, cf.~\cite[Chapter 5]{Huneke:CBMS}: Let
  $R$ be a reduced CM local ring essentially of finite type over an
  algebraically closed field of characteristic zero, and let $I$ be an
  ideal of $R$ such that $\Proj R[It]$ is smooth; then the associated
  graded ring $\gr_{I^n}(R)$ is Cohen--Macaulay for $n \gg 0$. It
  would be very interesting to have a purely algebraic proof of this
  result, particularly if it encompassed some non-commutative rings.
  The proof of Sancho de Salas uses results
  from~\cite{Grauert-Riemenschneider}, so relies on complex analysis;
  see~\cite{Huckaba-Marley:1999} for some progress toward an algebraic
  proof in dimension two.
\item\label{item:ncblow} In a similar direction, Question~\ref{q:ROB}
  asks for an algorithm to resolve singularities via a sequence of
  ``non-commutative blowups.'' For a start, one needs any non-trivial
  connection between $\D^b(\coh \Proj R[It])$ and $\D^b(\lmod
  {\End_R(R \oplus I)})$; other than Quarles' bijection, none seems to
  be known.
\item Even given a very strong result along the lines of
  (\ref{item:ncblow}), an enormous amount of work would still be
  needed to obtain applications of non-commutative desingularizations
  analogous to those of resolutions of singularities.  For example,
  can one define an ``arithmetic genus'' in a non-commutative context,
  and show, as Hironaka does, that it is a ``birational'' invariant?  
\end{enumerate}


\nocite{Rouquier:2006}
\nocite{Orlov:QcohCommNoncomm,
  Orlov:DerCatsCohTriang,Orlov:DerCatsCohEquivs, Orlov:RemsGens}
\nocite{Holm-Jorgensen:2010}

\bibliographystyle{amsalpha}
\renewcommand{\baselinestretch}{1}

\newcommand{\arxiv}[2][AC]{\mbox{\href{http://arxiv.org/abs/#2}{\textsf{arXiv:#2[math.#1]}}}}
\newcommand{\oldarxiv}[2][AC]{\mbox{\href{http://arxiv.org/abs/math/#2}{\textsf{arXiv:math/#2[math.#1]}}}}
\renewcommand{\MR}[1]{%
  {\href{http://www.ams.org/mathscinet-getitem?mr=#1}{MR #1}}}
\providecommand{\bysame}{\leavevmode\hbox to3em{\hrulefill}\thinspace}
\newcommand{\arXiv}[1]{%
  \relax\ifhmode\unskip\space\fi\href{http://arxiv.org/abs/#1}{arXiv:#1}}


\def\cprime{$'$} \def\cprime{$'$} \def\cprime{$'$}
\providecommand{\bysame}{\leavevmode\hbox to3em{\hrulefill}\thinspace}
\providecommand{\MR}{\relax\ifhmode\unskip\space\fi MR }
\providecommand{\MRhref}[2]{%
  \href{http://www.ams.org/mathscinet-getitem?mr=#1}{#2}
}
\providecommand{\href}[2]{#2}

\end{document}